\newcommand{\maxideal}{\mathfrak{m}}
\newcommand{\CC}{\mathbb{C}}
\newcommand{\PP}{\mathbb{P}}
\numberwithin{equation}{section}
\newtheorem{theorem}{Theorem}[section]
\newtheorem{corollary}[theorem]{Corollary}
\newtheorem{lemma}[theorem]{Lemma}
\newtheorem{proposition}[theorem]{Proposition}
\theoremstyle{definition}
\newtheorem{remark}[theorem]{Remark}
\newtheorem{example}[theorem]{Example}
\title[Classification and degenerations\ldots]{Classification and degenerations of small minimal border rank tensors via modules}
\author{Jakub Jagie\l{}\l{}a}
\thanks{Jagie\l{}\l{}a is supported by National Science Centre grant
2020/39/D/ST1/00132}
\author{Joachim Jelisiejew}
\thanks{Jelisiejew is supported by National Science
    Centre grants 2020/39/D/ST1/00132 and 2023/50/E/ST1/00336}
\newcommand{\kk}{\Bbbk}
\newcommand{\OO}{\mathcal{O}}
\newcommand{\spann}[1]{\left\langle #1\right\rangle}
\DeclareMathOperator{\ann}{ann}
\DeclareMathOperator{\degtopure}{\trianglerighteq}%
\DeclareMathOperator{\degto}{\trianglerighteq_{\Sigma}}%
\DeclareMathOperator{\notdegto}{\not\trianglerighteq_{\Sigma}}%
\DeclareMathOperator{\notdegtopure}{\not\trianglerighteq}%
\DeclareMathOperator{\End}{End}
\DeclareMathOperator{\GL}{GL}
\DeclareMathOperator{\charr}{char}
\newcommand{\Tdeg}[1]{T_{\OO_{#1}}}
\newcommand{\Tdegtw}{T_{\widetilde{\OO_{56}}}}
\newcommand{\BBname}{Bia{\l}ynicki-Birula}%
\newcommand{\Gmult}{\mathbb{G}_m}%
\newcommand{\modA}{\underline{A}}
\newcommand{\modB}{\underline{B}}
\newcommand{\modC}{\underline{C}}
\newcommand{\alg}[1]{\mathcal{A}_{111}^{#1}}
\DeclareMathOperator{\OpQuot}{Quot}
\DeclareMathOperator{\Hom}{Hom}
\DeclareMathOperator{\Spec}{Spec}
\DeclareMathOperator{\id}{id}
\DeclareMathOperator{\Gr}{Gr}
\begin{document}

\begin{abstract}
    We give a self-contained classification of $1_*$-generic minimal border rank tensors in $\CC^m \otimes \CC^m \otimes \CC^m$ for $m \leq 5$. Together
    with previous results, this gives a classification of all minimal border
    rank tensors in $\CC^m \otimes \CC^m \otimes \CC^m$ for $m \leq 5$: there
    are $107$ isomorphism classes (only $37$ up to permuting factors).  We fully describe possible degenerations
    among the tensors.  We prove that there are no
    $1$-degenerate minimal border rank tensors in $\CC^m \otimes \CC^m \otimes
    \CC^m $ for $m \leq 4$.
\end{abstract}
\maketitle

\section{Introduction}

We consider tensors in $\CC^m \otimes \CC^m \otimes \CC^m$. 
The \emph{rank} of a tensor $T$ is the smallest integer $r$ for which there exists a decomposition 
$T = \sum_{i=1}^r a_i \otimes b_i \otimes c_i$ and the \emph{border rank} of $T$ is the smallest $r$
such that $T$ can be approximated by rank $r$ tensors.
A tensor is \emph{concise} if it does not lie in any proper subspace
$\CC^{m_1} \otimes \CC^{m_2} \otimes \CC^{m_3}\subsetneq \CC^m\otimes
\CC^m\otimes \CC^m$. Every concise tensor has
border rank at least $m$. A tensor has \emph{minimal border rank} if it is
concise and its border rank is equal to $m$. Buczyński observed that every tensor of
border rank $\leq m$ is a \emph{restriction} of a minimal border rank tensor.
Understanding restrictions is much easier than understanding degenerations, so
minimal border rank tensors shed light on all (not necessarily
concise) border rank $\leq
m$ tensors in $\CC^m \otimes \CC^m\otimes \CC^m$.

Relatively little is known about the geometry of minimal border rank tensors even for small
$m$. The classification for $m=3$ was given in~\cite{third}. The
classification of $1_*$-generic ones (see \S\ref{prelims_tensors} for the definition) 
for $m\leq 5$ was known but rests on an involved
linear algebra computation in the book~\cite{perestanovochnye}. The possible degenerations are
much harder to determine as $m$ grows. These, as far as we know, were not known even for
$m=4$. See~\S\ref{previous} for a detailed discussion of previous work.

Minimal border rank tensors, among them the Coppersmith-Winograd tensors,
appear prominently in complexity theory, see \cite{landsberg_tensors,
BurgisserBook, landsberg_complexity}. The results below can be applied in
particular as follows:
\begin{itemize}
    \item special minimal border rank tensors are an input of the celebrated
        laser method. Typically the big CW tensor is used, but it is subject
        to barrier results, see for example~\cite{Christandl__barriers}.
        Alternative inputs are currently investigated, see for
        example~\cite{Conner_Gesmundo_Landsberg_Ventura,
        Homs_Jelisiejew_Michalek_Seynnaeve, Conner_Huang_Landsberg}.
    \item explicit symbolic degenerations and non-degenerations for minimal
        border rank tensors can be used as testing data for numerous conjectures, such as
        best rank one tensors~\cite{Friedland_Ottaviani}, approximation
        degree~\cite{CGLS} etc. Not much of such explicit symbolic data is
        available in literature, perhaps surprisingly.
    \item to prove non-existence of some tensor degenerations we use new, advanced tools.
        They can be useful also in other contexts such as qubits and
        entanglement and in general in the many fields where tensors are
        employed, see for example~\cite{CGLS, landsberg_tensors} for
        their list.
\end{itemize}

\subsection{Results}

In this article we classify tensors and degenerations of minimal border rank
tensors for $m\leq 5$.
In the introduction we work over $\CC$, although out results are
more general.
We define two tensors to be \emph{isomorphic} (respectively, \emph{isomorphic
up to permutations}) if they differ by a linear coordinate change
(respectively, a linear coordinate change and a permutation of factors).
\begin{theorem}\label{theorem_final}
    Up to isomorphism, there are exactly $1, 2, 6, 21, 107$ minimal border
    rank tensors in $\CC^m \otimes \CC^m \otimes \CC^m$ for $m = 1,2,3,4,5$.
    Up to permutations, the numbers are $1,2,4,11,37$. An
    explicit list is given in \S\ref{classification_tensors}.
\end{theorem}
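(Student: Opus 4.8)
We separate the minimal border rank tensors into the $1_*$-generic ones and the remaining, \emph{$1$-degenerate}, ones. For $m\le 3$ the classification is \cite{third}; for $m\le 4$ there are no $1$-degenerate minimal border rank tensors by the structure result of this paper; and for $m=5$ the $1$-degenerate ones have already been classified in earlier work (see~\S\ref{previous}). So the whole statement comes down to classifying $1_*$-generic minimal border rank tensors in $\CC^m\otimes\CC^m\otimes\CC^m$ for $m\le 5$, both up to isomorphism and up to permutation of factors, and then gluing in the $1$-degenerate contribution at $m=5$. Since a tensor is $1_*$-generic exactly when some permutation of its factors is $1_A$-generic, it suffices to produce the finite list $\mathcal{L}$ of isomorphism classes of $1_A$-generic minimal border rank tensors and to describe the action of the symmetric group $S_3$ on it: the isomorphism classes of $1_*$-generic tensors are then the elements of $S_3\cdot\mathcal{L}$, and the permutation classes are the $S_3$-orbits on $S_3\cdot\mathcal{L}$ (every orbit meets $\mathcal{L}$).

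To build $\mathcal{L}$ I use the module dictionary that underlies the whole approach. Fixing a generic $\alpha\in A^{*}$, the isomorphism $T(\alpha)\colon B^{*}\to C$ turns $T$ into an $m$-dimensional subspace $\mathcal{E}\subseteq\End(B)$ with $\id\in\mathcal{E}$; when $T$ has minimal border rank $\mathcal{E}$ is abelian, and it makes $B$ into a module $M$ of length $m$ over the polynomial ring generated by $\mathcal{E}$, carrying the extra data that records $\id$. Conciseness and $1_A$-genericity of $T$ become a conciseness condition on $M$, the minimal border rank condition becomes smoothability of $M$ (membership in the smoothable component of the relevant $\OpQuot$ scheme), and isomorphism of tensors matches isomorphism of this data. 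Classifying $1_A$-generic minimal border rank tensors is therefore the same as classifying such smoothable concise modules of length $m\le 5$ up to isomorphism. Conciseness forces at most $m-1$ active variables and bounds the number of generators of $M$, so for each $m$ the possible Hilbert functions and local algebra types of $M$ form a short explicit list; within each type the isomorphism classes are the orbits of an algebraic group acting on an affine space, and for these small lengths one reduces them to finitely many normal forms by elementary linear algebra. Smoothability is then settled type by type: in this range every module supported on at most two variables is smoothable, while the finitely many modules genuinely requiring three or more variables, which arise only at $m=5$, are handled directly, by exhibiting a degeneration to a semisimple module or by a tangent-space computation. This produces exactly the list of \S\ref{classification_tensors}; non-isomorphism of distinct entries is part of the dictionary, and in practice is read off quickly from the algebra $\alg{}$ and the Hilbert function of $M$.

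It remains to carry out the $S_3$-bookkeeping and assemble the counts. A cyclic permutation of the three factors corresponds, on the module side, to a concrete duality operation (roughly, replacing $\mathcal{E}$ by its transpose and reinterpreting the marked element, a Matlis-type module dual) and a transposition to its composition with the $A\leftrightarrow B$ swap; applying each of the at most six operations to each entry of $\mathcal{L}$ and recognising the result through the normal forms above yields $S_3\cdot\mathcal{L}$ together with its partition into $S_3$-orbits, hence the ``up to permutation'' numbers $1,2,4,11,37$. For $m\le 4$ there is no $1$-degenerate contribution, so $S_3\cdot\mathcal{L}$ already has $1,2,6,21$ elements for $m=1,2,3,4$; for $m=5$ one adjoins the previously classified $1$-degenerate tensors and removes the overlaps, arriving at $107$, which one may additionally cross-check against the count extractable from the linear algebra classification in \cite{perestanovochnye}.

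The main obstacle is the classification step: ensuring that the module list is complete in the strata carrying positive-dimensional families, so that the normal-form reduction is genuinely exhaustive, and settling smoothability for the length-$5$ modules on three variables — length $5$ lies inside the range where smoothability is expected, but each such module must actually be certified, since a single overlooked non-smoothable module would falsify the value $107$. A secondary and essentially clerical difficulty is keeping the correspondence between tensors, modules, and normal forms coherent throughout the $S_3$-action, so that the final deduplication, and with it the ``up to permutation'' numbers, are provably correct.
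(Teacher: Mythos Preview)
Your overall architecture matches the paper's: reduce to $1_A$-generic tensors via the module dictionary, classify concise $S$-modules of degree $m\le 5$, determine which yield minimal border rank, analyse the $S_3$-action, and adjoin the $1$-degenerate tensors at $m=5$. The gap is in how you certify minimal border rank. You equate it with smoothability and propose to verify this via ``modules supported on at most two variables are smoothable; those needing three or more arise only at $m=5$ and are handled directly.'' Both assertions fail: concise modules genuinely requiring three variables already appear at $m=4$ (e.g.\ $\kk[x,y,z]/(x,y,z)^2$ and the non-cyclic $N_7,N_8$ of \S\ref{deg4}), and not every concise degree-$5$ module is smoothable --- the module $M_{20}$ of \S\ref{ssec:m5} is concise but its tensor $T_{1,20}$ does \emph{not} have minimal border rank (Proposition~\ref{theorem_additional}), so your procedure must detect and exclude it. A tangent-space computation, as you suggest, only shows smoothness of the Quot scheme at the point, not membership in the smoothable component.

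The paper replaces your smoothability check by the \emph{End-closedness} condition on the module (equivalently, on the subspace $\Espace{T}\subseteq\End(C)$): this is necessary for minimal border rank, is trivial to test, and discards exactly $M_{20}$ and one non-local analogue. Sufficiency --- that End-closed implies minimal border rank for $1_*$-generic tensors with $m\le 5$ --- is imported from~\cite{concise}. Your $S_3$-bookkeeping is also coarser than what the count requires: the argument in \S\ref{ssec:upToIsomorphism} proceeds by splitting on which of $1_A,1_B,1_C$-generic hold, using that $1_A,1_B,1_C$-generic tensors are isomorphic to symmetric ones while $1_A,1_B$-generic (not $1_C$) tensors come from commutative algebras so the $A\leftrightarrow B$ swap is automatically trivial, rather than through a single ``Matlis-type'' duality operation.
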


Minimal border rank tensors subdivide into two classes:
$1_*$-generic and $1$-degenerate ones (see \S\ref{prelims_tensors} for definitions). In this article we directly classify
$1_*$-generic minimal border rank tensors for $m \leq 5$ using modules. This is the content
of Section~\ref{classification}.

\begin{theorem}\label{classification_theorem}
    Up to isomorphism and permutations, there are exactly $1,2,4,11,32$  minimal border rank
    $1_*$-generic tensors in $\CC^m \otimes \CC^m \otimes \CC^m$ for $m =
    1,2,3,4,5$.
\end{theorem}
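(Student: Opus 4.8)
The plan is to transport the entire problem into the module-theoretic framework set up in this section, where a $1_*$-generic tensor is encoded by commutative-algebra and module data. First I would make the correspondence precise at the level of isomorphism classes: starting from a $1_*$-generic minimal border rank tensor $T$ in $\CC^m\otimes\CC^m\otimes\CC^m$, normalizing by a generic element of a factor in which $T$ is $1$-generic extracts the algebra $\mathcal A=\alg{}(T)$ together with the modules $\modA,\modB,\modC$, each of $\CC$-dimension $m$; conciseness is a condition on this datum, and the minimal border rank hypothesis translates — through border apolarity and the description of the relevant $\OpQuot$ scheme — into the statement that the datum lies on the smoothable component, i.e.\ deforms to $m$ reduced points carrying one-dimensional modules. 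Conversely $(\mathcal A,\modA,\modB,\modC)$ reconstructs $T$ up to isomorphism, a cyclic permutation of the factors cyclically permutes the modules, and a transposition of two factors dualizes them; so Theorem~\ref{classification_theorem} becomes the problem of counting admissible module data up to module isomorphism and this $S_3$-action. Since $\dim\mathcal A\le m\le 5$, this is a finite problem.

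The enumeration I would run in two stages. First, list all possibilities for $\mathcal A$: by the structure theorem it is a finite product of local Artinian algebras, and local $\CC$-algebras of length at most $5$ (so in at most four variables) are classically classified, which yields a short explicit list. Second, for each $\mathcal A$ enumerate the $\mathcal A$-modules of $\CC$-dimension $m$ satisfying the conciseness condition, again reducing to the local factors and organizing by Hilbert function, and keep the smoothable ones. At this step I would invoke irreducibility of the Hilbert scheme of $d$ points for $d\le 7$ and of the small $\OpQuot$ schemes that occur here: because every relevant length is at most $5$, smoothability is automatic and all deformation-theoretic obstructions vanish, so the condition imposes no genuine restriction. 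Finally, collapse the list under module isomorphism and under the factor-permutation $S_3$-symmetry and tally; the counts come out to $1,2,4,11,32$ for $m=1,2,3,4,5$, with the case $m=3$ recovering the classification of~\cite{third} as a check.

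The main obstacle is the last, bookkeeping-heavy step when $m=5$: I must be certain that the enumeration of admissible $(\mathcal A,\modA,\modB,\modC)$ is complete — nothing of total dimension $5$ is overlooked — and non-redundant — the $32$ classes really are pairwise non-isomorphic even after permuting the factors — and this forces an honest, if elementary, computation of isomorphism classes of the module triples together with their behaviour under dualization. A secondary delicate point is making the correspondence of the first paragraph a true bijection on isomorphism classes: it must handle uniformly the tensors that are $1_*$-generic but not $1$-generic, for which the modules need not be free or even mutually isomorphic (such non-free examples already occur at $m=4$, e.g.\ over $\CC[x,y,z]/\maxideal^2$), and one must check that $\alg{}(T)$ together with $\modA,\modB,\modC$ records all of $T$. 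Granting these, Theorem~\ref{classification_theorem} follows by the count.
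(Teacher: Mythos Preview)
Your overall strategy---reduce to module data and enumerate---is the same as the paper's, but there is a genuine gap that would make your count wrong at $m=5$.

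The gap is the sentence ``because every relevant length is at most $5$, smoothability is automatic and all deformation-theoretic obstructions vanish, so the condition imposes no genuine restriction.'' This is false. Irreducibility of the Hilbert scheme of $d\le 7$ points concerns \emph{algebras}, not modules, and does not transfer to the Quot-scheme/tensor statement you need. Concretely, for $m=5$ there exist concise $1_A$-generic tensors satisfying Strassen's equations whose border rank is strictly greater than $5$: the module-theoretic obstruction is \emph{End-closedness}, i.e.\ that for every $i,j$ some linear form $y$ has $(x_ix_j-y)M=0$. The paper's Proposition~\ref{theorem_additional} exhibits exactly two such non-End-closed classes ($T_{1,20}$ and $T_{2,9}$), and these must be discarded. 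Without this check your tally would be $34$, not $32$. So ``keep the smoothable ones'' is not vacuous and you must actually verify End-closedness case by case (as the paper does in \S\ref{classification_local}--\S\ref{classification_concise}).

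A secondary point: you are working harder than necessary by carrying the full triple $(\mathcal{A},\modA,\modB,\modC)$. For a $1_*$-generic tensor one of the modules is cyclic, hence isomorphic to $\mathcal{A}$ itself, and then the bilinear map forces the remaining two to be $M$ and $M^{\vee}$ for a single module $M$; the tensor is exactly the multiplication tensor $\mu_M$. So the datum collapses to one concise $S$-module $M$ of degree $m$, classified up to affine coordinate change and duality (the $B\leftrightarrow C$ swap). This is the paper's framework in \S\ref{ssec:modulesAnd1Ageneric}. Your worry that the triple might not determine $T$ is justified in the $1$-degenerate case (indeed $\Tdeg{55}$ and $\Tdeg{54}$ share the same triple, see~\eqref{eq:modules}), but for $1_*$-generic tensors the single-module picture makes reconstruction immediate and the $\Sigma_3$ bookkeeping much cleaner (see \S\ref{ssec:degenerationsDictionary}).
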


In Section~\ref{degenerate} we prove that for $m \leq 4$ there are no $1$-degenerate minimal border rank tensors.

\begin{theorem}\label{theorem_degenerate}
    For $m \leq 4$, every minimal border rank tensor in $\CC^m \otimes \CC^m \otimes \CC^m$ is $1_*$-generic. 
\end{theorem}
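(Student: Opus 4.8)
The statement is immediate for $m\le 2$: a concise tensor in $\CC^2\otimes\CC^2\otimes\CC^2$ is a pencil of $2\times 2$ matrices with no common kernel, hence contains an invertible matrix, so it is $1$-generic; and $m=1$ is trivial. For $m=3$ it follows from the explicit list of \cite{third}, where each concise minimal border rank tensor is seen to be $1_*$-generic. So the real content is $m=4$, and I would argue by contradiction. Suppose $T\in\CC^4\otimes\CC^4\otimes\CC^4$ is concise, has border rank $4$, and is $1$-degenerate. Then none of the three $4$-dimensional slice spaces $T(A^*)\subseteq B\otimes C$, $T(B^*)\subseteq A\otimes C$, $T(C^*)\subseteq A\otimes B$ — viewed as spaces of $4\times4$ matrices — contains an invertible matrix, so each of them has all elements of rank $\le 3$.

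The first step is to reduce $T$ to a short explicit list of normal forms using only conciseness and $1$-degeneracy. A $4$-dimensional space of $4\times4$ matrices of rank $\le 3$ is, by the classification of spaces of matrices of bounded rank (Atkinson; Eisenbud--Harris; see also the discussion in \S\ref{prelims_tensors}), forced to be a compression space: there is a subspace $U\subseteq\CC^4$ with $\dim\big(T(A^*)\cdot U\big)$ small, or the transposed version. Conciseness of $T$ rules out common kernels and common cokernels, which constrains which compression data are admissible, and — crucially — the three slice spaces are the three flattenings of a single tensor, so their compression flags must be mutually compatible. Tracking this compatibility should pin $T$ down to finitely many candidate normal forms, each with only a few free parameters. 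This case analysis is where the bulk of the work lies.

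The second step is to eliminate each candidate using necessary conditions for border rank $4$. Here I would use the module framework of \S\ref{prelims_tensors}--\S\ref{classification} as the organizing tool: a minimal border rank tensor is a concise degeneration of $\sum_{i=1}^{4}e_i\otimes e_i\otimes e_i$, hence via border apolarity corresponds to a point of a multigraded Hilbert scheme, equivalently to a flat limit of the module data attached to four general points. For each normal form from Step~1 I would either (i) exhibit a nonvanishing Koszul flattening minor or Strassen-type equation, or a violation of the End-closed condition of Landsberg--Michałek, all of which are implied by border rank $4$; or (ii) show that the associated (generally non-unital) module structure is not a flat limit of the split one, i.e. does not lie on the principal component of the relevant Hilbert scheme; in fact one expects several candidates to have border rank exactly $5$. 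Since the list is finite, finitely many such checks conclude the proof. A module-theoretic shortcut worth attempting is to show directly that the limiting module must admit a generator, which translates back into the existence of a binding functional and hence $1_*$-genericity — a clean contradiction.

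The main obstacle is the interaction, in Step~1, between conciseness and three simultaneous compression-space structures on $\CC^4$: producing a provably \emph{complete} finite list of $1$-degenerate candidates is delicate, because the compatibility of the three flattening flags is easy to under- or over-count. The secondary difficulty, in Step~2, is that the candidates of border rank $5$ must be excluded with the sharp obstructions (Koszul flattenings, End-closedness, or border apolarity on the Hilbert scheme) rather than the elementary ones; getting a uniformly efficient argument across all candidates, rather than an ad hoc computation per case, is the delicate point. Finally, for a fully self-contained treatment one should either quote the $m=3$ case carefully from \cite{third} or rederive it inside the module framework.
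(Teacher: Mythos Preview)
Your strategy is genuinely different from the paper's and, as you yourself flag, is a sketch rather than a proof. The paper does \emph{not} classify $1$-degenerate candidates via compression spaces and then eliminate them with border-rank obstructions. Instead it uses the $111$-algebra machinery of~\cite{concise}: a concise minimal border rank tensor $T$ with $m\le 5$ is $111$-sharp, so it corresponds to a surjective non-degenerate $S$-bilinear map $\varphi\colon M\otimes_S N\to P$ of concise degree-$m$ modules, and $T$ is $1$-degenerate exactly when $M,N$ are non-cyclic and $P$ is non-cocyclic. The proof then runs entirely on the module side: for $m\le 4$ one shows (using the classification in~\S\ref{classification_local}) that $\dim_\kk M_\maxideal=\dim_\kk N_\maxideal=\dim_\kk P_\maxideal$ at every maximal ideal, and that locally every degree $\le 3$ module is cyclic or cocyclic while the only non-cyclic, non-cocyclic degree-$4$ modules are $N_7,N_8$, both $2$-generated with $2$-dimensional $P/\maxideal P$. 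A short Nakayama argument then shows that if all three of $M,N,P$ are $2$-generated, the induced map $\overline\varphi$ on $\maxideal$-quotients has image at most $1$-dimensional, contradicting surjectivity. This is a few lines once the module classification is in place; no compression-space analysis, no Koszul flattenings, and no border apolarity are needed.

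Your Step~1 has a real gap: the assertion that a $4$-dimensional space of $4\times 4$ matrices of rank $\le 3$ must be a compression space is not a direct consequence of Atkinson's theorem---the bound $\dim V\le r+1$ for primitive spaces is exactly $4$ here, so primitive spaces are not excluded a priori, and you would need to rule them out (or incorporate them) before the ``finite list'' emerges. Your Step~2 is then a promissory note on top of an unestablished list. The ``module-theoretic shortcut'' you mention at the end---show the limiting structure admits a generator---is in spirit exactly what the paper does, but via $\varphi$ and Nakayama rather than via a single limiting module; if you had pursued that line you would have converged on the paper's argument. Your approach is closer to the route in~\cite{friedland_4}, which the paper acknowledges as an alternative (and harder to extract) source for the $m\le 4$ statement; the $111$-algebra method buys a uniform, short, and self-contained argument that also scales to the $m=5$ analysis elsewhere in the paper.
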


For $m=5$, the classification of $1$-degenerate minimal border rank tensors is
given in \cite[Theorem 1.7]{concise}. Together with
Theorem~\ref{classification_theorem} and Theorem~\ref{theorem_degenerate},
this
yields the classification from Theorem \ref{theorem_final}, up to
isomorphism and permutations. The classification up to isomorphism, but not
allowing permutations, is done
in~\S\ref{ssec:upToIsomorphism}.

In Section~\ref{sec:degenerations} we determine the possible degenerations,
allowing for permutations. We
found the result quite challenging to obtain. First, it was necessary to construct $66$
minimal degenerations, some of them subtle. Second, and much importantly,
after applying standard invariants, we were still left with showing
nonexistence of $20$ minimal degenerations. To rule them out, we apply subtle module
invariants, the theory of 111-algebras (see~\S\ref{ssec:111algebras}). In two
cases we needed to resort to \BBname{} decompositions, which were not applied
to the tensor setup before.
\begin{theorem}\label{degenerations_theorem}
    The diagram of degenerations for $m=5$ is given in Diagram~\ref{sec:diagram}. There are $66$ minimal degenerations. All of them
    are presented explicitly in the attached \emph{Macaulay2} package, see
    Appendix~\ref{sec:code}.
\end{theorem}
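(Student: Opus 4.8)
The plan is to pin down the Hasse diagram of the degeneration partial order on the finite set of isomorphism classes of minimal border rank tensors in $\CC^5\otimes\CC^5\otimes\CC^5$ (finitely many by Theorem~\ref{theorem_final}), working up to permutations of the three factors. Here $T\degtopure T'$ means that $T'$ lies in the closure of the orbit of $T$ under $\GL_5^{\times 3}$ together with the $S_3$-action permuting factors; this is a partial order, and it suffices to determine its covering relations. The proof has two halves: exhibiting enough degenerations to bound the order from below, and ruling out enough to bound it from above. A final combinatorial check on the resulting finite poset — a transitive reduction, carried out in the \emph{Macaulay2} package — shows that the $66$ relations produced are exactly the covers, giving Diagram~\ref{sec:diagram}.

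For the lower bound, for each claimed minimal degeneration $T\degtopure T'$ I produce an explicit one-parameter family: a morphism $t\mapsto T(t)$ over $\kk[t]$ or $\kk[t,t^{-1}]$ with $T(t)\cong T$ for $t\neq 0$ and $\lim_{t\to 0}T(t)=T'$, frequently realized by a $\Gmult$-action conjugated by a fixed change of coordinates. All such families are recorded and checked in the attached package. Many are routine, but a handful — notably those landing on the most degenerate tensors — are delicate, since the naive limit of the obvious path can have the wrong border rank or fail to be concise, and the path must be corrected. Transitivity of $\degtopure$ then propagates these relations throughout the poset.

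For the upper bound one must prove $T\notdegtopure T'$ for every pair not forced by the relations above. The first line of attack is the collection of invariants that are (semi)continuous under degeneration, hence monotone along $\degtopure$: the border rank (constant, $=5$), the rank and corank patterns of the three contraction maps $\CC^5\to\CC^5\otimes\CC^5$, dimensions of associated annihilator and $\End$-type spaces, Hilbert functions of the commutative-algebra and module structures $\modA,\modB,\modC$ arising from $1_*$-genericity, and the Hilbert function of the 111-algebra $\alg{T}$. These dispatch most non-edges; what survives is a residual list of $20$ pairs on which every one of these invariants agrees. For these I use finer module information: a tensor degeneration induces a degeneration of the associated modules over the polynomial ring, so obstructions at the level of minimal free resolutions, $\Hom$- and $\OpQuot$-type deformation data, or the local structure of $\alg{\bullet}$ at the two points rule out $T\degtopure T'$.

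The main obstacle is the last sliver of non-degenerations — two pairs on which even the refined module and $\alg{\bullet}$ invariants coincide, so that no single monotone invariant can separate them. Here one is forced to study the geometry of $\overline{\GL^{\times 3}\cdot T}$ locally near $T'$. The tool is a \BBname{} decomposition: choosing a suitable $\Gmult$-action fixing $T'$, I decompose a neighborhood inside an ambient parameter space (a Quot scheme, via the module description) into attracting cells, identify the cell containing $T$ and the induced tangent directions, and show that the orbit closure of $T$ cannot reach the cell of $T'$ — equivalently, that no formal arc out of the orbit of $T$ specializes to $T'$. Such decompositions have not, to our knowledge, been applied to tensor degenerations before, and they settle the two remaining cases. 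With all $66$ degenerations constructed and every other non-degeneration excluded, the transitive-reduction step confirms the diagram.
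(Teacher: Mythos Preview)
Your outline matches the paper's strategy: explicit one-parameter families for the $66$ degenerations (verified in the package), a battery of semicontinuous invariants (stabilizer dimension, $1_*$-genericity, support splitting via Proposition~\ref{ref:degenerationSplitting:prop}, submodule degenerations via Proposition~\ref{ref:submodules:prop}, low-rank-locus dimensions, and the 111-algebra coordinate modules) for the bulk of the non-degenerations, and the \BBname{} decomposition on the Quot scheme for the two hardest cases $T_{1,4}\degto T_{1,12}$ and $T_{1,5}\degto T_{1,13}$. The paper's BB step is phrased a bit more concretely than yours --- it uses the retraction $p\colon U\to U^{\Gmult}$ to reduce to showing that no \emph{associated-graded} degeneration exists, then computes the associated graded of $M_{1,4}^{\vee}$ (resp.\ $M_{1,5}^{\vee}$) directly and finds it is $M_{1,15}$ rather than $M_{1,12}$ (resp.\ $M_{1,13}$) --- but the overall plan is the same.
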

The diagram yields an interesting result on indecomposable tensors. Recall
that $T\in \CC^m \otimes \CC^m\otimes \CC^m$ is a \emph{direct sum} if there
are direct sum decompositions $A'\oplus A'' = \CC^m$, $B'\oplus B'' = \CC^m$,
$C' \oplus C'' = \CC^m$ and nonzero tensors $T'\in A'\otimes B'\otimes C'$,
$T''\in A''\otimes B''\otimes C''$ such that $T = T' + T''$. We say that $T$
is \emph{indecomposable} if it is not a direct sum. In
Diagram~\ref{sec:diagram} the indecomposable tensors are marked $T_{1, *}$ and
$\Tdeg{58}, \ldots ,\Tdeg{54}$. We have the following result.
\begin{corollary}\label{ref:localscheme:cor}
    For $m\leq 5$, every indecomposable tensor of minimal border rank is a degeneration of the multiplication tensor
    of the algebra $\CC[x]/x^m$. (For $m=5$, on Diagram~\ref{sec:diagram} this
    tensor is denoted $T_{1,1}$.)
\end{corollary}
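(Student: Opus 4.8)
The plan is to deduce the corollary from the classification together with the degeneration diagram; beyond Theorems~\ref{theorem_final}, \ref{theorem_degenerate} and~\ref{degenerations_theorem} there is no new content to establish, so the ``proof'' is really an extraction. First I would fix $m\le 5$ and write down the finite list of indecomposable minimal border rank tensors. For $m=5$ the statement already identifies them: they are the tensors $T_{1,*}$ together with $\Tdeg{58},\dots,\Tdeg{54}$, and $T_{1,1}$ is the multiplication tensor of $\CC[x]/x^5$. For $m\le 4$ I would instead invoke Theorem~\ref{theorem_degenerate} (everything is $1_*$-generic) and the explicit list behind Theorem~\ref{classification_theorem} to enumerate the indecomposable ones: for $m\le 2$ this is only the multiplication tensor of $\CC[x]/x^m$ itself, for $m=3,4$ a handful more, all of them structure tensors of local algebras of dimension $m$, together with at most a few non-$1$-generic module tensors if any occur in dimension $4$. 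In every case the multiplication tensor of $\CC[x]/x^m$ is on the list, and the goal is to show it degenerates to each of the others.

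For $m=5$ this is a finite check on Diagram~\ref{sec:diagram}. Since $\degto$ is a transitive preorder (a degeneration order, with permutations of the three tensor factors allowed), it suffices to exhibit, for each indecomposable vertex $\Tdeg{i}$, a directed path from $T_{1,1}$ to it; such a path is allowed to run through decomposable vertices, which does no harm. Concretely, I would simply verify that $T_{1,1}$ lies above every indecomposable vertex of the diagram. For $m\le 4$ I would carry out the analogous, but much easier, finite verification by hand: for each indecomposable $T$ produce a chain $T_{\CC[x]/x^m}\degto\cdots\degto T$. In most cases this is transparent, because the relevant local algebra is a flat limit over a line of algebras isomorphic to $\CC[x]/x^m$, and trivializing such a family as a family of vector spaces turns it into a curve of tensors lying in the single $\GL\times\GL\times\GL$-orbit of the multiplication tensor of $\CC[x]/x^m$ and degenerating to $T$; any tensors not of this form are few and are handled directly.

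Where is the difficulty? Essentially nowhere in this argument, precisely because the hard work is already packaged in Section~\ref{sec:degenerations}: it is exactly the construction of the $66$ minimal degenerations and the exclusion of the $20$ spurious ones that make Diagram~\ref{sec:diagram} correct, and hence make the required paths exist. The one point I would take care to state is that ``$T_{1,1}$ is a maximal element among the indecomposable tensors'' does \emph{not} by itself yield the corollary: being indecomposable is not a closed condition — a decomposable tensor can degenerate to an indecomposable one, for instance the multiplication tensor of $\CC\times\CC\times\CC\times\CC\times\CC$ degenerates to $T_{1,1}$ as five points collide along a line — so a priori some $\Tdeg{i}$ could be reachable from $T_{1,1}$ only through decomposable intermediate tensors, and one genuinely needs the full diagram rather than a maximality argument. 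In this sense the corollary is just the combinatorial shadow of Theorem~\ref{degenerations_theorem}.
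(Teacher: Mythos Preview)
Your proposal is correct and is exactly the paper's approach: the paper does not give a separate proof of the corollary but simply states that ``an analysis of Diagram~\ref{sec:diagram} yields also Corollary~\ref{ref:localscheme:cor}'', which is precisely the extraction you describe. Your additional remark that maximality of $T_{1,1}$ among indecomposables would not suffice on its own is a nice clarification, though not something the paper itself addresses.
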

The above result is natural from the point of
irreversibility, as the barriers for matrix
multiplication~\cite[p.3]{Christandl__barriers} obtained for $T_{1,1}$ are
much weaker than those for the big Coppersmith-Winograd tensor (tensor
$T_{1,8}$ in our diagram). For $1_A$-generic tensors,
Corollary~\ref{ref:localscheme:cor} can be rephrased in algebro-geometric
terms by saying that every
smoothable local module of degree $m\leq 5$ is in the closure of the curvilinear
component in the punctual Quot scheme.

The classification of $1_A$-generic minimal border rank tensors up to
isomorphism (without allowing permutations) is
equivalent to the classification of $m$-dimensional subspaces of $\End(\kk^m)$
which are limits of diagonalizable subspaces. We provide this one as well.
\begin{theorem}\label{theorem_subspaces}
    Consider $m$-dimensional subspaces of $\End(\kk^m)$ which are limits of
    diagonalizable subspaces.  Up to isomorphism, there are exactly
    $1,2,5,14,48$ such subspaces for $m = 1,2,3,4,5$. A list of isomorphism
    types of subspaces is given in Subsection \ref{classification_subspaces}.
    Equivalently, consider degree $m$ modules over the polynomial ring
    $\CC[x_1, \ldots ,x_{m-1}]$. Up to affine coordinate changes
    (see~\S\ref{ssec:modulesOne}), there are
    exactly $1,2,5,14,48$ isomorphism classes of such modules.
\end{theorem}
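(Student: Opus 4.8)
The plan is to reduce the statement to a dictionary that is essentially already in place and then to a counting step built on Theorem~\ref{classification_theorem}.

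First I would recall the equivalence of the three descriptions. Given a $1_A$-generic tensor $T\in\CC^m\otimes\CC^m\otimes\CC^m$, pick a generic $\alpha\in A^*$; the contraction $T(\alpha)\in B\otimes C$ is then an isomorphism, and using it to identify $B\otimes C$ with $\End(\CC^m)$ produces an injective linear map $A^*\to\End(\CC^m)$ (injective because $T$ is concise) whose image $E$ is an $m$-dimensional subspace containing $\id=T(\alpha)$. Isomorphic tensors give $\GL_m$-conjugate subspaces and conversely, and a different choice of generic $\alpha$ changes $E$ only by $\GL_m$-conjugation, so $1_A$-generic concise tensors up to isomorphism correspond to $m$-dimensional subspaces of $\End(\CC^m)$ through $\id$, up to conjugation. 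The characterization of minimal border rank for $1_A$-generic tensors recalled in \S\ref{prelims_tensors} says precisely that $T$ has minimal border rank if and only if $E$ lies in the closure of the locus of diagonalizable subspaces. Finally, quotienting $E$ by $\CC\id$ and letting the images of a basis of $E/\CC\id$ act as the variables $x_1,\dots,x_{m-1}$ makes $\CC^m$ a degree $m$ module over $\CC[x_1,\dots,x_{m-1}]$ (these are exactly the modules considered in \S\ref{ssec:modulesOne}); module isomorphisms correspond to $\GL_m$-conjugation, while the choice of a complement to $\CC\id$ together with a linear reparametrization corresponds to an affine coordinate change. Under this correspondence "limit of diagonalizable subspaces'' matches "smoothable module''; that for $m\le 5$ every module arising here is smoothable is part of what Section~\ref{classification} establishes, so all three descriptions coincide.

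For the numbers, I would observe that every $1_A$-generic tensor is $1_*$-generic and, conversely, every $1_*$-generic tensor becomes $1_A$-generic after some permutation of factors; hence the $1_A$-generic minimal border rank tensors up to isomorphism are obtained by taking each of the $1,2,4,11,32$ isomorphism-and-permutation classes $[T]$ from Theorem~\ref{classification_theorem} and counting its inequivalent $1_A$-generic representatives. Fix a representative $T$, let $\mathrm{Gen}(T)\subseteq\{1,2,3\}$ be the set of factors in which $T$ is generic, and let $H\le S_3$ be the stabilizer $\{h:h\cdot T\cong T\}$; since being generic in a given factor is an isomorphism invariant, $H$ preserves $\mathrm{Gen}(T)$. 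A short coset argument shows that the isomorphism classes among $\{\sigma\cdot T:\sigma\in S_3\}$ are indexed by the cosets $\sigma H\in S_3/H$, and that $\sigma\cdot T$ is $1_A$-generic exactly when $\sigma^{-1}(1)\in\mathrm{Gen}(T)$, a condition that descends to cosets because $H$ preserves $\mathrm{Gen}(T)$; so $[T]$ contributes $\#\{\sigma H\in S_3/H:\sigma^{-1}(1)\in\mathrm{Gen}(T)\}$ classes. Running through the explicit list of \S\ref{classification_tensors}, recording $\mathrm{Gen}(T)$ and $H$ for each class and summing these coset counts, gives $1,2,5,14,48$; equivalently, one reads off the length of the list of modules/subspaces produced in Section~\ref{classification}, which is what \S\ref{classification_subspaces} records.

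The dictionary is routine and already available; the real work, and the main obstacle, is the $m=5$ bookkeeping. One must correctly determine, for all $32$ classes, which factors are generic (several tensors are generic in two but not all three factors, while the $1$-generic ones are generic in all three) and the precise permutation stabilizer (distinguishing, for instance, fully symmetric structure tensors of commutative algebras from tensors with only a transposition symmetry or with none), since an error in $\mathrm{Gen}(T)$ or in $H$ changes the coset count. This is exactly the data encoded in the explicit classification, so the cleanest writeup is to present the list in \S\ref{classification_subspaces} and verify it against Theorem~\ref{classification_theorem} using the coset formula above, rather than to re-derive each entry by hand.
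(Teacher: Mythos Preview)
Your approach is correct and follows essentially the same strategy as the paper: establish the dictionary between subspaces, $1_A$-generic minimal border rank tensors, and modules, then refine the count of Theorem~\ref{classification_theorem} from isomorphism-up-to-permutation to plain isomorphism. The paper's execution is, however, more direct than your general coset formula. Rather than tracking the full stabilizer $H\le\Sigma_3$ and the set $\mathrm{Gen}(T)$ for each of the $32$ classes, the paper observes (using the facts recorded in \S\ref{ssec:degenerationsDictionary}) that for a $1_A$-generic tensor the only permutation that can yield a non-isomorphic $1_A$-generic representative is the swap $B\leftrightarrow C$, which on modules is exactly duality $M\mapsto M^\vee$: if $T$ is also $1_B$-generic it comes from a commutative algebra, so $(AB)\cdot T\cong T$; if $T$ is $1_A,1_B,1_C$-generic it is Gorenstein and symmetric. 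Hence the whole count collapses to asking, for each of the $32$ modules, whether it is self-dual; the non-self-dual ones contribute two subspaces and the self-dual ones one. For local cyclic $M$ the paper uses the criterion $\dim_\kk M/\maxideal M=\dim_\kk(0:\maxideal)_M$, and for the non-cyclic cases reads self-duality off the classification already done in \S\ref{classification_local}--\S\ref{classification_concise}. This produces the $16$ extra transposes listed at the end of \S\ref{classification_subspaces} without ever computing $H$ case by case. Your coset method gives the same numbers and is perfectly valid, but the reduction to module self-duality is the simpler bookkeeping device and is what the paper actually uses.
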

Our methods would likely provide a graph of degenerations also in
this setup, but we refrain from this due to space considerations.
We point out that if we do not allow affine coordinate changes, then there are
infinitely many isomorphism classes of modules and classification is deemed
impossible, see for example~\cite{Moschetti_Ricolfi, Mroz_Zwara}.

\subsection{Methods}\label{ssec:methods}

\newcommand{\Espace}[1]{\mathcal{E}_{\alpha}(#1)}

We refer the reader to~\S\ref{prelims_tensors} for definitions of some of the
notions used below. We let $A$, $B$, $C$ be $m$-dimensional vector spaces and
consider tensors in $A\otimes B\otimes C$.

\subsubsection{Modules} To obtain Theorem~\ref{classification_theorem}, we first classify concise
$\CC[x_1, \dots, x_{m-1}]$-modules of dimension $m \leq 5$, extending the result on
algebras by Poonen~\cite{poonen}. To apply it, we use
 the correspondence between modules, spaces of commuting matrices and
 $1_A$-generic tensors, see~\cite{abelian, concise}, which we recall now. A
 tensor $T$ is \emph{$1_A$-generic} if the image
 of the map $T_A \colon A^\vee \to B \otimes C$ contains an element of maximal
 rank. Any $1_*$-generic tensor becomes $1_A$-generic after permuting factors.

Consider a tensor $T \in A \otimes B \otimes C$ which is $1_A$-generic and has
minimal border rank. Pick an element $\alpha \in A^\vee$ such that
$T_A(\alpha)$ has full rank. Interpret $B \otimes C$ as $\Hom(B^\vee, C)$ and define
\[
    \Espace{T} := T_A(A^\vee) T_A(\alpha)^{-1} \subset \End(C).
\]
The subspace $\Espace{T}$ contains the identity. The tensor $T$ is concise, so
$\Espace{T}$ is $m$-dimensional. Since $T$ has minimal border rank, the space
$\Espace{T}$ consists of pairwise commuting endomorphisms and is closed under
composition of endomorphisms. Therefore
$\Espace{T}$ is a commutative subalgebra of the (noncommutative) algebra $\End(C)$.

Let $S$ denote the polynomial ring $\CC[x_1, \dots, x_{m-1}]$. Choose a basis
$e_0 = \id_C, e_1, \dots, e_{m-1}$~of~$\Espace{T}$. We define an $S$-module
$\modC$ associated to $T$ to be the vector space $C$ with an action of
$S$ given by $x_j\cdot c := e_j(c)$. The module $\modC$ is concise and \emph{End-closed}, i.e., it has the property that for each $f \in S$ there is a linear form $\ell \in S_{\leq 1}$ such that $f - \ell$ annihilates $\modC$.

This procedure can be reversed. Let $M$ be an $S$-module of degree $m$. The multiplication map 
$S_{\leq 1} \otimes M \to M$ gives the tensor $\mu_{M} \in S_{\leq 1}^\vee \otimes M^\vee \otimes M$. 
If $M = \modC$ then $\mu_M$ is isomorphic to $T$.

\begin{example}\label{ex0}
    Let $m = 4$ and fix bases $(a_i)_i, (b_i)_i, (c_i)_i$ of $A, B, C$. Consider the tensor
    \[
        T = a_1 \otimes (b_1 \otimes c_1 + \dots + b_4 \otimes c_4) + a_2 \otimes b_1 \otimes c_2 + a_3 \otimes (b_1 \otimes c_3 + b_3 \otimes c_4) + a_4 \otimes b_1 \otimes c_4.
    \]
    The element $\alpha := a_1^*$ gives the tensor $T_A(\alpha) = b_1 \otimes
    c_1 + \dots + b_4 \otimes c_4$, which corresponds to the identity matrix.
    This shows that $T$ is $1_A$-generic. It is also true that $T$ has minimal
    border rank (it is the tensor $U_{2,4}$ from Appendix~\ref{sec:smallm}), so we assign to $T$ the subspace
    \[
        \Espace{T} = \spann{
        \begin{bmatrix}
            1 & 0 & 0 & 0 \\
            0 & 1 & 0 & 0 \\
            0 & 0 & 1 & 0 \\
            0 & 0 & 0 & 1 \\
        \end{bmatrix},
        \begin{bmatrix}
            0 & 0 & 0 & 0 \\
            1 & 0 & 0 & 0 \\
            0 & 0 & 0 & 0 \\
            0 & 0 & 0 & 0 \\
        \end{bmatrix},
        \begin{bmatrix}
            0 & 0 & 0 & 0 \\
            0 & 0 & 0 & 0 \\
            1 & 0 & 0 & 0 \\
            0 & 0 & 1 & 0 \\
        \end{bmatrix},
        \begin{bmatrix}
            0 & 0 & 0 & 0 \\
            0 & 0 & 0 & 0 \\
            0 & 0 & 0 & 0 \\
            1 & 0 & 0 & 0 \\
        \end{bmatrix}}.
    \]
    As expected, it is 4-dimensional, consists of pairwise commuting matrices and is closed under multiplication.
    Denote the matrices spanning $\Espace{T}$ by $e_0 = \id_C, e_1, e_2, e_3$.
    The underlying vector space of $\modC$ is $C$ and the action of
    $x_1^{a_1} x_2^{a_2} x_3^{a_3}\in S$ on a vector $c \in C$ is given by
    $x_1^{a_1} x_2^{a_2} x_3^{a_3} \cdot c := e_1^{a_1} e_2^{a_2}
    e_3^{a_3}(c)$. For another description of this module see
    Example~\ref{ex:cyclic_not_cocyclic}, Example~\ref{ex1} and Example~\ref{example_tensor}.
\end{example}

The above transformations identify $1_A$-generic minimal border rank tensors
with concise End-closed modules (up to isomorphisms). See Subsection
\ref{prelims_tensors} for more details. The dictionary above is also very
useful to disprove existence of degenerations in $1_A$-generic case,
see Subsection~\ref{ssec:degenerationsPrelims}.

\begin{remark}\label{remark_generalization}
    It is likely that the classification of $1_*$-generic minimal border
    rank tensors can be extended to the case $m = 6$ using our methods.
    Also for $m=6$, Poonen's~\cite{poonen} classification is finite,
    while it becomes infinite for $m=7$.
\end{remark}

\subsubsection{111-algebras}\label{ssec:111algebras}
The proof of Theorem~\ref{theorem_degenerate} and a part of
Theorem~\ref{degenerations_theorem} utilize the correspondence between concise
$111$-abundant tensors and surjective bilinear non-degenerate maps between
concise modules. This correspondence is based on the $111$-algebra,
introduced in \cite{concise}. Below we outline it.

Let $T$ be a concise tensor in $A \otimes B \otimes C$. Let $\alg{T}$ denote the subset of $\End A \times \End B \times \End C$ consisting of triples $(X, Y, Z)$ such that
\[
    (X \otimes \mathrm{id} \otimes \mathrm{id})(T)
    = (\mathrm{id} \otimes Y \otimes \mathrm{id})(T)
    = (\mathrm{id} \otimes \mathrm{id} \otimes Z)(T).
\]
The set $\alg{T}$ is called the \emph{111-algebra} of $T$. It is a commutative unital subalgebra of $\End A \times \End B \times \End C$, see \cite[Theorem 1.11]{concise}.
The tensor $T$ is called \emph{111-abundant} if $\dim_\CC \alg{T} \geq m$. In particular all minimal border rank tensors are $111$-abundant.

Let $T$ be a concise $111$-abundant tensor. Then there exist an associative commutative unital $\CC$-algebra $\mathcal{A}$ of rank at least $m$, concise $\mathcal{A}$-modules $M, N, P$ of degree $m$ and a surjective bilinear non-degenerate map of $\mathcal{A}$-modules
$
    \varphi \colon M \times N \to P
$
such that $T$ corresponds to the composition of the linear maps $M \otimes N \to M \otimes_{\mathcal{A}} N \to P$. Moreover each tensor coming from a map as above is concise and 111-abundant. For proofs of this characterisation see \cite[Theorem 5.5]{concise}.

The map $\varphi$ can be found explicitly. The algebra $\alg{T}$ projects onto each of $\End A ,\End B, \End C$. It gives each of the spaces $A, B, C$ a structure of an $\alg{T}$-module, denoted by $\modA, \modB, \modC$. The linear map $T_C^{\top} \colon A^\vee \otimes B^\vee \to C$ factors through the linear map $A^\vee \otimes B^\vee \to \modA^\vee \otimes_{\alg{T}} \modB^\vee$ and induces a map $\varphi \colon \modA^\vee \otimes_{\alg{T}} \modB^\vee \to \modC$, which is an $\alg{T}$-module homomorphism corresponding to $T$.

The tensor $T$ is $1_*$-generic when at least one of $M, N, P^\vee$ is cyclic,
see \cite[Theorem 5.3]{concise}.
To prove Theorem~\ref{theorem_degenerate}, we use the classification of concise
$S$-modules of degree $m\leq 4$ and show that there are no suitable maps
$\varphi$
between non-cyclic ones.

\subsection{Previous work}\label{previous}

The classification of minimal border rank tensors is motivated by algebraic complexity theory and classical algebraic geometry. Such tensors are essential building blocks used to prove upper bounds on the exponent of matrix multiplication via the Strassen’s laser method. They are also closely related to study of secant varieties in algebraic geometry, as they form a dense open subset of the cone over the $m$-th secant variety of Segre variety $\hat{\sigma}_m(Seg(\PP_\CC^{m-1} \times \PP_\CC^{m-1} \times \PP_\CC^{m-1}))$.

The problem of classification of tensors is hard already in small dimensions.
Only the tensors of border rank at most three are fully classified, see
\cite{third}. The minimal border rank tensors for $m = 4$ are well understood
in terms of equations (but not isomorphism types). The variety of these tensors is described
as the zero set of explicit polynomial equations in \cite{friedland_4}. A
refined set of equations, which conjecturally generates the ideal,
is obtained in~\cite{bates-oeding}, together with numerical evidence.
Defining equations for the set of tensors of minimal border rank for $m = 5$
and the set of minimal border rank $1_*$-generic tensors for $m = 5, 6$ are
described in \cite{concise}. They were obtained via introducing the
111-algebra, which was  motivated by the 111-space, introduced in
\cite{buczynski}. The result that there are no $1$-degenerate minimal border
rank tensors for $m \leq 4$ can be extracted from \cite[Section
3]{friedland_4}, although it is not stated explicitly there and the extraction
is difficult. \emph{Symmetric} tensors of symmetric (or Waring) border rank four are much more
understood, see~\cite{Ballico_Bernardi, Landsberg_Teitler}, however it is
important to remember that a priori the symmetric border rank and border rank of
a symmetric tensor might differ (this is the border version of Comon's
conjecture).

The classification of $1_*$-generic minimal border rank tensors for $m=5$ was
obtained in~\cite[Subsection~6.4]{abelian}, which relies on the
classification of nilpotent commutative subalgebras of matrices obtained in
\cite[Chapter~3.3]{perestanovochnye} via a long explicit calculation.
Landsberg and Micha{\l}ek
manually check that nineteen of the resulting tensors have minimal border rank
and one of them does not have minimal border rank.

In our classification, these 20 tensors correspond to the ones that come from
local modules. Our result agrees with \cite{perestanovochnye}, while
there are some inaccuracies in the result of \cite[Subsection 6.4]{abelian},
which we discuss now. First, the subalgebra corresponding to the tensor
$T_{N_{6,8}}$ from \cite{abelian} is not commutative. It appears that this is
due to a typo introduced in \cite{abelian}. Second, the numbering of tensors $T_{N_{16}}, T_{N_{17}}$ is switched with respect to the numbering from \cite{perestanovochnye}. The tensors $T_{N_{15}}, T_{N_{17}}^\vee$ are isomorphic. The tensors corresponding to $T_1, T_4$ from our classification are missing. The correspondence between the three classifications is summarised in the following tables.

\[
\begin{array}{c|ccccccccc}
    \text{this paper} & T_1 & T_2 & T_3 & T_4 & T_5 & T_6 & T_7 & T_8 & T_9 \\
    \hline
    \text{\cite{abelian}} & - & T_{N_{15}}, T_{N_{17}}^\vee & T_{N_{16}} & - & T_{N_{7,9}}^\vee & T_{N_{10,12}} & T_{N_{11,13}} & T_{N_{14}} & T_{N_{1,4}} \\
    \hline
    \text{\cite{perestanovochnye}} & N_{18} & N_{15}, N_{16}^\vee & N_{17} & N_6^\vee, N_8 & N_7^\vee, N_9 & N_{10}, N_{12}^\vee & N_{11}, N_{13}^\vee & N_{14} & N_1, N_4^\vee \\
\end{array}
\]

\[
\begin{array}{c|ccccccccccc}
    \text{this paper} & T_{10} & T_{11} & T_{12} & T_{13} & T_{14} & T_{15} & T_{16} & T_{17} & T_{18} & T_{19} & T_{20}\\
    \hline
    \text{\cite{abelian}} & T_9 & T_{21} & T_{22} & T_{19} & T_{23} & T_{20} & S_1 & S_2 & S_3 & S_4 & T_{Leit, 5}\\
\end{array}
\]

In contrast with~\cite{perestanovochnye, abelian}, we establish the
classification over any algebraically closed field of characteristic different
from two (although in introduction we assume for sake of simplicity that the
base field is $\CC$). We feel that our approach is self-contained and uses
more conceptual techniques. The only exterior classification that we use is
the classification of commutative rank $m$ algebras over an algebraically
closed field for $m \leq 5$ from Poonen's~\cite{poonen}. Poonen's~paper is short and
self-contained and additionally its results can be recovered using apolarity,
as we illustrate in Example~\ref{ex:poonen}.

Our argument uses general results, such as the
correspondence between tensors and modules and the result from \cite[Theorem
1.4]{concise}, and is conducted mostly in the language of commutative algebra.
Thus the method can in the future yield results for higher $m$ as well, see
Remark \ref{remark_generalization}.

For degenerations of tensors, very important results are contained
in~\cite{Blaser_Lysikov, abelian, concise}. Numerical tools can be
successfully applied to heuristically obtain degenerations and bounds on
ranks, see for example~\cite{Conner_Huang_Landsberg,
Conner_Gesmundo_Landsberg_Ventura}, however transforming this into a symbolic
degeneration is still challenging.

\subsection{Acknowledgements}

The authors are very grateful to Jarosław Buczyński, Austin Conner, Joseph M.~Landsberg, and
Mateusz Micha{\l}ek
for their helpful suggestions to improve earlier drafts, and especially to
Joseph M.~Landsberg for forcing them to deal with the degeneration graph. We
thank an anonymous referee for a thorough and very helpful review.

\section{Preliminaries}
\subsection{Tensors}\label{prelims_tensors}
Let $\kk$ be an algebraically closed field with $\charr \kk \neq 2$ and let $A, B, C$ be copies of $\kk^m$. We will be interested in tensors $T \in A \otimes B \otimes C$. 
We define two tensors $T, T'$ to be \emph{isomorphic up to permutations} if there exists a
permutation $\sigma \in \Sigma_3$ and a triple of linear automorphisms $(g_A,
g_B, g_C) \in \GL(A)\times \GL(B)\times \GL(C)$ such that applying $g_A, g_B,
g_C$ on the corresponding factors of $T$ and the permuting the factors by
$\sigma$ yields $T'$. We say that $T, T'$ are \emph{isomorphic} if a triple
above exists with $\sigma$ the identity permutation. Two tensors are
isomorphic up to permutations if and only if
they lie in the same orbit of the action of $(\GL(A)\times \GL(B)\times
\GL(C)) \rtimes \Sigma_3$ on $A\otimes B\otimes C$.

A tensor $T$ induces linear maps $T_A\colon A^\vee \to B \otimes C$,
$T_B\colon B^\vee \to A \otimes C$ and $T_C\colon C^\vee \to A \otimes B$.
We say that $T$ is \emph{$A$-concise} if the map $T_A$ is injective, and $T$ is \emph{concise} if it is simultaneously $A, B$ and $C$-concise.
A
tensor $T$ is \emph{$1_A$-generic} if the image of $T_A$ contains an element
of rank $m$ and \emph{$1_*$-generic} if it is at least one of $1_A$-, $1_B-$
or $1_C$-generic. If $T$ is $1_A$-generic, then it is $B$ and $C$-concise. Tensors which are not $1_*$-generic are called
\emph{1-degenerate}. For a $1_A$-generic tensor $T$, pick an element $\alpha \in A^\vee$ such that
$T_A(\alpha)$ has full rank. Interpret $B \otimes C$ as $\Hom(B^\vee, C)$ and define
\begin{equation}\label{eq:Espace}
    \Espace{T} := T_A(A^\vee) T_A(\alpha)^{-1} \subset \End(C).
\end{equation}
In this setup, we say that $T$ \emph{satisfies the
$A$-Strassen's equations} if $\Espace{T}$ consists of pairwise commuting
matrices. We say that $T$ is \emph{$A$-End-closed} if the
space $\Espace{T}$ is closed under the composition of endomorphisms.
Minimal border rank tensors are automatically End-closed and satisfy
Strassen's equations.
While it is unimportant for the current article, both conditions can be
expressed in terms of equations on coefficients of $T$, see~\cite{strassen}
and~\cite[\S2.1, \S2.4]{abelian}.

\subsection{Modules I}\label{ssec:modulesOne}

Let $S$ denote the polynomial ring $\kk[x_1, \dots, x_{m-1}]$.
An $S$-module $M$ has \emph{degree $m$} if $\dim_\kk M = m$.
For an $S$-module $M$ and an algebra automorphism $\varphi\colon S\to S$ we define
$M^{\varphi}$ to be the $S$-module with the action given by $f\cdot n :=
\varphi(f)\cdot n$ for every $f\in S$. An automorphism $\varphi\colon S\to S$
is an \emph{affine change of variables} if for every $i$ the image
$\varphi(x_i)$ is a $\kk$-linear combination of $1, x_1, \ldots, x_{m-1}$.
Every linear isomorphism $S_{\leq 1} \to S_{\leq 1}$ that preserves $1$
extends uniquely to an affine change of coordinates.
Two $S$-modules $M, N$ of degree $m$ are \emph{equivalent} if $M \simeq N^\varphi$ for some affine change of variables $\varphi$.

To a degree $m$ module $M$ we associated the \emph{multiplication tensor} $\mu_M \in S_{\leq 1}^\vee \otimes M^\vee \otimes M$. The tensor $\mu_M$ is automatically $1_A$-generic because the image of $1 \in S_{\leq 1}$ in $M^\vee \otimes M = \Hom(M, M)$ is the identity.
The \emph{annihilator} of a module $M$ is $\ann(M) = \left\{ f\in S\ |\ fM = 0
\right\}$. We say that $M$ is \emph{concise} if $\mu_M$ is concise, which is
equivalent to saying that $\ann(M)$ is disjoint from $S_{\leq 1}$. We say that
$M$ is \emph{End-closed} if for each $i, j$ there exists a linear form $y$
such that $(x_i x_j - y) M = 0$.

\begin{example}\label{example_min_brank}
    Let $m = 5$ and consider the $S$-module
    \[
        M = \frac{Se_1 \oplus Se_2}
        {(x_{4}e_{1},\:x_{3}e_{1},\:x_{2}e_{1}-x_{4}e_{2},\:x_{2}e_{2},\:x_{1}e_{2},\:x_{1}^{2}e_{1}-x_{3}e_{2})}.
    \]
    The vector space $M$ has a basis $e_1, e_2, x_1e_1, x_2e_1, x_1^2 e_1$.
    In particular, the element $x_1^2 e_1$ cannot be expressed as a linear combination of 
    $e_1, x_1e_1, x_2e_1$, so there is no $y \in S_{\leq 1}$ such that $(x_1^2 - y) e_1 = 0$,
    hence $M$ is not End-closed.
\end{example}

The \emph{dual module} of $M$ is the $S$-module $M^\vee = \Hom_\kk(M, \kk)$,
where the module structure is $(f\cdot \varphi)(m) := \varphi(fm)$ for every $\varphi\in M^{\vee}$,
$f\in S$, $m\in M$. We have a natural isomorphism $M \to (M^\vee)^\vee$ given
by the usual map. The multiplication tensor of $M^{\vee}$ is obtained from
$\mu_M$ by transposing two factors.

The module $M$ is \emph{cyclic} if there
exists an element $m\in M$ such that $S\cdot m = M$. If this happens, we have
$M \simeq S^{\oplus 1}/\ann(M)$ and we say
that $M$ \emph{comes from an algebra} $S/\ann(M)$. A module is cyclic if and
only if $\mu_M$ is $1_B$-generic.  We say that $M$ is \emph{cocyclic} if
$M^\vee$ is cyclic. This happens if and only if $\mu_M$ is $1_C$-generic.

\begin{example}\label{ex:cyclic_not_cocyclic}
    Let $m = 4$ and consider the $S$-module 
    \[
        M = \kk[x_1, x_2, x_3] / (x_1^2, x_1 x_2, x_2^3, x_3 - x_2^2).
    \]
    An explicit calculation shows that $\mu_M$ is the tensor from Example
    \ref{ex0}, that is
    \[
        \mu_M = a_1 \otimes (b_1 \otimes c_1 + \dots + b_4 \otimes c_4) + a_2 \otimes b_1 \otimes c_2 
        + a_3 \otimes (b_1 \otimes c_3 + b_3 \otimes c_4) + a_4 \otimes b_1 \otimes c_4.
    \]
    This tensor is $1_B$-generic, but not $1_C$-generic, so the module $M$ is cyclic but is not cocyclic 
    (and consequently, the module $M^\vee$ is cocyclic but is not cyclic). Hence, 
    the notions of being cyclic or cocyclic are independent. 
\end{example}

\subsection{Modules and $1_A$-generic tensors}\label{ssec:modulesAnd1Ageneric}

Consider a concise $1_A$-generic tensor $T$ that satisfies the $A$-Strassen's
equations. Take a space of commuting matrices $\Espace{T}$ as
in~\eqref{eq:Espace} and choose its basis $e_0 = \id_C$, $e_1$, \ldots
,$e_{m-1}$. Using this space, we define an action of $S$ on $C$ by $x_i\cdot c
= e_i(c)$ for every $i=1,2, \ldots ,m-1$. The resulting $S$-module is denoted
$\modC$. The multiplication tensor of such $\modC$ is
isomorphic to $T$, so it is $1_A$-generic, concise and satisfies the
$A$-Strassen's equations. Conversely,
for a concise $S$-module $M$ of degree $m$, we obtain a multiplication tensor
$\mu_M$ which is $1_A$-generic, concise and satisfies $A$-Strassen's
equations. The tensor $\mu_M$ is End-closed if and only if $M$ is End-closed. 
The Example~\ref{example_min_brank} shows that this condition is not vacuous.

The following result binds the classification of modules and their
multiplication tensors.
\begin{lemma}
    The multiplication tensors $\mu_M, \mu_N$ of concise $S$-modules $M$, $N$ are isomorphic
    if and only if $M$ and $N$ are equivalent $S$-modules.
\end{lemma}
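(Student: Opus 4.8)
The plan is to prove the two implications separately; the backward one is a short computation and the forward one carries the content.

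\textbf{($\Leftarrow$).} Suppose $M\simeq N^{\varphi}$ for an affine change of variables $\varphi$. Then $\varphi$ restricts to a linear automorphism $\psi$ of $S_{\leq 1}$ fixing $1$, and a module isomorphism $g\colon M\to N^{\varphi}$ is in particular a linear isomorphism $M\to N$ of underlying spaces. Setting $g_A:=(\psi^{-1})^{\vee}$ on the first factor, $g_C:=g$, and $g_B:=(g^{\vee})^{-1}$, one checks directly on $\mu_M=\sum_i \ell_i^{*}\otimes(\text{multiplication by }\ell_i)$, for a basis $(\ell_i)$ of $S_{\leq 1}$, that $(g_A\otimes g_B\otimes g_C)(\mu_M)=\mu_N$.

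\textbf{($\Rightarrow$).} Write $\rho_M,\rho_N$ for the structure maps $S\to\End(M),\End(N)$ (on $S_{\leq 1}$ they record multiplication), put $R_M:=S/\ann(M)$ with quotient $\iota\colon S\twoheadrightarrow R_M$ and induced embedding $\bar{\rho}_M\colon R_M\hookrightarrow\End(M)$. Let $(g_A,g_B,g_C)$ be a tensor isomorphism $\mu_M\to\mu_N$. Contracting the identity $(g_A\otimes g_B\otimes g_C)(\mu_M)=\mu_N$ against the first factor gives, for all $\ell\in S_{\leq 1}$,
\[
\rho_N(\ell)=g_C\,\rho_M(\psi(\ell))\,g_B^{\vee},\qquad \psi:=g_A^{\vee}\in\GL(S_{\leq 1}).
\]
Taking $\ell=1$ shows $\rho_M(\psi(1))$ is invertible, so $\psi(1)$ is a unit modulo $\ann(M)$; writing $g_B^{\vee}=\rho_M(\psi(1))^{-1}g_C^{-1}$ and substituting (the inverse lies in the commutative algebra $\rho_M(S)$) one obtains, for all $\ell$,
\[
\rho_N(\ell)=g_C\,\bar{\rho}_M\bigl(\bar{\chi}(\ell)\bigr)\,g_C^{-1},\qquad \bar{\chi}(\ell):=\iota(\psi(\ell))\cdot\iota(\psi(1))^{-1}\in R_M.
\]
Here $\ell\mapsto\bar{\chi}(\ell)$ is the restriction to $S_{\leq 1}$ of the algebra homomorphism $\bar{\chi}\colon S\to R_M$ with $\bar{\chi}(x_i)=\iota(\psi(x_i))\iota(\psi(1))^{-1}$, and since both sides above are algebra homomorphisms in the free variable they agree for all $f\in S$: $\rho_N(f)=g_C\,\bar{\rho}_M(\bar{\chi}(f))\,g_C^{-1}$. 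Thus $g_C$ identifies $N$ with the $S$-module $\bar{\chi}^{*}\bigl({}_{R_M}M\bigr)$ obtained from $M$ by restricting scalars along $\bar{\chi}$.

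Next $\bar{\chi}$ is surjective: expanding $1$ in the basis $\{\psi(1)\}\cup\{\psi(x_i)\}$ of $S_{\leq 1}$ shows $\iota(\psi(1))^{-1}$ lies in the subalgebra of $R_M$ generated by the $\bar{\chi}(x_i)$, whence (as $R_M$ is Artinian) so does its inverse $\iota(\psi(1))$, hence $\iota(S_{\leq 1})$ and therefore all of $R_M$. So $\ann(N)=\ker\bar{\chi}$ has the same codimension as $\ann(M)$ and $\bar\chi$ descends to an isomorphism $S/\ann(N)\xrightarrow{\sim}R_M$. It now suffices to produce an affine change of variables $\varphi$ with $\bar{\chi}=\iota\circ\varphi$, since then $N\cong\bar{\chi}^{*}({}_{R_M}M)=(\iota\circ\varphi)^{*}({}_{R_M}M)=M^{\varphi}$. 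If $M$ is End-closed then $\iota(S_{\leq 1})$ is a subalgebra of $R_M$, so $\iota(\psi(1))^{-1}$ and hence each $\bar{\chi}(x_i)$ is the class of a linear form $\ell_i$; one checks $\{\ell_i\}\cup\{1\}$ is a basis of $S_{\leq 1}$ and takes $\varphi(x_i):=\ell_i$. In general $\iota(\psi(1))^{-1}$ need not be linear, and what one must establish is that the $(m-1)$-dimensional subspace $\bar{\rho}_M(\bar{\chi}(S_1))\subset\End(M)$ is $\GL(M)$-conjugate to $\{\rho_M(\ell):\ell\in S_1\}$ — equivalently, that right-multiplying the degree-one multiplication operators of $M$ by the fixed unit $\rho_M(\psi(1))^{-1}$ does not change their simultaneous conjugacy class; given this conjugacy, pulling the linear coordinates back through it (and appending $1$) produces $\varphi$.

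The single non-formal step is this last conjugacy for modules that are not End-closed: one cannot read $\varphi$ off the formulas and must instead run a normal-form argument on the commuting tuple $(\rho_M(\psi(x_i)))_i$ after twisting by the unit $\rho_M(\psi(1))^{-1}$, using $\charr\kk\neq2$ to peel off the square root of the unipotent part of that unit. Everything preceding it is routine bookkeeping with contractions and restriction of scalars.
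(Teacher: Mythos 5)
Your backward implication and the formal bookkeeping of the forward implication are correct and follow the same route as the paper's proof: extract $\psi=g_A^{\vee}$ from the tensor isomorphism, note that $\rho_M(\psi(1))$ must be invertible, eliminate $g_B$, and identify $N$ with $M$ restricted along the ring map $x_i\mapsto\iota(\psi(x_i))\iota(\psi(1))^{-1}$. The paper does the same thing diagrammatically, but it sets $\varphi=f_S^{\vee}$, treats it as an affine change of coordinates, and evaluates the resulting commutative square at $1\in S_{\leq 1}$ using $\mu_{M^{\varphi}}(1,-)=\id_M$; in other words it works under the normalization $f_S^{\vee}(1)=1$. You have correctly isolated the fact that this normalization is exactly what needs to be justified when $M$ is merely concise and not End-closed.

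You do not, however, close that gap, so the forward direction is incomplete. Everything is reduced to the claim that $\rho_M(S_{\leq 1})\cdot\rho_M(\psi(1))^{-1}$ is $\GL(M)$-conjugate to $\rho_M(S_{\leq 1})$, and this is left as an unproved assertion. The repair you sketch cannot work as described: the unipotent part of the unit $u=\rho_M(\psi(1))$, and any square root of it, is a polynomial in $u$ and hence lies in the commutative subalgebra of $\End(M)$ generated by the multiplication operators, so conjugating by it fixes both $\rho_M(S_{\leq 1})$ and $\rho_M(S_{\leq 1})u^{-1}$ pointwise and accomplishes nothing (nor is $\charr\kk\neq 2$ the relevant hypothesis for such square roots). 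The issue is not vacuous: for the module of $T_{1,20}$, with matrix units $E_{ij}$ one has $\rho(x_1)=E_{31}+E_{53}$, and taking $u=\id+\rho(x_1)$ gives $\rho(x_1)u^{-1}=\rho(x_1)-\rho(x_1)^2\notin\rho_M(S_{\leq 1})$; the two spaces are in fact conjugate, but only via an element such as $\id-E_{53}$ that does not commute with the multiplication operators and is invisible to any manipulation of $u$ alone. So what you have is a correct reduction to, plus an unestablished instance of, the key conjugacy claim; you should either prove that claim in general or justify the normalization $\psi(1)=1$ directly, which amounts to the same thing.
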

The argument follows
implicitly from~\cite[\S2]{concise} or~\cite[\S2]{abelian} but we know no
explicit reference.
\begin{proof}
    Suppose first that $N = M^{\varphi}$ is an $S$-module equivalent to $M$
    via an affine change of coordinates $\varphi\colon S_{\leq 1}\to
    S_{\leq 1}$. By definition, their multiplication maps satisfy
    \[
        \begin{tikzcd}[column sep=small]
            S_{\leq 1}\ar[d, "\varphi"] & \otimes & M\ar[d, equal] \ar[rrr,
            "\mu_{M^{\varphi}}"] &&&
            M\ar[d, equal]\\
            S_{\leq 1} & \otimes & M \ar[rrr, "\mu_M"]  &&& M
        \end{tikzcd}
    \]
    so that $(\varphi^{\vee}\otimes \id_{M^{\vee}} \otimes \id_M)(\mu_M) =
    \mu_{M^{\varphi}}$ is the required isomorphism of tensors.

    Suppose conversely that $\mu_M\in S_{\leq 1}^{\vee}\otimes M^{\vee}\otimes M$ and
    $\mu_N\in S_{\leq 1}^{\vee} \otimes N^{\vee} \otimes N$ are isomorphic
    tensors, that is, there are linear isomorphisms $f_S\colon S_{\leq 1}^{\vee}\to S_{\leq
    1}^{\vee}$, $f_{M^{\vee}}\colon M^{\vee}\to N^{\vee}$ and $f_M\colon M\to
    N$ such that
    \begin{equation}\label{eq:iso}
        (f_S\otimes f_{M^{\vee}} \otimes f_M)(\mu_M) = \mu_N.
    \end{equation}
    Take
    $\varphi = f_S^{\vee}\colon S_{\leq 1}\to S_{\leq 1}$. This linear map is
    bijective, so we can view it as an affine change of coordinates. We have
    $f_S = \varphi^{\vee}$. We claim
    that $N$ is isomorphic to $M^{\varphi}$. The multiplication tensor of
    $M^{\varphi}$ satisfies $(f_S\otimes \id_{M^{\vee}}\otimes \id_M)(\mu_M) =
    \mu_{M^{\varphi}}$. Comparing this with~\eqref{eq:iso}, we obtain that
    \[
        \begin{tikzcd}[column sep=small]
            S_{\leq 1}\ar[d, equal] & \otimes & M\ar[d,
            "f_{M^{\vee}}^{\vee}"] \ar[rrr,
            "\mu_{M^{\varphi}}"] &&&
            M\ar[d, "f_M"]\\
            S_{\leq 1} & \otimes & N \ar[rrr, "\mu_N"]  &&& N
        \end{tikzcd}
    \]
    is commutative. Evaluating at $1\in S_{\leq 1}$ and using that $\mu_N(1, -) =
    \id_N$, $\mu_{M^{\varphi}}(1, -) = \id_M$, we obtain that
    $f_{M^{\vee}}^{\vee} = f_M$. The map $f_M$ is the required isomorphism.
\end{proof}

\subsection{Modules II}\label{prelim:modules}
Recall that $S = \kk[x_1, \dots, x_{m-1}]$ and let $M$ be an $S$-module of
finite degree. For a maximal ideal $\maxideal \subset S$, let $M_\maxideal$
denote the localization of $M$ with respect to the multiplicatively closed set
$S \setminus \maxideal$. Equivalently, this is the quotient module
$M/\maxideal^N M$ for any $N\gg 0$.

The module $M$ has a finite length, so there exist unique maximal ideals $\maxideal_1, \dots, \maxideal_s \subset S$ such that 
\[
    M \simeq M_{\maxideal_1} \oplus \dots \oplus M_{\maxideal_s}
\]
with $M_{\maxideal_i}$ nonzero for $i=1, \ldots , s$. We say that
$\{\maxideal_1, \ldots ,\maxideal_s\}$ is the \emph{support} of $M$. We will
refer to the numbers $\dim_{\kk} M_{\maxideal_1}$, \ldots, $\dim_{\kk}
M_{\maxideal_s}$ as the \emph{degree decomposition} of $M$. If $r=1$
we say that $M$ is \emph{local}. A nonzero module $M$ is local if and only if $\maxideal^N M = 0$ 
for some $N \geq 1$. For proofs of these claims see \cite[Theorem~2.13]{eisenbud}.

The classification of local modules is easier than the classification of general modules because 
each local $S$-module has a natural structure of an $S_\maxideal$-module, so we can work over 
a local ring and utilize results such as Nakayama's lemma.

\begin{lemma}[Nakayama's Lemma, {\cite[Corollary~2.7]{Atiyah_Macdonald}}]\label{ref:Nakayama:lem}
    Let $M$ be a local module as above and $N\subseteq M$ be a submodule. If
    $M = \maxideal M + N$, then $M = N$.
\end{lemma}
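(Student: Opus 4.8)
The plan is to reduce the statement to a fact about the quotient module $M/N$ and then to exploit that the maximal ideal acts nilpotently, which is precisely what "local" means for the modules under consideration. First I would set $\overline{M} := M/N$ and note that the hypothesis $M = \maxideal M + N$ says exactly that the image of $\maxideal M$ generates $\overline{M}$, i.e. $\overline{M} = \maxideal\,\overline{M}$. Since $M$ is local in the sense recalled just before the lemma (equivalently, since $M$ has finite degree and its support is a single maximal ideal), there is an integer $N_0 \geq 1$ with $\maxideal^{N_0} M = 0$; as $\overline{M}$ is a quotient of $M$, we also get $\maxideal^{N_0}\,\overline{M} = 0$.

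Now I would simply iterate the identity $\overline{M} = \maxideal\,\overline{M}$: applying $\maxideal$ repeatedly gives
\[
    \overline{M} = \maxideal\,\overline{M} = \maxideal^{2}\,\overline{M} = \cdots = \maxideal^{N_0}\,\overline{M} = 0,
\]
hence $\overline{M} = 0$, that is, $M = N$. This is the whole argument; there is essentially no obstacle, the only point requiring a moment's care being that the termination of the iteration relies on $\maxideal$ being nilpotent on $M$, which is exactly the locality hypothesis (established in \cite[Theorem~2.13]{eisenbud}).

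If one wished to avoid invoking nilpotency, an alternative route is available: $M$ is finitely generated over the Noetherian ring $S$ because $\dim_{\kk} M$ is finite, and $M$ is in fact a module over the local ring $S_{\maxideal}$, in whose Jacobson radical $\maxideal$ lies; one may then apply the classical determinant (Cayley–Hamilton) trick to the endomorphism given by a suitable element of $\maxideal$ acting on a finite generating set of $M/N$, concluding that $(1 + x)(M/N) = 0$ for some $x \in \maxideal$ and hence $M/N = 0$ since $1 + x$ is a unit. In the present finite-length situation, however, the nilpotency argument above is shorter and fully self-contained, so that is the one I would record.
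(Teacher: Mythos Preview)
Your argument is correct. The paper does not actually supply a proof of this lemma; it simply cites \cite[Corollary~2.7]{Atiyah_Macdonald}. Your nilpotency-based reduction $\overline{M} = \maxideal\,\overline{M} = \cdots = \maxideal^{N_0}\overline{M} = 0$ is a standard and fully valid proof in the finite-length setting used here, so there is nothing to compare.
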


We work over an algebraically closed field, so by Hilbert's Nullstellensatz
$\maxideal = (x_1 - a_1, \dots, x_{m-1} - a_{m-1})$ for some $a_1, \dots, a_{m-1} \in \kk$.
We classify modules only up to affine changes of variables, so we can further assume that 
$\maxideal = (x_1, \dots, x_{m-1}) = S_+$. 
Note that $\maxideal$ is fixed only under linear changes of variables.

Let $M$ be a cyclic $S$-module. There is an isomorphism of $S$-modules $M
\simeq S^{\oplus 1} / \ann(M)$, so $M$ has a natural structure of an unital commutative
$S$-algebra, which is in particular a $\kk$-algebra. Each unital commutative
$\kk$-algebra of rank $m$ yields a concise cyclic $S$-module of degree $m$
(unique up to a change of variables in $S$, see Corollary \ref{corr_cyclic}).
This correspondence restricts to local algebras and local modules. Such
algebras were classified for small $m$ by Poonen~\cite{poonen}. We follow his
naming convention and represent such $\kk$-algebras as quotients of the polynomial ring in variables~$x, y, z, \dots$.

\begin{example}\label{ex1}
    Consider the unital commutative $\kk$-algebra $\kk[x, y] / (x^2, xy, y^3)$
    of degree $4$. This algebra is a quotient of the polynomial ring in 2
    variables, so a priori it has a structure of a $\kk[x_1,x_2]$-module.
    We can make it into a concise $\kk[x_1, x_2, x_3]$-module by choosing a basis $1, x, y, y^2$ and 
    declaring that $1, x_1, x_2, x_3$ act as multiplication by corresponding elements of this basis:
    \begin{center}
    \[
        \begin{array}{c|cccc}
                & 1 & x & y & y^2 \\
            \hline
            1   & 1 & x & y & y^2 \\
            x_1 & x & 0 & 0 & 0   \\
            x_2 & y & 0 & y^2 & 0 \\
            x_3 & y^2 & 0 & 0 & 0 \\
        \end{array}
    \]
    \end{center}
    This is an explicit description of the module from Example~\ref{ex:cyclic_not_cocyclic}.

    The multiplication tensor in the $\kk[x_1, x_2]$-module is the restriction
    of the multiplication tensor in the $\kk[x_1, x_2, x_3]$-module via the
    inclusion
    $\kk[x_1, x_2]_{\leq 1}\hookrightarrow \kk[x_1, x_2, x_3]_{\leq 1}$.
\end{example}
In the following we will repeatedly use the ``add additional variable'' construction from
Example~\ref{ex1}. The following lemmas address the issue of when we can extend modules to concise modules over polynomial rings in more variables and whether these extensions are unique.
For a homomorphism of rings $\psi\colon S'\to S$ and an $S$-module $M$, the
\emph{restriction of $M$} (via $\psi$) is the $S'$-module $M$ such that for
$s'\in S'$ and $n\in M$ we have $s'\cdot n := \psi(s')n$.

\begin{lemma}\label{concise+endclosed}
    Let $S'$ be a polynomial ring over $\kk$. Let $M'$ be an $S'$-module of
    degree $m$ such that the space $\End_{S'}(M')$ is $m$-dimensional and
    its elements pairwise commute. Then there is a \emph{concise} $S$-module
    $M$ that restricts to $M'$ via a linear inclusion $S'\hookrightarrow S$.
    The module $M$ is unique up to equivalence and it is End-closed.
\end{lemma}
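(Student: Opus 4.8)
The plan is to build $M$ from the commutative $m$-dimensional algebra $\mathcal{E} := \End_{S'}(M')$, following the recipe of Example~\ref{ex1}, and then to recognise every concise extension of $M'$ over $S$ as a choice of linear basis of $\mathcal{E}$. First note that $\mathcal{E}$ is a commutative unital $\kk$-subalgebra of $\End_\kk(M')$: it is closed under composition since a composite of $S'$-linear maps is $S'$-linear, it contains $\id_{M'}$, and commutativity is the hypothesis. Fix a $\kk$-basis $e_0 = \id_{M'}, e_1, \ldots, e_{m-1}$ of $\mathcal{E}$. Since the $e_i$ commute pairwise, $x_i \mapsto e_i$ extends uniquely to a $\kk$-algebra homomorphism $S \to \End_\kk(M')$ with image exactly $\mathcal{E}$; let $M$ be $M'$ equipped with the resulting $S$-action. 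A nonzero $\ell = c_0 + \sum_{i \geq 1} c_i x_i \in S_{\leq 1}$ acts on $M$ as $c_0 e_0 + \sum_{i \geq 1} c_i e_i$, which is nonzero by linear independence of $e_0, \ldots, e_{m-1}$; hence $\ann(M) \cap S_{\leq 1} = 0$ and $M$ is concise. For any $i, j$ the composite $e_i e_j$ again lies in $\mathcal{E} = \langle e_0, \ldots, e_{m-1} \rangle_\kk$, so $x_i x_j - y$ annihilates $M$ for a suitable $y \in S_{\leq 1}$, that is, $M$ is End-closed.

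To see that $M$ restricts to $M'$: for each variable $y$ of $S'$, multiplication by $y$ on $M'$ is an $S'$-linear endomorphism, hence lies in $\mathcal{E}$; expanding it as $a_0 e_0 + \sum_{i \geq 1} a_i e_i$ and setting $\psi(y) := a_0 + \sum_{i \geq 1} a_i x_i \in S_{\leq 1}$ defines a $\kk$-algebra homomorphism $\psi \colon S' \to S$ with $\psi(S'_{\leq 1}) \subseteq S_{\leq 1}$. By construction the $S$-action on $M$ precomposed with $\psi$ agrees with the original $S'$-action on $M'$ on the variables, hence on all of $S'$, so $M$ restricts to $M'$ via $\psi$. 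A short computation shows $\psi$ is injective precisely when $M'$ is concise over $S'$ — the situation in which the statement gets applied — and then $\psi$ is the asserted linear inclusion.

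For uniqueness, let $M_1$ be an arbitrary concise $S$-module restricting to $M'$. On $M_1$ the actions of $S$ and of $S'$ factor through a common commutative quotient of $S$, so each $x_i$ acts by an operator commuting with the $S'$-action, that is, by an element of $\End_{S'}(M') = \mathcal{E}$; conciseness of $M_1$ makes $\id_{M'}$ together with these $m-1$ operators linearly independent, and since $\dim_\kk \mathcal{E} = m$ they form a basis of $\mathcal{E}$. Thus the $S$-action on $M_1$ is a surjection $\pi_1 \colon S \twoheadrightarrow \mathcal{E}$ with $\pi_1(1) = \id_{M'}$ restricting to a linear isomorphism $S_{\leq 1} \xrightarrow{\sim} \mathcal{E}$, and likewise for a second such $M_2$ with surjection $\pi_2$. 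Then $(\pi_2|_{S_{\leq 1}})^{-1} \circ \pi_1|_{S_{\leq 1}}$ is a linear automorphism of $S_{\leq 1}$ fixing $1$, so it extends to an affine change of variables $\varphi$ of $S$ with $\pi_2 \circ \varphi = \pi_1$; hence $M_2^{\varphi}$ carries the same $S$-action as $M_1$, and $M_1, M_2$ are equivalent. I expect this rigidity step — forcing the $x_i$ to act through $\mathcal{E}$ and using $\dim_\kk \mathcal{E} = m$ to promote that to a basis — to be the main point; the construction and the conciseness, End-closedness and change-of-variables bookkeeping are routine.
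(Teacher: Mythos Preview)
Your proof is correct and follows essentially the same approach as the paper's: build $M$ by letting the variables act via a chosen basis of $\mathcal{E}=\End_{S'}(M')$, and for uniqueness observe that on any concise extension the $x_i$ must act through $\End_{S'}(M')$, whence the dimension count forces $S_{\leq 1}\xrightarrow{\sim}\mathcal{E}$ and two such extensions differ by an affine change of variables. Your write-up is more explicit than the paper's (you spell out the restriction map $\psi$ and the change of variables $\varphi$), and you correctly flag that the ``linear inclusion $S'\hookrightarrow S$'' requires $M'$ to be concise over $S'$, an assumption the paper leaves implicit but which holds in every application.
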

\begin{proof}
    Choose a basis $e_0 = \id_{M'}, e_1 \dots, e_{m-1}$ of $\End_{S'}(M')$.
    We can assume that $e_1, \ldots ,e_k$ correspond to multiplications by
    elements of a basis of $S'_1$. We define a structure of a concise $S$-module $M$ 
    on the underlying vector space of $M'$ by setting $x_i n := e_i(n)$ for $i=1,2, \ldots ,m-1$. 
    This identifies $S'$ with a subring $\kk[x_1, \ldots ,x_k]\subseteq S$, so $M'$ is indeed
    a restriction of $M$. To prove uniqueness, observe that for every other
    $M$, by conciseness we obtain an inclusion $S_{\leq 1} \hookrightarrow
    \End_{S}(M)$. Moreover, $\End_{S}(M)$ is a subspace of $\End_{S'}(M')$.
    We have $\dim_{\kk} S_{\leq 1} = m = \dim_\kk \End_{S'}(M')$, so in particular
    the inclusion $S_{\leq 1} \hookrightarrow\End_{S}(M)$ is an
    isomorphism and $M$ differs from the choice above only by a change of basis.
    Finally, the $S$-module $M$ is End-closed, because the composition of any two endomorphisms 
    is again an endomorphism, so it corresponds to multiplication by some linear form. 
\end{proof}

\begin{corollary}\label{corr_cyclic}
    Let $M'$ be a cyclic or cocyclic $S$-module of degree $m$. Up to a linear change of variables in $S$, there is a unique way to give $M'$ a structure of a \emph{concise} $S$-module $M$. The module $M$ is automatically End-closed.
\end{corollary}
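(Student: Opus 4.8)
The plan is to reduce the corollary to Lemma~\ref{concise+endclosed} by checking, in both the cyclic and the cocyclic case, that the hypothesis of that lemma is satisfied, namely that $\End_S(M')$ is $m$-dimensional with pairwise commuting elements. Suppose first that $M'$ is cyclic, say $M' = S\cdot v$. Then evaluation at $v$ gives an injective $\kk$-linear map $\End_S(M')\to M'$, $\psi\mapsto \psi(v)$, so $\dim_\kk \End_S(M')\le m$. On the other hand multiplication by any element of $S$ is an $S$-endomorphism, and since $M'$ is concise the composition $S_{\le 1}\hookrightarrow S\to \End_S(M')$ is injective, giving $\dim_\kk\End_S(M')\ge \dim_\kk S_{\le 1}=m$. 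Hence $\End_S(M')$ is exactly $m$-dimensional and, being generated as above by multiplications by elements of $S$ together with the identity, consists of pairwise commuting endomorphisms. Lemma~\ref{concise+endclosed} (applied with $S'=S$) then gives a concise End-closed $S$-module $M$ restricting to $M'$, unique up to equivalence; since $S'=S$ here, ``restricts via a linear inclusion $S'\hookrightarrow S$'' just means $M$ differs from $M'$ by a linear change of variables, which is what we want.

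For the cocyclic case, observe that $M'$ is cocyclic exactly when $(M')^\vee$ is cyclic, and that $\End_S(M')\cong \End_S((M')^\vee)$ via $\psi\mapsto \psi^\vee$; this isomorphism is an anti-isomorphism of algebras, but since $\End_S((M')^\vee)$ is commutative by the previous paragraph it is in fact an isomorphism of algebras, and it carries identity to identity. Therefore $\End_S(M')$ is again $m$-dimensional with pairwise commuting elements, and Lemma~\ref{concise+endclosed} applies verbatim. One should also note that $M'$ being cocyclic of degree $m$ forces $M'$ to be concise: $\ann(M')=\ann((M')^\vee)$, and a cyclic module $S/\ann((M')^\vee)$ of degree $m$ has $\ann$ disjoint from $S_{\le 1}$ (a nonzero linear form in the annihilator would cut the degree), so $M'$ is concise and the hypotheses of Lemma~\ref{concise+endclosed} on $M'$ being concise, needed implicitly, are met.

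The only mild subtlety — and the step I expect to require the most care — is the bookkeeping around the two notions ``concise $S$-module'' and ``$S$-module whose multiplication tensor we have not yet checked is concise'': one must make sure that the $M'$ being fed into Lemma~\ref{concise+endclosed} already satisfies the dimension hypothesis before invoking conciseness of the \emph{output} module $M$. In the cyclic case conciseness of $M'$ is part of the statement; in the cocyclic case it follows from conciseness of $(M')^\vee=S/\ann((M')^\vee)$ as just noted. Everything else is a direct unwinding of definitions together with the already-established Lemma~\ref{concise+endclosed}, so no genuinely new computation is needed. The End-closedness of $M$ is inherited directly from the conclusion of Lemma~\ref{concise+endclosed} and requires no separate argument.
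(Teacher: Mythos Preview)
Your overall strategy---reduce to Lemma~\ref{concise+endclosed} by computing $\dim_\kk \End_S(M')$---matches the paper's exactly. However, there is a genuine gap in your dimension count.

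You assert that ``in the cyclic case conciseness of $M'$ is part of the statement,'' and similarly argue that a cocyclic module of degree $m$ is automatically concise because ``a nonzero linear form in the annihilator would cut the degree.'' Both claims are false. For $m=3$, $S=\kk[x_1,x_2]$, the cyclic module $M'=S/(x_1,x_2^3)$ has degree $3$ yet $x_1\in S_1\cap\ann(M')$, so $M'$ is not concise. The statement of the corollary does not assume conciseness of $M'$; indeed its whole point is that any cyclic module of degree $m$, concise or not, admits a canonical concise repackaging. Consequently your lower bound $\dim_\kk\End_S(M')\ge m$, which you obtain from the injection $S_{\le 1}\hookrightarrow \End_S(M')$, is unjustified, and so is your argument for commutativity of $\End_S(M')$ (which rests on the same injection being an isomorphism).

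The fix is short and is what the paper does: the evaluation map $\End_S(M')\to M'$, $\psi\mapsto\psi(1)$, is not only injective but \emph{surjective}, because for any $a\in M'\simeq S/\ann(M')$ the map ``multiplication by (a lift of) $a$'' is an $S$-endomorphism sending $1$ to $a$. Hence $\End_S(M')\cong S/\ann(M')$ as a $\kk$-algebra, giving both $\dim_\kk\End_S(M')=m$ and commutativity in one stroke, with no appeal to conciseness of $M'$. Your cocyclic reduction via $\psi\mapsto\psi^\vee$ is then fine once the cyclic case is repaired.
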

\begin{proof}
    Assume that $M'$ is cyclic. Let $1$ denote the unit of the associated
    $S$-algebra $S / \ann(M')$. The endomorphisms of the $S$-module $M'$ are
    determined by the choice of the image of $1$, so there is a natural
    isomorphism of vector spaces $\End_{S}(M')$ and $S / \ann(M')$. Therefore $\dim_\kk \End_{S}(M') = m$ and Lemma \ref{concise+endclosed} applies. 

    Assume that $M'$ is cocyclic. Endomorphisms of $M' = \Hom_\kk((M')^\vee, \kk)$ are given by precomposition with endomorphisms of the cyclic module $(M')^\vee$, so $\dim_\kk \End_S(M') = m$ an we conclude by Lemma \ref{concise+endclosed}.
\end{proof}
Corollary~\ref{corr_cyclic} can be generalized to disjoint sums.  It will be
useful for recovering the classification of all concise modules from the
classification of local concise modules in Subsection
\ref{classification_concise}, see Example~\ref{example_sum}.
\begin{corollary}\label{cor_unique}
    Let $S'\hookrightarrow S$ be a polynomial subring and
    $N'_1, \dots, N'_r$ be $S'$-modules such that their supports are pairwise
    disjoint, that each of them is cyclic or cocyclic, and that
    $\sum_{i=1}^r \dim_{\kk} N'_i = m$. Then there exist
    $S$-modules $N_1, \dots, N_r$ such that $N_i$ restricts to $N_i'$ for
    $i=1,2, \ldots ,r$ and that $N_1\oplus  \ldots \oplus N_r$ is concise.
    Such an $S$-module is unique up to equivalence and End-closed.
\end{corollary}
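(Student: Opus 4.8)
The plan is to reduce the statement to Lemma~\ref{concise+endclosed} by assembling the $S'$-modules $N_1', \ldots, N_r'$ into a single $S'$-module $M' := N_1' \oplus \cdots \oplus N_r'$ of degree $m$ and then checking the hypothesis that $\End_{S'}(M')$ is $m$-dimensional with pairwise commuting elements. Once that is done, Lemma~\ref{concise+endclosed} produces a concise $S$-module $M$ restricting to $M'$, unique up to equivalence and automatically End-closed; and the direct-sum decomposition of $M'$ into its localizations (Subsection~\ref{prelim:modules}) will show that $M$ itself splits as $N_1 \oplus \cdots \oplus N_r$ with $N_i$ restricting to $N_i'$. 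So the real content is the computation of $\End_{S'}(M')$.

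\emph{Key steps.} First, since the supports of the $N_i'$ are pairwise disjoint, the localizations $M'_{\maxideal}$ of $M' = \bigoplus_i N_i'$ at the various maximal ideals $\maxideal$ in the (disjoint) union of supports are exactly the $N_i'$ themselves; in particular $M'$ has degree $\sum_i \dim_\kk N_i' = m$, and, more importantly, any $S'$-module endomorphism of $M'$ must preserve each localization, so
\[
    \End_{S'}(M') \;=\; \prod_{i=1}^r \End_{S'}(N_i').
\]
Second, I invoke the computation already made inside the proof of Corollary~\ref{corr_cyclic}: if $N_i'$ is cyclic then $\End_{S'}(N_i') \cong S'/\ann(N_i')$ as a vector space, which has dimension $\dim_\kk N_i'$; if $N_i'$ is cocyclic the same dimension count holds by passing to the dual. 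Hence $\dim_\kk \End_{S'}(M') = \sum_i \dim_\kk N_i' = m$. Third, for commutativity: $\End_{S'}(N_i')$ is commutative in each case — for a cyclic module because it is isomorphic as a ring to the commutative algebra $S'/\ann(N_i')$ acting by multiplication, and for a cocyclic module because it is the opposite of the endomorphism ring of the cyclic dual, hence again commutative — and a product of commutative rings is commutative. Thus $\End_{S'}(M')$ is an $m$-dimensional commutative algebra, so Lemma~\ref{concise+endclosed} applies and yields a concise End-closed $S$-module $M$ restricting to $M'$, unique up to equivalence.

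\emph{Finishing.} It remains to see that this $M$ decomposes compatibly with $M'$. The finite-length $S$-module $M$ splits as the direct sum of its localizations $M_{\maxideal}$ over the maximal ideals $\maxideal \subset S$ in its support (Subsection~\ref{prelim:modules}). Restricting along $S' \hookrightarrow S$, each $M_{\maxideal}$ restricts to a summand of $M' = \bigoplus_i N_i'$; comparing supports over $S'$ and using that the supports of the $N_i'$ are disjoint, the summands of $M$ match up bijectively with the $N_i'$, giving $M = N_1 \oplus \cdots \oplus N_r$ with $N_i$ restricting to $N_i'$. Uniqueness up to equivalence and the End-closed property are inherited directly from the corresponding clauses of Lemma~\ref{concise+endclosed}.

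\emph{Main obstacle.} The only genuinely delicate point is the first step — verifying that an $S'$-endomorphism of $\bigoplus_i N_i'$ cannot mix the summands, i.e.\ that $\Hom_{S'}(N_i', N_j') = 0$ for $i \neq j$. This is where disjointness of supports is essential: a homomorphism $N_i' \to N_j'$ is killed by $\ann(N_i') + \ann(N_j')$, which is the unit ideal when the supports are disjoint (the two annihilators define disjoint closed subschemes of $\Spec S'$), forcing the homomorphism to be zero. Everything else is bookkeeping on top of Lemma~\ref{concise+endclosed} and the endomorphism-ring computations already recorded in the proof of Corollary~\ref{corr_cyclic}.
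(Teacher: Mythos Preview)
Your proposal is correct and follows essentially the same route as the paper: form the direct sum $N' = \bigoplus_i N_i'$, use disjointness of supports to get $\End_{S'}(N') = \bigoplus_i \End_{S'}(N_i')$, invoke the dimension count from Corollary~\ref{corr_cyclic} to obtain $\dim_\kk \End_{S'}(N') = m$, and then apply Lemma~\ref{concise+endclosed}. Your write-up is in fact more detailed than the paper's (you spell out why $\Hom_{S'}(N_i', N_j') = 0$ and why the endomorphism ring is commutative), and the final decomposition step is handled the same way, by localizing and matching supports.
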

\begin{proof}
    Let $N' = N_1'\oplus  \ldots \oplus N'_r$. Since supports are disjoint, we
    have $\Hom_{S'}(N_i', N_j') = 0$ for $i\neq j$, so that
    \[
    \End_{S'}(N') = \End_{S'}(N_1')\oplus \ldots \oplus \End_{S'}(N'_r).
    \]
    By cyclicity, arguing as in Corollary~\ref{corr_cyclic}, we obtain that
    $\dim_{\kk} \End_{S'}(N') = m$, so we can apply
    Lemma~\ref{concise+endclosed} and obtain a concise, End-closed $S$-module
    $N$ that restricts to $N'$. Either by direct check or by disjointness we
    have $N = N_1\oplus \ldots \oplus N_r$, where $N_i$ restricts to $N_i'$
    for $i=1, \ldots ,r$.
\end{proof}

Alone, the condition that $\dim_\kk \End_S N_1 + \dots + \dim_\kk \End_S N_r = m$ does not guarantee that the $S$-module $N_1 \oplus \dots \oplus N_r$ can be made concise. 

\begin{example}
    Let $S = \kk[x_1, x_2, x_3]$. Consider the $\kk[x]$-module $N'_1 =
    \kk[x]/(x^2)$ and the $S$-modules $N_2 = N_3 = S/(x_1, x_2, x_3)$. Clearly
    $\dim_\kk \End N'_1 + \dim_\kk \End N_2 + \dim_\kk \End N_3 = 2 + 1 + 1 =
    4$. Consider any $S$-module $N_1$ such that there exists a linear form $x
    \in S_1$ such that the restriction of scalars via the inclusion map
    $\kk[x] \subset S$ yields $N'_1$ and $\End N_1 = \End N'_1$. Let $M = N_1
    \oplus N_2 \oplus N_3$. The factor $N_2 \oplus N_3$ is annihilated by
    $S_+$, so $S_{\leq 1} \cap \ann(M)= S_1 \cap \ann(N_1)$. The space $\End N_1$ is $2$-dimensional, so the endomorphism corresponding to multiplication by $x_1, x_2, x_3 \in S_1$ are linearly dependent, so $S_{\leq 1} \cap \ann(M)\neq 0$.
\end{example}

The following lemma generalizes this example in the local case.
\begin{lemma}\label{not_concise}
    Let $M$ be a local $S$-module of degree $m$. If there exists a cyclic or cocyclic $S$-module $N$ and an integer $l \geq 1$ such that $M \simeq (S/\maxideal)^{\oplus l} \oplus N$, then $M$ is not concise.
\end{lemma}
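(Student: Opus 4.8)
The plan is to show that the hypothesis $M \simeq (S/\maxideal)^{\oplus l} \oplus N$ with $l \geq 1$ forces a nonzero linear form in $\ann(M)$, so that $M$ cannot be concise. The key observation is that $(S/\maxideal)^{\oplus l}$ is annihilated by all of $\maxideal = S_+$, hence $S_{\leq 1} \cap \ann(M) = S_1 \cap \ann(N)$ exactly as in the preceding example; so it suffices to produce a nonzero element of $S_1 \cap \ann(N)$. First I would reduce to bounding $\dim_\kk \End_S(N)$: by Corollary~\ref{corr_cyclic} (and the isomorphisms $\End_S(N) \simeq S/\ann(N)$ in the cyclic case, $\End_S(N) \simeq \End_S(N^\vee)$ in the cocyclic case), a cyclic or cocyclic $S$-module $N$ has $\dim_\kk \End_S(N) = \dim_\kk N$. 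Since $M$ is local, $N$ is local too (it is a direct summand of $M$), so its single support point is $\maxideal$, and after an affine change of variables we may take $\maxideal = S_+ = (x_1, \dots, x_{m-1})$.

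Next I would count. The tensor $\mu_N$ being $A$-concise amounts to an injection $S_{\leq 1} \hookrightarrow \End_S(N)$ sending a linear form $\ell$ to the endomorphism ``multiplication by $\ell$''. But $\dim_\kk N = m - l < m = \dim_\kk S_{\leq 1}$ (here I use $l \geq 1$), while $\dim_\kk \End_S(N) = \dim_\kk N = m - l$. Therefore the map $S_{\leq 1} \to \End_S(N)$ cannot be injective: its kernel is a nonzero subspace of $S_{\leq 1}$. The kernel consists of linear-plus-constant forms $f = c + \ell$ acting as zero on $N$; but acting as zero means $f \in \ann(N)$, and since $N$ is local and nonzero, $1 \notin \ann(N)$, so any nonzero element of the kernel must actually be a \emph{nonzero} linear form $\ell \in S_1 \cap \ann(N)$ (the constant $c$ must vanish). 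This $\ell$ then annihilates $N$, annihilates $(S/\maxideal)^{\oplus l}$ because $\ell \in S_+ = \maxideal$, hence annihilates $M$, contradicting conciseness of $M$.

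The one point that requires a little care is the identity $\dim_\kk \End_S(N) = \dim_\kk N$ for cocyclic $N$: here I would invoke that $N^\vee$ is cyclic, so $\End_S(N^\vee) \simeq S/\ann(N^\vee)$ has dimension $\dim_\kk N^\vee = \dim_\kk N$, and endomorphisms of $N = (N^\vee)^\vee$ are the $\kk$-duals of endomorphisms of $N^\vee$, giving $\dim_\kk \End_S(N) = \dim_\kk \End_S(N^\vee) = \dim_\kk N$. I do not expect a genuine obstacle here — the proof is essentially a dimension count once the structure of $\End_S(N)$ for cyclic/cocyclic modules is in hand — but the slightly delicate part is making sure the kernel element is a \emph{linear} form and not merely a constant; this is handled by noting $1 \notin \ann(N)$ for $N$ nonzero, which rules out the constant contributing. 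Everything else is bookkeeping with direct sums and the already-established dictionary between conciseness of $\mu_M$ and $\ann(M) \cap S_{\leq 1} = 0$.
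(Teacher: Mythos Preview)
Your proof is correct and follows essentially the same dimension-count strategy as the paper: both show that the multiplication map from linear forms into (a space of size $\dim_\kk N < m$) must have nonzero kernel, yielding a linear form in $\ann(N)$. The paper's version is marginally more direct---it works with $S_1$ rather than $S_{\leq 1}$ (so the ``constant term must vanish'' step never arises) and handles the cocyclic case in one line via $\ann(N) = \ann(N^\vee)$ rather than via $\End_S(N) \simeq \End_S(N^\vee)$---but the underlying idea is the same.
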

\begin{proof}
    Note that $S_{\leq 1} \cap \ann(M)= S_1 \cap \ann(M)= S_1 \cap \ann(N)$.
    If $N$ is cyclic, then $\ann(N)$ coincides with the annihilator of the
    unit $1$ of the algebra corresponding to $N$. The module $N$ is local, so
    it is annihilated by large powers of $\maxideal$, so the subspace $S_1
    \cdot 1 \subset N$ cannot contain $1$. It follows that $\dim_\kk S_1 \cdot
    1 < \dim_\kk N \leq \dim_\kk S_1$, so the map $S_1 \to S_1 \cdot 1$ has a
    non-zero kernel, so $S_1 \cap \ann(N)$ is non-zero. The identity $\ann(N)=
    \ann(N^\vee)$ asserts that the result holds also for cocyclic modules.
\end{proof}

Above, we introduced lemmas which can be used to obtain new modules from already classified ones. The following enables us to determine some modules satisfying the assumptions of these lemmas. Let $(0 : \maxideal)_M$ denote the \emph{socle} of a local module $M$, i.e., the maximal submodule of $M$ annihilated by $\maxideal$.

\begin{lemma}\label{redundant}
    Let $M$ be a local $S$-module of finite degree. Assume there exists an element $m \in M$ such that $m \in (0 : \maxideal)_M$ and $m \notin \maxideal M$. Then there exists a local $S$-module $N$ such that $M \simeq S/\maxideal \oplus N$.
\end{lemma}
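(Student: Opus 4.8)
The plan is to split off the copy of $S/\maxideal$ using a direct-sum decomposition that is canonical once we fix a complement. First I would use the element $m\in (0:\maxideal)_M$ with $m\notin\maxideal M$ to produce a module surjection. Since $m\notin\maxideal M$, by Nakayama's Lemma (Lemma~\ref{ref:Nakayama:lem}) there is a submodule $N\subseteq M$ with $M=\kk m + N$; among such complements choose $N$ to be a $\kk$-vector-space complement of $\kk m$ inside $M$, so that $M=\kk m\oplus N$ as vector spaces. The point is that $N$ is automatically a submodule: for $n\in N$ and any $x_i$, write $x_i n = \lambda m + n'$ with $n'\in N$; applying $\maxideal$ again and using $\maxideal m=0$ one checks consistency, but more directly, $\maxideal M\subseteq N$ because $m\notin\maxideal M$ forces $\maxideal M$ to be contained in the $m$-codimension-one complement — here I would just pick $N\supseteq \maxideal M$ from the start, using that $\maxideal M$ is a submodule not containing $m$, and extend it to a vector-space complement of $\kk m$; any such extension is still a submodule because $\maxideal N\subseteq\maxideal M\subseteq N$, and $\kk m$ is killed by $\maxideal$ so $M/\maxideal M$ decomposes and we lift a complement.

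Concretely, the key steps in order: (1) observe $\maxideal M\subseteq M$ is a submodule with $m\notin\maxideal M$, hence the image $\bar m$ of $m$ in the $\kk$-vector space $M/\maxideal M$ is nonzero; (2) choose a vector-space complement $\bar N$ of $\kk\bar m$ in $M/\maxideal M$ and let $N$ be its preimage in $M$, so $N\supseteq\maxideal M$ and $M=\kk m\oplus N$ as vector spaces; (3) check $N$ is an $S$-submodule: $\maxideal N\subseteq\maxideal M\subseteq N$, and multiplication by the unit is the identity, so $N$ is closed under the $S$-action; (4) since $\maxideal m=0$, the submodule $\kk m$ is isomorphic to $S/\maxideal$, and the inclusions give an internal direct sum decomposition $M\simeq S/\maxideal\oplus N$ of $S$-modules. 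Finally $N$ is still a local $S$-module (it is supported at $\maxideal$ since $M$ is), completing the proof.

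The only mildly delicate point, and the one I would be most careful about, is step~(3): verifying that the vector-space complement can be chosen to be a submodule. The clean way is precisely to arrange $\maxideal M\subseteq N$, which is possible exactly because $m\notin\maxideal M$; once that inclusion holds, submodule closure is immediate since $\maxideal N\subseteq\maxideal M\subseteq N$. I do not expect this to require any computation beyond the dimension count $\dim_\kk(\maxideal M)\le m-1$ and the observation that $\kk m$ maps isomorphically onto its image in $M/\maxideal M$. This lemma is the local counterpart of Lemma~\ref{not_concise}'s obstruction, and it will later be used together with Corollary~\ref{cor_unique} to reduce classification of concise modules to those with no ``redundant'' socle generator, i.e.\ those with $(0:\maxideal)_M\subseteq\maxideal M$.
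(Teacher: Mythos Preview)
Your concrete steps (1)--(4) give a correct and clean proof. The key observation, which you identify, is that any $\kk$-linear complement $N$ of $\kk m$ chosen to contain $\maxideal M$ is automatically a submodule, since $x_i N\subseteq\maxideal M\subseteq N$ for each variable $x_i$; combined with $\maxideal m=0$ this yields the desired internal direct sum of $S$-modules. (Your first paragraph meanders a bit before arriving at this, but the numbered steps are self-contained.)

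This is a genuinely different route from the paper. The paper argues via a minimal free presentation: it takes a surjection $F=Se_1\oplus F'\twoheadrightarrow M$ with $e_1\mapsto m$, and shows that the kernel $K$ splits as $\maxideal e_1\oplus K'$ with $K'=K\cap F'$, whence $M\simeq (Se_1/\maxideal e_1)\oplus(F'/K')$. The splitting of $K$ uses that any relation $s_1e_1+f'\in K$ with $s_1\notin\maxideal$ would force $m\in\pi(F')$, contradicting minimality of the generating set. Your argument bypasses the free presentation entirely and works inside $M$: it is shorter and more elementary, requiring only the one-line submodule check. The paper's approach has the mild advantage of exhibiting $N$ with an explicit presentation $F'/K'$, but for the purposes of the lemma this is not needed.
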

\begin{proof}
    Let $r = \dim_\kk M / \maxideal M$. Consider the free modules $F' := S e_2 \oplus \dots \oplus S e_r$ and $F := Se_1 \oplus F'$. The element $m$ is a minimal generator, so we can choose a surjection $F \to M$ such that $e_1 \mapsto m$. Let $K$ be the kernel of this surjection and let $K' = K \cap F'$. The element $m$ lies in $(0 : \maxideal )_M$, so $K \cap S e_1 = \maxideal e_1$ and consequently we obtain $K = K \cap (S e_1 \oplus F') =  \maxideal e_1 \cap K'$. Therefore $M \simeq F/K = (Se_1 \oplus F') / (\maxideal e_1 \oplus K') \simeq S / \maxideal \oplus F' / K'$, so we can take $N = F' / K'$.
\end{proof}

\begin{lemma}\label{cyclic}
    Let $M$ be a local $S$-module of finite degree. The following hold:
    \begin{enumerate}
        \item The module $M$ is cyclic if and only if $\dim_{\kk} M / \maxideal M = 1$.
        \item The module $M^\vee$ is cyclic if and only if $\dim_{\kk} (0 : \maxideal)_M = 1$.
    \end{enumerate}
\end{lemma}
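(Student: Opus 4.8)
The plan is to prove both statements by combining Nakayama's lemma (Lemma~\ref{ref:Nakayama:lem}) with the duality $\ann(M)=\ann(M^\vee)$ and the fact that $\dim_\kk M/\maxideal M$ is the minimal number of generators of $M$, while $\dim_\kk (0:\maxideal)_M$ is the dimension of the socle of $M$. Statement (2) will follow from statement (1) applied to $M^\vee$ once we identify $(0:\maxideal)_M$ with the vector space dual of $M^\vee/\maxideal M^\vee$.

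For part (1): first I would observe that if $M$ is cyclic, say $M = S\cdot v$, then the image $\bar v$ of $v$ generates $M/\maxideal M$ as an $S/\maxideal = \kk$-vector space, so $\dim_\kk M/\maxideal M \leq 1$; since $M$ is nonzero (it is local of finite degree, hence $M/\maxideal M\neq 0$ by Nakayama), we get $\dim_\kk M/\maxideal M = 1$. Conversely, if $\dim_\kk M/\maxideal M = 1$, pick $v\in M$ whose image spans $M/\maxideal M$, and set $N = S\cdot v\subseteq M$. Then $M = \maxideal M + N$, so Nakayama's lemma (Lemma~\ref{ref:Nakayama:lem}, applicable because $M$ is local) gives $M = N$, i.e. $M$ is cyclic.

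For part (2): I would use that for a finite-dimensional $\kk$-vector space, $\kk$-linear duality $(-)^\vee$ is exact and turns quotients into subspaces. Concretely, applying $\Hom_\kk(-,\kk)$ to the surjection $M^\vee \twoheadrightarrow M^\vee/\maxideal M^\vee$ identifies $(M^\vee/\maxideal M^\vee)^\vee$ with the submodule of $(M^\vee)^\vee = M$ consisting of elements killed by $\maxideal$, that is with $(0:\maxideal)_M$; hence $\dim_\kk (0:\maxideal)_M = \dim_\kk M^\vee/\maxideal M^\vee$. By part (1) applied to the local module $M^\vee$, the latter equals $1$ if and only if $M^\vee$ is cyclic, which is exactly the claim. (One should note $M^\vee$ is again local with the same support, as recorded in Subsection~\ref{prelim:modules} via $\ann(M)=\ann(M^\vee)$.)

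The only mildly delicate point is the duality identification in part (2): one must check the $S$-module structure on $M^\vee$ defined by $(f\cdot\varphi)(m)=\varphi(fm)$ interacts with duality so that "$\varphi$ annihilated by $\maxideal$" dualizes to "element of $(0:\maxideal)_M$", and that the pairing $M\times M^\vee\to\kk$ exhibits $(0:\maxideal)_M$ as the annihilator of $\maxideal M^\vee$ — a routine linear-algebra verification once one writes out that $\varphi(\maxideal m)=0$ for all $m$ is equivalent to $(\maxideal\cdot\varphi)(m)=0$ for all $m$. I do not expect any real obstacle; the proof is short and the main content is just assembling Nakayama's lemma with finite-dimensional duality.
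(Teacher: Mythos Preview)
Your proposal is correct and follows essentially the same approach as the paper: Nakayama's lemma for part~(1), and for part~(2) the identification of $(0:\maxideal)_M$ with the dual of $M^\vee/\maxideal M^\vee$ via the evaluation pairing, reducing to part~(1) applied to $M^\vee$. The only cosmetic difference is that the paper phrases the duality step via the perfect pairing $M^\vee\times (M^\vee)^\vee\to\kk$ rather than by dualizing the quotient map, but the content is identical.
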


\begin{proof}
    \begin{enumerate}
        \item Assume that $\dim_{\kk} M / \maxideal M = 1$. Choose an element
            $n \in M$ whose image spans $M / \maxideal M$. The map $Sn \to M /
            \maxideal M$ is surjective, so $M = \maxideal M + Sn$, hence $M =
            Sn$ by Nakayama's Lemma~\ref{ref:Nakayama:lem}. If $M$ is cyclic, then it inherits its $S$-module structure from the structure of the corresponding $S$-algebra, so $M / \maxideal M$ is spanned by the class of the unit of this $S$-algebra.
        \item The perfect pairing $M^\vee \times (M^\vee)^\vee \to \kk$ induced by evaluation gives an isomorphism between the space $M^\vee / \maxideal M^\vee$ and the subspace of $(M^\vee)^\vee$ consisting of functionals vanishing on $\maxideal M^\vee$. The latter space is equal to $(0 : \maxideal)_{(M^\vee)^\vee}$, which has the same dimension as $(0 : \maxideal)_M$ since the natural map $M \to (M^\vee)^\vee$ is an isomorphism. Therefore $\dim_{\kk} M^\vee / \maxideal M^\vee = \dim_\kk (0 : \maxideal)_M$, so~the conclusion follows from the previous case.\qedhere
    \end{enumerate}
\end{proof}

\subsection{Apolarity for modules}\label{subsection_apolarity}
We briefly recall apolarity for modules, which is a very useful tool in the
classification and for finding degenerations. A more detailed survey and proofs can be found in \cite[Subsection 4.1]{components}.

Let $F$ be a finitely generated free $S$-module. A submodule $L\subseteq F$ is
\emph{cofinite} if $\dim_\kk F / L < \infty$. For such an $L$ we define the subspace
\[
    L^\perp := \{ \varphi \in F^\vee \colon \varphi(L) = 0 \} \subset F^\vee.
\]
Conversely, for a submodule $M \subset F^\vee$ of finite degree we define the subspace 
\[
    M^\perp := \{f \in F \colon \varphi(f) = 0 \text{ for every } \varphi \in M \} \subset F.
\]
Both subspaces are in fact submodules. Applying $(-)^\vee$ to the natural inclusion $M \subset F^\vee$ yields a surjective map $(F^\vee)^\vee \to M^\vee$. Note that $M^\perp$ is the kernel of the composed map $F \to (F^\vee)^\vee \to M^\vee$ which is still surjective, so we get an isomorphism of vector spaces
$
    F / M^\perp \to M^\vee,
$
which in fact is an isomorphism of modules.

The maps $L \mapsto L^\perp, M \mapsto M^\perp$ give a bijection between cofinite submodules of $F$ and finite degree submodules of $F^\vee$. This correspondence is called \emph{apolarity for modules}, see \cite[Proposition 4.3]{components} for a proof.

There is also a local version of this correspondence, more useful for applications. If we restrict our attention to cofinite submodules of $F$ which yield local quotient, then we can replace $F^\vee$ with a much smaller submodule $F^* \subset F^\vee$. 

Define $F^* $ to be $ \bigoplus_i F_i^\vee \subset F^\vee$. It is the
submodule of $F^\vee$ consisting of functionals that vanish on some
$\maxideal^N F$, where $\maxideal = S_{+}$. Consider $S^* := \kk[y_1, \dots,
y_n]$ with an $S$-module structure given by \emph{contraction}, that is
\begin{equation}\label{eq:contraction}
    x_i \cdot (y_1^{a_1} \dots y_n^{a_n}) =
    \begin{cases}
        y_1^{a_1} \dots y_{i-1}^{a_{i-1}} y_i^{a_i - 1} y_{i+1}^{a_{i+1}}\dots y_n^{a_n} & \text{if } a_i > 0 \\
        0 & \text{otherwise}.
    \end{cases}
\end{equation}
If we fix a basis $e_1, \dots, e_r$ of $F$, then $F^*$ can be identified with
the space $\bigoplus_{j=1}^r S^* e_j^*$. We view $S^*$ purely as a vector
space, although it can be viewed invariantly as a graded dual
of $S$ and has a divided power ring structure, see for
example~\cite[Appendix~A]{Iarrobino_Kanev}.

Finally, we have the local version of apolarity for modules:
    The maps $L \mapsto L^\perp, M \mapsto M^\perp$ give a bijection between
    cofinite submodules of $F$ such that $F/L$ is local with support
    $\{\maxideal\}$ and finite degree submodules of $F^*$. See \cite[Proposition 4.4]{components} for a proof.

\begin{example}[A sketch of classification of algebras]\label{ex:poonen}
    Algebras correspond to cyclic modules, so we take $F = S$ 
    (and thus $F^* = S^*$). The correspondence above gives a
    bijection between the quotient algebras $S/L$ and submodules $L^{\perp}
    \subseteq S^*$. A submodule $L^{\perp}$ is a subspace closed under the
    contraction action~\eqref{eq:contraction}. For small value of $m = \dim_{\kk} S/L = \dim_{\kk}
    L^{\perp}$, these subspaces are fairly easy to classify directly,
    especially if we allow coordinate changes on $S$.

    For example, for $m=1$, we notice that $1\in L^{\perp}$, hence $L^{\perp} = \spann{1}$
    and $S/L = S/\maxideal$.

    For $m = 2$, apart from $1$ we need to have a linear form in $L^{\perp}$, so up to
    coordinate change $L^{\perp} = \spann{1, x_1}$ and $S/L = S/(x_1^2, x_2,
    x_3, \ldots )$. This algebra is isomorphic to $\kk[x]/(x^2)$.

    For $m=3$ we have either a one-dimensional or a two-dimensional space of
    linear forms in $L^{\perp}$. The two-dimensional case
    yields $L^{\perp} = \spann{1, x_1, x_2}$ and $S/L = S/(x_1^2, x_1x_2,
    x_2^2, x_3, \ldots )$, which is an algebra isomorphic to $\kk[x, y]/(x,
    y)^2$. In the one-dimensional case, the space of linear
    forms is spanned by, say, $x_1$. Thus, the
    leading form of any polynomial in $L^{\perp}$ is necessarily a pure power
    of $x_1$, so up to coordinate change, we have $L^{\perp} = \spann{x_1^2 + x_2, x_1, 1}$ or $L^{\perp} =
    \spann{x_1^2, x_1, 1}$. Both choices yield $S/L$ isomorphic to the algebra
    $\kk[x]/(x^3)$.
\end{example}

\subsection{Maps between modules and 111-equations}\label{maps}
The definitions and general results about $111$-algebra introduced in
\cite{concise} are stated for $\kk = \CC$. This is not a necessary assumption (the same proofs work), 
so we state it over $\kk$.

Let $T \in A \otimes B \otimes C$ be a concise tensor. 
Recall that $m = \dim_\kk A = \dim_\kk B = \dim_\kk C$. Each linear endomorphism $X \in \End(A)$ 
yields a new tensor $X \circ_A T := (X \otimes \id_B \otimes \id_C)(T)$. The set of triples 
$(X,Y,Z) \in \End(A) \times \End(B) \times \End(C)$ such that $X\circ_A T = Y \circ_B T = Z \circ_C T$
is a commutative unital subalgebra of $\End(A) \times \End(B) \times \End(C)$, see \cite[Theorem 1.11]{concise}. 
We denote this algebra by $\alg{T}$ and call it the \emph{111-algebra} of $T$.
We say that $T$ is \emph{$111$-abundant} if $\dim_\kk \alg{T} \geq m$ and \emph{$111$-sharp} 
if the equality holds.

We will describe a correspondence between concise $111$-abundant tensors and bilinear maps between modules. 
Let $\mathcal{A}$ be a commutative unital $\kk$-algebra of degree at least $
m$ and let $M, N, P$ be $\mathcal{A}$-modules of degree $m$. An
$\mathcal{A}$-linear map $\varphi
\colon M \otimes_\mathcal{A} N \to P$ is \emph{non-degenerate}, if it is
surjective and for each $m \in M$ and $n\in N$ the restrictions $\varphi(m,
-)$, $\varphi(-, n)$ are nonzero. The linear map $M \otimes N \to M \otimes_\mathcal{A} N$ and 
the $\mathcal{A}$-module homomorphism $M \otimes_\mathcal{A} N \to P$ compose to a linear map 
$M \otimes N \to P$, corresponding to a tensor denoted by $T_\varphi$. The conditions imposed on 
$\varphi$ imply that $T_\varphi$ is concise and $111$-abundant, see \cite[Theorem 5.5]{concise}. 

Let $T \in A \otimes B \otimes C$ be a concise $111$-abundant tensor. 
Each of the spaces $A, B, C$ has a structure of an $\alg{T}$-module, coming from projections from 
$\alg{T}$ to the corresponding factors. We denote these modules by $\modA, \modB, \modC$. 
The map $T_C^{\top} \colon A^\vee \otimes B^\vee \to C$ factors through the natural surjection 
$A^\vee \otimes B^\vee \to \modA^\vee \otimes_{\alg{T}} \modB^\vee$ and induces an $\alg{T}$-module 
homomorphism $\varphi \colon \modA^\vee \otimes_{\alg{T}} \modB^\vee \to \modC$. 
The map $\varphi$ satisfies conditions described above, so it induces a concise $111$-abundant 
tensor $T_\varphi$ which coincides with $T$. For proofs of these claims see~\cite[Theorem 5.4]{concise}. 

Conciseness of $T$ implies that the projections of $\alg{T}$ to $\End(A)$, $\End(B)$, and $\End(C)$ 
are one-to-one, see~\cite[Theorem 1.1]{concise}.
In particular, no non-zero element of the $\alg{T}$ annihilates $\modA, \modB, \modC$ 
or their duals. We call such modules \emph{concise}. This notion of conciseness of modules over 
finite algebras is closely related to the notion of conciseness of modules over polynomial rings.
If we take a surjection from a polynomial ring $S'$ in $\dim_\kk \alg{T} - 1$ variables mapping 
$S'_{\leq 1}$ isomorphically to $\alg{T}$ and consider the $\alg{T}$-modules as $S'$-modules,
then these two notions coincide.

It is easy to determine whether a concise $111$-abundant tensor is $1$-degenerate. 
It is the case precisely when none of the $\alg{T}$-modules $\modA^\vee, \modB^\vee, \modC^\vee$ 
is cyclic, see \cite[Proposition 5.3]{concise}. Therefore, the concise $111$-abundant tensor 
$T_\varphi$ coming from a bilinear map $\varphi \colon M \times N \to P$ is $1$-degenerate precisely 
if $M, N$ are not cyclic and $P$ is not cocyclic.

Every tensor of minimal border rank is also $111$-abundant, this follows by
semicontinuity from the fact that the unit tensor is $111$-abundant, see
also~\cite[Example~4.5, Lemma~5.7]{concise}. For $m \leq 5$ the
converse is true and $111$-abundant tensors are in fact $111$-sharp, see \cite[Theorem 1.6]{concise}. 
It follows that $\dim_\kk \alg{T} = m$ and we can therefore choose a surjection $S \to \alg{T}$ 
which maps $S_{\leq 1}$ isomorphically to $\alg{T}$. 
This allows us to work with $S$-modules instead of $\alg{T}$-modules. 
The fact that $\modA, \modB, \modC$ and their duals are concise as $\alg{T}$-modules translates 
to the fact that the corresponding $S$-modules are concise.

Summing up, each tensor of minimal border rank for $m \leq 5$ gives a surjective non-degenerate map 
of $S$-modules $M \otimes_S N \to P$, where $M, N, P$ are concise $S$-modules of degree $m$. 
If the tensor is additionally $1$-degenerate, then $M, N$ are not cyclic and $P$ is not cocyclic. 
We will use the classification of local concise modules of degree $\leq 4$ to show that there 
are no such maps for $m \leq 4$ and thus there are no $1$-degenerate tensors of minimal border rank 
for $m \leq 4$.

We can decompose bilinear maps between any modules of finite degree to bilinear maps between local modules. It will enable us to utilize results such as Nakayama's lemma and use the classification of local concise $S$-modules of degrees $\leq 4$ obtained in Subsection \ref{classification_local}.

\begin{lemma}\label{maps_decomposition}
    Let $M, N, P$ be $S$-modules of degree $m$. Let $\{\maxideal_1, \dots,
    \maxideal_r\}$ be the union of their supports (see~\S\ref{prelim:modules}
    for definition). Then every map of
    $S$-modules $\varphi\colon M \otimes_S N \to P$ is a direct sum of maps $M_{\maxideal_i} \otimes_S N_{\maxideal_i} \to P_{\maxideal_i}$.
\end{lemma}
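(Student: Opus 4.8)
The plan is to exploit the canonical primary decomposition of finite-length modules recalled in \S\ref{prelim:modules}, namely that each of $M$, $N$, $P$ splits uniquely as the direct sum of its localizations at the finitely many maximal ideals in its support. Writing $\{\maxideal_1,\dots,\maxideal_r\}$ for the union of the three supports, we have $M\simeq\bigoplus_i M_{\maxideal_i}$, $N\simeq\bigoplus_i N_{\maxideal_i}$, $P\simeq\bigoplus_i P_{\maxideal_i}$ (where some summands may be zero). The first step is to compute $M\otimes_S N$ in these terms: since localization is exact and compatible with tensor products, $M_{\maxideal_i}\otimes_S N_{\maxideal_j}$ is a module supported on $\{\maxideal_i\}\cap\{\maxideal_j\}$, hence vanishes whenever $i\neq j$. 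Thus $M\otimes_S N\simeq\bigoplus_i\big(M_{\maxideal_i}\otimes_S N_{\maxideal_i}\big)$, with the $i$-th summand being precisely the localization $(M\otimes_S N)_{\maxideal_i}$.

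The second step is to observe that any $S$-linear map $\varphi\colon M\otimes_S N\to P$ must respect these decompositions. Indeed, a homomorphism between finite-length $S$-modules sends the $\maxideal_i$-primary component of the source into the $\maxideal_i$-primary component of the target, because $(M\otimes_S N)_{\maxideal_i}$ is annihilated by a power of $\maxideal_i$ and $P_{\maxideal_j}$ contains no nonzero element annihilated by a power of $\maxideal_i$ for $j\neq i$ (the maximal ideals being distinct and $S$ Noetherian). Equivalently, one can phrase this using the idempotents of the semilocal ring $S/\ann(M\otimes_S N\oplus P)$, or simply note that $\Hom_S(X_{\maxideal_i},Y_{\maxideal_j})=\Hom_S(X,Y)_{\maxideal_i}$ localized further, which is $0$ for $i\neq j$. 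Hence $\varphi=\bigoplus_i\varphi_i$ with $\varphi_i\colon M_{\maxideal_i}\otimes_S N_{\maxideal_i}\to P_{\maxideal_i}$, which is exactly the claimed decomposition.

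The third and final step is bookkeeping about degrees, which is where a little care is needed: the statement asserts $M$, $N$, $P$ all have degree $m$, but the individual localizations need not match up dimension-wise across $M$, $N$, $P$ at a given $\maxideal_i$. This is harmless for the conclusion as stated—we only claim $\varphi$ decomposes as a direct sum of maps between the localized pieces, not that those pieces are balanced. (In the intended application one typically also invokes conciseness or non-degeneracy of $\varphi$ to force the degrees to agree at each $\maxideal_i$, but that is a separate matter.) I do not anticipate a genuine obstacle here: the whole argument is a standard consequence of the structure theory of finite-length modules over a Noetherian ring, already cited in the excerpt via \cite[Theorem~2.13]{eisenbud}. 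If anything, the only subtlety is making sure the tensor product is taken over $S$ (not over a localization) and checking that $M_{\maxideal_i}\otimes_S N_{\maxideal_i}$ agrees with $M_{\maxideal_i}\otimes_{S_{\maxideal_i}} N_{\maxideal_i}$, which holds because both factors are already $S_{\maxideal_i}$-modules.
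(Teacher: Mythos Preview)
Your proof is correct and follows essentially the same approach as the paper. The only cosmetic difference is that you argue $M_{\maxideal_i}\otimes_S N_{\maxideal_j}=0$ for $i\neq j$ directly via supports, whereas the paper keeps these cross terms and instead observes that the image of $\varphi$ restricted to $M_{\maxideal_i}\otimes_S N_{\maxideal_j}$ lands in $P_{\maxideal_i}\cap P_{\maxideal_j}=0$; both are the same annihilator argument.
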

\begin{proof}
    We have $M \otimes_S N = \bigoplus_{1 \leq i,j \leq r} M_{\maxideal_i} \otimes_S N_{\maxideal_j}$, so $\varphi$ decomposes as a direct sum of homomorphisms $\varphi_{i,j} \colon M_{\maxideal_i} \otimes_S N_{\maxideal_j} \to P$. The module $M_{\maxideal_i} \otimes_S N_{\maxideal_j}$ is annihliated by sufficiently large powers of $\maxideal_i$ and $\maxideal_j$, so the same holds for the image of $\varphi_{i,j}$. Therefore the image of $\varphi_{i,j}$ is contained in $P_{\maxideal_i} \cap P_{\maxideal_j}$, so $\varphi_{i,j} = 0$ for $i \neq j$ and each $\varphi_{i,i}$ factors through a map $M_{\maxideal_i} \otimes_S N_{\maxideal_i} \to P_{\maxideal_i}$.
\end{proof}

In general, there is no reason for $M_{\maxideal}, N_{\maxideal},
P_{\maxideal}$ to have equal degrees. We will show in Section \ref{degenerate}
that if the map $M \times N \to P$ corresponds to a concise $111$-abundant
tensor $T$ with 111-algebra $\mathcal{A} = \alg{T}$ and $m \leq 4$, then we have $\dim_\kk M_{\maxideal} = \dim_\kk N_{\maxideal} = \dim_\kk P_{\maxideal} = \dim_\kk \mathcal{A}_\maxideal$ and that $M_{\maxideal}, N_{\maxideal}, P_{\maxideal}$ are concise $\mathcal{A}_\maxideal$-modules.

In this setting the map $M_\maxideal \otimes_{\mathcal{A}_\maxideal}
N_\maxideal \to P_\maxideal$ is also non-degenerate, so it
yields a $111$-abundant tensor. In general, there is no reason why should it be $1$-degenerate if the original tensor was $1$-degenerate, but we will show in Section \ref{degenerate} that it is the case for $m \leq 4$. We will need the following weaker result.

\begin{lemma}\label{maps_(co)cyclic}
    Let $M$ be an $S$-module of finite degree. If $M_\maxideal$ is cyclic (respectively, cocyclic) for each maximal ideal, then $M$ is (respectively, cocyclic).
\end{lemma}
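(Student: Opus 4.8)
The plan is to use the module decomposition $M \simeq M_{\maxideal_1} \oplus \dots \oplus M_{\maxideal_s}$ from \S\ref{prelim:modules} and reduce the statement to a compatibility check between cyclicity and direct sums. First I would observe that the support of $M$ consists precisely of the maximal ideals $\maxideal_i$ appearing in this decomposition, and that $M_{\maxideal_i}$, being annihilated by a power of $\maxideal_i$, is local. So the hypothesis says each $M_{\maxideal_i}$ is cyclic; I want to conclude $M$ is cyclic.

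The key step is that a finite direct sum of cyclic $S$-modules supported at \emph{pairwise distinct} maximal ideals is cyclic. To see this, pick generators $n_i \in M_{\maxideal_i}$ with $S n_i = M_{\maxideal_i}$ and set $n = n_1 + \dots + n_s \in M$; I claim $S n = M$. Since the $\maxideal_i$ are distinct, by the Chinese Remainder Theorem (or: choose $f_i \in S$ with $f_i \equiv 1 \bmod \maxideal_i^{N}$ and $f_i \equiv 0 \bmod \maxideal_j^{N}$ for $j \neq i$, $N \gg 0$) we can extract each summand: $f_i n = f_i n_i = n_i' $ still generates $M_{\maxideal_i}$ because $f_i$ acts invertibly on $M_{\maxideal_i}$ (it is congruent to a unit modulo the annihilator). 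Hence $S n \supseteq S n_i'= M_{\maxideal_i}$ for each $i$, so $S n = M$, proving $M$ is cyclic. The cocyclic case follows by dualizing: $M^\vee \simeq \bigoplus (M_{\maxideal_i})^\vee = \bigoplus (M^\vee)_{\maxideal_i}$ (duality commutes with the local decomposition because $\Hom_\kk(-,\kk)$ preserves the support-by-support splitting and $(0:\maxideal)$ of a module localized elsewhere vanishes), and $(M_{\maxideal_i})^\vee$ is cyclic by hypothesis, so the previous paragraph applied to $M^\vee$ gives that $M^\vee$ is cyclic, i.e.\ $M$ is cocyclic.

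The main obstacle — really the only subtle point — is making precise the interaction of $(-)^\vee$ with the local decomposition in the cocyclic case: one needs that $(M^\vee)_{\maxideal_i}$ is canonically $(M_{\maxideal_i})^\vee$. This follows because $M^\vee = \bigoplus_i (M_{\maxideal_i})^\vee$ as $S$-modules (a functional on $M$ is the same as a tuple of functionals on the summands), and $(M_{\maxideal_i})^\vee$ is supported exactly at $\maxideal_i$ since it is annihilated by the same power of $\maxideal_i$ that annihilates $M_{\maxideal_i}$; uniqueness of the local decomposition then forces $(M^\vee)_{\maxideal_i} = (M_{\maxideal_i})^\vee$. With that identification in hand, the cocyclic statement is literally the cyclic statement applied to $M^\vee$, so no further work is needed.
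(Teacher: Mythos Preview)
Your proof is correct and follows essentially the same approach as the paper: both use the local decomposition $M \simeq \bigoplus_i M_{\maxideal_i}$ and the Chinese Remainder Theorem to assemble a single generator, then handle the cocyclic case by dualizing. The only cosmetic difference is that the paper applies CRT at the level of ideals (showing $M \simeq S/\bigcap_i \ann(M_{\maxideal_i})$ directly), whereas you construct the generator $n = n_1 + \dots + n_s$ explicitly and use idempotent-like elements $f_i$ to extract each summand; your treatment of the cocyclic case is more detailed than the paper's one-line reduction but proves the same identification $(M^\vee)_{\maxideal_i} \simeq (M_{\maxideal_i})^\vee$.
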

\begin{proof}
    We can decompose $M$ as a finite direct sum $M_{\maxideal_1} \oplus \dots
    \oplus M_{\maxideal_r}$ of local modules. Assume that each
    $M_{\maxideal_i}$ is cyclic, hence isomorphic to $S /
    \ann(M_{\maxideal_i})$. There exist $N_i$ such that $\maxideal_i^{N_i}
    \subset \ann(M_{\maxideal_i})$. For each $i \neq j$ the ideals
    $\maxideal_i, \maxideal_j$ are coprime, so $\maxideal_i^{N_i},
    \maxideal_j^{N_j}$ are also coprime and consequently
    $\ann(M_{\maxideal_i}), \ann(M_{\maxideal_j})$ are coprime. By Chinese
    remainder theorem $M$ is isomorphic to $S / \bigcap_i
    \ann(M_{\maxideal_i})$, so it is cyclic. If each $M_{\maxideal_i}$ is cocyclic, then $(M_{\maxideal_1}^\vee \oplus \dots \oplus M_{\maxideal_r}^\vee)^\vee = M$ is also cocyclic.
\end{proof}

Below we give technical lemmas that will be used in the proof that for $m \leq
4$ there are no bilinear maps that could yield $1$-degenerate minimal border
rank tensors.

\begin{lemma}\label{maps_cyclic}
    Let $M, N, P$ be concise $S$-modules of degree $m$ and let $\varphi \colon M \times N \to P$ be a bilinear map of $S$-modules. If there exists an element $m \in M$ such that the map $\varphi(m, -) \colon N \to P$ is surjective, then $M$ is cyclic.
\end{lemma}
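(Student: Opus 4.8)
The plan is to exploit the surjectivity of $\varphi(m,-)\colon N\to P$ together with the $S$-bilinearity of $\varphi$ and the conciseness of $P$ to produce a single generator of $M$. First I would consider the submodule $Sm\subseteq M$ generated by the distinguished element $m$, and show that $\varphi$ restricted to $Sm\times N$ is already surjective onto $P$. Indeed, for $f\in S$ and $n\in N$ we have $\varphi(fm,n)=f\cdot\varphi(m,n)$, so the image of $\varphi|_{Sm\times N}$ is the $S$-submodule of $P$ generated by $\varphi(m,N)$; but $\varphi(m,N)=P$ already by hypothesis, so in particular $\varphi(Sm,N)=P$.

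The key step is then to compare $Sm$ with $M$ using conciseness. The bilinear map $\varphi\colon M\times N\to P$ corresponds to the tensor $T_\varphi$ (viewing it as a map $M\otimes N\to P$, equivalently $A^\vee\otimes B^\vee\to C$ after identifying $A^\vee=M$, $B^\vee=N$, $C=P$ appropriately). Conciseness of $P$ as an $S$-module — equivalently $C$-conciseness of the associated tensor — means precisely that the map $C^\vee\to A\otimes B$ is injective, i.e. no nonzero functional on $P$ vanishes on the whole image of $\varphi$. Now suppose for contradiction $Sm\subsetneq M$. Pick a nonzero functional $\lambda\in M^\vee$ vanishing on $Sm$. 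I claim $\lambda$ then contradicts the $A$-conciseness (i.e. conciseness of $M$): by the previous paragraph every element of $P$ is of the form $\varphi(fm,n)$ (a sum of such), and $\lambda$ pairs against the $M$-slot; more precisely, the tensor $T_\varphi\in M^\vee\otimes N^\vee\otimes P$ — wait, I should be careful about which slot carries the dual. The cleaner route: conciseness of $M$ means the map $M^\vee\to N^\vee\otimes P$ induced by $T_\varphi$ is injective. But if $\lambda(Sm)=0$, I want to instead use that $M$ is generated by the images under the map $N\otimes P^\vee\to M$. Let me restate: $A$-conciseness of $T_\varphi$ says the image of the transpose map $N^\vee\otimes P^\vee\to M^\vee\;$… the safest formulation is that conciseness of $M$ as an $S$-module is equivalent to: the smallest $S$-submodule $M'\subseteq M$ such that the tensor "lives in" $M'\otimes N\otimes P$ is $M$ itself, i.e. there is no proper submodule $M'\subsetneq M$ with $\varphi$ factoring through $M'\times N\to P$ in the sense that… Here the correct statement is simply that $M$ is spanned (as a $\kk$-vector space) by $\{\,$image of $\varphi^\sharp\,\}$ where $\varphi^\sharp\colon P^\vee\otimes N\to M$ is the induced map; equivalently, no nonzero $\lambda\in M^\vee$ annihilates all $\varphi^\sharp(P^\vee\otimes N)$.

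So the final argument runs: since $\varphi(Sm,N)=P$, for every $\xi\in P^\vee$ and $n\in N$ the element $\varphi^\sharp(\xi\otimes n)\in M$ is computed by pairing $\xi$ against $\varphi(-,n)$; because $\varphi(m,N)=P$ is already everything, the elements $\varphi(sm,n)$ with $s\in S$, $n\in N$ span $P$, hence their duals detect all of $P^\vee$, which forces $\varphi^\sharp(P^\vee\otimes N)\subseteq Sm$ to actually satisfy: any $\lambda\in M^\vee$ vanishing on $\varphi^\sharp(P^\vee\otimes N)$ vanishes on $Sm$ — and by $M$-conciseness such $\lambda$ must be $0$, so $Sm$ cannot be contained in a proper kernel, giving $Sm=M$. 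The main obstacle I anticipate is bookkeeping which of the three slots of $T_\varphi$ is dualized and matching "conciseness of $P$" versus "conciseness of $M$" to the right injectivity statement; once the identifications $A^\vee\leftrightarrow M$, $B^\vee\leftrightarrow N$, $C\leftrightarrow P$ are pinned down, each verification is a one-line diagram chase. I expect the cleanest writeup to avoid tensor-slot language entirely and argue directly: $Sm$ is a submodule with $\varphi(Sm,N)=P$; if $Sm\neq M$, choose $0\neq\lambda\in(M/Sm)^\vee\subseteq M^\vee$; then since $\varphi(m,-)$ is onto $P$ and $\varphi$ is $S$-bilinear, one checks $\lambda$ lies in the "non-concise directions" of $M$, contradicting that $M$ is concise.
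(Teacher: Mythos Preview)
Your argument has a genuine gap rooted in a conflation of two different notions of conciseness. In the hypotheses of the lemma, ``$M$, $N$, $P$ are concise $S$-modules'' means that $\ann(M)\cap S_{\leq 1}=0$, and likewise for $N$, $P$ --- this is a statement about the multiplication tensors $\mu_M$, $\mu_N$, $\mu_P$, not about the tensor $T_\varphi$ associated to the bilinear map $\varphi$. Nothing in the hypotheses guarantees that $T_\varphi$ is concise in any slot; for instance, the map $M\to\Hom(N,P)$, $m'\mapsto\varphi(m',-)$, need not be injective. So your final move --- picking $0\neq\lambda\in M^\vee$ vanishing on $Sm$ and deriving a contradiction with ``$M$-conciseness of $T_\varphi$'' --- invokes a property that is simply not assumed. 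The difficulty you flag is not mere bookkeeping of which slot is dualized; the identification ``conciseness of $P$ as an $S$-module $\Leftrightarrow$ $C$-conciseness of $T_\varphi$'' is false in this generality.

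The paper's proof instead uses conciseness of $P$ in the correct (annihilator) sense. From $S$-bilinearity and surjectivity of $\varphi(m,-)$ one gets $\ann(m)\subseteq\ann(P)$: if $fm=0$ then $f\varphi(m,n)=\varphi(fm,n)=0$ for all $n\in N$, and $\varphi(m,N)=P$. Conciseness of $P$ gives $S_{\leq 1}\cap\ann(P)=0$, hence $S_{\leq 1}\cap\ann(m)=0$, so the $m$-dimensional space $S_{\leq 1}$ injects into $M$ via $f\mapsto fm$. Since $\dim_\kk M=m$, this forces $S_{\leq 1}m=M$, and in particular $m$ generates $M$. The point is that conciseness enters through the annihilator of $P$, not through any injectivity property of $T_\varphi$.
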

\begin{proof}
    If $f \in \ann(m)$, then for every $n \in N$ we have $f \varphi(m, n) =
    \varphi(fm, n) = 0$, so $\ann(m)\subset \ann(\varphi(m, N))$. The map
    $\varphi(m, -)$ is assumed to be surjective, so $\ann(m)\subset \ann(P)$.
    The module $P$ is concise, so $S_{\leq 1} \cap \ann(P)= 0$ which implies
    that $S_{\leq 1} \cap \ann(m)= 0$. It follows that $S_{\leq 1}m$ is
    $m$-dimensional, so $Sm = M$ and so $m$ generates $M$.
\end{proof}

\begin{lemma}\label{maps_nakayama}
    Let $N, P$ be local $S$-modules of degree $m$, supported at the same maximal ideal $\maxideal \subset S$, and let $\varphi\colon N \to P$ be a map of $S$-modules. If the induced map $\overline{\varphi} \colon N / \maxideal N \to P / \maxideal P$ is surjective, then $\varphi$ is surjective.
\end{lemma}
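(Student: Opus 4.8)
The plan is to apply Nakayama's Lemma (Lemma \ref{ref:Nakayama:lem}) to the cokernel of $\varphi$. First I would set $Q := P / \varphi(N) = \operatorname{coker}\varphi$, which is again a local $S$-module supported at $\maxideal$, since it is a quotient of $P$. The surjectivity of the induced map $\overline{\varphi}\colon N/\maxideal N \to P/\maxideal P$ says precisely that $\varphi(N) + \maxideal P = P$, which means $\maxideal Q = Q$. Because $P$ has finite degree, $Q$ also has finite degree, hence is finitely generated, and being supported at $\maxideal$ it is a local module. Applying Nakayama's Lemma to $Q$ with the submodule $N = 0$ (i.e. $\maxideal Q + 0 = Q$ forces $Q = 0$) gives $Q = 0$, so $\varphi$ is surjective.

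The only point that needs a word of care is that Lemma \ref{ref:Nakayama:lem} as stated applies to a \emph{local} module, so I must first note that $Q$ is local: it is a quotient of the local module $P$, and any quotient of a module annihilated by $\maxideal^N$ is again annihilated by $\maxideal^N$, hence local by the characterization in \S\ref{prelim:modules}. Alternatively, one may invoke the standard form of Nakayama over the local ring $S_\maxideal$ directly, viewing $N$ and $P$ as $S_\maxideal$-modules (which is legitimate since both are local at $\maxideal$, as recalled in \S\ref{prelim:modules}), but the quotient argument above keeps everything inside $S$-modules and matches the phrasing of Lemma \ref{ref:Nakayama:lem}.

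I do not anticipate any real obstacle here; this is the module-theoretic analogue of the classical statement that a map of modules over a local ring inducing a surjection on residue fibers is itself surjective. The one bookkeeping step is confirming that $\maxideal Q = Q$ is equivalent to the hypothesis: this follows because the exact sequence $N/\maxideal N \to P/\maxideal P \to Q/\maxideal Q \to 0$ (right-exactness of $-\otimes_S S/\maxideal$) together with surjectivity of the first map gives $Q/\maxideal Q = 0$.

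\begin{proof}
    Let $Q := \operatorname{coker}(\varphi) = P/\varphi(N)$. As a quotient of the local module $P$, the module $Q$ is again local, supported at $\maxideal$, and of finite degree. Tensoring the exact sequence $N \xrightarrow{\varphi} P \to Q \to 0$ with $S/\maxideal$ over $S$ yields the exact sequence
    \[
        N/\maxideal N \xrightarrow{\overline{\varphi}} P/\maxideal P \to Q/\maxideal Q \to 0.
    \]
    Since $\overline{\varphi}$ is surjective by hypothesis, we get $Q/\maxideal Q = 0$, i.e.\ $Q = \maxideal Q$. Applying Nakayama's Lemma~\ref{ref:Nakayama:lem} to the local module $Q$ with the submodule $N = 0$ gives $Q = 0$. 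Hence $\varphi$ is surjective.
\end{proof}
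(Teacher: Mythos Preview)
Your proof is correct and essentially identical to the paper's: the paper simply observes that surjectivity of $\overline{\varphi}$ means $P = \varphi(N) + \maxideal P$ and applies Lemma~\ref{ref:Nakayama:lem} directly to $P$ with the submodule $\varphi(N)$, which is the same Nakayama argument as yours, just without passing explicitly to the cokernel.
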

\begin{proof}
    The map $\overline{\varphi}$ is surjective, so $P = \varphi(P) + \maxideal
    P$ and we conclude by Nakayama's Lemma~\ref{ref:Nakayama:lem}.
\end{proof}

\begin{lemma}\label{maps_not_surjective}
    Let $M, N, P$ be local concise $S$-modules of degree $m$ supported at the same maximal ideal $\maxideal \subset S$ and let $\varphi \colon M \times N \to P$ be a bilinear surjective map of $S$-modules. If $P$ is cyclic, then $M, N$ are cyclic as well.
\end{lemma}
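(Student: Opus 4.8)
The plan is to reduce the surjectivity of $\varphi(m,-)$ or $\varphi(-,n)$ for a well-chosen generator, using the cyclicity of $P$ together with Nakayama's Lemma~\ref{ref:Nakayama:lem} and Lemma~\ref{maps_nakayama}. Since $P$ is cyclic and local, by Lemma~\ref{cyclic}(1) the quotient $P/\maxideal P$ is one-dimensional. Because $\varphi$ is surjective, the induced bilinear map $\overline{\varphi}\colon (M/\maxideal M)\times(N/\maxideal N)\to P/\maxideal P$ is still surjective: indeed $P = \varphi(M\times N) + \maxideal P$, and in fact every element of $P$ is a sum of products $\varphi(m_i,n_i)$, whose images in the one-dimensional space $P/\maxideal P$ then span it. So there exist $m\in M$, $n\in N$ whose images in $M/\maxideal M$, $N/\maxideal N$ give $\overline{\varphi}(\bar m,\bar n)\neq 0$, i.e. $\overline{\varphi}(\bar m,\bar n)$ spans $P/\maxideal P$.

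Now consider the $S$-module map $\varphi(m,-)\colon N\to P$. I claim the induced map $\overline{\varphi(m,-)}\colon N/\maxideal N\to P/\maxideal P$ is surjective: it sends $\bar n$ to $\overline{\varphi(m,n)} = \overline{\varphi}(\bar m,\bar n)$, which spans the target by the choice of $m,n$. By Lemma~\ref{maps_nakayama}, $\varphi(m,-)\colon N\to P$ is surjective. Applying Lemma~\ref{maps_cyclic} (with the roles of the first two factors of $\varphi$ as written), the surjectivity of $\varphi(m,-)$ forces $M$ to be cyclic. Symmetrically, the induced map $\overline{\varphi(-,n)}\colon M/\maxideal M\to P/\maxideal P$ sends $\bar m'$ to $\overline{\varphi}(\bar m',\bar n)$, which is surjective because $\overline{\varphi}(\bar m,\bar n)\neq 0$; hence by Lemma~\ref{maps_nakayama} the map $\varphi(-,n)\colon M\to P$ is surjective, and then Lemma~\ref{maps_cyclic} applied with the two arguments of $\varphi$ swapped shows that $N$ is cyclic. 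This gives both conclusions.

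I expect the only subtlety to be the first step: verifying that $\overline{\varphi}$ is surjective as a \emph{bilinear} map onto the one-dimensional space $P/\maxideal P$, and more precisely that a single pair $(\bar m,\bar n)$ can be chosen with $\overline{\varphi}(\bar m,\bar n)\neq 0$. This needs that the bilinear form $\overline{\varphi}$ on the $\kk$-vector spaces $M/\maxideal M$ and $N/\maxideal N$ is not identically zero — which follows since its image spans the (nonzero) space $P/\maxideal P$, and the image of a bilinear map, while not a subspace, spans the same subspace as the set of all values; as that span is one-dimensional, some single value is already nonzero. Everything else is a direct application of the lemmas already established, so no further calculation is required.
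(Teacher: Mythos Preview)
Your proof is correct and uses exactly the same ingredients as the paper's proof: Lemma~\ref{cyclic}(1) to get $\dim_\kk P/\maxideal P = 1$, then Lemma~\ref{maps_nakayama} and Lemma~\ref{maps_cyclic}. The only difference is presentation: the paper argues by contradiction (assume $M$ not cyclic, then every $\overline{\varphi}(m,-)$ is zero, so $\varphi(M\times N)\subseteq\maxideal P$, contradicting surjectivity), while you give the direct version (surjectivity forces some $\overline{\varphi}(\bar m,\bar n)\neq 0$, hence $\varphi(m,-)$ is surjective, hence $M$ is cyclic). These are logically the same argument.
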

\begin{proof}
    Assume that $M$ is not cyclic. By Lemmas \ref{maps_nakayama} and
    \ref{maps_cyclic} for each $m \in M$ the induced map
    $\overline{\varphi}(m, -) \colon N/\maxideal N \to P/\maxideal P$ is not
    surjective. The module $P$ is cyclic, so by Lemma \ref{cyclic} the space
    $P / \maxideal P$ is 1-dimensional, so all of these maps are in fact zero.
    It follows that the image of $\varphi$ is contained in $\maxideal P$. By
    Nakayama's Lemma~\ref{ref:Nakayama:lem} the submodule $\maxideal P$ is not
    the whole $P$, so $\varphi$ is not surjective. The same argument shows that $N$ is not cyclic.
\end{proof}

\begin{corollary}\label{maps_symmetry}
    Let $M, N, P$ be concise $S$-modules of degree and let $\varphi \colon M \times N \to P$ be a non-degenerate bilinear map of $S$-modules. Assume that for every maximal ideal $\maxideal \subset S$ the modules $M_\maxideal, N_\maxideal, P_\maxideal$ have equal dimensions. If at least one of $M, N, P$ is cyclic or cocyclic, then $T_\varphi$ is $1_*$-generic.
\end{corollary}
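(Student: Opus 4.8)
The plan is to reduce the claim to the local case via Lemma~\ref{maps_decomposition} and then invoke Lemma~\ref{maps_not_surjective}. First I would observe that by Lemma~\ref{maps_decomposition} the map $\varphi$ decomposes as a direct sum of bilinear maps $\varphi_\maxideal \colon M_\maxideal \times N_\maxideal \to P_\maxideal$ over the maximal ideals $\maxideal$ in the union of the supports. Non-degeneracy of $\varphi$, in particular surjectivity, passes to each summand: surjectivity of a direct sum of maps forces each summand to be surjective. So each $\varphi_\maxideal$ is a surjective bilinear map of local concise $S$-modules supported at $\maxideal$, and by hypothesis the three local modules $M_\maxideal, N_\maxideal, P_\maxideal$ have the same dimension.

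Next I would use the hypothesis that one of $M, N, P$ is cyclic or cocyclic. Suppose first that $P$ is cyclic. A cyclic module localizes to cyclic modules, so each $P_\maxideal$ is cyclic; then Lemma~\ref{maps_not_surjective} applies to each $\varphi_\maxideal$ and gives that every $M_\maxideal$ and $N_\maxideal$ is cyclic. By Lemma~\ref{maps_(co)cyclic} this implies $M$ and $N$ are cyclic, so $\mu_M$ is $1_B$-generic (in particular $1_*$-generic); since $T_\varphi$ corresponds, up to transposing factors, to such a multiplication map, $T_\varphi$ is $1_*$-generic. If instead $M$ is cyclic (the case of $N$ is symmetric), then $M$ itself being cyclic already makes $T_\varphi$ $1_*$-generic directly, using that $T_\varphi$ corresponds to the composition $M \otimes N \to M \otimes_S N \to P$ and the discussion in \S\ref{maps} identifying cyclicity of one of the factor-modules with $1_*$-genericity of the tensor. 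Finally, if $P$ is cocyclic, apply the already-established case to the dual map $\varphi^\vee \colon P^\vee \times (\text{appropriate factor})^\vee \to (\cdot)^\vee$, or more cleanly: $P$ cocyclic means $P^\vee$ is cyclic, and the transpose tensor $T_\varphi^\vee$ is then seen to be $1_*$-generic by the cyclic case, whence $T_\varphi$ is $1_*$-generic as well (being $1_*$-generic is invariant under transposing factors).

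The main obstacle is bookkeeping the three separate cases (one of $M$, $N$, $P^\vee$ cyclic) and making sure the localization behaves correctly: I need that localizing a cyclic module $S/\ann$ at $\maxideal$ yields the cyclic module $S_\maxideal/\ann S_\maxideal$, and dually for cocyclic modules via the identity $\ann(M) = \ann(M^\vee)$ used in the proof of Lemma~\ref{maps_not_surjective}. The other subtlety is confirming that surjectivity and non-degeneracy of $\varphi$ really do restrict to each local summand; this is immediate from the direct-sum decomposition $M \otimes_S N = \bigoplus_\maxideal M_\maxideal \otimes_S N_\maxideal$ established in the proof of Lemma~\ref{maps_decomposition}, since a direct sum of maps is surjective if and only if each summand is. Once these points are in place the corollary follows by stringing together Lemmas~\ref{maps_decomposition}, \ref{maps_not_surjective}, and~\ref{maps_(co)cyclic} together with the $1_*$-genericity criterion from \S\ref{maps}.
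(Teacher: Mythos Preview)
Your overall strategy---localize via Lemma~\ref{maps_decomposition}, apply Lemma~\ref{maps_not_surjective} in the local case, and reassemble with Lemma~\ref{maps_(co)cyclic}---matches the paper's. However, your case analysis is incomplete. The hypothesis ``at least one of $M, N, P$ is cyclic or cocyclic'' gives six cases, and you address only four: $M$ cyclic, $N$ cyclic, $P$ cyclic, $P$ cocyclic. The cases $M$ cocyclic and $N$ cocyclic are missing, and these are precisely the nontrivial ones remaining. Note that the criterion from \S\ref{maps} (which says $T_\varphi$ is $1$-degenerate exactly when $M, N$ are not cyclic and $P$ is not cocyclic) disposes of $M$ cyclic, $N$ cyclic, and $P$ cocyclic immediately; so your dualization argument for $P$ cocyclic is unnecessary, while the genuinely needed analogous step for $M$ cocyclic and $N$ cocyclic is absent.

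The paper closes this gap by permuting the bilinear map: from $\varphi\colon M\times N\to P$ one obtains a non-degenerate $\varphi'\colon M\times P^\vee\to N^\vee$ (and similarly $N\times P^\vee\to M^\vee$), to which the already-established ``target cyclic'' argument applies. Concretely, if $N$ is cocyclic then $N^\vee$ is cyclic, so each $(N^\vee)_\maxideal$ is cyclic; applying Lemma~\ref{maps_not_surjective} to $\varphi'_\maxideal$ gives that each $M_\maxideal$ is cyclic, hence $M$ is cyclic by Lemma~\ref{maps_(co)cyclic}, and $T_\varphi$ is $1_*$-generic. The case $M$ cocyclic is symmetric. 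Your proposal gestures at dualization only for $P$, but the same permutation trick is what is needed for the two cases you omitted.
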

\begin{proof}
    Assume that $T_\varphi$ is $1$-degenerate. By the general characterization
    of concise 111-abundant tensors we know that $P$ cannot be cocyclic and $M, N$
    cannot be cyclic. If $P$ is cyclic, then each $P_\maxideal$ is cyclic
    because $(S / \ann(P))_\maxideal = S_\maxideal / (\ann(P))_\maxideal$, so
    by Lemma \ref{maps_not_surjective} in particular each $M_\maxideal$ is
    cyclic, so by Lemma \ref{maps_(co)cyclic} the module $M$ is cyclic and we
    get a contradiction. Therefore $P$ is neither cyclic nor cocyclic. The
    permuted map $\varphi'\colon M\times P^{\vee}\to N^{\vee}$ is
    nondegenerate as well. Repeating the above argument, we get that
    $N$ is not cyclic. The argument for $M$ is the same.
\end{proof}

\subsection{A special case of Kronecker's normal form}\label{ssec:Kronecker}

For certain cases of classification below (in~\S\ref{ssec:m5}), a very special case of Kronecker's
normal form will be useful. It seems nontrivial to find a reference over an
arbitrary field and, moreover, we need only a little, so we prove it below.
\newcommand{\Mats}[1]{\mathbb{M}_{#1}}%
\newcommand{\tr}{\operatorname{tr}}%

Consider vector spaces $B', C'$ and a subspace $V\subseteq
\Mats{b\times c}$ of matrices. For
fixed dimensions $\dim V$, $b, c$ we can ask what are the orbits
of $\GL_b\times \GL_c$ acting on the Grassmannian $\Gr(\dim V,
\Mats{b\times c})$. In this section we recall the answer for very small cases.

The \emph{trace pairing} $\Mats{b\times c}\times \Mats{b\times c}\to \kk$ given by
the formula $(X, Y)\mapsto \tr(XY^{\top})$ is nondegenerate for every $b$, $c$
and any field $\kk$. It yields an isomorphism $\Gr(\dim V, \Mats{b\times c})
\simeq \Gr(bc - \dim V, \Mats{b\times c})$ given by $V\mapsto V^{\perp}$.
The $\GL_b\times \GL_c$ orbits on both spaces correspond.

\begin{example}\label{ex:2x2}
    Consider the case $b=c=2$, $\dim V = 3$. By the trace pairing, we reduce
    to considering $1$-dimensional subspaces of $\Mats{2\times 2}$. These are
    classified by rank with two isomorphism classes.
    \[
        \begin{bmatrix}
            1 & 0 \\
            0 & -1 \\
        \end{bmatrix},
        \begin{bmatrix}
            0 & 1 \\
            0 & 0 \\
        \end{bmatrix},
        \begin{bmatrix}
            0 & 0 \\
            1 & 0 \\
        \end{bmatrix}
        \text{\quad or \quad}
        \begin{bmatrix}
            1 & 0 \\
            0 & 0 \\
        \end{bmatrix},
        \begin{bmatrix}
            0 & 1 \\
            0 & 0 \\
        \end{bmatrix},
        \begin{bmatrix}
            0 & 0 \\
            1 & 0 \\
        \end{bmatrix}.
    \]
\end{example}

\newcommand{\boldedcust}[2]{\mathbf{#1}\marginnote{$\mathbf{#2}$}}
\newcommand{\bolded}[1]{\mathbf{#1}\marginnote{$\mathbf{#1}$}}
\begin{example}\label{ex:2x3}
    Consider the case $b=2, c=3, \dim V = 4$. By the trace pairing, we reduce
    to $2$-dimensional subspaces $W\subseteq \Mats{2\times 3}$. In the classical
    language, these are Kronecker's pencils of matrices. We classify them below.

    \begin{enumerate}
        \item Assume that $W$ contains only matrices of rank $2$. Let
            $w_1, w_2$ be a basis of $W$. If the kernels of $w_1$,
            $w_2$ intersect non-trivially, then the space $W$ lies in $\Mats{2\times
            2}$, so it has a rank one element. Hence the kernels are disjoint. Change the basis
            $e_1, e_2, e_3$ so that $w_1 e_1 = 0, w_2 e_3 = 0$.

            Take $f_1 = w_1 e_3, f_2 = w_2 e_1$.
            The matrices $w_1, w_2$ have rank 2, so $f_1, f_2$ are non-zero.
            Suppose that $f_1, f_2$ are linearly dependent. Take a nonzero $w'\in \spann{w_1,
            w_2}$ such that $w'e_2 = 0$, then the image
            of $w'$ is $\spann{f_1} = \spann{f_2}$, so $w'$ has rank one, a contradiction. Therefore $f_1, f_2$ are linearly independent.

            The pairs of vectors $f_1,  w_1 e_2$ and $f_2, w_2 e_2$ are
            linearly independent, so after rescaling $f_1, f_2$ and adding
            multiples of $e_1, e_3$ to $e_2$ we can assume that $w_1 e_2 =
            f_2, w_2 e_2 = f_1$. In bases $e_1$, $e_2$, $e_3$ and $f_1$,
            $f_2$ we obtain the subspace $\bolded{W_{11}} = \spann{w_1, w_2}$,
            where
            \[
                w_1 = \begin{bmatrix}
                    0 & 1 & 0 \\
                    0 & 0 & 1 \\
                \end{bmatrix},\quad
                w_2 = \begin{bmatrix}
                    1 & 0 & 0 \\
                    0 & 1 & 0 \\
                \end{bmatrix}.
            \]
        \item Assume that $W$ contains both rank 1 and rank 2 matrices. Let $w_1, w_2$ be a basis of $W$ such that $w_1$ has rank 2 and $w_2$ has rank 1.
            \begin{enumerate}
                \item Assume that the kernels of $w_1, w_2$ intersect
                    trivially. Change the basis so that $\ker w_1 =
                    \spann{e_2}$, $\ker w_2 = \spann{e_1, e_3}$, and $w_1 e_1
                    = w_2 e_2$. In bases $e_1$, $e_2$, $e_3$ and  $w_1
                    e_1, w_1 e_3$ we obtain the subspace $\bolded{W_{12}} = \spann{w_1, w_2}$,
            where
                    \[
                        w_1 = \begin{bmatrix}
                            1 & 0 & 0 \\
                            0 & 0 & 1 \\
                        \end{bmatrix},\quad
                        w_2 = \begin{bmatrix}
                            0 & 1 & 0 \\
                            0 & 0 & 0 \\
                        \end{bmatrix}.
                    \]
                \item Assume that the kernels of $w_1, w_2$ intersect
                    non-trivially.
                    Change the basis so that $\ker w_1 = \spann{e_1}$ and
                    $\ker w_2 = \spann{e_1, e_2}$. Let $f_1 = w_1 e_2, f_2 =
                    w_1 e_3, f_3 = M_3 e_3$. The vectors $f_1, f_2$ form a
                    basis of $\kk^2$, so $f_3$ is a linear combination of
                    $f_1, f_2$.

                    If there exists an element $\lambda \in \kk$ such that
                    $f_2 + \lambda f_1$ is a multiple of $f_3$, then after
                    adding a multiple of $e_2$ to $e_3$ and rescaling $w_2$ we
                    can assume that $f_2 = f_3$. In bases $e_1$, $e_2$, $e_3$
                    and  $f_1, f_2$ we obtain the subspace
                    $\bolded{W_{13}} = \spann{w_1, w_2}$, where
                    \[
                        w_1 - w_2 = \begin{bmatrix}
                            0 & 1 & 0 \\
                            0 & 0 & 0 \\
                        \end{bmatrix},\quad
                        w_2 = \begin{bmatrix}
                            0 & 0 & 0 \\
                            0 & 0 & 1 \\
                        \end{bmatrix}.
                    \]

                    If there is no such element $\lambda \in \kk$, then $f_3$
                    is a multiple of $f_1$. An analogous argument yields the
                    subspace $\bolded{W_{14}} = \spann{w_1, w_2}$ where
                    \[
                        w_1 = \begin{bmatrix}
                            0 & 1 & 0 \\
                            0 & 0 & 1 \\
                        \end{bmatrix},\quad
                        w_2 = \begin{bmatrix}
                            0 & 0 & 1 \\
                            0 & 0 & 0 \\
                        \end{bmatrix}.
                    \]
            \end{enumerate}

        \item Assume that $W$ contains only matrices of rank at most $1$. Let $w_1, w_2$ be a basis of $W$.
            \begin{enumerate}
                \item Assume that $\ker w_1 \neq \ker w_2$. Change the basis
                    $e_1, e_2, e_3$ so that $\ker w_1 = \spann{e_1, e_2}$ and
                    $\ker w_2 = \spann{e_1, e_3}$. Each linear combination of
                    $w_1, w_2$ has rank one, so the vectors $w_1 e_3, w_2 e_2$
                    are non-zero and linearly dependent. After bases changes
                    we obtain the subspace $\bolded{W_{15}}$ spanned by
                    \[
                        \begin{bmatrix}
                            0 & 0 & 0 \\
                            0 & 0 & 1 \\
                        \end{bmatrix},\quad
                        \begin{bmatrix}
                            0 & 0 & 0 \\
                            0 & 1 & 0 \\
                        \end{bmatrix}.
                    \]
                \item Assume that $\ker w_1 = \ker w_2$. After change of bases, we
                    obtained $\bolded{W_{10}}$ the subspace spanned by
                    \[
                        \begin{bmatrix}
                            0 & 0 & 1 \\
                            0 & 0 & 0 \\
                        \end{bmatrix},\quad
                        \begin{bmatrix}
                            0 & 0 & 0 \\
                            0 & 0 & 1 \\
                        \end{bmatrix}.
                    \]
            \end{enumerate}
    \end{enumerate}
\end{example}

\section{Classification}\label{classification}
\subsection{Local concise modules}\label{classification_local}
Let $S = \kk[x_1, \ldots ,x_{m-1}]$.
In this section we classify \emph{concise} local $S$-modules $M$ of degree $m \leq 5$, up to an affine change of variables and duality.
We fix the support of the module at $\maxideal = S_{\geq 1}$. The affine
changes of variables that preserve $\maxideal$ are precisely the linear
changes of variables in $S$, so two such modules are equivalent if one becomes isomorphic to the other after a linear change of variables.

The classification up to degree three is nearly trivial and degree four is
also quite approachable. We give all the details, because we will use the
result to classify local concise modules of degree five and to prove that
there are no $1$-degenerate tensors of degree at most four.

\subsubsection{Case $m=1$.}
    In this case $S = \kk$ and the only $\kk$-module of degree $1$ is $\kk$.

\subsubsection{Case $m=2$.}
    Let $M$ be an $S$-module of degree $2$. Consider the space $M /\maxideal M$.
    Since $M$ is nonzero, by Nakayama's Lemma~\ref{ref:Nakayama:lem} we have $\dim_{\kk} M/\maxideal
    M \geq 1$.
    If $\dim_\kk M /\maxideal M = 2$, then $\maxideal M = 0$, so $M = \kk^2$,
    which is not concise. If $\dim_\kk M /\maxideal M = 1$, then by Lemma
    \ref{cyclic} the module $M$ is cyclic. By Corollary~\ref{corr_cyclic} and
    Example~\ref{ex:poonen} we get that the only concise cyclic $S$-module is the one coming from the $\kk$-algebra $\kk[x]/(x^2)$.

\subsubsection{Case $m=3$.}
    To deal with the cyclic and cocyclic modules, we use
    Corollary~\ref{corr_cyclic} and the classification in
    Example~\ref{ex:poonen}.
    There are two concise cyclic modules, coming from the $\kk$-algebras
    $\kk[x]/(x^3)$ and $\kk[x,y]/(x,y)^2$, and two concise cocyclic modules,
    dual to these ones. By Lemma~\ref{cyclic}, $(\kk[x]/(x^3))^\vee$ is cyclic
    so we discard it, but $(\kk[x,y]/(x,y)^2)^\vee$ is a new concise module.

    Let $M$ be neither cyclic nor cocyclic. By Lemma~\ref{cyclic} we have
    \[
        \dim_\kk M / \maxideal M \geq 2,\quad\mbox{and}\quad \dim_\kk (0 : \maxideal)_M \geq 2
    \]
    so by dimensional reasons there exists a minimal generator of $M$ which
    lies in the socle. By Lemma~\ref{redundant} there exists an $S$-module $N$
    of degree $2$ such that $M = \kk \oplus N$. By classification for $m = 1,
    2$ and Lemma~\ref{not_concise}, the module $M$ is not concise.

\subsubsection{Case $m=4$.}\label{deg4}
    As above, we first deal with cyclic and cocyclic modules. The concise
    cyclic modules come from $\kk$-algebras $\kk[x]/(x^4), \kk[x,y]/(x^2, xy,
    y^3), \kk[x,y]/(x^2, y^2)$ and $\kk[x,y,z]/(x,y,z)^2$, by~\cite{poonen} or
    arguing similarly as in Example~\ref{ex:poonen}. There are also two new concise cocyclic modules $(\kk[x,y]/(x^2, xy, y^3))^\vee$ and $(\kk[x,y,z]/(x,y,z)^2)^\vee$.

    Let $M$ be an $S$-module of degree $4$ which is not cyclic or cocyclic.
    Lemma~\ref{cyclic} yields $\dim_\kk M / \maxideal M \geq 2$ and $\dim_\kk (0 :
        \maxideal)_M \geq 2$.
    If there exists a minimal generator from the socle, then by the
    classification for $m=1,2,3$ and Lemma~\ref{not_concise}, the considered
    module cannot be concise. We thus obtain that $(0 : \maxideal)_M \subset
    \maxideal M$. By the above, we have $\dim_{\kk} \maxideal M \leq 2$ while
    $\dim_{\kk} (0:\maxideal)_M\geq 2$, so
    \[
        (0 : \maxideal)_M = \maxideal M
    \]
    is a two-dimensional subspace.
    Choose a basis $e_1, e_2, e_3, e_4$ of $M$ such that $e_3, e_4$ span this
    subspace. In this basis, the multiplications by $x_1, x_2, x_3$ on $M$
    have matrices of the form
        \[
        \begin{bmatrix}
            0 & 0 & 0 & 0 \\
            0 & 0 & 0 & 0 \\
            * & * & 0 & 0 \\
            * & * & 0 & 0 \\
        \end{bmatrix},
        \]
        so they yield a $3$-dimension subspace of $2 \times 2$ matrices.
        Using Example~\ref{ex:2x2}, we obtain two possible modules
        $\boldedcust{N_7}{N_{7, 8}}$,
        $\mathbf{N_8}$. They are isomorphic to their duals.

        \subsubsection{Case $m=5$}\label{ssec:m5}
We start with cyclic and cocyclic modules. Corollary~\ref{corr_cyclic} and the
classification from~\cite{poonen} yields modules corresponding to the following $\kk$-algebras:
\begin{equation}\label{eq:localalgebras}
\begin{array}{c|c}
    \bolded{M_1} & \kk[x]/(x^5) \\
     \bolded{M_2} & \kk[x, y]/(x^2, xy, y^4) \\
     \bolded{M_3} & \kk[x,y]/(x^2 + y^3, xy) \\
     \bolded{M_4} & \kk[x, y]/(xy, x^3, y^3) \\
     \bolded{M_5} & \kk[x,y]/(x^2, xy^2, y^3) \\
     \bolded{M_6} & \kk[x,y,z]/(x^2, y^2, xy, xz, yz, z^3) \\
     \bolded{M_7} & \kk[x,y,z]/(x^2, y^2, z^2, xy, xz) \\
     \bolded{M_8} & \kk[x,y,z]/(x^2, y^2, xz, yz, xy + z^2) \\
     \bolded{M_9} & \kk[x, y, z, w]/(x, y, z, w)^2 \\
\end{array}
\end{equation}
All of these modules are End-closed by Corollary \ref{corr_cyclic}. By
duality, this also concludes the case of cocyclic modules.

We move to the case of modules concise $S$-modules of the form $M = \kk \oplus N$. By the classification of local modules of degree $\leq 4$ and Lemma~\ref{not_concise} we get that $M$ can be concise only if $N$ comes from one of the modules $N_7, N_8$.
In this case the action of $x_1, x_2, x_3$ corresponded to $3$ linearly
independent matrices in the space of $2 \times 2$ matrices. A concise module
is obtained only if $x_4$ acts by a matrix which completes it to a basis of
the space of $2 \times 2$ matrices. In both cases, we obtain a concise module
$\bolded{M_{10}}$ whose multiplication tensor after a linear change of variables is
\[
\begin{bmatrix}
    x_0 & 0 & 0 & 0 & 0 \\
    0 & x_0 & 0 & 0 & 0 \\
    0 & 0 & x_0 & 0 & 0 \\
    0 & x_1 & x_3 & x_0 & 0 \\
    0 & x_2 & x_4 & 0 & x_0 \\
\end{bmatrix}
\]
The fact that $M_{10}$ is annihilated by $\maxideal^2$ implies that
$M_{10}$ is End-closed.

If a local concise $S$-module $M$ is not of one of these forms, then by
Lemma~\ref{cyclic} and Lemma~\ref{redundant} we have
\[
    \dim_\kk M / \maxideal M \geq 2,\quad \dim_\kk (0 : \maxideal)_M \geq
    2,\quad (0 : \maxideal)_M \subset \maxideal M.
\]
If the inclusion $(0 : \maxideal)_M \subset \maxideal M$ is proper, then
$\maxideal^2 M \neq 0$. There are three further subcases:
\begin{enumerate}
    \item\label{case1} $(0 : \maxideal)_M = \maxideal M$ are subspaces of dimensions $3$ and $\maxideal^2 M = 0$,
    \item\label{case2} $(0 : \maxideal)_M = \maxideal M$ are subspaces of dimensions $2$ and $\maxideal^2 M = 0$,
    \item\label{case3} $(0 : \maxideal)_M \subsetneq \maxideal M$ are subspaces of dimensions $2, 3$ respectively, $\dim_\kk \maxideal^2 M = 1$ and $\maxideal^3 M = 0$.
\end{enumerate}
The modules from the subcase \ref{case1} are exactly the dual modules of the
modules from the subcase \ref{case2} because taking the dual module
corresponds to transposing the matrices describing the actions of variables
$x_1, x_2, x_3, x_4$. Therefore there are in fact only two genuinely new subcases.

We start by investigating the \textbf{subcase~\ref{case2}}. Choose a basis $e_1, e_2,
e_3, e_4, e_5$ of the underlying vector space of $M$ such that $e_4, e_5$
spans $(0 : \maxideal)_M = \maxideal M$. The multiplications by $x_1,  \ldots , x_4$ correspond to matrices of the form
\[
\begin{bmatrix}
    0 & 0 & 0 & 0 & 0 \\
    0 & 0 & 0 & 0 & 0 \\
    0 & 0 & 0 & 0 & 0 \\
    * & * & * & 0 & 0 \\
    * & * & * & 0 & 0 \\
\end{bmatrix}.
\]
We require $M$ to be concise, so the obtained $2 \times 3$ matrices  span a 4-dimensional subspace $W \subset \mathrm{M}_{2
\times 3}(\kk)$. We employ Example~\ref{ex:2x3}. From $W_{11}, \ldots , W_{15}$, we get new modules
$\boldedcust{M_{11}}{M_{11, \ldots , 15}}, \ldots , \mathbf{M_{15}}$ which indeed satisfy the conditions of subcase
\ref{case2}, so in particular they are not isomorphic to any of previously
obtained modules. They are End-closed because they are annihilated by
$\maxideal^2$ and none of them is self-dual as their dual modules are not
generated by two elements.

We move to the \textbf{subcase~\ref{case3}}. We will use apolarity for modules, see Subsection \ref{subsection_apolarity}.
We have $\dim_\kk M / \maxideal M = 2$ and $\dim_\kk M^\vee / \maxideal M^\vee
= \dim_\kk (0 : \maxideal)_M = 2$, so there exist $\sigma_1, \sigma_2 \in S^*
e_1^* \oplus S^* e_2^*$ such that $M^{\vee} = S\sigma_1 +  S\sigma_2$ and this is a minimal presentation.

We have $\dim_{\kk} (0 : \maxideal^2)_{M^{\vee}} = \dim_{\kk} M/\maxideal^2 M =
m-1$, so we may assume that $\sigma_2$ has degree at most one.
We have $\dim_{\kk} \maxideal^2 M^{\vee} = \dim_{\kk} M/(0 : \maxideal^2)_M =
1$, so we may assume that $\sigma_1 = qe_1^* + \ell_{1,2} e_2^*$ with $q$ of
degree two, $\ell_{1,2}$ of degree one.
Again by $\dim_{\kk} (0 : \maxideal^2)_{M^{\vee}} = m-1$, we can assume that
the degree-two part of $q$ is equal to
$y_1^2$.

In summary, we obtain the following normal form
\[
    \begin{split}
        \sigma_1 = (y_1^2 + \ell_{1,1})e_1^* &+ \ell_{1,2} e_2^* \\
        \sigma_2 = \ell_{2,1}e_1^* &+ \ell_{2,2}e_2^*,
    \end{split}
\]
where $\ell_{i,j}\in \spann{y_1, \ldots , y_4}$ are linear forms in dual
variables.

Let us summarize the transformation which we have at out disposal. We may
\begin{itemize}
    \item add a multiple of $\sigma_2$ to $\sigma_1$, add a multiple of
        $x_1\sigma_1 = y_1 e_1^*$ to $\sigma_1$ and $\sigma_2$, multiply
        $\sigma_1, \sigma_2$ by non-zero constants: this corresponds to different choices of generators $\sigma_1, \sigma_2$,
    \item add some multiple of $e_1$ to $e_2$: this corresponds to different choices of the basis $e_1, e_2$,
    \item perform linear changes $\varphi$ of variables $y_1, y_2, y_3, y_4$
        provided that $\varphi(y_1)\in \kk y_1$, this corresponds to the
        changes of the (dual) variables $x_1, \ldots , x_4$,
    \item interchange $\ell_{1,2}$ and $\ell_{2,1}$: this corresponds to
        taking the dual module, see
        also~\cite[Theorem~2.8]{Kunte__Gorenstein_modules_of_finite_length},
        \cite[Theorem~4.3]{Wojtala}.
\end{itemize}
These operations do not affect End-closedness or conciseness.
\begin{enumerate}
    \item Assume that $M$ is not End-closed. Then there is no linear form $f$
        such that $y_1^2 - f \in \ann(M)$. This means that every linear form
        which annihilates $\ell_{2,1}$, annihilates also $\ell_{1,1}$, so
        $\ell_{1,1}$ is a scalar multiple of $\ell_{2,1}$. By
        adding some multiple of $\sigma_2$ to $\sigma_1$, we may assume that
        $\ell_{1,1} = 0$.

        By conciseness of $M$, the forms $y_1$, $\ell_{1,2}$, $\ell_{2,1}$,
        $\ell_{2,2}$ are linearly independent, so we can change variables to
        obtain $\ell_{1,2} = y_2$, $\ell_{2,1} = y_3$, $\ell_{2,2} = y_4$. We
        obtain the module
        \[
            \bolded{M_{20}} = \frac{Se_1 \oplus Se_2}{(y_1^2 e_1^* + y_2
            e_2^*,\, y_3 e_1^* + y_4 e_2^*)^\perp} = \frac{Se_1 \oplus
            Se_2}{(y_{4}e_{1},\:y_{3}e_{1},\:y_{2}e_{1}-y_{4}e_{2},\:y_{2}e_{2},\:y_{1}e_{2},\:y_{1}^{2}e_{1}-y_{3}e_{2})}
        \]
    \item Assume that $\ell_{2,2}$ is a scalar multiple of $y_1$.
        By conciseness of $M$, the forms $y_1$, $\ell_{1,1}$, $\ell_{1,2}$,
        $\ell_{2,1}$ are linearly independent, so we may assume
        $\ell_{1,1} = y_2$, $\ell_{1,2} = y_4$, $\ell_{1,3} = y_3$.

        Rescaling $\sigma_1$, $\sigma_2$, $y_1$, we reduce to two cases:
        $\ell_{2,2} = y_1$, $\ell_{2,2} = 0$. We have two corresponding
        modules
        \[
            \bolded{M_{16}} = \frac{Se_1 \oplus Se_2}{((y_1^2 + y_2)e_1^* +
            y_4 e_2^*,\, y_3 e_1^*)^\perp}
        \]
        and 
        \[
            \bolded{M_{17}} = \frac{Se_1 \oplus Se_2}{((y_1^2 + y_2)e_1^* +
            y_4 e_2^*,\, y_3 e_1^* + y_1 e_2^*)^\perp}.
        \]
        Both modules are concise and End-closed.
    \item Assume that $\ell_{2,2}$ is not a scalar multiple of $y_1$ and
        assume that $M$ is End-closed. Change the variables $x_2, x_3, x_4$
        so that $x_1^2 - x_2$ annihilates $M$. It follows that the only form
        $\ell_{i,j}$ not annihilated by $x_2$ is $\ell_{1,1}$.

        Up to linear change of coordinates we may assume that $\ell_{2,2} =
        y_4$.  Consider $\ell_{1,2}$, $\ell_{2,1}$. They are both annihilated
        by $x_2$. By adding multiple of $y_1e_1^*$ to $\sigma_2$ and passing
        to the dual module we may assume that $x_1$ annihilates $\ell_{1,2}$,
        $\ell_{2,1}$. Next, by adding a multiple of $\sigma_2$ to $\sigma_1$
        and linear coordinate changes in $e_1$, $e_2$, we may assume that
        $x_4$ annihilates $\ell_{1,2}$, $\ell_{2,1}$. So they are both
        multiples of $y_3$. Finally, we may assume $\ell_{1,1} = y_2$.
        At this point we have
        \[
            \sigma_1 = (y_1^2 + y_2) e_1^* + \lambda_{1,2} y_3 e_2^*, \quad
            \sigma_2 = \lambda_{2,1} y_3 e_1^* + y_4 e_2^*,
        \]
        for some scalars $\lambda_{1,2}$, $\lambda_{2,1}$, at least one of them
        nonzero. Up to taking the dual module and rescaling, we have
        $\lambda_{2,1} = 1$. Further rescaling of variables and $e_2$ reduces
        us to two cases: $\lambda_{2,1} = 1$ and $\lambda_{2,1} = 0$. The corresponding
        modules
        \[
            \bolded{M_{18}} = \frac{Se_1 \oplus Se_2}{((y_1^2 + y_2)e_1^*,\,
            y_3 e_1^* + y_4 e_2^*)^\perp}
        \]
        and
        \[
            \bolded{M_{19}} = \frac{Se_1 \oplus Se_2}{((y_1^2 + y_2)e_1^* +
            y_3 e_2^*,\, y_3 e_1^* + y_4 e_2^*)^\perp}.
        \]
        are concise and End-closed.
\end{enumerate}

Every one of the modules $M_{16}$, $M_{17}$, $M_{19}$ is $M_{20}$ is
self-dual, because swapping $\ell_{1,2}$ and $\ell_{2,1}$ leading to an
equivalent system $\sigma_1$, $\sigma_2$.
The module $M_{18}$ is not isomorphic to its dual, because the module
$M_{18}$ admits a quotient module $(Se_1 \oplus Se_2)/((y_1^2 +
y_2)e_1^*)^{\perp}$, which has degree three and which is not annihilated by
$\maxideal^2$. The dual module $M_{18}^{\vee}$ admits no such submodule.
The modules $M_{16}, \ldots , M_{20}$ are
pairwise non-equivalent, which can be seen most easily by considering the
dimension of the stabilizer, see Section~\ref{ssec:stabilizer} below. The
direct proof is also quite easy, but we leave it to the reader.
We conclude subcase~\ref{case3} and the whole classification for $m=5$.

\subsection{Concise modules}\label{classification_concise}
We retrieve the classification of all concise modules of degree $m \leq 5$ from the local case and determine which of them are End-closed. Then we translate this result into the classification of $1_A$-generic concise tensors satisfying the $A$-Strassen's equations and determine which of them have minimal border rank. We classify modules up to affine changes of variables and duality and tensors up to the action of $\operatorname{GL}_m(\kk)^3 \rtimes \Sigma_3$.

Before we begin, we fix the convention on how one obtain concise modules from
an algebra. When all local modules are cyclic or cocyclic, this procedure is unique up to 
isomorphism by Corollary~\ref{cor_unique}. The convention allows us to pick concrete
representatives systematically. Corollary~\ref{cor_unique} does not directly
apply for the cases using modules $N_7, N_8$ from Subsection~\ref{deg4}, but we handle
these cases manually.

Let $\maxideal_1, \ldots , \maxideal_m\subseteq S$ denote the vertices of the standard
$(m-1)$-simplex. For a finite algebra $R$ with $r$ maximal ideals we will
fix an
isomorphism $R \simeq S/I$ such that $S/I$ is supported at $\maxideal_m$,
\ldots ,$\maxideal_{m-r+1}$. The details are best illustrated by the example below.

\begin{example}\label{example_sum}
    Consider the $\kk[x_1, x_2, x_3, x_4]$-module $\kk[x_1]/(x_1)^3 \oplus \kk[x_3]/(x_3-1)^2$. 
    The first factor comes from the $3$-dimensional algebra $\kk[x]/(x)^3$, so we set $x_1, x_2$ 
    to act as multiplication by $x$ and $x^2$. The second factor comes from the $2$-dimensional 
    algebra $\kk[x]/(x-1)^2$, so we set $x_3$ to act as multiplication by $x-1$. The last variable 
    has to act in such a way that we can retrieve endomorphisms corresponding to the identity 
    on one of the summands and zero on the other one. Therefore, we declare $x_4$ to act on the 
    direct sum by $x_4(m_1 + m_2) = m_2$. We retrieve the identity on the first summand 
    as $1 - x_4$. Note that this module is concise. By Corollary~\ref{cor_unique}, we conclude 
    that this is essentially the only way to make it concise and that it is End-closed.
\end{example}

We proceed with the classification for $m = 5$ and here we give a proof of
Theorem~\ref{classification_theorem}.
Let $s$ denote the number of maximal ideals in the support of our module,
see~\S\ref{prelim:modules}. We will use the classifications from Subsection \ref{classification_local}. We do not include the computations showing which modules are simultaneously cyclic and cocyclic. It can be checked using Lemma~\ref{cyclic} combined with the fact that $\ann(N)= \ann(N^\vee)$. The modules without labels are dual to other modules on the list.

\subsubsection{Support of cardinality $s = 1$.}
 This is exactly the classification of local concise modules of degree $5$ from Subsection \ref{classification_local}.

 \subsubsection{Support of cardinality $s = 2$.}\label{sssec:s2}

    The degree decomposition $5 = 3 + 2$ yields
    \[
    \begin{array}{c|c}
        \bolded{M_{2,1}} & \kk[x_1]/(x_1)^3 \oplus \kk[x_3]/(x_3-1)^2 \\
        \bolded{M_{2,2}} & \kk[x_1, x_2]/(x_1, x_2)^2 \oplus \kk[x_3]/(x_3 - 1)^2 \\
            & (\kk[x_1, x_2]/(x_1, x_2)^2)^\vee \oplus \kk[x_3]/(x_3-1)^2 \\
    \end{array}
    \]
    where $x_4$ acts on the direct sum by $x_4(m_1 + m_2) = m_2$. The degree decomposition $5 = 4 + 1$ using only cyclic and cocyclic modules gives
    \begin{equation}\label{eq:modulesM23M26}
    \begin{array}{c|c}
        \bolded{M_{2, 3}}  & \kk[x_1]/(x_1)^4 \oplus S / \maxideal_4 \\
        \bolded{M_{2, 4}} & \kk[x_1, x_2]/(x_1^2, x_1 x_2, x_2^3) \oplus S / \maxideal_4 \\
         & (\kk[x_1, x_2]/(x_1^2, x_1 x_2, x_2^3))^\vee \oplus S / \maxideal_4 \\
        \bolded{M_{2, 5}} & \kk[x_1, x_2]/(x_1^2, x_2^2) \oplus S / \maxideal_4 \\
        \bolded{M_{2, 6}} & \kk[x_1, x_2, x_3]/(x_1, x_2, x_3)^2\oplus S / \maxideal_4 \\
         & (\kk[x_1, x_2, x_3]/(x_1, x_2, x_3)^2)^\vee \oplus S / \maxideal_4 \\
    \end{array}
\end{equation}
    The modules without labels are dual to other modules on the list because in the other summand 
    is simultaneously cyclic and cocyclic. We directly check that they are concise, so they are 
    also End-closed by Corollary~\ref{cor_unique}.
    
    There are also two modules $N_7, N_8$ of degree $4$ described in
    Subsection~\ref{deg4}. We directly compute that $\End(N_7)$, $\End(N_8)$ are $5$-dimensional, spanned by the identity and multiplication by matrices with non-zero elements only in the top right $2\times2$ minor.
    Let $N$ be an $S$-module restricting to $N_7$ or to $N_8$ and let $M = N \oplus S / \maxideal_4$. Consider the map $S_{\leq 1} \to \End(N)$. It has rank at least $4$ because $N_i$ are concise.

    If $S_{\leq 1}\to \End(N)$ has rank $4$, then we can assume (after a linear change of
            variables in $S$) that $x_4$ spans its kernel and $x_1, x_2, x_3$
            act as in Subsection \ref{deg4}. This yields two new modules
            $\bolded{M_{2,7}}$ and $\bolded{M_{2,8}}$ corresponding to $N_7$
            and $N_8$. It can be checked directly that they are concise,
            End-closed and isomorphic to their dual modules.

        If $S_{\leq 1}\to \End(N)$ has rank $5$, then it is injective.
            The module $N$ is supported at $0$, so $x_4$ acts nilpotently on
            $N$ and it acts as an isomorphism on $S/\maxideal_4$. It follows
            that some large power $x_4^e$ acts by zero on $N$ and as an
            isomorphism on $S/\maxideal_4$. Therefore, the multiplication by
            $x_4^e$ does not coincide with the multiplication by any element of
            $S_{\leq 1}$ and the resulting module is not End-closed.

\subsubsection{Support of cardinality $s = 3$.}
    The decomposition $5 = 3+1+1$ yields 
    \[
    \begin{array}{c|c}
        \bolded{M_{3,1}} & \kk[x_1]/(x_1^3) \oplus S / \maxideal_3 \oplus S /\maxideal_4 \\
        \bolded{M_{3,2}} & \kk[x_1,x_2]/(x_1, x_2)^2 \oplus S / \maxideal_3 \oplus S /\maxideal_4 \\
         & (\kk[x_1,x_2]/(x_1, x_2)^2)^\vee \oplus S / \maxideal_3 \oplus S /\maxideal_4 \\
    \end{array}
    \]
    The second decomposition $5 = 2+2+1$ gives 
    \[
    \bolded{M_{3,3}} = \kk[x_1]/(x_1)^2 \oplus \kk[x_2]/(x_2 - 1)^2 \oplus S / \maxideal_4,
    \]
    where $x_3$ acts on $\kk[x_1]/(x_1)^2 \oplus \kk[x_2]/(x_2 - 1)^2$ by $x_3(m_1 + m_2) = m_2$. 
    All the modules are concise by direct inspection and  thus End-closed by Corollary~\ref{cor_unique}.

\subsubsection{Support of cardinality $s = 4,5$.}
    For $s=4$, the only module is 
    \[
        \bolded{M_{4,1}} = \kk[x_1]/(x_1^2) \oplus S /\maxideal_2 \oplus S / \maxideal_3 \oplus S / \maxideal_4.
    \]
    It is concise and hence End-closed.
    For $s=5$, the only module is 
    \[
        \bolded{M_{5,1}} = S / \maxideal \oplus S /\maxideal_1 \oplus \dots \oplus S / \maxideal_4.
    \]
    It is concise and hence End-closed.

\subsubsection{The case $m\leq 4$.}
The situation for $m \leq 4$ is simple, but for sake of completeness we give an explicit list 
of modules, up to duality. All the obtained modules are End-closed by Corollary~\ref{cor_unique}.

For $m=2$ we obtain
    \[
        \kk[x_1]/(x_1)^2, \quad S/ \maxideal \oplus S / \maxideal_1
    \]
For $m=3$ we obtain
    \[
        \kk[x_1]/(x_1)^3, \quad \kk[x_1, x_2]/(x_1, x_2)^2, \quad \kk[x_1]/(x_1)^2 \oplus S / \maxideal_2, \quad S / \maxideal \oplus S / \maxideal_1 \oplus S / \maxideal_2
    \]
For $m=4$ we obtain
    \[
    \begin{split}
        &\kk[x_1]/(x_1)^4, \quad \kk[x_1, x_2]/(x_1^2, x_1 x_2, x_2^3), \quad \kk[x_1, x_2]/(x_1^2, x_2^2), \quad \kk[x_1, x_2, x_3]/(x_1, x_2, x_3)^2,\\
        &N_7, \quad N_8, \quad \kk[x_1]/(x_1)^3 \oplus S / \maxideal_3, \quad \kk[x_1, x_2]/(x_1, x_2)^2 \oplus S / \maxideal_3, \quad\kk[x_1]/(x_1)^2 \oplus \kk[x_2]/(x_2)^2,\\
        & \kk[x_1]/(x_1)^2 \oplus S / \maxideal_2  \oplus S / \maxideal_3, \quad S / \maxideal \oplus S / \maxideal_1 \oplus S / \maxideal_2 \oplus S / \maxideal_3
    \end{split}
    \]

\section{Summary of isomorphism classes up to permutations}

\subsection{Minimal border rank $1_*$-generic tensors}\label{classification_tensors}

We now translate the results from Subsection~\ref{classification_concise} into the tensor language. For sake of consistency we rename each tensor $M_i$ from Subsection~\ref{classification_local} to $M_{1, i}$. The tensor corresponding to $M_{s,i}$ will be denoted by $T_{s,i}$. We represent each $T_{s,i}$ as a space of matrices in variables $x_0, x_1, \dots, x_4$, where $x_0$ corresponds to the action of scalars. It is illustrated by the following example.

\begin{example}\label{example_tensor}
    Consider the $\kk[x_1, x_2, x_3]$-module $\kk[x, y] / (x^2, xy, y^3)$ from Example~\ref{ex1}. Recall that we chose the basis $1, x, y, y^2$ and assigned the corresponding endomorphisms to $1, x_1, x_2, x_3$. This yields the tensor represented by
    \[
    \begin{bmatrix}
        x_0 & 0 & 0 & 0 \\
        x_1 & x_0 & 0 & 0 \\
        x_2 & 0 & x_0 & 0 \\
        x_3 & 0 & x_2 & x_0 \\
    \end{bmatrix}.
    \]

    In the tensor notation, we can write this element of $A \otimes B \otimes C = \kk^4 \otimes \kk^4 \otimes \kk^4$ as
    \[
        a_1 \otimes (b_1 \otimes c_1 + \dots + b_4 \otimes c_4) + a_2 \otimes b_1 \otimes c_2 + a_3 \otimes (b_1 \otimes c_3 + b_3 \otimes c_4) + a_4 \otimes b_1 \otimes c_4.
    \]
    Note that this is the tensor considered in Example \ref{ex0}.
\end{example}

Now we present the classification of $1_*$-generic minimal border rank
tensors up to permutations, as declared in Theorem~\ref{classification_theorem}. It holds under the
assumption that $\kk$ is an algebraically closed field of $\charr \kk \neq 2$.
We arrange the tensors by $s$, the cardinality of the support of the
associated module, or, equivalently, the maximal number of parts in which the
tensor splits.

\subsubsection{Cardinality of the support $s=1$, that is, the local case.}
{\small\begin{equation*}
        T_{1,1} = \begin{bmatrix}
            x_0 & 0 & 0 & 0 & 0 \\
            x_1 & x_0 & 0 & 0 & 0 \\
            x_2 & x_1 & x_0 & 0 & 0 \\
            x_3 & x_2 & x_1 & x_0 & 0 \\
            x_4 & x_3 & x_2 & x_1 & x_0 \\
        \end{bmatrix}\quad
        T_{1,2} = \begin{bmatrix}
            x_0 & 0 & 0 & 0 & 0 \\
            x_1 & x_0 & 0 & 0 & 0 \\
            x_2 & 0 & x_0 & 0 & 0 \\
            x_3 & 0 & x_2 & x_0 & 0 \\
            x_4 & 0 & x_3 & x_2 & x_0 \\
        \end{bmatrix}\quad
        T_{1,3} = \begin{bmatrix}
            x_0 & 0 & 0 & 0 & 0 \\
            x_1 & x_0 & 0 & 0 & 0 \\
            x_2 & 0 & x_0 & 0 & 0 \\
            x_3 & 0 & x_2 & x_0 & 0 \\
            x_4 & x_1 & x_3 & x_2 & x_0 \\
        \end{bmatrix}
    \end{equation*}
    \begin{equation*}
        T_{1,4} = \begin{bmatrix}
            x_0 & 0 & 0 & 0 & 0 \\
            x_1 & x_0 & 0 & 0 & 0 \\
            x_2 & x_1 & x_0 & 0 & 0 \\
            x_3 & 0 & 0 & x_0 & 0 \\
            x_4 & 0 & 0 & x_3 & x_0 \\
        \end{bmatrix}\quad
        T_{1,5} = \begin{bmatrix}
            x_0 & 0 & 0 & 0 & 0 \\
            x_1 & x_0 & 0 & 0 & 0 \\
            x_2 & 0 & x_0 & 0 & 0 \\
            x_3 & 0 & x_2 & x_0 & 0 \\
            x_4 &  x_2 & x_1 & 0 & x_0 \\
        \end{bmatrix}\quad
        T_{1,6} = \begin{bmatrix}
            x_0 & 0 & 0 & 0 & 0 \\
            x_1 & x_0 & 0 & 0 & 0 \\
            x_2 & 0 & x_0 & 0 & 0 \\
            x_3 & 0 & 0 & x_0 & 0 \\
            x_4 & 0 & 0 & x_3 & x_0 \\
        \end{bmatrix}
    \end{equation*}
    \begin{equation*}
        T_{1,7} = \begin{bmatrix}
            x_0 & 0 & 0 & 0 & 0 \\
            x_1 & x_0 & 0 & 0 & 0 \\
            x_2 & 0 & x_0 & 0 & 0 \\
            x_3 & 0 & 0 & x_0 & 0 \\
            x_4 & 0 & x_3 & x_2 & x_0 \\
        \end{bmatrix}\quad
        T_{1,8} = \begin{bmatrix}
            x_0 & 0 & 0 & 0 & 0 \\
            x_1 & x_0 & 0 & 0 & 0 \\
            x_2 & 0 & x_0 & 0 & 0 \\
            x_3 & 0 & 0 & x_0 & 0 \\
            x_4 & x_2 & x_1 & x_3 & x_0 \\
        \end{bmatrix}\quad
        T_{1,9} = \begin{bmatrix}
            x_0 & 0 & 0 & 0 & 0 \\
            x_1 & x_0 & 0 & 0 & 0 \\
            x_2 & 0 & x_0 & 0 & 0 \\
            x_3 & 0 & 0 & x_0 & 0 \\
            x_4 & 0 & 0 & 0 & x_0 \\
        \end{bmatrix}
    \end{equation*}
    \begin{equation*}
        T_{1,10} = \begin{bmatrix}
            x_0 & 0 & 0 & 0 & 0 \\
            0 & x_0 & 0 & 0 & 0 \\
            0 & 0 & x_0 & 0 & 0 \\
            0 & x_1 & x_3 & x_0 & 0 \\
            0 & x_2 & x_4 & 0 & x_0 \\
        \end{bmatrix}\quad
        T_{1,11} = \begin{bmatrix}
            x_0 & 0 & 0 & 0 & 0 \\
            0 & x_0 & 0 & 0 & 0 \\
            0 & 0 & x_0 & 0 & 0 \\
            x_2 & x_3 & x_4 & x_0 & 0 \\
            x_1 & x_2 & x_3 & 0 & x_0 \\
        \end{bmatrix}\quad
        T_{1,12} = \begin{bmatrix}
            x_0 & 0 & 0 & 0 & 0 \\
            0 & x_0 & 0 & 0 & 0 \\
            0 & 0 & x_0 & 0 & 0 \\
            x_1 & 0 & x_4 & x_0 & 0 \\
            x_2 & x_3 & x_1 & 0 & x_0 \\
        \end{bmatrix}
    \end{equation*}
    \begin{equation*}
        T_{1,13} = \begin{bmatrix}
            x_0 & 0 & 0 & 0 & 0 \\
            0 & x_0 & 0 & 0 & 0 \\
            0 & 0 & x_0 & 0 & 0 \\
            x_1 & 0 & x_4 & x_0 & 0 \\
            x_2 & x_3 & 0 & 0 & x_0 \\
        \end{bmatrix}\quad
        T_{1,14} = \begin{bmatrix}
            x_0 & 0 & 0 & 0 & 0 \\
            0 & x_0 & 0 & 0 & 0 \\
            0 & 0 & x_0 & 0 & 0 \\
            x_1 & x_4 & 0 & x_0 & 0 \\
            x_2 & x_3 & x_4 & 0 & x_0 \\
        \end{bmatrix}\quad
        T_{1,15} = \begin{bmatrix}
            x_0 & 0 & 0 & 0 & 0 \\
            0 & x_0 & 0 & 0 & 0 \\
            0 & 0 & x_0 & 0 & 0 \\
            x_1 & x_3 & x_4 & x_0 & 0 \\
            x_2 & 0 & 0 & 0 & x_0 \\
        \end{bmatrix}
    \end{equation*}
    \begin{equation*}
        T_{1,16} = \begin{bmatrix}
            x_0 & 0 & 0 & 0 & 0 \\
            0 & x_0 & 0 & 0 & 0 \\
            x_1 & 0 & x_0 & 0 & 0 \\
            x_3 & 0 & 0 & x_0 & 0 \\
            x_2 & x_4 & x_1 & 0 & x_0 \\
        \end{bmatrix}\quad
        T_{1,17} = \begin{bmatrix}
            x_0 & 0 & 0 & 0 & 0 \\
            0 & x_0 & 0 & 0 & 0 \\
            x_1 & 0 & x_0 & 0 & 0 \\
            x_3 & x_1 & 0 & x_0 & 0 \\
            x_2 & x_4 & x_1 & 0 & x_0 \\
        \end{bmatrix}\quad
        T_{1,18} = \begin{bmatrix}
            x_0 & 0 & 0 & 0 & 0 \\
            0 & x_0 & 0 & 0 & 0 \\
            x_1 & 0 & x_0 & 0 & 0 \\
            x_3 & x_4 & 0 & x_0 & 0 \\
            x_2 & 0 & x_1 & 0 & x_0 \\
        \end{bmatrix}
    \end{equation*}
    \begin{equation*}
        T_{1,19} = \begin{bmatrix}
            x_0 & 0 & 0 & 0 & 0 \\
            0 & x_0 & 0 & 0 & 0 \\
            x_1 & 0 & x_0 & 0 & 0 \\
            x_3 & x_4 & 0 & x_0 & 0 \\
            x_2 & x_3 & x_1 & 0 & x_0 \\
        \end{bmatrix}
    \end{equation*}}
    \subsubsection{Cardinality of the support $s\geq 2$.}
{\small
    \begin{equation*}
            T_{2,1} = \begin{bmatrix}
            x_0 & 0 & 0 & 0 & 0 \\
            x_1 & x_0 & 0 & 0 & 0 \\
            x_2 & x_1 & x_0 & 0 & 0 \\
            0 & 0 & 0 & x_0 + x_4 & 0 \\
            0 & 0 & 0 & x_3 & x_0 + x_4 \\
        \end{bmatrix}\quad
        T_{2,2} = \begin{bmatrix}
            x_0 & 0 & 0 & 0 & 0 \\
            x_1 & x_0 & 0 & 0 & 0 \\
            x_2 & 0 & x_0 & 0 & 0 \\
            0 & 0 & 0 & x_0 + x_4 & 0 \\
            0 & 0 & 0 & x_3 & x_0 + x_4 \\
        \end{bmatrix}
    \end{equation*}
    \begin{equation*}
        T_{2,3} = \begin{bmatrix}
            x_0 & 0 & 0 & 0 & 0 \\
            x_1 & x_0 & 0 & 0 & 0 \\
            x_2 & x_1 & x_0 & 0 & 0 \\
            x_3 & x_2 & x_1 & x_0 & 0 \\
            0 & 0 & 0 & 0 & x_0 + x_4 \\
        \end{bmatrix}\quad
        T_{2,4} = \begin{bmatrix}
            x_0 & 0 & 0 & 0 & 0 \\
            x_1 & x_0 & 0 & 0 & 0 \\
            x_2 & 0 & x_0 & 0 & 0 \\
            x_3 & 0 & x_2 & x_0 & 0 \\
            0 & 0 & 0 & 0 & x_0 + x_4 \\
        \end{bmatrix}
    \end{equation*}
    \begin{equation*}
        T_{2,5} = \begin{bmatrix}
            x_0 & 0 & 0 & 0 & 0 \\
            x_1 & x_0 & 0 & 0 & 0 \\
            x_2 & 0 & x_0 & 0 & 0 \\
            x_3 & x_2 & x_1 & x_0 & 0 \\
            0 & 0 & 0 & 0 & x_0 + x_4 \\
        \end{bmatrix}\quad
        T_{2,6} = \begin{bmatrix}
            x_0 & 0 & 0 & 0 & 0 \\
            x_1 & x_0 & 0 & 0 & 0 \\
            x_2 & 0 & x_0 & 0 & 0 \\
            x_3 & 0 & 0 & x_0 & 0 \\
            0 & 0 & 0 & 0 & x_0 + x_4 \\
        \end{bmatrix}
    \end{equation*}
    \begin{equation*}
        T_{2,7} = \begin{bmatrix}
            x_0 & 0 & 0 & 0 & 0 \\
            0 & x_0 & 0 & 0 & 0 \\
            x_1 & x_2 & x_0 & 0 & 0 \\
            x_3 & -x_1 & 0 & x_0 & 0 \\
            0 & 0 & 0 & 0 & x_0 + x_4 \\
        \end{bmatrix}\quad
        T_{2,8} = \begin{bmatrix}
            x_0 & 0 & 0 & 0 & 0 \\
            0 & x_0 & 0 & 0 & 0 \\
            x_1 & x_2 & x_0 & 0 & 0 \\
            x_3 & 0 & 0 & x_0 & 0 \\
            0 & 0 & 0 & 0 & x_0 + x_4 \\
        \end{bmatrix}
    \end{equation*}
    \begin{equation*}
        T_{3,1} = \begin{bmatrix}
            x_0 & 0 & 0 & 0 & 0 \\
            x_1 & x_0 & 0 & 0 & 0 \\
            x_2 & x_1 & x_0 & 0 & 0 \\
            0 & 0 & 0 & x_0 + x_3 & 0 \\
            0 & 0 & 0 & 0 & x_0 + x_4 \\
        \end{bmatrix}\quad
        T_{3,2} = \begin{bmatrix}
            x_0 & 0 & 0 & 0 & 0 \\
            x_1 & x_0 & 0 & 0 & 0 \\
            x_2 & 0 & x_0 & 0 & 0 \\
            0 & 0 & 0 & x_0 + x_3 & 0 \\
            0 & 0 & 0 & 0 & x_0 + x_4 \\
        \end{bmatrix}
    \end{equation*}
    \begin{equation*}
        T_{3,3} = \begin{bmatrix}
            x_0 & 0 & 0 & 0 & 0 \\
            x_1 & x_0 & 0 & 0 & 0 \\
            0 & 0 & x_0 + x_3 & 0 & 0 \\
            0 & 0 & x_2 & x_0 + x_3 & 0 \\
            0 & 0 & 0 & 0 & x_0 + x_4 \\
        \end{bmatrix}\quad
        T_{4,1} = \begin{bmatrix}
            x_0 & 0 & 0 & 0 & 0 \\
            x_1 & x_0 & 0 & 0 & 0 \\
            0 & 0 & x_0 + x_2 & 0 & 0 \\
            0 & 0 & 0 & x_0 + x_3 & 0 \\
            0 & 0 & 0 & 0 & x_0 + x_4 \\
        \end{bmatrix}
    \end{equation*}
    \begin{equation*}
        T_{5,1} = \begin{bmatrix}
            x_0 & 0 & 0 & 0 & 0 \\
            0 & x_0 + x_1 & 0 & 0 & 0 \\
            0 & 0 & x_0 + x_2 & 0 & 0 \\
            0 & 0 & 0 & x_0 + x_3 & 0 \\
            0 & 0 & 0 & 0 & x_0 + x_4 \\
        \end{bmatrix}.
    \end{equation*}
}
For $m\leq 4$ the isomorphism classes up to permutations are listed in
Appendix~\ref{sec:smallm}.

For completeness below we recall the classification of $1$-degenerate minimal
border rank tensors from \cite[Theorem 1.7]{concise}. Note that this result,
unlike the classifications obtained in our article, was proved under the
additional assumption that $\kk = \CC$. Therefore the final classification of
minimal border rank tensors for $m \leq 5$ holds under the assumption that
$\kk = \CC$. For convenience, we also use the tensor $\Tdegtw{}$ rather than
the isomorphic tensor $\Tdeg{56}$.

\begin{equation*}
\begin{split}
    &\Tdeg{58} = \begin{bmatrix}
        x_0 & 0 & x_1 & x_2 & x_4 \\
        x_4 & x_0 & x_3 & -x_1 & 0 \\
        0 & 0 & x_0 & 0 & 0 \\
        0 & 0 & - x_4 & x_0 & 0 \\
        0 & 0 & 0 & x_4 & 0 \\
    \end{bmatrix},\quad
    \Tdeg{57} = \begin{bmatrix}
        x_0 & 0 & x_1 & x_2 & x_4 \\
        0 & x_0 & x_3 & -x_1 & 0 \\
        0 & 0 & x_0 & 0 & 0 \\
        0 & 0 & 0 & x_0 & 0 \\
        0 & 0 & 0 & x_4 & 0 \\
    \end{bmatrix},\\
    &\Tdegtw{} = \begin{bmatrix}
        x_0 & 0 & x_1 & x_2 & x_4 \\
        0 & x_0  & 0 & x_3 & x_4\\
        0 & 0 & x_0 & 0 & 0 \\
        0 & 0 & 0 & x_0 & 0 \\
        0 & 0 & 0 & x_4 & 0 \\
    \end{bmatrix},\quad
    \Tdeg{55} = \begin{bmatrix}
        x_0 & 0 & x_1 & x_2 & x_4 \\
        0 & x_0  & x_4 & x_3 & 0 \\
        0 & 0 & x_0 & 0 & 0 \\
        0 & 0 & 0 & x_0 & 0 \\
        0 & 0 & 0 & x_4 & 0 \\
    \end{bmatrix},\quad
    \Tdeg{54} = \begin{bmatrix}
        x_0 & 0 & x_1 & x_2 & x_4 \\
        0 & x_0  & 0 & x_3 & 0 \\
        0 & 0 & x_0 & 0 & 0 \\
        0 & 0 & 0 & x_0 & 0 \\
        0 & 0 & 0 & x_4 & 0 \\
    \end{bmatrix}
\end{split}
\end{equation*}

\subsubsection{Non End-closed tensors}
In this section we record the following side result.
\begin{proposition}\label{theorem_additional}
    Let $\kk$ be an algebraically closed field with $\charr \kk \neq 2$.
    Consider $1_*$-generic concise tensors in $\kk^m \otimes \kk^m \otimes
    \kk^m$ which are not End-closed (hence with border rank strictly greater
    than $m$). For $m\leq 4$ there are no such tensors.
    For $m = 5$ there are exactly two up to isomorphism and permutations:
\[
    T_{1,20} = \begin{bmatrix}
            x_0 & 0 & 0 & 0 & 0 \\
            0 & x_0 & 0 & 0 & 0 \\
            x_1 & 0 & x_0 & 0 & 0 \\
            x_3 & x_4 & 0 & x_0 & 0 \\
            0 & x_2 & x_1 & 0 & x_0
        \end{bmatrix}\quad
        T_{2,9} = \begin{bmatrix}
        x_0 & 0 & 0 & 0 & 0 \\
        0 & x_0 & 0 & 0 & 0 \\
        x_1 & x_2 & x_0 & 0 & 0 \\
        x_3 & x_4 & 0 & x_0 & 0 \\
        0 & 0 & 0 & 0 & x_0 + x_4
    \end{bmatrix}.
\]
\end{proposition}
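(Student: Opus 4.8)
The plan is to reduce the statement to the classification of concise $S$-modules of degree $m\le 5$ carried out in Section~\ref{classification} and then translate it back to tensors.

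Recall from \S\ref{ssec:modulesAnd1Ageneric} that, after a permutation of factors, every $1_*$-generic tensor is $1_A$-generic, and that concise $1_A$-generic tensors satisfying the $A$-Strassen equations correspond, up to isomorphism, to concise $S$-modules of degree $m$ up to equivalence, the correspondence matching End-closedness on the two sides; moreover, by~\cite[Theorem~1.4]{concise} such a tensor has minimal border rank precisely when it is End-closed, which gives the parenthetical statement about the border rank. Hence it is enough to list the non-End-closed concise $S$-modules of degree $m\le 5$ up to equivalence and duality (dualizing a module transposes two tensor factors, hence is a permutation) and to write down the corresponding tensors.

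For $m\le 4$ this is immediate from \S\ref{classification_concise}: every concise $S$-module of degree at most $4$ is cyclic or cocyclic, hence End-closed by Corollary~\ref{corr_cyclic}; or a disjoint sum of cyclic/cocyclic modules, hence End-closed by Corollary~\ref{cor_unique}; or one of $N_7, N_8$, which are annihilated by $\maxideal^2$ and so End-closed. For $m=5$ one goes through the list in \S\ref{ssec:m5} and \S\ref{classification_concise}: the cyclic and cocyclic modules and $M_{16},\dots,M_{19}$ are End-closed, $M_{10},\dots,M_{15}$ are End-closed because killed by $\maxideal^2$, every module with support of cardinality at least two that is assembled from cyclic/cocyclic summands is End-closed by Corollary~\ref{cor_unique}, and the modules $M_{2,7}, M_{2,8}$ (from $N_7, N_8$ with $S_{\le 1}\to\End N$ of rank $4$) are End-closed by the direct check in \S\ref{sssec:s2}. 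Exactly two modules fail to be End-closed: the local module $M_{1,20}=M_{20}$, which is not End-closed by Example~\ref{example_min_brank}, and the module $M_{2,9}$ produced in \S\ref{sssec:s2} from $N=N_7$ or $N_8$ with $S_{\le 1}\to\End N$ of rank $5$, which is not End-closed because $x_4^2$ acts as the identity on the summand $S/\maxideal_4$ while no linear form in $S_{\le 1}$ does --- the images of $1,x_1,\dots,x_4$ in $\End N$ being linearly independent. A direct computation identifies the multiplication tensors of $M_{20}$ and $M_{2,9}$ with $T_{1,20}$ and $T_{2,9}$ after a linear change of variables, and these two tensors are not isomorphic up to permutations: each is $1_*$-generic in only one of the three slots, so the cardinality of the support of the associated module is a permutation invariant, and it equals $1$ for $M_{20}$ but $2$ for $M_{2,9}$.

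The step I expect to be the main obstacle is the rank-$5$ case of \S\ref{sssec:s2}: one must determine, up to equivalence and duality, which modules the extension $M=N\oplus S/\maxideal_4$ yields when $N$ restricts to $N_7$ or $N_8$, and verify that they collapse to the single module $M_{2,9}$. Here one uses that $\End N_7$ and $\End N_8$ are both isomorphic to $\kk\oplus\mathfrak n$ with $\mathfrak n$ a four-dimensional square-zero ideal; since the images of $x_1,\dots,x_4$ then span all of $\mathfrak n$, the module is determined by the linear functional $\mathfrak n\to\kk$ encoding the action on the one-dimensional summand $S/\maxideal_4$, and this functional has to be normalized using the available affine changes of variables together with duality --- a small Kronecker-type classification in the spirit of Example~\ref{ex:2x2}. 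Carrying out this normalization is the crux; everything else is bookkeeping over the classification already established in Section~\ref{classification}.
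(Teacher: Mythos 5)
Your reduction to the module classification is exactly the route the paper takes (the proposition is recorded as a by-product of Section~\ref{classification} and is not proved separately there), and the $m\leq 4$ part, the identification of the local example $M_{20}$, and the location of the second example in the rank-$5$ branch of \S\ref{sssec:s2} are all correct. The problem is the step you yourself single out as the crux and leave undone: the normalization showing that the rank-$5$ branch yields a \emph{single} module. Carrying out the classification you sketch does not give one orbit. Write the degree-four local piece as $N=V_1\oplus V_2$ with $\dim V_i=2$, so that the square-zero part of $\End(N)$ is $\mathfrak n=\Hom(V_1,V_2)$; in the rank-$5$ case the module is determined by an isomorphism $\rho\colon S_1\to\mathfrak n$ together with the nonzero functional $\lambda\colon S_1\to\kk$ giving the action on the one-dimensional summand, i.e.\ by $\mu=\lambda\circ\rho^{-1}\in\mathfrak n^{\vee}\simeq\Hom(V_2,V_1)$. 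Any equivalence of two such modules forces a linear change of variables together with an $h\in\GL(N)$ normalizing $\mathfrak n$; such $h$ preserve $V_2=\bigcap_{X\in\mathfrak n}\ker X$, hence are block lower triangular and act on $\mu$ by $\mu\mapsto A^{-1}\mu B$, with $A,B$ the induced maps on $V_1,V_2$. There are therefore \emph{two} orbits of nonzero $\mu$, distinguished by $\operatorname{rank}\mu\in\{1,2\}$, and passing to the dual module transposes $\mu$ and preserves this rank. The rank-one orbit is $T_{2,9}$. The rank-two orbit gives the space $\spann{\id,\ E_{3,1}+E_{5,5},\ E_{3,2},\ E_{4,1},\ E_{4,2}+E_{5,5}}$, that is, $T_{2,9}$ with corner entry $x_0+x_1+x_4$; it is concise, commuting, not End-closed, only $1_A$-generic, and not isomorphic to $T_{2,9}$ or its transpose, since the locus of rank-$\leq 1$ elements in the corresponding space of matrices is a pair of planes for $T_{2,9}$ but an irreducible quadric cone here. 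So the argument as written produces three tensors rather than two; to close the gap you must either exhibit an explicit equivalence merging the two orbits (which the invariants above appear to exclude) or revisit the count in the statement.

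Two smaller points. First, as literally stated the proposition does not assume the $A$-Strassen equations, but your reduction to modules, and already the claim for $m\leq4$, require the space $T_A(A^{\vee})T_A(\alpha)^{-1}$ to be commutative (a generic concise tensor is $1_*$-generic and not End-closed); this hypothesis should be made explicit. Second, ``no linear form acts as the identity on $S/\maxideal_4$'' is not quite what you need ($1$ and $x_4$ both do); the correct statement is that no linear form acts by zero on $N$ and by a nonzero scalar on $S/\maxideal_4$, which does follow from the injectivity of $S_{\leq1}\to\End(N)$ as you indicate.
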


\subsection{Limits of diagonalizable subspaces of matrices}\label{classification_subspaces}
In Subsection \ref{prelims_tensors}, we described a correspondence between $1_A$-generic minimal border rank tensors in $\kk^m \otimes \kk^m \otimes \kk^m$ and $m$-dimensional subspaces of $\End(\kk^m)$ which are limits of diagonalizable subspaces. Recall that a subspace $\mathcal{E} \subset \End(\kk^m)$ is \emph{diagonalizable} if there exists a basis of $\kk^m$ such that $\mathcal{E} \subset \mathrm{M}_{m\times m}(\kk)$ consists of diagonal matrices. Two subspaces of $\mathrm{M}_{m \times m}(\kk)$ are \emph{equivalent} if they correspond to the same subspace of $\End(\kk^m)$, i.e., they differ by a choice of basis.
We use the classification of tensors from Theorem~\ref{classification_theorem} to derive the classification of such subspaces. The result is summed up in Theorem~\ref{theorem_subspaces}.

\begin{proof}[Proof of Theorem~\ref{theorem_subspaces}]
    By the correspondence described in Subsection \ref{prelims_tensors}, limits of diagonalizable subspaces correspond to $1_A$-generic minimal border rank tensors and to concise End-closed modules. In Theorem \ref{classification_theorem}, we classified such tensors up to a permutation of the factors $B, C$. On the level of modules, it corresponds to identifying modules with their duals. Therefore we need to determine which modules are not self-dual. The final list of subspaces consists of the subspaces corresponding to tensors from Theorem \ref{classification_theorem} and the subspaces corresponding to tensors which come from these new dual modules. 

    Let $M$ be a local $S$-module of finite degree. We know that $\dim_\kk
    M^\vee / \maxideal M^\vee = \dim_\kk (0 : \maxideal)_M$, see proof of
    Lemma \ref{cyclic}. Therefore if $M$ is self-dual, then $\dim_\kk M /
    \maxideal M = \dim_\kk (0 \colon \maxideal)_M$. If $M$ is cyclic, then the
    other implication holds as well (because then both $M, M^\vee$ come from
    algebras and have equal annihilators). This observation enables us to
    easily calculate which local cyclic modules are self-dual; among the local
    ones these are exactly $M_1$, $M_3$, $M_8$, while among the non-local, the self-dual ones are
    $M_{2,1}$, $M_{2,3}$, $M_{2,5}$, $M_{3, 1}$, $M_{3, 3}$, $M_{4, 1}$,
    $M_{5, 1}$.
    The duality of non-cyclic local modules was determined in
    Subsection~\ref{ssec:m5}: $M_{10}$, $M_{16}$, $M_{17}$, $M_{19}$ are the
    self-dual ones. The only two remaining cases $M_{2,7}$,
    $M_{2,8}$ from Subsection \ref{classification_tensors}, which are
    self-dual, since $N_7, N_8$ from Subsection \ref{classification_local} are
    self-dual.

    We present the list of tensors which correspond to new subspaces. For $m \leq 2$ there are no such tensors. For $m = 3$ we have
    \[
    \begin{bmatrix}
        x_0 & x_1 & x_2 \\
        0 & x_0 & 0 \\
        0 & 0 & x_0 \\
    \end{bmatrix}.
    \]
    For $m = 4$ we have
    \[
    \begin{bmatrix}
        x_0 & x_1 & x_2 & x_3 \\
        0 & x_0 & 0 & 0  \\
        0 & 0 & x_0 & x_2 \\
        0 & 0 & 0 & x_0 \\
    \end{bmatrix},\quad
    \begin{bmatrix}
        x_0 & x_1 & x_2 & x_3 \\
        0 & x_0 & 0 & 0 \\
        0 & 0 & x_0 & 0 \\
        0 & 0 & 0 & x_0 \\
    \end{bmatrix},\quad
    \begin{bmatrix}
        x_0 & x_1 & x_2 & 0 \\
        0 & x_0 & 0 & 0\\
        0 & 0 & x_0 & 0\\
        0 & 0 & 0 & x_0 + x_3 \\
    \end{bmatrix}.
    \]
    For $m = 5$ we have
    \[
        T_{1,2}^{\top},\, T_{1,4}^{\top},\, T_{1,5}^{\top},\, T_{1,6}^{\top},\, T_{1,7}^{\top},\, T_{1,9}^{\top},\, T_{1,11}^{\top},\, T_{1,12}^{\top},\, T_{1,13}^{\top},\, T_{1,14}^{\top},\,\ T_{1,15}^{\top},\,\ T_{1,18}^{\top},\,
        T_{2,2}^{\top},\, T_{2,4}^{\top},\, T_{2,6}^{\top},\,
        T_{3,2}^{\top},
    \]
    where the superscript $(-)^{\top}$ denotes the transpose and the numbering is taken from Theorem \ref{classification_theorem}.
\end{proof}

\begin{remark}
    Modules corresponding to both tensors from Proposition \ref{theorem_additional} are self-dual, so subspaces corresponding to both tensors are isomorphic to its transposes.
\end{remark}

\newcommand{\marklevel}[1]{\textcolor{gray}{\texttt{#1}}}
\begin{figure}
\noindent\scalebox{0.8}{
\begin{tikzpicture}[node distance={20mm}, thick, main/.style = {draw, circle}]

    \node[main] (T51) {$T_{5,1}$};
    \node[main] (T41) [below of=T51] {$T_{4,1}$};
    \node[main] (T33) [below left of=T41] {$T_{3,3}$};
    \node[main] (T31) [below right of=T41] {$T_{3,1}$};
    \node[main] (T21) [below of=T33] {$T_{2,1}$};
    \node[main] (T23) [below of=T31] {$T_{2,3}$};
    \node[main] (T32) [below left of=T21] {$T_{3,2}$};
    \node[main] (T25) [below left of=T23] {$T_{2,5}$};
    \node[main] (T11) [below right of=T23]{$T_{1,1}$};
    \node[main] (T22) [below left of=T32]{$T_{2,2}$};
    \node[main] (T24) [below right of=T32]{$T_{2,4}$};
    \node[main] (T13) [below left of=T11]{$T_{1,3}$};
    \node[main] (T27) [below right of=T11]{$T_{2,7}$};
    \node[main] (T111) [below left=15mm of T22]{$T_{1,11}$};
    \node[main] (T119) [below right=15mm of T22]{$T_{1,19}$};
    \node[main] (T12) [below left=15mm of T13]{$T_{1,2}$};
    \node[main] (T14) [below right=15mm of T13]{$T_{1,4}$};
    \node[main] (T28) [below right=15mm of T27]{$T_{2,8}$};
    \node[main] (T112) [below of=T111]{$T_{1,12}$};
    \node[main] (T118) [below of=T119]{$T_{1,18}$};
    \node[main] (T15) [below of=T12]{$T_{1,5}$};
    \node[main] (T18) [below=27.5mm of T27]{$T_{1,8}$};
    \node[main] (T113) [below of=T112]{$T_{1,13}$};
    \node[main] (T117) [below of=T118]{$T_{1,17}$};
    \node[main] (T17) [below=27.5mm of T14]{$T_{1,7}$};
    \node[main] (T114) [below of=T113]{$T_{1,14}$};
    \node[main] (T116) [below of=T117]{$T_{1,16}$};
    \node[main] (T26) [left of=T114]{$T_{2,6}$};
    \node[main] (T115) [below=17.5mm of T114]{$T_{1,15}$};
    \node[main] (T16) [below=27.5mm of T17]{$T_{1,6}$};
    \node[main] (T110) [below=37.5mm of T26]{$T_{1,10}$};
    \node[main] (T19) [below=42.5mm of T16]{$T_{1,9}$};
    \node[main] (T58) [left=25mm of T111]{$T_{\mathcal{O}_{58}}$};
    \node[main] (T57) [left=25mm of T112]{$T_{\mathcal{O}_{57}}$};
    \node[main] (T56) [left=25mm of T113]{$T_{\widetilde{\mathcal{O}}_{56}}$};
    \node[main] (T55) [left=25mm of T114]{$T_{\mathcal{O}_{55}}$};
    \node[main] (T54) [below=7.5mm of T55]{$T_{\mathcal{O}_{54}}$};
    \node (stab16) [right= 5mm of T28]{\marklevel{16}};
    \node (stab15) [above=14mm of stab16]{\marklevel{15}};
    \node (stab14) [above=10.3mm of stab15]{\marklevel{14}};
    \node (stab13) [above=10.3mm of stab14]{\marklevel{13}};
    \node (stab12) [above=10.3mm of stab13]{\marklevel{12}};
    \node (stab11) [above=10.3mm of stab12]{\marklevel{11}};
    \node (stab10) [above=14mm of stab11]{\marklevel{10}};
    \node (stab17) [below=14mm of stab16]{\marklevel{17}};
    \node (stab18) [below=14mm of stab17]{\marklevel{18}};
    \node (stab19) [below=16mm of stab18]{\marklevel{19}};
    \node (stab20) [below=13mm of stab19]{\marklevel{20}};
    \node (stab21) [below=13mm of stab20]{\marklevel{21}};
    \node (stab22) [below=10mm of stab21]{\marklevel{22}};
    \node (stab26) [below=10mm of stab22]{\marklevel{26}};
    \node (infostab) [below=10mm of stab26]{\marklevel{stab.dim.}};

\draw[arrows={-Latex[length=8pt,bend,line width=0pt]}] (T111) edge (T112);
\draw[arrows={-Latex[length=8pt,bend,line width=0pt]}] (T112) edge (T113);
\draw[arrows={-Latex[length=8pt,bend,line width=0pt]}] (T113) edge (T114);
\draw[arrows={-Latex[length=8pt,bend,line width=0pt]}] (T114) edge (T115);
\draw[arrows={-Latex[length=8pt,bend,line width=0pt]}] (T114) edge (T110);
\draw[arrows={-Latex[length=8pt,bend,line width=0pt]}] (T119) edge (T118);
\draw[arrows={-Latex[length=8pt,bend,line width=0pt]}] (T118) edge (T117);
\draw[arrows={-Latex[length=8pt,bend,line width=0pt]}] (T117) edge (T116);
\draw[arrows={-Latex[length=8pt,bend,line width=0pt]}] (T119) edge (T112);
\draw[arrows={-Latex[length=8pt,bend,line width=0pt]}] (T118) edge (T113);
\draw[arrows={-Latex[length=8pt,bend,line width=0pt]}] (T117) edge (T114);
\draw[arrows={-Latex[length=8pt,bend,line width=0pt]}] (T116) edge (T115);
\draw[arrows={-Latex[length=8pt,bend,line width=0pt]}, in=50, out=180] (T13) edge (T119);
\draw[arrows={-Latex[length=8pt,bend,line width=0pt]},in=45,out=195] (T14) edge (T118);
\draw[arrows={-Latex[length=8pt,bend,line width=0pt]}] (T15) edge (T117);
\draw[arrows={-Latex[length=8pt,bend,line width=0pt]}] (T17) edge (T116);
\draw[arrows={-Latex[length=8pt,bend,line width=0pt]}] (T16) edge (T115);
\draw[arrows={-Latex[length=8pt,bend,line width=0pt]}, in=100, out=202] (T11) edge (T111);
\draw[arrows={-Latex[length=8pt,bend,line width=0pt]}] (T27) edge (T28);
\draw[arrows={-Latex[length=8pt,bend,line width=0pt]}] (T16) edge (T19);
\draw[arrows={-Latex[length=8pt,bend,line width=0pt]}] (T17) edge (T16);
\draw[arrows={-Latex[length=8pt,bend,line width=0pt]}] (T15) edge (T17);
\draw[arrows={-Latex[length=8pt,bend,line width=0pt]}] (T12) edge (T15);
\draw[arrows={-Latex[length=8pt,bend,line width=0pt]}] (T18) edge (T17);
\draw[arrows={-Latex[length=8pt,bend,line width=0pt]}] (T14) edge (T15);
\draw[arrows={-Latex[length=8pt,bend,line width=0pt]}] (T13) edge (T14);
\draw[arrows={-Latex[length=8pt,bend,line width=0pt]}, out=190, in=90] (T13) edge (T12);
\draw[arrows={-Latex[length=8pt,bend,line width=0pt]}, out=0, in=90] (T13) edge (T18);
\draw[arrows={-Latex[length=8pt,bend,line width=0pt]}] (T11) edge (T13);
\draw[arrows={-Latex[length=8pt,bend,line width=0pt]}] (T24)
.. controls +(230:2cm) and +(90:6cm).. (T26);
\draw[arrows={-Latex[length=8pt,bend,line width=0pt]}] (T24) edge (T12);
\draw[arrows={-Latex[length=8pt,bend,line width=0pt]}] (T24) edge (T14);
\draw[arrows={-Latex[length=8pt,bend,line width=0pt]}, out=340, in=160] (T24) edge (T28);
\draw[arrows={-Latex[length=8pt,bend,line width=0pt]}, in=45, out=205] (T28) edge (T118);
\draw[arrows={-Latex[length=8pt,bend,line width=0pt]}, out=200, in=35] (T27) edge (T119);
\draw[arrows={-Latex[length=8pt,bend,line width=0pt]}, out=320, in=180] (T26) edge (T16);
\draw[arrows={-Latex[length=8pt,bend,line width=0pt]}] (T25) edge (T24);
\draw[arrows={-Latex[length=8pt,bend,line width=0pt]}] (T32) edge (T22);
\draw[arrows={-Latex[length=8pt,bend,line width=0pt]}] (T41) edge (T33);
\draw[arrows={-Latex[length=8pt,bend,line width=0pt]}] (T51) edge (T41);
\draw[arrows={-Latex[length=8pt,bend,line width=0pt]}] (T41) edge (T31);
\draw[arrows={-Latex[length=8pt,bend,line width=0pt]}] (T33) edge (T21);
\draw[arrows={-Latex[length=8pt,bend,line width=0pt]}] (T33) edge (T23);
\draw[arrows={-Latex[length=8pt,bend,line width=0pt]}] (T31) edge (T21);
\draw[arrows={-Latex[length=8pt,bend,line width=0pt]}] (T31) edge (T23);
\draw[arrows={-Latex[length=8pt,bend,line width=0pt]}, bend left=15] (T31) edge (T32);
\draw[arrows={-Latex[length=8pt,bend,line width=0pt]}] (T23) edge (T25);
\draw[arrows={-Latex[length=8pt,bend,line width=0pt]}] (T23) edge (T11);
\draw[arrows={-Latex[length=8pt,bend,line width=0pt]}, bend left=30] (T23) edge (T27);
\draw[arrows={-Latex[length=8pt,bend,line width=0pt]}] (T21) edge (T11);
\draw[arrows={-Latex[length=8pt,bend,line width=0pt]}, out=350, in=135] (T22) edge (T12);
\draw[arrows={-Latex[length=8pt,bend,line width=0pt]}, out=220, in=30] (T12) edge (T112);
\draw[arrows={-Latex[length=8pt,bend,line width=0pt]}] (T25) edge (T13);
\draw[arrows={-Latex[length=8pt,bend,line width=0pt]}] (T32) edge (T24);
\draw[arrows={-Latex[length=8pt,bend,line width=0pt]}, bend right] (T21) edge (T22);
\draw[arrows={-Latex[length=8pt,bend,line width=0pt]}] (T12) edge (T118);
\draw[arrows={-Latex[length=8pt,bend,line width=0pt]}, out=200, in=27] (T116) edge (T54);
\draw[arrows={-Latex[length=8pt,bend,line width=0pt]}, out=200, in=27] (T117) edge (T55);
\draw[arrows={-Latex[length=8pt,bend,line width=0pt]}, out=200, in=27] (T118) edge (T56);
\draw[arrows={-Latex[length=8pt,bend,line width=0pt]}, out=200, in=27] (T119) edge (T57);
\draw[arrows={-Latex[length=8pt,bend,line width=0pt]}, out=202, in=70] (T11) edge (T58);
\draw[arrows={-Latex[length=8pt,bend,line width=0pt]}, out=205, in=27] (T12) edge (T57);
\draw[arrows={-Latex[length=8pt,bend,line width=0pt]}] (T58) edge (T57);
\draw[arrows={-Latex[length=8pt,bend,line width=0pt]}] (T57) edge (T56);
\draw[arrows={-Latex[length=8pt,bend,line width=0pt]}] (T56) edge (T55);
\draw[arrows={-Latex[length=8pt,bend,line width=0pt]}] (T55) edge (T54);

\end{tikzpicture}}
\caption{Degenerations of minimal border rank tensors in $\kk^5\otimes
\kk^5\otimes \kk^5$}\label{sec:diagram}
\end{figure}
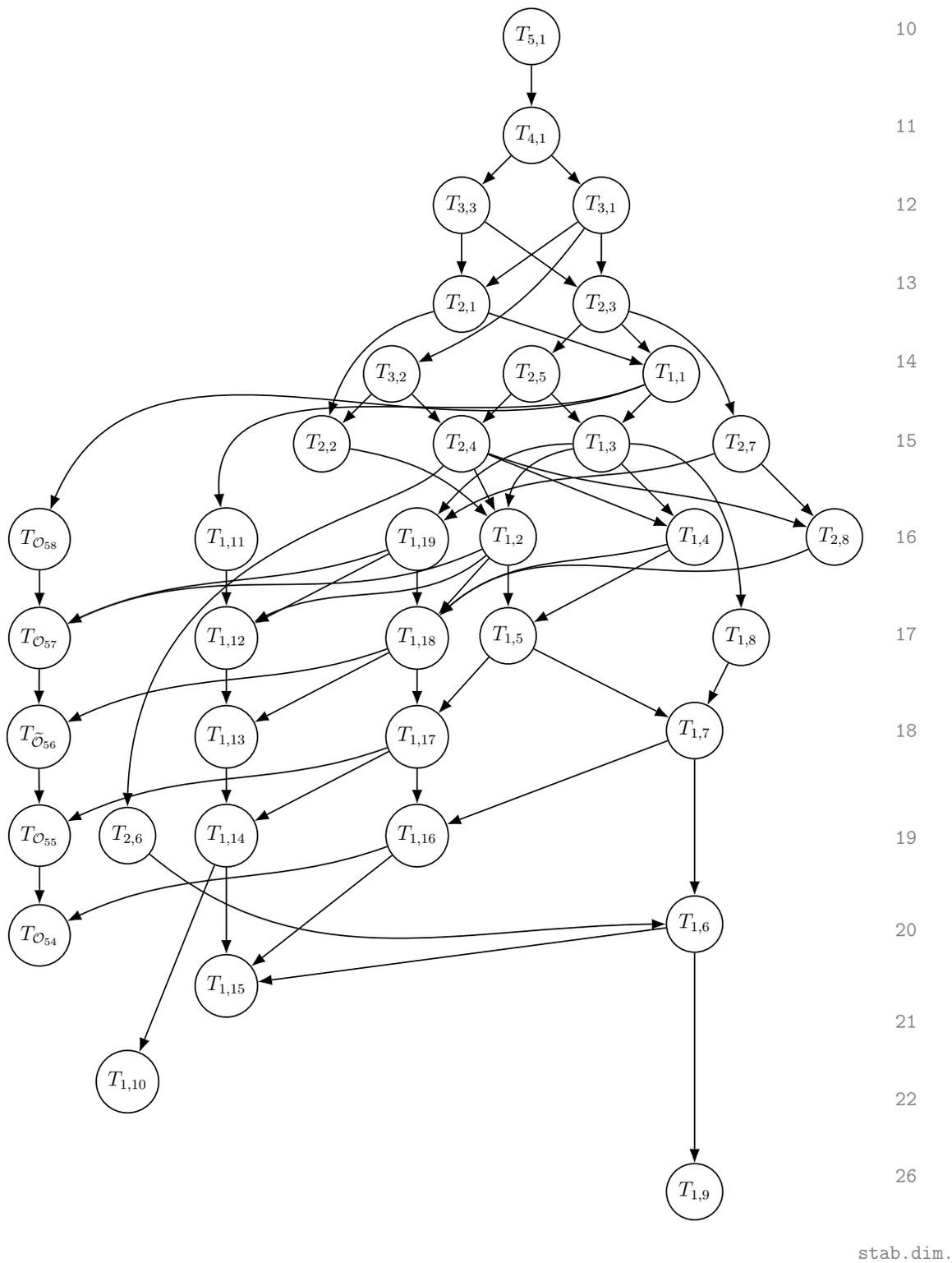

\begin{figure}
\scalebox{0.8}{
\noindent\begin{tikzpicture}[node distance={20mm}, thick, main/.style = {draw,
    circle}, every edge quotes/.style = {auto, font=\footnotesize,
    very near end}]

    \node[main] (T51) {$T_{5,1}$};
    \node[main] (T41) [below of=T51] {$T_{4,1}$};
    \node[main] (T33) [below left of=T41] {$T_{3,3}$};
    \node[main] (T31) [below right of=T41] {$T_{3,1}$};
    \node[main] (T21) [below of=T33] {$T_{2,1}$};
    \node[main] (T23) [below of=T31] {$T_{2,3}$};
    \node[main] (T32) [below left of=T21] {$T_{3,2}$};
    \node[main] (T25) [below left of=T23] {$T_{2,5}$};
    \node[main] (T11) [below right of=T23]{$T_{1,1}$};
    \node[main] (T22) [below left of=T32]{$T_{2,2}$};
    \node[main] (T24) [below right of=T32]{$T_{2,4}$};
    \node[main] (T13) [below left of=T11]{$T_{1,3}$};
    \node[main] (T27) [below right of=T11]{$T_{2,7}$};
    \node[main] (T111) [below left=15mm of T22]{$T_{1,11}$};
    \node[main] (T119) [below right=15mm of T22]{$T_{1,19}$};
    \node[main] (T12) [below left=15mm of T13]{$T_{1,2}$};
    \node[main] (T14) [below right=15mm of T13]{$T_{1,4}$};
    \node[main] (T28) [below right=15mm of T27]{$T_{2,8}$};
    \node[main] (T112) [below of=T111]{$T_{1,12}$};
    \node[main] (T118) [below of=T119]{$T_{1,18}$};
    \node[main] (T15) [below of=T12]{$T_{1,5}$};
    \node[main] (T18) [below=27.5mm of T27]{$T_{1,8}$};
    \node[main] (T113) [below of=T112]{$T_{1,13}$};
    \node[main] (T117) [below of=T118]{$T_{1,17}$};
    \node[main] (T17) [below=27.5mm of T14]{$T_{1,7}$};
    \node[main] (T114) [below of=T113]{$T_{1,14}$};
    \node[main] (T116) [below of=T117]{$T_{1,16}$};
    \node[main] (T26) [left of=T114]{$T_{2,6}$};
    \node[main] (T115) [below=17.5mm of T114]{$T_{1,15}$};
    \node[main] (T16) [below=27.5mm of T17]{$T_{1,6}$};
    \node[main] (T110) [below=37.5mm of T26]{$T_{1,10}$};
    \node[main] (T19) [below=42.5mm of T16]{$T_{1,9}$};
    \node[main] (T58) [left=25mm of T111]{$T_{\mathcal{O}_{58}}$};
    \node[main] (T57) [left=25mm of T112]{$T_{\mathcal{O}_{57}}$};
    \node[main] (T56) [left=25mm of T113]{$T_{\widetilde{\mathcal{O}}_{56}}$};
    \node[main] (T55) [left=25mm of T114]{$T_{\mathcal{O}_{55}}$};
    \node[main] (T54) [below=7.5mm of T55]{$T_{\mathcal{O}_{54}}$};
    \node (stab16) [right= 5mm of T28]{\marklevel{16}};
    \node (stab15) [above=14mm of stab16]{\marklevel{15}};
    \node (stab14) [above=10.3mm of stab15]{\marklevel{14}};
    \node (stab13) [above=10.3mm of stab14]{\marklevel{13}};
    \node (stab12) [above=10.3mm of stab13]{\marklevel{12}};
    \node (stab11) [above=10.3mm of stab12]{\marklevel{11}};
    \node (stab10) [above=14mm of stab11]{\marklevel{10}};
    \node (stab17) [below=14mm of stab16]{\marklevel{17}};
    \node (stab18) [below=14mm of stab17]{\marklevel{18}};
    \node (stab19) [below=16mm of stab18]{\marklevel{19}};
    \node (stab20) [below=13mm of stab19]{\marklevel{20}};
    \node (stab21) [below=13mm of stab20]{\marklevel{21}};
    \node (stab22) [below=10mm of stab21]{\marklevel{22}};
    \node (stab26) [below=10mm of stab22]{\marklevel{26}};
    \node (infostab) [below=10mm of stab26]{\marklevel{stab.dim.}};

\draw[arrows={-Latex[length=8pt,bend,line width=0pt]}, style=dashed,
in=90,out=270, color=red] (T32) edge node[pos=0.60, sloped, above,
font=\footnotesize] {\ref{ref:notoT119:cor}}(T119);
\draw[arrows={-Latex[length=8pt,bend,line width=0pt]}, style=dashed,
in=140,out=330, color=red] (T25) edge node[very near end, sloped, above,
font=\footnotesize] {\ref{ref:notoT27:lem}} (T27);
\draw[arrows={-Latex[length=8pt,bend,line width=0pt]}, style=dashed,
in=150,out=330, color=red] (T32) edge node[pos=0.9, sloped, below,
font=\footnotesize] {\ref{ref:notoT27:lem}}(T27);
\draw[arrows={-Latex[length=8pt,bend,line width=0pt]}, style=dashed,
in=90,out=180, color=red] (T32) edge node[pos=0.90, sloped, above,
font=\footnotesize] {\ref{ref:notoT111:cor}}(T111);
\draw[arrows={-Latex[length=8pt,bend,line width=0pt]}, style=dashed,
in=30,out=200, color=red] (T25) edge node[pos=0.95, sloped, above,
font=\footnotesize] {\ref{ref:notoT111:cor}}(T111);
\draw[arrows={-Latex[length=8pt,bend,line width=0pt]}, style=dashed,
in=20,out=210, color=red] (T27) edge node[pos=0.95, sloped, below,
font=\footnotesize] {\ref{ref:notoT111:cor}} (T111);
\draw[arrows={-Latex[length=8pt,bend,line width=0pt]}, style=dashed, in=15,out=200, color=red] (T28) edge node[pos=0.95, sloped, below,
font=\footnotesize] {\ref{ref:nofromT28toT112:lem}}(T112);
\draw[arrows={-Latex[length=8pt,bend,line width=0pt]}, style=dashed,
in=15,out=190, color=red] (T14) edge node[pos=0.95, sloped, above,
font=\footnotesize] {\ref{ref:nofromT14toT112:prop}}(T112);
\draw[arrows={-Latex[length=8pt,bend,line width=0pt]}, style=dashed, in=15,out=220, color=red] (T15) edge node[pos=0.90, sloped, below,
font=\footnotesize] {\ref{ref:nofromT15toT113:prop}}(T113);
\draw[arrows={-Latex[length=8pt,bend,line width=0pt]}, style=dashed,
in=0,out=270, color=red] (T18) edge node[pos=0.95, sloped, above,
font=\footnotesize] {\ref{ref:notoT110:cor}}(T110);
\draw[arrows={-Latex[length=8pt,bend,line width=0pt]}, style=dashed,
in=90,out=270, color=red] (T26) edge node[pos=0.80, sloped, below,
font=\footnotesize] {\ref{ref:notoT110:cor}}(T110);

\draw[arrows={-Latex[length=8pt,bend,line width=0pt]}, style=dashed,
in=45,out=225, color=red] (T26) edge node[pos=0.50, sloped, below,
font=\footnotesize] {\ref{ref:nofromT26toT54:lem}}(T54);
\draw[arrows={-Latex[length=8pt,bend,line width=0pt]}, style=dashed, in=60,out=220, color=red] (T111) edge node[pos=0.95, sloped, above,
font=\footnotesize] {\ref{ref:nodegstodegenerate:prop}}(T54);
\draw[arrows={-Latex[length=8pt,bend,line width=0pt]}, style=dashed, in=80,out=160, color=red] (T32) edge node[pos=0.90, sloped, above,
font=\footnotesize] {\ref{ref:nodegstodegenerate:prop}}(T58);
\draw[arrows={-Latex[length=8pt,bend,line width=0pt]}, style=dashed, in=55,out=200, color=red] (T25) edge node[pos=0.90, sloped, above,
font=\footnotesize] {\ref{ref:nodegstodegenerate:prop}}(T58);
\draw[arrows={-Latex[length=8pt,bend,line width=0pt]}, style=dashed, in=20,out=200, color=red] (T27) edge node[pos=0.95, sloped, above,
font=\footnotesize] {\ref{ref:nodegstodegenerate:prop}}(T58);
\draw[arrows={-Latex[length=8pt,bend,line width=0pt]}, style=dashed,
in=20,out=205, color=red] (T28) edge node[pos=0.95, sloped, below,
font=\footnotesize] {\ref{ref:nodegstodegenerate:prop}}(T57);
\draw[arrows={-Latex[length=8pt,bend,line width=0pt]}, style=dashed, in=25,out=200, color=red] (T14) edge node[pos=0.95, sloped, above,
font=\footnotesize] {\ref{ref:nodegstodegenerate:prop}}(T57);
\draw[arrows={-Latex[length=8pt,bend,line width=0pt]}, style=dashed, in=25,out=210, color=red] (T15) edge node[pos=0.95, sloped, above,
font=\footnotesize] {\ref{ref:nodegstodegenerate:prop}}(T56);
\draw[arrows={-Latex[length=8pt,bend,line width=0pt]}, style=dashed,
in=23,out=225, color=red] (T18) edge node[pos=0.90, sloped, above,
font=\footnotesize] {\ref{ref:nofromT18toT55:lem}}(T55);

\draw[arrows={-Latex[length=8pt,bend,line width=0pt]}, style=dashed,
in=60,out=290, color=red] (T27) edge node[pos=0.95, sloped, above,
font=\footnotesize] {\ref{ssec:being1Ageneric}}(T19);
\draw[arrows={-Latex[length=8pt,bend,line width=0pt]}, style=dashed,
in=180,out=350, color=red] (T32) edge node[pos=0.90, sloped, above,
font=\footnotesize] {\ref{ssec:being1Ageneric}}(T13);
\draw[arrows={-Latex[length=8pt,bend,line width=0pt]}, style=dashed,
in=180,out=350, color=red] (T32) edge node[pos=0.95, sloped, above,
font=\footnotesize] {\ref{ssec:being1Ageneric}}(T18);

\draw[arrows={-Latex[length=8pt,bend,line width=0pt]}, style=dashed,
in=90,out=245, color=red] (T33) edge node[pos=0.85, sloped, above,
font=\footnotesize] {\ref{ssec:severalParts}}(T32);
\draw[arrows={-Latex[length=8pt,bend,line width=0pt]}, style=dashed,
in=20,out=200, color=red] (T23) edge node[pos=0.10, sloped, above,
font=\footnotesize] {\ref{ssec:severalParts}}(T22);
\draw[arrows={-Latex[length=8pt,bend,line width=0pt]}, style=dashed,
in=150,out=340, color=red] (T22) edge node[pos=0.95, sloped, below,
font=\footnotesize] {\ref{ex:T22toT14}}(T14);

\end{tikzpicture}}
\caption{Non-degenerations of minimal border rank tensors in $\kk^5\otimes
\kk^5\otimes \kk^5$}\label{sec:nondiagram}
\end{figure}
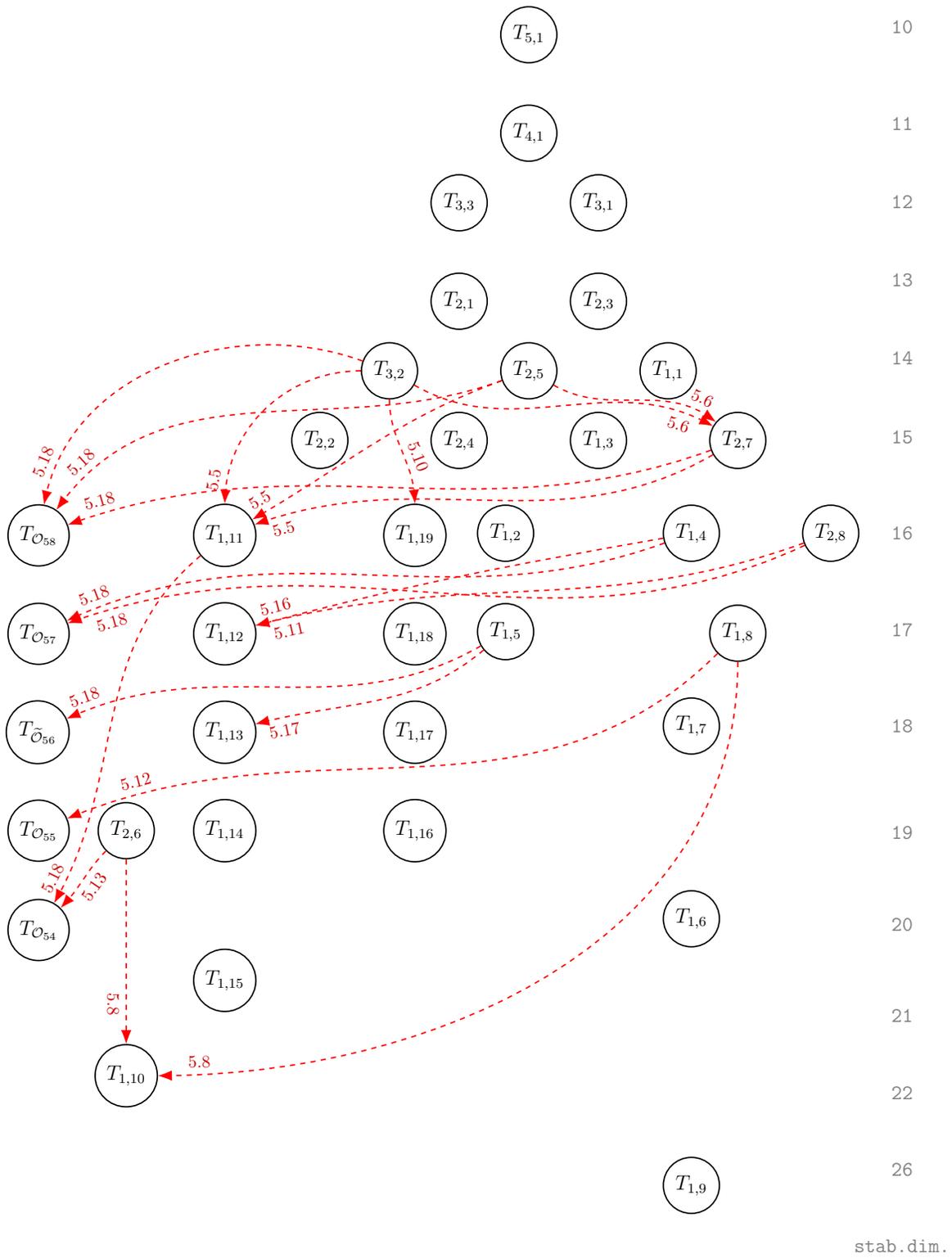

\section{Degenerations of tensors}\label{sec:degenerations}

In this section we prove the nonexistence of certain degenerations of minimal
border rank tensors. Together with the explicit degenerations described in
Appendix~\ref{sec:code} this yields the Diagram~\ref{sec:diagram} and proves
Theorem~\ref{degenerations_theorem}. An analysis of Diagram~\ref{sec:diagram}
yields also Corollary~\ref{ref:localscheme:cor}.

\subsection{On the existence of degenerations}

    To obtain the graph, we need to construct $66$ direct degenerations; every
    of them is constructed completely explicitly in Appendix~\ref{sec:code}. Of
    these, $31$ are degenerations between algebras
    and follow from~\cite{mazzola_generic_finite_schemes}. However, the
    translation of algebra degenerations to tensor requires significant work
    and sometimes results in intricate base changes,
    see for example $T_{1,1}\degto T_{1,3}$ in the code,
    Appendix~\ref{sec:code}. The existence
    of four degenerations $\Tdeg{58}\degto \Tdeg{57}\degto \Tdegtw{} \degto
    \Tdeg{55} \degto \Tdeg{54}$ is proven over $\CC$ in~\cite{concise}.
    The remaining $31$ degenerations are new. We construct them by
    degenerating (the generators of) the apolar modules described in
    Subsection~\ref{subsection_apolarity}, because in them the information
    about tensors is most compressed and hence handy to manipulate. Constructing some of the
    degenerations is almost trivial, as they amount to rescaling coordinates, while
    for some others it proved to be rather tricky and required a case-by-case approach.

    By construction, all degenerations exist over any field $\kk$ of
    characteristic not equal to $2$, $3$. This can be verified directly by
    inspecting the coefficients of the matrices: the only denominators which
    appear are $1$, $2$, $3$, or $18$, see function \texttt{tallyDegenerationDenominators DegenerationList}.

    \subsection{Notation and preliminaries}

    We gather some notation on degenerations. For details on tensors,
    \cite[Chapter~15]{BurgisserBook} is an excellent reference. For details on
    modules, a book on deformation theory, such
    as~\cite{fantechi_et_al_fundamental_ag}, is best.

    Let $T, T'\in A\otimes B\otimes C$ be tensors.
    We say that $T$ \emph{degenerates} to a tensor $T'$ if $T'$ lies in the
    closure of the $(\GL(A) \times \GL(B)\times \GL(C))$-orbit of $T$. We
    denote this by $T\degtopure T'$. We say
    that $T$ \emph{degenerates} to $T'$ \emph{up to permutations}, if there is a
    permutation $\sigma\in \Sigma_3$ such that $\sigma\cdot T$ degenerates to
    $T'$. We
    denote this by $T\degto T'$. Both $\degtopure$ and $\degto$ are partial
    orders on the set of isomorphism classes of tensors. They can also be
    characterised
    using tensors with coefficient in power series, as
    in~\cite[(15.19)]{BurgisserBook} and using flattenings as
    in~\cite[Theorem~4.3]{CGZ}.

    Let $M, M'$ be $S$-modules of degree $m$. We say that $M$ \emph{degenerates} to $M'$ if
    there is a finitely generated flat $S[\![t]\!]$-module $\mathcal{M}$ such that
    $\mathcal{M}/t\mathcal{M}$ is isomorphic to $M'$ while
    $\mathcal{M}_{t}$ is isomorphic to $M(\!(t)\!)$. We denote this by
    $M\degto M'$ and call any such $\mathcal{M}$ a \emph{degeneration} of $M$
    to $M'$. If $M$, $M'$ come from
    $m$-dimensional spaces of matrices $E, E'\subseteq \End(C)$ as
    in~\ref{ssec:modulesAnd1Ageneric}, then the above is equivalent
    to saying that $E'$ lies in the closure of the orbit of $E\in \Gr(m, \End(C))$ under
    $\GL(C)$. Understanding degenerations of modules is quite subtle and
    equivalent to understanding the topology of the so-called Quot scheme of
    points. See~\S\ref{ssec:degenerationsDictionary} for comparing different
    types of degenerations.

    \subsection{How to read Diagrams~\ref{sec:diagram}-\ref{sec:nondiagram}}

        On Diagram~\ref{sec:diagram} arrows correspond to
        degenerations, we allow permutations of factors. Only ``minimal'' degenerations are drawn: all others
        are obtained by transitivity: if $T\degto T'$ and $T'\degto T''$,
        then $T\degto T''$.
        On Diagram~\ref{sec:nondiagram} the dashed arrows corresponds to
        \emph{nonexistence} of degenerations.
        For clarity, two classes of such arrows are omitted:
        \begin{itemize}
            \item By~\S\ref{ssec:stabilizer} below, there cannot be any
                degenerations going horizontally or going up. (The stabilizer
                dimension is written on the side of the diagram.)
            \item For $1_*$-generic tensors, denoted $T_{a, b}$ on the
                diagram, the number $a$ denotes the maximal number of summands
                in a direct sum decomposition of $T_{a, b}$. By Proposition~\ref{ref:degenerationSplitting:prop} below,
                if $T_{a, b}\degto T_{a', b'}$, then $a'\leq a$.
        \end{itemize}
        Moreover, only ``minimal''
        non-degenerations are drawn, others can be obtained by transitivity as
        follows: if we know from Diagram~\ref{sec:nondiagram} that $T\notdegto T'''$ and
        additionally from Diagram~\ref{sec:diagram} we know that $T\degto T'$ and $T''\degto T'''$, then we
        infer that $T'\notdegto T''$, as otherwise $T'\degto T'\degto T''\degto T'''$ would yield
        a contradiction.  For example, once we know that $T_{2, 7}$ does not
        degenerate to $T_{1,9}$, we can infer that it cannot degenerate to
        $T_{1,2}$, $T_{1,5}$, $T_{1,8}$, \ldots and neither any of $T_{1,19}$,
        $T_{1,18}$, $T_{1,12}$, \ldots can degenerate to $T_{1,9}$, because
        each of them is a degeneration of $T_{2, 7}$.

    \subsection{Preliminary results}\label{ssec:degenerationsPrelims}

In this section we gather several basic (and well-known) observations
regarding degenerations of tensors. We will refer to them in subsequent
arguments.

\subsubsection{Stabilizer Lie algebra}\label{ssec:stabilizer}

Let $T\in A\otimes B\otimes C$.  The \emph{stabilizer Lie algebra} consists of
triples $(X,Y,Z)\in \End(A) \times \End(B) \times \End(C)$, such that
\[
    X\cdot T + Y\cdot T + Z\cdot T = 0,
\]
where $X\cdot T := (X\otimes \id_B \otimes \id_C)(T)$, etc. This algebra is
equal to the kernel of the linear map
\[
    \End(A) \oplus \End(B)\oplus \End(C)\ni (X,Y,Z)\mapsto X\cdot T + Y\cdot T
    + Z\cdot T \in A \otimes B\otimes C,
\]
hence its dimension is (upper)-semicontinuous: for every $r$, the
locus of tensors having $(\geq r)$-dimensional stabilizer is closed.

\subsubsection{Being $1_A$-generic}\label{ssec:being1Ageneric}

    Recall that $\dim B = \dim C = m$.
    A tensor $T\in A\otimes B\otimes C$ is $1_A$-generic if and only if $\det
    T_A$ is not identically zero. It follows that being $1_A$-generic is open:
    no $1_A$-degenerate tensor can degenerate to a $1_A$-generic one.

    For example, $T_{2,7}$ is $1_A$-generic, but no $1_B$-, $1_C$-generic, so
    even after permuting factors it cannot degenerate to $T_{1, 9}$ which is
    both $1_A$- and $1_B$-generic. Similarly, $T_{3,2}$ is $1_A$- and
    $1_C$-generic, yet not $1_B$-generic, while $T_{1,3}$ and $T_{1,8}$ are
    $1_A$-, $1_B$-, $1_C$-, so $T_{3,2}$ cannot degenerate to $T_{1,3}$ or
    $T_{1, 8}$.

\subsubsection{Minimal number of generators}\label{ssec:mingens}

Let $M$ be an $S$-module of degree $m$. Let $\maxideal =
(x_1, \ldots ,x_{m-1})$.
Assume that there exists an integer $D\geq 0$ such that $\maxideal^DM = 0$.
In this case, Nakayama's Lemma~\ref{ref:Nakayama:lem} tells us that the minimal number of generators
of the $S$-module $M$ is equal to $\dim_{\kk} M/\maxideal M$. Suppose that
$M'$ is another $S$-module of degree $m$ with $\maxideal^D M' = 0$ for $D\gg 0$
and that $M$ degenerates to $M'$. Then
\begin{equation}\label{eq:semicontinuity}
    \dim_{\kk} M'/\maxideal M' \geq \dim_{\kk} M/\maxideal M.
\end{equation}
\newcommand{\Mbar}{\overline{\mathcal{M}}}%
Indeed, let $\mathcal{M}$ be a degeneration of $M$ to $M'$. Consider
$\Mbar :=\mathcal{M}/\maxideal \mathcal{M}$. This is a
finitely generated $\kk[\![t]\!]$-module such that $\Mbar/t \Mbar \simeq M'/\maxideal M'$
and $\dim_{\kk(\!(t)\!)} \Mbar_t = \dim_{\kk} M/\maxideal M$. The
inequality~\eqref{eq:semicontinuity} follows from the classification of
finitely generated $\kk[\![t]\!]$-modules: since $\kk[\![t]\!]$ is a principal
ideal domain, every such module has the form
$\kk[\![t]\!]^{\oplus r} \oplus \bigoplus_{j=1}^s
\frac{\kk[\![t]\!]}{(t^{e_j})}$ for some $r, s\geq 0$ and $e_1, \ldots
,e_s\geq 1$. The left-hand-side of~\eqref{eq:semicontinuity} is equal to $r+s$,
while the right-hand-side is $r$.

\subsubsection{Degenerations of $1_*$-generic tensors up to
$\Sigma_3$-action}\label{ssec:degenerationsDictionary}

    Let $T, T'\in A\otimes B\otimes C$ be $1_A$-generic and satisfy
    $A$-Strassen's equations. To prove that $T\notdegto T'$, in principle we need to consider $6$
    permutations $\sigma\in \Sigma_3$ and prove that $\sigma\cdot T\notdegtopure
    T'$ for each of them. Fortunately, it is not
    so:
    \begin{itemize}
        \item If $T$ is $1_B$-degenerate and $1_C$-degenerate, then by
            Subsection~\ref{ssec:being1Ageneric} the only possible
            degenerations are
            $T\degtopure T'$ and $\sigma\cdot T\degtopure T'$, where $\sigma\in
            \Sigma_3$ switches $B$ and $C$ coordinates. Let $M$, $M'$ be the
            modules associated to $T$ and $T'$, as in
            Section~\ref{prelims_tensors}. Then the two possible degenerations
            above translate to degenerations of $M\degtopure M'$ and
            $M^{\vee}\degtopure M'$ of modules. To have a degeneration
            $M^{\vee} \degtopure M'$ is the same as to have a degeneration
            $M\degtopure (M')^{\vee}$, because $M^{\vee\vee}$ is isomorphic to
            $M$.
        \item If $T$ is $1_B$-generic and $1_C$-degenerate, then in principle
            we could also swap $A$ and $B$ coordinates. However, $T$
            corresponds to a commutative algebra (see Subsection~\ref{prelims_tensors}), so such a swap does
            nothing. Hence also in this case we need to consider the
            degenerations as above.
        \item If $T$ is $1_A-$, $1_B-$, $1_C$-generic, then it corresponds to
            a Gorenstein algebra, so $T$ is
            isomorphic to a symmetric tensor~\cite[Proposition~5.6.2.1]{landsberg_complexity}, hence
            we need to consider only $T\degtopure T'$.
    \end{itemize}
    \begin{remark}
        The above considerations of course fail when $T'$ is 1-degenerate and
        indeed one of our degenerations is $\sigma\cdot T_{1,2}\degtopure \Tdeg{57}$,
        where $\sigma\in \Sigma_3$ is a three-cycle $A\to C\to B\to A$.
    \end{remark}

    \subsubsection{Degenerations of modules supported in several maximal
    ideals}\label{ssec:severalParts}

    \begin{proposition}[{\cite[THEOREM,
        p.291]{mazzola_generic_finite_schemes}}]\label{ref:degenerationSplitting:prop}
            Let $N = N_{1}\oplus \ldots \oplus N_{r}$ be a
            direct sum of $S$-modules, where the supports of $N_i$ and $N_j$
            are disjoint for $i\neq j$.

            Suppose that $M$ degenerates to $N$. Then there exists direct sum
            decomposition of $S$-modules $M = M_1\oplus \ldots \oplus
            M_{r}$ such that $M_i$ and $M_j$ have disjoint supports and $M_i$
            degenerates to $N_i$ for every $i=1,2, \ldots ,r$.
        \end{proposition}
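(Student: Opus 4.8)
The plan is to split the given degeneration by lifting idempotents. Let $\mathcal{M}$ be a degeneration of $M$ to $N$, i.e.\ a finitely generated flat $S[\![t]\!]$-module with $\mathcal{M}/t\mathcal{M}\simeq N$ and $\mathcal{M}_t\simeq M(\!(t)\!)$. First I would note that $\mathcal{M}$ is in fact finite and free over $\kk[\![t]\!]$: it is flat, hence torsion-free, over the PID $\kk[\![t]\!]$ (as $S[\![t]\!]$ is a $\kk[\![t]\!]$-flat domain), and it is finitely generated over $\kk[\![t]\!]$ by Nakayama, since $\mathcal{M}$ is $t$-adically separated and $\mathcal{M}/t\mathcal{M}\simeq N$ is finite-dimensional. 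Then I would introduce $R := S[\![t]\!]/\ann_{S[\![t]\!]}(\mathcal{M})$, the image of $S[\![t]\!]$ acting on $\mathcal{M}$; since $R$ embeds into the $\kk[\![t]\!]$-free algebra $\End_{\kk[\![t]\!]}(\mathcal{M})$, it is a finite, free, commutative $\kk[\![t]\!]$-algebra. Its special fibre $R_0 := R/tR$ is a finite $\kk$-algebra with $\Spec R_0\hookrightarrow\Spec S$ a closed subscheme containing $\operatorname{Supp}_S(N)$, and $R_0$ acts on $N$.

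Next I would exploit the disjointness hypothesis. Since $R_0$ is Artinian it is the product of its localizations at maximal ideals, which correspond to points of $\Spec R_0\subseteq\Spec S$; grouping these so that each $\operatorname{Supp}_S(N_i)$ sits in its own block, I obtain a complete system of orthogonal idempotents $\epsilon_1,\dots,\epsilon_r\in R_0$ with $\epsilon_i N=N_i$ (points of $\Spec R_0$ outside $\operatorname{Supp}_S(N)$ contribute nothing to $N$ and may be distributed arbitrarily). Because $\kk[\![t]\!]$ is local, $t$ lies in the Jacobson radical of the finite algebra $R$ and $R$ is $t$-adically complete; hence complete systems of orthogonal idempotents lift, and I may choose orthogonal $\tilde\epsilon_1,\dots,\tilde\epsilon_r\in R$ summing to $1$ and reducing mod $t$ to the $\epsilon_i$. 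As $R$ is commutative, each $\mathcal{M}_i:=\tilde\epsilon_i\mathcal{M}$ is an $S[\![t]\!]$-direct summand of $\mathcal{M}$, hence finite flat over $S[\![t]\!]$, with $\mathcal{M}_i/t\mathcal{M}_i=\epsilon_i N=N_i$ and $(\mathcal{M}_i)_t=\tilde\epsilon_i\,M(\!(t)\!)$.

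The one point that is not purely formal — and which I expect to be the main obstacle — is to show the idempotents $\tilde\epsilon_i$ are already defined over $\kk$, so that the generic fibres $(\mathcal{M}_i)_t$ are base changes of honest $S$-modules over $\kk$. For this I would observe that $\tilde\epsilon_i$ lies in $R_t=R\otimes_{\kk[\![t]\!]}\kk(\!(t)\!)=R_M\otimes_\kk\kk(\!(t)\!)$, where $R_M:=S/\ann_S(M)$ is the image of $S$ acting on $M$ (using that the annihilator of the finitely generated module $M$ commutes with the flat base change $\kk\to\kk(\!(t)\!)$). Since $\kk$ is algebraically closed, $R_M$ is a finite product of local Artinian $\kk$-algebras each with residue field $\kk$, and each such factor stays local after $\otimes_\kk\kk(\!(t)\!)$; therefore every idempotent of $R_M\otimes_\kk\kk(\!(t)\!)$ already lies in $R_M$. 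Thus $\tilde\epsilon_i\in R_M\subseteq\End_\kk(M)$; setting $M_i:=\tilde\epsilon_i M$ gives $M=M_1\oplus\dots\oplus M_r$ with $(\mathcal{M}_i)_t\simeq M_i\otimes_\kk\kk(\!(t)\!)$, so $\mathcal{M}_i$ is a degeneration of $M_i$ to $N_i$. (Alternatively one could obtain the same conclusion from the Krull--Schmidt theorem applied over $\kk$ and over $\kk(\!(t)\!)$.)

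Finally I would check that the $M_i$ have pairwise disjoint supports. The closed immersion $\Spec R_M\hookrightarrow\Spec S$ identifies $\operatorname{Supp}_S(M)$ with the set of maximal ideals of $R_M$, and the orthogonal idempotents $\tilde\epsilon_i$ partition this set; localizing at any $\maxideal\in\operatorname{Supp}_S(M)$, exactly one $\tilde\epsilon_i$ becomes the unit, so $(M_j)_\maxideal\neq 0$ for a unique $j$, giving $\operatorname{Supp}_S(M_i)\cap\operatorname{Supp}_S(M_j)=\emptyset$ for $i\neq j$. The remaining verifications are routine: that a direct summand of a finite flat module is finite flat, that passage to $\mathcal{M}_i=\tilde\epsilon_i\mathcal{M}$ commutes with reduction mod $t$ and with inverting $t$, and the idempotent-lifting statement for finite algebras over complete local rings.
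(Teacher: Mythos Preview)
The paper does not give its own proof of this proposition: it is stated with a citation to Mazzola~\cite{mazzola_generic_finite_schemes} and used as a black box. Your argument via lifting idempotents is correct and is the standard route to results of this type; it would serve perfectly well as a self-contained proof were one desired. The one step worth scrutinizing is the claim that $\mathcal{M}$ is finite over $\kk[\![t]\!]$: ``Nakayama'' alone does not give finite generation, but your observation that $R=S[\![t]\!]/\ann(\mathcal{M})$ embeds in $\End_{\kk[\![t]\!]}(\mathcal{M})$ can be reorganized to make this rigorous (e.g.\ lift a $\kk$-basis of $N$ to $\mathcal{M}$, use flatness and $t$-adic separatedness to see these generate over $\kk[\![t]\!]$), after which everything else --- idempotent lifting over the complete local base, and the descent of idempotents from $R_M\otimes_\kk\kk(\!(t)\!)$ to $R_M$ using that local Artinian $\kk$-algebras with residue field $\kk$ stay local under field extension --- goes through as you wrote.
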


        For example, $T_{2,3}$ corresponds to the cyclic module
        $\kk[x_1]/(x_1^4) \oplus \kk[x_4]/(x_4-1)$, where summands have degree
        $4$ and $1$, see~\S\ref{sssec:s2}, while
        $T_{2,2}$ corresponds to $\kk[x_1, x_2]/(x_1, x_2)^2 \oplus
        \kk[x_3]/(x_3 - 1)^2$, where summands have degree $3$ and $2$, so by
        Proposition~\ref{ref:degenerationSplitting:prop}
        the tensor $T_{2,3}$ cannot degenerate to $T_{2, 2}$. Similarly,
        $T_{3, 3}$ cannot degenerate to $T_{3, 2}$.

    \subsubsection{Submodules}

        In several important cases below, we rule our a degeneration by
        considering submodules of the module associated to a $1_A$-generic
        tensor $T$ that satisfies $A$-Strassen's equations.

        We review the construction here, in a somewhat naive way, which is
        sufficient for our purposes. Recall that to a tensor $T$ and full rank
        matrix $T(\alpha)$ we associate~\eqref{eq:Espace}
        a space of matrices $\Espace{T} \subseteq \End(C)$
        and the module $\modC$. A submodule of $\modC$ is a subspace
        $V\subseteq C$ closed under the action of elements of $\Espace{T}$.

        \begin{proposition}\label{ref:submodules:prop}
            Let $T\in A\otimes B\otimes C$ be concise $1_A$-generic and satisfy $A$-Strassen's equations.
            Suppose that $T$ degenerates to another concise $1_A$-generic
            $T'$. Let $M$, $M'$ be the modules associated to $T$, $T'$,
            respectively. Let $N\subseteq M$ be a submodule of degree $r$.
            Then there exists a submodule $N'\subseteq M'$ of degree $r$ and a degeneration
            $N\degtopure N'$.
        \end{proposition}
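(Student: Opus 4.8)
The plan is to exhibit $N'$ by taking the flat limit of $N$ inside the flat limit of $M$. Since $T$ degenerates to $T'$, and both are $1_A$-generic satisfying $A$-Strassen's equations, we may pass to the module picture: there is a finitely generated flat $S[\![t]\!]$-module $\mathcal{M}$ with $\mathcal{M}/t\mathcal{M}\simeq M'$ and $\mathcal{M}_t\simeq M(\!(t)\!)$. Inside $M(\!(t)\!)$ sits the submodule $N(\!(t)\!)$, a free $\kk(\!(t)\!)$-module of rank $r$, and hence a submodule $\mathcal{N}_t := N(\!(t)\!)\cap \mathcal{M}_t$ of $\mathcal{M}_t$. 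First I would set $\mathcal{N}$ to be the $S[\![t]\!]$-submodule of $\mathcal{M}$ defined as the intersection $\mathcal{M}\cap N(\!(t)\!)$ inside $\mathcal{M}_t = M(\!(t)\!)$; more concretely, after choosing a basis one can clear denominators so that $\mathcal{N}$ is generated by $S[\![t]\!]$-multiples of the generators of $N$. Because $S[\![t]\!]$ is Noetherian, $\mathcal{N}$ is finitely generated; because $\mathcal{N}\subseteq \mathcal{M}$ and $\mathcal{M}$ is $t$-torsion-free, $\mathcal{N}$ is $t$-torsion-free, and over the principal ideal domain $\kk[\![t]\!]$ (viewing $\mathcal{N}$ as a $\kk[\![t]\!]$-module via $S_{\leq 0}$, or more carefully: $\mathcal{N}$ is a finitely generated module over $S[\![t]\!]$ which is flat over $\kk[\![t]\!]$ since it is $t$-torsion-free) it is flat over $\kk[\![t]\!]$. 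Hence $\mathcal{N}$ is a degeneration of $N$ to $N' := \mathcal{N}/t\mathcal{N}$, provided we check that $N'$ is naturally a submodule of $M'$ of degree $r$.

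The key steps, in order, are: (1) reduce to the module picture via Section~\ref{ssec:modulesAnd1Ageneric}; (2) construct $\mathcal{N} := \mathcal{M}\cap N(\!(t)\!)$ and verify it is a finitely generated, $t$-torsion-free, hence $\kk[\![t]\!]$-flat $S[\![t]\!]$-submodule of $\mathcal{M}$; (3) verify $\mathcal{N}_t = N(\!(t)\!)$, so that $\mathcal{N}$ is indeed a degeneration of $N$; (4) identify the special fibre $N' := \mathcal{N}/t\mathcal{N}$ with a submodule of $M' = \mathcal{M}/t\mathcal{M}$ via the natural map, and show this map is injective with image of degree $r$. For step (4), flatness of $\mathcal{N}$ over $\kk[\![t]\!]$ gives $\dim_{\kk} \mathcal{N}/t\mathcal{N} = \dim_{\kk(\!(t)\!)} \mathcal{N}_t = r$; the inclusion $\mathcal{N}\hookrightarrow \mathcal{M}$ induces $\mathcal{N}/t\mathcal{N}\to \mathcal{M}/t\mathcal{M}$, and one shows this is injective because $\mathcal{M}/\mathcal{N}$ is again $t$-torsion-free (being a submodule of the free, hence flat, $\kk(\!(t)\!)$-module-theoretic quotient $M(\!(t)\!)/N(\!(t)\!)$ intersected appropriately — equivalently, $\mathcal{M}/\mathcal{N}$ embeds in $\mathcal{M}_t/\mathcal{N}_t = (M/N)(\!(t)\!)$). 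Then $\operatorname{Tor}_1^{\kk[\![t]\!]}(\mathcal{M}/\mathcal{N}, \kk) = 0$ yields the injectivity of $\mathcal{N}/t\mathcal{N}\to \mathcal{M}/t\mathcal{M}$ from the snake lemma applied to multiplication by $t$.

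The main obstacle I expect is step (2)–(3): making sure that the naive "clear denominators" definition of $\mathcal{N}$ actually produces an $S[\![t]\!]$-submodule whose generic fibre is all of $N(\!(t)\!)$ and whose special fibre has the right dimension, rather than something smaller or non-flat. The clean way around this is to define $\mathcal{N}$ as the \emph{saturation}-free intersection $\mathcal{M}\cap N(\!(t)\!)$ inside $\mathcal{M}_t$: this is automatically $t$-torsion-free (it sits inside the torsion-free $\mathcal{M}$), automatically satisfies $\mathcal{N}_t = N(\!(t)\!)$ (any element of $N(\!(t)\!)$ lies in $\mathcal{M}_t$, hence some $t^k$-multiple lies in $\mathcal{M}\cap N(\!(t)\!)$), and is finitely generated by Noetherianity of $S[\![t]\!]$. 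The fact that the quotient $\mathcal{M}/\mathcal{N}$ is also $t$-torsion-free — which is exactly what one needs for the special fibre $N'$ to inject into $M'$ — follows from the same intersection description: if $tx\in \mathcal{N}$ for $x\in \mathcal{M}$, then $tx\in N(\!(t)\!)$, so $x\in N(\!(t)\!)$ (as $N(\!(t)\!)$ is a $\kk(\!(t)\!)$-subspace of $M(\!(t)\!)$), so $x\in \mathcal{M}\cap N(\!(t)\!) = \mathcal{N}$. This closes the argument.
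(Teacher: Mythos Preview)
Your algebraic approach via flat limits over $\kk[\![t]\!]$ is correct, and your steps (2)--(4) are clean: defining $\mathcal{N}=\mathcal{M}\cap N(\!(t)\!)$, checking that $\mathcal{M}/\mathcal{N}$ is $t$-torsion-free (hence $\kk[\![t]\!]$-flat), and deducing from $\operatorname{Tor}_1^{\kk[\![t]\!]}(\mathcal{M}/\mathcal{N},\kk)=0$ that $N':=\mathcal{N}/t\mathcal{N}$ injects into $M'$ all go through exactly as you say.

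However, step (1) hides work that the paper's proof carries out explicitly. You assume that the tensor degeneration $T\degtopure T'$ directly yields a flat $S[\![t]\!]$-family $\mathcal{M}$ interpolating between $M$ and $M'$. This passage is not automatic from \S\ref{ssec:modulesAnd1Ageneric}: the modules are built from the spaces $\Espace{T}$, $\Espace{T'}$, and one must choose a full-rank functional $\alpha$ that works for $T'$ and, by semicontinuity, for nearby points of the degenerating family, and then observe that the $\GL(A)$- and $\GL(B)$-actions cancel in the passage from $T(A^{\vee})\subseteq B\otimes C$ to $\Espace{T}\subseteq\End(C)$. The paper does exactly this by taking a smooth curve into $\GL(A)\times\GL(B)\times\GL(C)$ and tracking the induced curve in $\Gr(m,\End(C))$. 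You should at least sketch this reduction before invoking $\mathcal{M}$.

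Once that gap is filled, the two proofs are the same operation (taking a flat limit of a sub\-space) in different languages: the paper uses properness of $\Gr(r,C)$ to extend a curve and a semicontinuity argument to check that the limit subspace is $\Espace{T'}$-stable, while you saturate inside $\mathcal{M}$ and use $\operatorname{Tor}$-vanishing. Your route is purely module-theoretic and avoids curves and Grassmannians; the paper's route has the advantage of simultaneously establishing the tensor-to-module reduction you are assuming.
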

        We stress that the degeneration in the statement does not allow for
        any permutations; we require that $T'$ is in the closure of the
        $(\GL(A)\times \GL(B)\times \GL(C))$-orbit of $T$.
        \begin{proof}
            \def\curve{\mathcal{C}}
            \def\curvebar{\overline{\curve}}
            Let $\OO$ denote the orbit of $T$. The map $\GL(A) \times \GL(B)
            \times \GL(C)\to \OO$ is surjective.
            The tensor $T$ degenerates to $T'$, so we may pick a smooth curve $\curve\to
            \GL(A) \times \GL(B)\times \GL(C)$ such that $T'$ lies in the
            closure of $\curve \cdot T$.

            View $T(A^{\vee})\subseteq B\otimes C$ as a
            point of the Grassmannian $\Gr(m, B\otimes C)$. The group
            $\GL(B)\times \GL(C)$ acts on this Grassmannian and
            $[T'(A^{\vee})]$ lies in the closure of $\curve\cdot [T(A^{\vee})]$,
            where $\curve$ acts only by its $\GL(B)\times \GL(C)$ part.

            Pick an element $\alpha\in A^{\vee}$ such that
            $T'(\alpha)$ has full rank. By semicontinuity, for a nonempty open
            subset of $x\in \curve$, the element $(x \cdot T)(\alpha)$ has
            full rank as well. We restrict the curve $\curve$ to this open
            subset.
            Recall from~\eqref{eq:Espace} the space
            \[
                \Espace{T'} = T'(A^{\vee})T'(\alpha)^{-1}\subseteq \End(C)
            \]
            and its counterpart for $T$.  The point
            $[\Espace{T'}]\in \Gr(m, \End(C))$ lies in the closure of
            $\curve\cdot \Espace{T}$. Observe that here $\curve$ acts only by
            the $\GL(C)$-part, the $\GL(B)$-part of the action has cancelled
            out. Let $\curvebar\to \Gr(m, \End(C))$ be the smooth projective curve
            extending $\curve$ and suppose that $0\in \curvebar$ maps to
            $[\Espace{T'}]$.

            Consider now the module $N$ and view it as an element of $\Gr(r,
            C)$. We have a map $\curve \to \GL(C)$ and $\GL(C)$ acts on
            $\Gr(r, C)$, so we can  associate an element
            $[x\cdot N]\in \Gr(r, C)$ to every $x\in \curve$.
            The
            Grassmannian $\Gr(r, C)$ is projective and so the map $\curve \to
            \Gr(r, C)$ extends to a map from a smooth projective curve. By
            uniqueness it is
            $\curvebar\to \Gr(r, C)$.
            Let $[N']\in \Gr(r, C)$ be the image of $0$.

            For every $x\in \curve$, the subspace $x\cdot N$ is closed
            under the action of the matrices $x\cdot [\Espace{T}]$.
            By semicontinuity, the space $N'$
            is closed under the action of $\Espace{T'}$, so it is a submodule
            of $M'$.
        \end{proof}

\subsection{Obstructions to degenerations coming from submodules}

In this section we rule out degenerations using the following observation: if
a module $M$ degenerates to a module $M'$ and $N\subseteq M$ is a submodule,
then there exists a submodule $N'\subseteq M'$ which is a degeneration of $N$,
see Proposition~\ref{ref:submodules:prop}. If $M$ admits such an $N$ which has large annihilator
or requires many generators, then the same is true for $N'$. But such an $N'$
cannot exist in the cases below.

\begin{lemma}[{obstruction for $T_{1,11}$}]\label{ref:T11submodules:lem}
    Let $M_{11}$ be the module corresponding to the tensor $T_{1,11}$. Let
    $N\subseteq M_{11}^{\vee}$ be any submodule of degree $\dim_{\kk} N = 4$. Then $N$ is
    cyclic. Let $N\subseteq M_{11}$ be any submodule of degree four. Then
    $\ann(N)\cap \langle x_1, \ldots ,x_5\rangle$ is at most one-dimensional.
\end{lemma}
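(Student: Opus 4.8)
The plan is to make $M_{11}$ and $M_{11}^{\vee}$ completely explicit, describe all of their degree-four submodules, and then reduce each of the two assertions to an elementary rank computation.

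First I would read off $M_{11}$ from the displayed matrix of $T_{1,11}$. In the basis $e_1,\dots,e_5$ one has $\maxideal M_{11}=\spann{e_4,e_5}$, this subspace equals the socle $(0:\maxideal)_{M_{11}}$, and $\maxideal^2 M_{11}=0$. Hence $M_{11}$ is a module over $S/\maxideal^2$, determined by the linear map $\rho\colon S_1\to\Hom(M_{11}/\maxideal M_{11},\,\maxideal M_{11})$ recording the action of $x_1,\dots,x_4$; conciseness makes $\rho$ injective (its kernel is $\ann(M_{11})\cap S_1=0$), so it identifies $S_1$ with its $4$-dimensional image $W$, which in the bases coming from the matrix is
\[
W=\left\{\left(\begin{smallmatrix} b & c & d\\ a & b & c\end{smallmatrix}\right):a,b,c,d\in\kk\right\}\subseteq\Hom(\kk^3,\kk^2).
\]
Dualising, $M_{11}^{\vee}$ again satisfies $\maxideal^2=0$; here $\maxideal M_{11}^{\vee}=(0:\maxideal)_{M_{11}^{\vee}}$ is $3$-dimensional, $\dim M_{11}^{\vee}/\maxideal M_{11}^{\vee}=2$, and the analogous structure map $\rho'$ has image the transpose $W'=\{w^{\top}:w\in W\}=\left\{\left(\begin{smallmatrix} b & a\\ c & b\\ d & c\end{smallmatrix}\right)\right\}\subseteq\Hom(\kk^2,\kk^3)$.

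Next I would classify degree-four submodules. Since $\maxideal^2$ kills everything, a subspace $N\subseteq M$ (for $M\in\{M_{11},M_{11}^{\vee}\}$) is a submodule iff $W\overline N\subseteq N$, where $\overline N$ is the image of $N$ in $M/\maxideal M$; note $W\overline N\subseteq\maxideal M$ always, and $\dim N=\dim\overline N+\dim(N\cap\maxideal M)$. If $\overline N=M/\maxideal M$ then, since the action of $S_1$ factors through $M/\maxideal M$, we get $\maxideal N=\maxideal M$, hence $N=M$; so a degree-four submodule of $M_{11}$ has $\dim\overline N\le 2$, and the dimension count with $\dim\maxideal M_{11}=2$ forces $\dim\overline N=2$ and $N\cap\maxideal M_{11}=\maxideal M_{11}$, i.e.\ $N=\pi^{-1}(\overline N)$ for the projection $\pi\colon M_{11}\to M_{11}/\maxideal M_{11}$. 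For $M_{11}^{\vee}$, with $\dim\maxideal M_{11}^{\vee}=3$ and $\dim M_{11}^{\vee}/\maxideal M_{11}^{\vee}=2$, the same count forces $\dim\overline N=1$ and $N\cap\maxideal M_{11}^{\vee}=\maxideal M_{11}^{\vee}$, so $N=\kk g+\maxideal M_{11}^{\vee}$ for any $g$ lifting a spanning vector of the line $\overline N$.

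Now the two conclusions. For $N\subseteq M_{11}^{\vee}$ of degree four: as $\maxideal\cdot\maxideal M_{11}^{\vee}=0$ we have $Sg=\kk g+W'\overline g$, so $N=Sg$ is cyclic once the evaluation map $W'\to\maxideal M_{11}^{\vee}$, $w\mapsto w(\overline g)$, is surjective. Writing $\overline g=(s,t)\neq 0$, this map has matrix $\left(\begin{smallmatrix} t & s & 0 & 0\\ 0 & t & s & 0\\ 0 & 0 & t & s\end{smallmatrix}\right)$, of rank $3$ whenever $(s,t)\neq 0$ (the first three columns give the minor $t^{3}$, the last three $s^{3}$); hence $N$ is cyclic. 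For $N=\pi^{-1}(\overline N)\subseteq M_{11}$ of degree four: a linear form $\ell$ annihilates $N$ iff $\rho(\ell)\in W$ vanishes on $\overline N$, so via the isomorphism $\rho$ one has $\ann(N)\cap S_1\cong\{w\in W:w|_{\overline N}=0\}$; picking a basis $u,u'$ of $\overline N$, this space is the kernel of the $4\times 4$ matrix with rows $(0,u_1,u_2,u_3)$, $(u_1,u_2,u_3,0)$, $(0,u'_1,u'_2,u'_3)$, $(u'_1,u'_2,u'_3,0)$. Its row space is $P+Q$, where $P$ and $Q$ are the copies of $\spann{u,u'}$ sitting in the first three and in the last three coordinates; $P\cap Q$ lies in the span of the two middle coordinates, and $P=Q$ would force $\overline N$ to be two distinct coordinate planes of $M_{11}/\maxideal M_{11}$ at once, which is impossible. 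Hence $\dim(P\cap Q)\le 1$, the matrix has rank $\ge 3$, and $\ann(N)\cap S_1$ has dimension at most one.

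There is no deep obstacle: the content lies in identifying $W$ and $W'$ correctly (the transpose bookkeeping for $M_{11}^{\vee}$ being the easiest place to slip) and in the structural step showing that a degree-four submodule must contain $\maxideal M$ and hence take the stated form. After that, each assertion becomes one of the two displayed rank facts for Sylvester/Hankel-type matrices.
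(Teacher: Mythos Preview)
Your proof is correct. The route differs from the paper's in organisation rather than depth: the paper argues by direct case analysis on elements (for $M_{11}^{\vee}$ it splits on whether a particular generator lies in $N$ and then traces its $S_1$-orbit; for $M_{11}$ it splits on whether $N\cap\spann{e_1,e_2,e_3}$ equals $\spann{e_1,e_2}$ or contains an $e_3+\lambda_1 e_1+\lambda_2 e_2$), whereas you first prove the structural fact that every degree-four submodule contains $\maxideal M$ and is therefore determined by its image $\overline N$ in $M/\maxideal M$, and then reduce each assertion to a single rank computation for a Sylvester-type matrix built from the space $W$ (respectively $W'$). Your approach packages the case analysis into a uniform linear-algebra statement and makes the parametrisation of submodules explicit, at the cost of the transpose bookkeeping you flag; the paper's approach is more hands-on and avoids setting up $W$, but the underlying content is the same.
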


\begin{proof}
    We provide an elementary proof. The module $N$ corresponds to a
    $4$-dimensional subspace $V\subseteq \kk^{\oplus 5}$, which is closed
    under the action of matrices coming from $T_{1,11}^{\top}$.
    If this subspace
    contains $e_2$, then it also contains $x_4(e_2) = e_3$, $x_3(e_2) = e_4$,
    $x_2(e_2) = e_5$, so $N = \spann{e_2, e_3, e_4, e_5}$ is generated by
    $e_2$.
    Suppose that the subspace does not contain $e_2$. By dimension reasons, it
    intersects $\spann{e_1, e_2}$, so it contains an element $e_1 + \lambda
    e_2$ for some $\lambda\in \kk$. Then it also contains elements
    $x_5(e_1+\lambda e_2) = e_5$, $x_4(e_1 + \lambda e_2) = e_4 + \lambda
    e_5$, $x_3(e_1 + \lambda e_2) = e_3 + \lambda e_4$, so $N$ is generated by
    $e_1 + \lambda e_2$.

    The part for $N\subseteq M_{11}$ is quite similar. The submodule $N$
    intersects $\spann{e_1, e_2, e_3}$ in at least a $2$-dimensional subspace.
    If this subspace is $\spann{e_1, e_2}$ then the claim holds by direct
    check. If not, then $N$ contains an element of the form $e_3 +
    \lambda_1e_1 + \lambda_2e_2$, so its annihilator is contained in the
    annihilator of this element, hence
    \[
        \ann(N)\cap \langle x_1, \ldots ,x_5\rangle \subseteq \spann{x_1,
        x_2}.
    \]
    Analysing the intersection $N\cap \spann{e_1, e_2}$, we check that the
    containment is strict.
\end{proof}

\begin{corollary}\label{ref:notoT111:cor}
    There are no degenerations $T_{3,2} \degto T_{1,11}$, $T_{2,5} \degto
    T_{1,11}$, $T_{2,7}\degto T_{1,11}$.
\end{corollary}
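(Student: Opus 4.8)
The plan is to reduce, via the dictionary of \S\ref{ssec:degenerationsDictionary}, to nonexistence of plain module degenerations, and then to contradict Lemma~\ref{ref:T11submodules:lem} by transporting a well-chosen submodule along Proposition~\ref{ref:submodules:prop}. Write $M_{3,2},M_{2,5},M_{2,7},M_{11}$ for the $S$-modules associated to these four ($1_A$-generic, minimal border rank) tensors; from its description in \S\ref{ssec:m5} (subcase~\ref{case2}) the module $M_{11}$ is local, neither cyclic nor cocyclic (three minimal generators, socle of dimension two), and $\maxideal^2M_{11}=0$, hence also $\maxideal^2M_{11}^\vee=0$. Since $M_{11}$ is neither cyclic nor cocyclic, $T_{1,11}$ is neither $1_B$- nor $1_C$-generic, so by the case analysis of \S\ref{ssec:degenerationsDictionary} a degeneration $T\degto T_{1,11}$ with $T\in\{T_{3,2},T_{2,5},T_{2,7}\}$ is realized by a module degeneration $M\degtopure M_{11}$ or $M\degtopure M_{11}^\vee$, where $M$ is the module of $T$; both of these are degenerations between $1_A$-generic tensors, so Proposition~\ref{ref:submodules:prop} will apply. (For $T_{2,5}$, whose module is the Gorenstein algebra $\kk[x_1,x_2]/(x_1^2,x_2^2)\times\kk$ and which is therefore $1_*$-generic in all factors and isomorphic to a symmetric tensor, only the first possibility occurs; for $T_{2,7}$ the module is self-dual, so the two possibilities coincide.) Thus it suffices to rule out $M\degtopure M_{11}$ and $M\degtopure M_{11}^\vee$ in each case.

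For the comparison I would use two invariants of a module $N$ that are upper semicontinuous along degenerations: $\dim_\kk(\ann(N)\cap S_1)$ --- the corank of the action map $S_1\to\End_\kk(N)$ --- and, for $N$ supported only at $\maxideal$, the minimal number of generators $\dim_\kk(N/\maxideal N)$ (cf.\ \S\ref{ssec:mingens}); here one also uses that a finitely generated flat $\kk[\![t]\!]$-module is torsion free, so a degeneration of a module killed by $\maxideal^D$ is again killed by $\maxideal^D$. Now Lemma~\ref{ref:T11submodules:lem} says that every degree-$4$ submodule $N'$ of $M_{11}$ has $\dim_\kk(\ann(N')\cap S_1)\le1$, and that every degree-$4$ submodule $N'$ of $M_{11}^\vee$ is cyclic; the latter, being additionally killed by $\maxideal^2$, must equal $S/(\maxideal^2+V)$ with $\dim_\kk V=1$ by a dimension count, hence also satisfies $\dim_\kk(\ann(N')\cap S_1)=1$. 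Consequently, if $M\degtopure M'$ with $M'\in\{M_{11},M_{11}^\vee\}$, then Proposition~\ref{ref:submodules:prop} produces, from any degree-$4$ submodule $N\subseteq M$, a degree-$4$ submodule $N'\subseteq M'$ with $N\degtopure N'$, hence with $\dim_\kk(\ann(N')\cap S_1)\le1$ and, when $M'=M_{11}^\vee$, with $N'$ cyclic.

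It remains to exhibit the obstructing submodules. For $M_{3,2}=\kk[x_1,x_2]/(x_1,x_2)^2\oplus S/\maxideal_3\oplus S/\maxideal_4$ take $N=\maxideal\cdot\bigl(\kk[x_1,x_2]/(x_1,x_2)^2\bigr)\oplus S/\maxideal_3\oplus S/\maxideal_4\cong(S/\maxideal)^{\oplus2}\oplus S/\maxideal_3\oplus S/\maxideal_4$, of degree $4$, with $\ann(N)\cap S_1=\maxideal\cap\maxideal_3\cap\maxideal_4\cap S_1$ a two-dimensional subspace; by the previous paragraph this excludes both $M_{3,2}\degtopure M_{11}$ and $M_{3,2}\degtopure M_{11}^\vee$. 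For $M_{2,5}=\kk[x_1,x_2]/(x_1^2,x_2^2)\oplus S/\maxideal_4$ take the direct summand $N=\kk[x_1,x_2]/(x_1^2,x_2^2)$, of degree $4$, with $\ann(N)\cap S_1=\langle x_3,x_4\rangle$ two-dimensional, excluding $M_{2,5}\degtopure M_{11}$. For $M_{2,7}=N_7\oplus S/\maxideal_4$ the annihilator invariant is useless, since every degree-$4$ submodule of $M_{2,7}$ has one-dimensional linear annihilator; instead take the direct summand $N=N_7$, a local module of degree $4$ which is not cyclic (it has two minimal generators). If $M_{2,7}\degtopure M_{11}^\vee$, then Proposition~\ref{ref:submodules:prop} makes $N_7$ degenerate to a degree-$4$ submodule of $M_{11}^\vee$, which is cyclic by Lemma~\ref{ref:T11submodules:lem} yet not cyclic by semicontinuity of the number of generators --- a contradiction; and $M_{2,7}\degtopure M_{11}$ gives, after dualizing and using that $M_{2,7}$ is self-dual, the already excluded $M_{2,7}\degtopure M_{11}^\vee$.

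The routine part consists of the coordinate verifications of the annihilators above and of the stated properties of $M_{11}$, all of which follow directly from the classification in \S\ref{ssec:m5}. The genuine obstacle is $T_{2,7}$: because no submodule of $M_{2,7}$ carries a large linear annihilator, one is forced to combine the cyclicity half of Lemma~\ref{ref:T11submodules:lem} with semicontinuity of the number of generators and with the self-duality of $M_{2,7}$; the auxiliary observation that degree-$4$ submodules of $M_{11}^\vee$ also have one-dimensional linear annihilator (deduced from cyclicity together with $\maxideal^2M_{11}^\vee=0$), which lets the $M_{11}^\vee$ case of $M_{3,2}$ be handled uniformly, is the other point needing a little care.
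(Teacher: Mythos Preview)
Your arguments for $T_{3,2}$ and $T_{2,7}$ are correct and close to the paper's. The observation that every degree-$4$ submodule of $M_{11}^\vee$, being cyclic and killed by $\maxideal^2$, is isomorphic to $S/(\maxideal^2+\kk\ell)$ and hence has exactly one-dimensional linear annihilator is a nice touch: it lets you handle $M_{3,2}\degtopure M_{11}^\vee$ uniformly via annihilators, whereas the paper uses non-cyclicity of the maximal ideal there.

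There is, however, a genuine gap in the $T_{2,5}$ case. Your claim that the direct summand $N=\kk[x_1,x_2]/(x_1^2,x_2^2)\subseteq M_{2,5}$ has $\ann(N)\cap S_1=\langle x_3,x_4\rangle$ is false: in the \emph{concise} $S$-module structure on $M_{2,5}$ (Corollary~\ref{corr_cyclic}, Example~\ref{example_sum}, and the explicit matrix of $T_{2,5}$), the variable $x_3$ acts on this summand as multiplication by $x_1x_2$, so $x_3\cdot 1=x_1x_2\neq 0$ and $\ann(N)\cap S_1=\langle x_4\rangle$ is only one-dimensional. In fact no degree-$4$ submodule of $M_{2,5}$ has two-dimensional linear annihilator (the other candidate, the maximal ideal $\langle c_2,c_3,c_4,c_5\rangle$, has $\ann\cap S_1=\langle x_3\rangle$), so the annihilator obstruction cannot work here. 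Worse, your chosen $N$ is cyclic (generated by $1$), so the non-cyclicity obstruction is not available for this choice either.

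The repair is immediate once you change the submodule: take instead the maximal ideal $\langle c_2,c_3,c_4,c_5\rangle$, which has degree $4$ and two minimal generators, hence is not cyclic. Since $M_{2,5}$ is self-dual, ruling out $M_{2,5}\degtopure M_{11}$ is the same as ruling out $M_{2,5}\degtopure M_{11}^\vee$, and now the cyclicity half of Lemma~\ref{ref:T11submodules:lem} together with~\S\ref{ssec:mingens} gives the contradiction, exactly as you argued for $T_{2,7}$. This is the paper's route.
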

\begin{proof}
    The tensors $T_{2,5}$, $T_{2,7}$ correspond to self-dual
    modules, so it is enough to prove that there are no degenerations
    $M_{2,5}$, $M_{2,7}$ to $M_{11}^{\vee}$. Consider $M_{2,5}$,
    which comes from an algebra (see~\eqref{eq:modulesM23M26}), and its submodule given by the maximal ideal
    $\maxideal$.
    By Proposition~\ref{ref:submodules:prop} this submodule degenerates to a degree four submodule
    $N$ of $M_{11}^{\vee}$. By Lemma~\ref{ref:T11submodules:lem}, the module
    $N$ is cyclic. By semicontinuity of minimal number of
    generators~\ref{ssec:mingens} also the maximal ideal of
    $M_{2,5}$ is cyclic, but this is not so, a contradiction. Same argument
    works for $M_{2,7}$ and its
    distinguished degree four submodule defined in Subsection~\ref{deg4}.

    The case $T_{3,2}$ is slightly different, since $T_{3,2}$ is not
    self-dual. This tensor corresponds to an algebra $A$ with a noncyclic maximal
    ideal, so as above we prove that there is no degeneration of $A$ to
    $M_{11}^{\vee}$. To prove that there is no degeneration of $A$ to
    $M_{11}$ take again the maximal ideal $\maxideal\subseteq A$. It is
    annihilated by two-dimensional space of variables. But it degenerates to a
    degree four submodule $N\subseteq M_{11}$ which by
    Lemma~\ref{ref:T11submodules:lem} does not have this property. A
    contradiction with semicontinuity of annihilators.
\end{proof}

\begin{lemma}[{obstruction for $T_{2,7}$}]\label{ref:notoT27:lem}
    There are no degenerations $T_{3,2} \degto T_{2,7}$, $T_{2,5} \degto
    T_{2,7}$.
\end{lemma}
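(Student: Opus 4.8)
The plan is to rule out both degenerations by exhibiting a submodule invariant of $T_{2,7}$ (equivalently, of $M_{2,7}$) that cannot be produced as a limit of submodules of $M_{2,5}$ or of the module attached to $T_{3,2}$, using Proposition~\ref{ref:submodules:prop} together with the semicontinuity statements from~\S\ref{ssec:mingens} (number of generators) and the analogous semicontinuity of the dimension of $\ann(N)\cap\langle x_1,\ldots,x_5\rangle$. First I would record the relevant structural facts: $M_{2,7} = N_7 \oplus S/\maxideal_4$ is supported at two maximal ideals, and its local part at $\maxideal$ is the module $N_7$ from~\S\ref{deg4}, which is not cyclic and not cocyclic, with $\maxideal^2 N_7 = 0$ and a distinguished degree-four submodule (the local factor itself) on which the three variables act as a three-dimensional space of $2\times 2$ matrices. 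The tensor $T_{2,7}$ is self-dual, so it suffices to bound degenerations into $M_{2,7}$ (no need to separately treat $M_{2,7}^\vee$).

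The key step is to analyse, for each of the two source tensors, the degree-four submodule that must appear in $M_{2,7}$ by Proposition~\ref{ref:submodules:prop}. For $T_{2,5}$: its module $M_{2,5} = \kk[x_1,x_2]/(x_1^2,x_2^2)\oplus S/\maxideal_4$ comes from an algebra; by Proposition~\ref{ref:degenerationSplitting:prop} a degeneration to the two-supported $M_{2,7}$ forces a splitting compatible with the supports, reducing us to a degeneration of the degree-four local algebra $\kk[x_1,x_2]/(x_1^2,x_2^2)$ to the degree-four local module $N_7$. But $\kk[x_1,x_2]/(x_1^2,x_2^2)$ is cyclic, hence all its submodules — in particular the image under any degeneration — have the property that the ambient module is generated by one element and remains annihilated by no linear form; concretely its maximal ideal is cyclic, while every candidate submodule of $N_7$ of the right degree fails one of these, a contradiction by~\S\ref{ssec:mingens} exactly as in the proof of Corollary~\ref{ref:notoT111:cor}. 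For $T_{3,2}$: here the source is again an algebra, now supported at three maximal ideals, so Proposition~\ref{ref:degenerationSplitting:prop} is incompatible with the two-point support of $M_{2,7}$ unless two of the three parts of $T_{3,2}$ merge; tracking degrees ($M_{3,2}$ has degree decomposition $3+1+1$ while $M_{2,7}$ has $4+1$), the only possibility is that the degree-three local part and one degree-one part merge into the degree-four part of $M_{2,7}$. One then gets a degeneration of $\kk[x_1,x_2]/(x_1,x_2)^2 \oplus S/\maxideal$ (degrees $3+1$) to $N_7$ (degree $4$); the source has a non-cyclic maximal ideal annihilated by a two-dimensional space of linear forms, whereas the corresponding degree-four submodule of $N_7$ does not, contradicting semicontinuity of $\dim\bigl(\ann(N)\cap\langle x_1,\ldots,x_5\rangle\bigr)$.

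I expect the main obstacle to be the bookkeeping for $T_{3,2}$: one must be careful that a degeneration $M_{3,2}\degto M_{2,7}$ need not respect the decomposition of $M_{2,7}$ into its two local summands in an obvious way, so the reduction to a degeneration of a specific degree-$4$ module requires combining Proposition~\ref{ref:degenerationSplitting:prop} (applied to the target's support) with a degree count, and then verifying that the resulting degree-four submodule of $N_7$ — which exists by Proposition~\ref{ref:submodules:prop} applied to the degeneration restricted to the merged support — genuinely cannot be a limit of the maximal ideal of $\kk[x_1,x_2]/(x_1,x_2)^2\oplus S/\maxideal$. The $T_{2,5}$ case is essentially a verbatim repetition of the argument already used for $T_{1,11}$ in Corollary~\ref{ref:notoT111:cor}, so it should be short; the substance of the lemma is the $T_{3,2}$ case.
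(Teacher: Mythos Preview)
Your overall strategy matches the paper's: use self-duality of $M_{2,7}$ to reduce to a single direction, apply Proposition~\ref{ref:degenerationSplitting:prop} to split off the degree-one summand and reduce to ruling out degenerations of the degree-four local pieces $\kk[x_1,x_2]/(x_1^2,x_2^2)$ and $\kk\times\kk[x,y]/(x,y)^2$ to $N_7$, and then invoke Proposition~\ref{ref:submodules:prop} together with semicontinuity of the number of generators. So far so good.

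The execution, however, contains a genuine error of direction. In the $T_{2,5}$ case you write that the maximal ideal of $\kk[x_1,x_2]/(x_1^2,x_2^2)$ is \emph{cyclic}; it is not --- the ideal $(x_1,x_2)$ is a degree-three submodule minimally generated by two elements, since $x_ix_j=0$ there. The correct argument (which is what the paper does, and what is done in Corollary~\ref{ref:notoT111:cor}) runs the other way: one shows that every degree-three submodule of $N_7$ \emph{is} cyclic, and then the two-generator maximal ideal of the source cannot degenerate to such a cyclic submodule, by~\S\ref{ssec:mingens}. Your sentence ``every candidate submodule of $N_7$ of the right degree fails one of these'' has the inequality backwards, and as written gives no contradiction.

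A second slip is in the $T_{3,2}$ case, where you speak of ``the corresponding degree-four submodule of $N_7$''. Since $N_7$ itself has degree four, this is vacuous; the relevant submodules are of degree \emph{three}. The paper treats both cases uniformly: one checks (exactly as in Lemma~\ref{ref:T11submodules:lem}) that every degree-three submodule of $N_7$ is cyclic, and then uses that each of $\kk[x_1,x_2]/(x_1^2,x_2^2)$ and $\kk\times\kk[x,y]/(x,y)^2$ has a non-cyclic maximal ideal of degree three. Your switch to an annihilator argument for $T_{3,2}$ is unnecessary and, with the degree error, does not go through; the generator-count argument handles both cases at once.
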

\begin{proof}
    The module $M_{2,7} \simeq \kk \times N$ corresponding to $T_{2,7}$ is self-dual, so it is
    enough to show nonexistence of degenerations $M_{3,2}\degtopure
    M_{2,7}^{\vee}$, $M_{2,5}\degtopure M_{2,7}^{\vee}$. By
    Proposition~\ref{ref:degenerationSplitting:prop} these
    degenerations would come from degenerations of $\kk\times \kk[x, y]/(x,
    y)^2$ or $\kk[x,y]/(x^2, y^2)$ to $N$. To disprove their existence, we
    argue as above: reasoning as in
    Lemma~\ref{ref:T11submodules:lem}, we prove that every degree $3$
    submodule of $N$ is cyclic, and use semicontinuity and maximal ideals in
    the above algebras.
\end{proof}

\begin{lemma}[{obstruction for $T_{1,10}$}]\label{ref:T10submodules:lem}
    Let $M_{10}$ be the module corresponding to the tensor $T_{1,10}$.
    This is a self-dual module.
    Let
    $N\subseteq M_{10}$ be any submodule of
    degree $\dim_{\kk} N = 4$. Then $N$ is generated by at most two elements.
\end{lemma}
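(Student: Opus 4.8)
The plan is to work directly with the explicit module structure of $M_{10}$, which one reads off from the matrix $T_{1,10}$ in Subsection~\ref{classification_tensors}. Fixing the standard basis $c_1,\dots,c_5$ of the underlying vector space, the columns of $T_{1,10}$ show that the only nonzero products of a variable with a basis vector are $x_1 c_2 = c_4$, $x_2 c_2 = c_5$, $x_3 c_3 = c_4$ and $x_4 c_3 = c_5$. In particular $\maxideal M_{10} = \spann{c_4, c_5}$, $\maxideal^2 M_{10} = 0$, and $\overline{c_1}, \overline{c_2}, \overline{c_3}$ form a basis of $M_{10}/\maxideal M_{10}$. That $M_{10}$ is self-dual was already shown in Subsection~\ref{ssec:m5}, so only the submodule claim remains.

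First I would reduce modulo $\maxideal M_{10}$. Since $\dim_\kk N = 4$, the quotient $M_{10}/N$ has degree one and is annihilated by $\maxideal^2$; but a one-dimensional module killed by $\maxideal^2$ is killed by $\maxideal$ (if $x_i$ acts on it by $\lambda_i \in \kk$, then $\lambda_i^2 = 0$). Hence $\maxideal M_{10} \subseteq N$, so $\spann{c_4,c_5}\subseteq N$ and $N/\maxideal M_{10}$ is a two-dimensional subspace of $M_{10}/\maxideal M_{10} = \spann{\overline{c_1},\overline{c_2},\overline{c_3}}$.

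The main computation is then short. A two-dimensional subspace of $\spann{\overline{c_1},\overline{c_2},\overline{c_3}}$ is not contained in the line $\spann{\overline{c_1}}$, so $N$ contains a vector $v = a c_1 + b c_2 + c c_3 + w$ with $w \in \maxideal M_{10}$ and $(b,c)\neq(0,0)$. Using the four products above, $x_1 v = b c_4$, $x_2 v = b c_5$, $x_3 v = c c_4$ and $x_4 v = c c_5$, and since $(b,c)\neq(0,0)$ these span $\spann{c_4,c_5}$. Therefore $\maxideal N = \spann{c_4,c_5} = \maxideal M_{10}$ has dimension two, so $\dim_\kk N/\maxideal N = 4 - 2 = 2$; since $\maxideal^2 N = 0$, Nakayama's Lemma~\ref{ref:Nakayama:lem} identifies the minimal number of generators of $N$ with $\dim_\kk N/\maxideal N = 2$, which gives the claim. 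No step here is really an obstacle; the only point requiring a moment's thought is the reduction that every degree-four submodule automatically contains $\maxideal M_{10}$, and that follows at once from $M_{10}/N$ being a one-dimensional module killed by $\maxideal^2$.
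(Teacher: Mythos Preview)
Your proof is correct. Both arguments are short, but they proceed differently: the paper splits into cases according to whether the direct summand $\kk e_1\subseteq M_{10}$ lies in $N$ or not (in the first case $N\simeq M_{10}/\kk e_1$ is visibly generated by $e_2,e_3$; in the second $N/\kk e_1$ is shown to be cyclic, giving two generators for $N$), whereas you avoid any case split by first observing $\maxideal M_{10}\subseteq N$ and then computing $\maxideal N=\spann{c_4,c_5}$ directly from a single element of $N$ with nonzero $(b,c)$-component. Your route in fact yields the slightly sharper conclusion $\dim_\kk N/\maxideal N=2$ (so $N$ is \emph{minimally} two-generated), while the paper's case analysis has the advantage of exhibiting explicit generators in each case.
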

\begin{proof}
    Recall that $\kk e_1 \subseteq M_{10}$ is also a submodule. If
    $e_1$ does not lie in $N$, then $N$ is isomorphic to $M_{10}/\kk e_1$,
    which is generated by two elements, $e_2$, $e_3$.
    Suppose that $e_1$ is an element of $N$ and take it as a generator.
    The module $N/\kk e_1$ is cyclic by the same argument as in
    Lemma~\ref{ref:T11submodules:lem}. Let $\bar{v}\in N/\kk e_1$ be its generator and let
    $v\in N$ be any lift, then $N$ is generated by $e_1$ and $v$.
\end{proof}

\begin{corollary}\label{ref:notoT110:cor}
    There are no degenerations $T_{1,8} \degto T_{1,10}$, $T_{2,6} \degto
    T_{1,10}$.
\end{corollary}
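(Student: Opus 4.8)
The plan is to follow the pattern of Corollary~\ref{ref:notoT111:cor} and Lemma~\ref{ref:notoT27:lem}: reduce each prospective degeneration to a degeneration of the associated modules, exhibit inside the source module a degree-four submodule requiring at least three generators, push it forward by Proposition~\ref{ref:submodules:prop}, and contradict Lemma~\ref{ref:T10submodules:lem}.

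First I would handle the reduction to module degenerations via \S\ref{ssec:degenerationsDictionary}. The tensor $T_{1,8}$ comes from the algebra $M_8 = \kk[x,y,z]/(x^2, y^2, xz, yz, xy+z^2)$, which is Gorenstein (its socle is the line $\spann{z^2}$), hence $T_{1,8}$ is $1_A$-, $1_B$- and $1_C$-generic and isomorphic to a symmetric tensor; so it is enough to rule out $T_{1,8}\degtopure T_{1,10}$, which translates to a module degeneration $M_8\degtopure M_{10}$. The tensor $T_{2,6}$ is $1_A$- and $1_B$-generic and $1_C$-degenerate and comes from the algebra $M_{2,6}=\kk[x_1,x_2,x_3]/(x_1,x_2,x_3)^2\oplus S/\maxideal_4$; since $M_{10}$ is self-dual (Lemma~\ref{ref:T10submodules:lem}) and $M_{2,6}\degtopure M_{10}$ is equivalent to $M_{2,6}^{\vee}\degtopure M_{10}^{\vee}=M_{10}$, by \S\ref{ssec:degenerationsDictionary} a degeneration $T_{2,6}\degto T_{1,10}$ would give $M_{2,6}^{\vee}\degtopure M_{10}$.

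Next I would exhibit the obstructing submodules. In $M_8$ take $N=\maxideal M_8=\spann{x,y,z,z^2}$, the maximal ideal of the algebra: it is supported at the origin, has degree four, and $N/\maxideal N = \maxideal M_8/\maxideal^2 M_8 = \spann{x,y,z}$ is three-dimensional, so $N$ needs three generators. In $M_{2,6}^{\vee}=(\kk[x_1,x_2,x_3]/(x_1,x_2,x_3)^2)^{\vee}\oplus S/\maxideal_4$ take $N$ to be the first summand: it is supported at the origin, has degree four, and by the proof of Lemma~\ref{cyclic} one has $\dim_{\kk}N/\maxideal N=\dim_{\kk}(0:\maxideal)_{\kk[x_1,x_2,x_3]/(x_1,x_2,x_3)^2}=3$ (the socle being $\spann{x_1,x_2,x_3}$), so $N$ again needs three generators. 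In both cases Proposition~\ref{ref:submodules:prop} produces a degree-four submodule $N'\subseteq M_{10}$ which is a degeneration of $N$; since $N$ and $N'$ are annihilated by a power of $\maxideal=S_+$ (they are local at the origin, $N'$ because $\maxideal^2 M_{10}=0$), the semicontinuity of the minimal number of generators from \S\ref{ssec:mingens} forces $N'$ to need at least three generators, contradicting Lemma~\ref{ref:T10submodules:lem}.

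I expect the only genuinely delicate point to be the treatment of $T_{2,6}$: one is forced to pass to $M_{2,6}^{\vee}$ before hunting for the obstructing submodule. Working with $M_{2,6}$ directly is tempting because $\maxideal M_{2,6}\cong(S/\maxideal)^{\oplus 3}\oplus S/\maxideal_4$ has degree four and needs four generators, but it is supported at two points, and for modules not annihilated by a power of $S_+$ the semicontinuity in \S\ref{ssec:mingens} can fail (generators of the two local summands may merge in the limit); the only degree-four submodule of $M_{2,6}$ supported at the origin alone is the cyclic summand $\kk[x_1,x_2,x_3]/(x_1,x_2,x_3)^2$, which carries no information. Dualising repairs this, because the corresponding local summand of $M_{2,6}^{\vee}$ is the cocyclic, three-generated module $(\kk[x_1,x_2,x_3]/(x_1,x_2,x_3)^2)^{\vee}$.
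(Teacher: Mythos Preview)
Your proof is correct and follows the same submodule-plus-semicontinuity strategy as the paper. The only divergence is in the $T_{2,6}$ case: the paper stays with $M_{2,6}$ itself and uses the degree-four ideal $\spann{x_1,x_2,x_3}\oplus S/\maxideal_4$, whose quotient by $\maxideal$ is three-dimensional. Your concern that the semicontinuity of \S\ref{ssec:mingens} could fail here is unfounded: the inequality~\eqref{eq:semicontinuity} holds for any finite-degree degeneration (the proof via the structure theorem for $\kk[\![t]\!]$-modules never uses the hypothesis $\maxideal^D M=0$); that hypothesis is only needed to interpret $\dim_{\kk}M/\maxideal M$ as the minimal number of generators, and on the target side $N'\subseteq M_{10}$ this interpretation is valid because $M_{10}$ is local at $\maxideal$. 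So the detour through $M_{2,6}^{\vee}$, while perfectly valid, is not required. (Incidentally, the ideal $\maxideal M_{2,6}$ needs three generators, not four: one can take $(x_1,1),(x_2,0),(x_3,0)$, since $x_4\cdot(x_1,1)=(0,1)$.)
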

\begin{proof}
    Let $M_{10}$ be the module corresponding to $T_{1,10}$. This module is
    self-dual, so it is enough to prove nonexistence of degenerations
    $M_{8}\degtopure M_{10}$ and $M_{2,6}\degtopure M_{10}$. The module $M_8$
    corresponds to the algebra $\kk \times \kk[x, y, z]/(x, y, z)^2$, while
    $M_{10}$ corresponds to the algebra $\kk[x, y, z]/(xy,yz,zx,
    x^2-y^2,x^2-z^2)$. They both have ideals which are not generated by two
    elements. This yields a contradiction with semicontinuity and
    Lemma~\ref{ref:T10submodules:lem}.
\end{proof}

\begin{lemma}[{obstruction for $T_{1,19}$}]\label{ref:T19submodules:lem}
    Let $M_{19}$ be the module corresponding to the tensor $T_{1,19}$. This is
    a self-dual module. Let
    $N\subseteq M_{19}$ be any submodule of
    degree $\dim_{\kk} N = 4$, then the subspace $\ann(N)\cap \langle x_1, \ldots
    ,x_5\rangle$ is at most one-dimensional.
\end{lemma}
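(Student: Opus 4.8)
The plan is to follow the pattern of Lemmas~\ref{ref:T11submodules:lem} and~\ref{ref:T10submodules:lem}: I would first classify \emph{all} degree-four submodules of $M_{19}$ explicitly, and then verify the annihilator bound on each member of the resulting (small) family. Self-duality of $M_{19}$ was already recorded in~\S\ref{ssec:m5}, so it suffices to treat submodules of $M_{19}$ itself. The first step is to transcribe the $S$-module structure from the matrix $T_{1,19}$: writing $f_1,\ldots,f_5$ for the basis of $C$ attached to its columns, the nonzero products are
\[
    x_1 f_1 = f_3,\quad x_1 f_3 = f_5,\quad x_2 f_1 = f_5,\quad x_3 f_1 = f_4,\quad x_3 f_2 = f_5,\quad x_4 f_2 = f_4,
\]
and all remaining $x_i f_j$ vanish. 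From this one reads off that $\maxideal M_{19} = \langle f_3, f_4, f_5\rangle$ is three-dimensional, that $f_1, f_2$ form a minimal set of generators, and that the socle $(0:\maxideal)_{M_{19}}$ is $\langle f_4, f_5\rangle$.

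Next I would classify the degree-four submodules. If $N \subseteq M_{19}$ has $\dim_\kk N = 4$, then $M_{19}/N$ is a one-dimensional module; being a quotient of a local module supported at $\maxideal$, it is isomorphic to $S/\maxideal$ and hence annihilated by $\maxideal$, so $\maxideal M_{19} \subseteq N$. Since $\maxideal M_{19}$ is three-dimensional, $N/\maxideal M_{19}$ is a line in the two-dimensional space $M_{19}/\maxideal M_{19} = \langle \overline{f_1}, \overline{f_2}\rangle$, and conversely every such line yields a submodule because $\maxideal$ acts as zero on $M_{19}/\maxideal M_{19}$. Thus the degree-four submodules are exactly
\[
    N_{(\alpha:\beta)} = \langle f_3, f_4, f_5,\ \alpha f_1 + \beta f_2\rangle,\qquad (\alpha:\beta) \in \PP^1 .
\]

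The last step is the annihilator computation. A linear form $\ell = a_1 x_1 + a_2 x_2 + a_3 x_3 + a_4 x_4$ lies in $\ann(N_{(\alpha:\beta)})$ if and only if it kills $f_3$ and $\alpha f_1 + \beta f_2$ (the socle generators being killed automatically). Using the table above, $\ell f_3 = a_1 f_5$ and $\ell(\alpha f_1 + \beta f_2) = \alpha a_1 f_3 + (\alpha a_3 + \beta a_4) f_4 + (\alpha a_2 + \beta a_3) f_5$, so the condition becomes the linear system $a_1 = 0$, $\alpha a_3 + \beta a_4 = 0$, $\alpha a_2 + \beta a_3 = 0$ in the unknowns $a_1,\ldots,a_4$. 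Its coefficient matrix has rank three for every $(\alpha:\beta) \in \PP^1$ --- two of its maximal minors are $-\alpha^2$ and $-\beta^2$, not both zero --- so the solution space is one-dimensional. Hence $\ann(N) \cap \langle x_1, \ldots, x_5\rangle$ is at most one-dimensional (in fact exactly one-dimensional), as claimed.

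I do not expect a genuine obstacle here: this is an elementary finite computation, entirely parallel to the treatment of $M_{10}$ and $M_{11}$. The only points needing care are transcribing the module structure of $M_{19}$ from $T_{1,19}$ correctly and checking that the boundary members $N_{(1:0)}$ and $N_{(0:1)}$ of the $\PP^1$-family are proper submodules rather than all of $M_{19}$ --- both issues are absorbed by the description of the degree-four submodules via quotients by $\maxideal M_{19}$. The conclusion can also be verified mechanically in the attached \emph{Macaulay2} package.
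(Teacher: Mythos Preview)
Your proof is correct and follows the same overall strategy as the paper's own argument: both are direct elementary computations of the possible degree-four submodules followed by an annihilator check. Your version is a bit more organised than the paper's. You begin with the observation that every degree-four submodule must contain $\maxideal M_{19} = \langle f_3,f_4,f_5\rangle$ (since the one-dimensional quotient $M_{19}/N$ is necessarily $S/\maxideal$), which immediately parametrises all such $N$ by a single $\PP^1$ and allows a uniform linear-algebra computation of the annihilator. The paper instead splits into two ad hoc cases depending on whether $N\cap\langle e_1,e_2\rangle$ contains an element of the form $e_1-\lambda e_2$ or only $e_2$, and bounds the annihilator separately in each case (its second case is in fact forced to be $N=\langle e_2,e_3,e_4,e_5\rangle$, though the paper does not make this explicit). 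Your parametrisation makes the argument cleaner and avoids the case split; the paper's approach is slightly quicker to write down but less uniform.
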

\begin{proof}
    This is a case-by-case analysis. Consider $N\subseteq M_{19}$.
    Then $N$ intersects the subspace $\spann{e_1, e_2}$. If for some
    $\lambda\in \kk$ the element $e_1-\lambda e_2$ belongs to $N$, then $\ann
    N$ is contained in $\ann(e_1-\lambda e_2) = x_4$. If not, then $N$
    contains $e_2$, hence also $x_4(e_2) = e_4$ and $x_3(e_2) = e_5$. We
    assumed that it does not contain $e_1$, so it contains some element $e_3 -
    \mu e_1$ and as a result, we get $\ann(N)\subseteq \ann(e_2)\cap \ann(e_3
    - \mu e_1) = \spann{x_2}$.
\end{proof}

\begin{corollary}\label{ref:notoT119:cor}
    There is no degeneration $T_{3,2} \degto T_{1,19}$.
\end{corollary}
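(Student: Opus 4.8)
The plan is to mimic the argument of Corollary~\ref{ref:notoT111:cor}, using the submodule obstruction from Lemma~\ref{ref:T19submodules:lem}. First I would record that the tensor $T_{3,2}$ corresponds to the cyclic module $A = \kk[x_1,x_2]/(x_1,x_2)^2 \oplus \kk[x_3]/(x_3-1)^2 \oplus S/\maxideal_4$ (see~\S\ref{sssec:s2}); its local summand at the origin is the algebra $\kk[x_1,x_2]/(x_1,x_2)^2$, whose maximal ideal $\mathfrak{n} = (x_1,x_2)$ is a degree-two submodule annihilated by the \emph{two}-dimensional space $\langle x_1, x_2\rangle$. I want a degree-\emph{four} submodule of $A$ with a two-dimensional annihilator intersection with $S_1$: take $N = \mathfrak{n} \oplus \kk[x_3]/(x_3-1)^2$, which has degree $2+2 = 4$. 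Since $\maxideal_4$ kills the first summand and $(x_3-1)^2$ together with, say, $x_1$ kill the second, one checks directly that $\ann(N) \cap \langle x_1,\ldots,x_5\rangle$ is at least two-dimensional (it contains, e.g., $x_2$ and an appropriate linear combination vanishing on both summands).

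Next, suppose for contradiction that $T_{3,2} \degto T_{1,19}$. The tensor $T_{1,19}$ corresponds to the self-dual module $M_{19}$ (Subsection~\ref{ssec:m5}), and $T_{1,19}$ is $1_B$- and $1_C$-degenerate: it is $1_A$-generic (it is a multiplication tensor of a module) but not $1_B$- or $1_C$-generic since $M_{19}$ is neither cyclic nor cocyclic. Likewise $T_{3,2}$ is $1_A$-generic and satisfies $A$-Strassen's equations. By the discussion in~\S\ref{ssec:degenerationsDictionary}, a degeneration up to permutations reduces to either $M_{3,2} \degtopure M_{19}$ or $M_{3,2}^{\vee} \degtopure M_{19}$; and since $M_{19} \simeq M_{19}^\vee$, both amount to a degeneration of an algebra (either $A$ or $A^\vee$) onto $M_{19}$. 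It suffices to treat the case $A \degtopure M_{19}$; the case of $A^\vee$ is handled the same way once one notes $A^\vee$ also has a degree-four submodule with two-dimensional annihilator intersection (the dual of $A/N$, whose socle is large), or simply by observing $T_{3,2}$ is $1_A$- and $1_C$-generic so the relevant permuted degeneration is $M_{3,2}\degtopure M_{19}$ outright as in~\S\ref{ssec:being1Ageneric}.

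Now apply Proposition~\ref{ref:submodules:prop} to the submodule $N \subseteq A$ of degree four: a degeneration $A \degtopure M_{19}$ forces the existence of a degree-four submodule $N' \subseteq M_{19}$ together with a degeneration $N \degtopure N'$. Since the defining equations of a module (membership of linear forms in the annihilator) are closed conditions, annihilators are semicontinuous under degeneration: $\dim_\kk\bigl(\ann(N') \cap \langle x_1,\ldots,x_5\rangle\bigr) \geq \dim_\kk\bigl(\ann(N) \cap \langle x_1,\ldots,x_5\rangle\bigr) \geq 2$. But Lemma~\ref{ref:T19submodules:lem} asserts that \emph{every} degree-four submodule of $M_{19}$ has $\ann(N') \cap \langle x_1,\ldots,x_5\rangle$ at most one-dimensional. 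This is a contradiction, so no such degeneration exists.

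The main obstacle I anticipate is the bookkeeping around permutations: one must be sure that the only permuted degenerations worth considering are $M_{3,2}\degtopure M_{19}$ and $M_{3,2}^\vee\degtopure M_{19}$, and then that the submodule obstruction applies on \emph{both} sides. The cleanest route is probably to verify that $T_{3,2}$ is $1_A$- and $1_C$-generic but not $1_B$-generic (so $A$-Strassen's equations hold and the $C$-side also gives an algebra), which by the trichotomy in~\S\ref{ssec:degenerationsDictionary} pins down exactly which degenerations of modules can occur; the annihilator-semicontinuity step itself is routine once Proposition~\ref{ref:submodules:prop} is invoked.
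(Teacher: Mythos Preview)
Your overall approach is exactly the paper's: exhibit a degree-four submodule of $M_{3,2}$ whose annihilator meets $S_1$ in a two-dimensional space, then invoke Proposition~\ref{ref:submodules:prop}, semicontinuity of annihilators, Lemma~\ref{ref:T19submodules:lem}, and the self-duality of $M_{19}$ to conclude.

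However, you have misidentified the module $M_{3,2}$. It is \emph{not} $\kk[x_1,x_2]/(x_1,x_2)^2 \oplus \kk[x_3]/(x_3-1)^2 \oplus S/\maxideal_4$ (which would have degree $3+2+1=6$ and is essentially $M_{2,2}\oplus \kk$); rather, as listed under support of cardinality $s=3$, it is
\[
M_{3,2} = \kk[x_1,x_2]/(x_1,x_2)^2 \;\oplus\; S/\maxideal_3 \;\oplus\; S/\maxideal_4,
\]
with degree decomposition $3+1+1$. Consequently your submodule $N$ is not well-defined as written. The correct choice is the ideal $N=\spann{x_1,x_2}\oplus S/\maxideal_3\oplus S/\maxideal_4$ (in the paper's notation, $\kk\times\kk\times\spann{x,y}\subseteq \kk\times\kk\times\kk[x,y]/(x,y)^2$), which has degree $2+1+1=4$. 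Since $x_1$ and $x_2$ act as zero on both $S/\maxideal_3$ and $S/\maxideal_4$ and annihilate $\spann{x_1,x_2}\subseteq\kk[x_1,x_2]/(x_1,x_2)^2$, the intersection $\ann(N)\cap S_1$ is indeed at least two-dimensional, and your argument then goes through verbatim.

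Your handling of permutations is also more convoluted than necessary. Once you note that $M_{19}$ is self-dual, the two module-level cases $M_{3,2}\degtopure M_{19}$ and $M_{3,2}^{\vee}\degtopure M_{19}$ collapse to the single case $M_{3,2}\degtopure M_{19}$ (since $M_{3,2}^{\vee}\degtopure M_{19}$ is equivalent to $M_{3,2}\degtopure M_{19}^{\vee}\simeq M_{19}$); there is no need to separately analyse submodules of $A^{\vee}$.
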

\begin{proof}
    The ideal $\kk\times \kk\times \spann{x, y} \subseteq \kk\times \kk\times
    \kk[x, y]/(x, y)^2$ is a submodule of $M_{3,2}$ and this submodule is
    annihilated by a two-dimensional space of variables. Coupled with
    Lemma~\ref{ref:T19submodules:lem} and semicontinuity, this proves that no
    degeneration $M_{3,2}\degtopure M_{19}$ can happen. Since $M_{19}$ is
    self-dual, this yields the claim.
\end{proof}

\begin{lemma}\label{ref:nofromT28toT112:lem}
    There is no degeneration $T_{2,8} \degto T_{1,12}$.
\end{lemma}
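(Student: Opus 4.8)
The statement to prove is that there is no degeneration $T_{2,8} \degto T_{1,12}$. Following the pattern of the preceding lemmas in this section, the plan is to rule out the degeneration using submodule obstructions via Proposition~\ref{ref:submodules:prop}, together with the splitting result Proposition~\ref{ref:degenerationSplitting:prop}, and the semicontinuity principles of~\S\ref{ssec:degenerationsPrelims}. First I would identify the modules: $T_{2,8}$ corresponds to $M_{2,8} \simeq N \oplus \kk[x_4]/(x_4-1)$ where $N$ restricts to the degree-$4$ module $N_8$ from Subsection~\ref{deg4} (supported at the origin), while $T_{1,12}$ corresponds to the local self-dual module $M_{12}$ of degree $5$. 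Since $T_{1,12}$ is local and $T_{2,8}$ splits into two parts with disjoint supports of degrees $4$ and $1$, any degeneration $T_{2,8} \degto T_{1,12}$ would have to be a genuine degeneration (not using the permutation freedom in a way that helps), so I must also account for permutations: I would check the $1_*$-genericity types — $T_{2,8}$ is $1_A$-generic but (being a direct sum with a $1$-dimensional non-Gorenstein-compatible piece) not $1_B$- or $1_C$-generic, so the only permutations to consider are the identity and the swap of $B$ and $C$, i.e. one must rule out both $M_{2,8} \degtopure M_{12}$ and $M_{2,8}^\vee \degtopure M_{12}$. Since $M_{2,8}$ is self-dual (as $N_8$ is self-dual and so is $\kk[x_4]/(x_4-1)$), it suffices to rule out $M_{2,8} \degtopure M_{12}$.

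**The key obstruction.** The plan is to find an invariant of $M_{2,8}$, visible on some submodule, that cannot persist under degeneration to $M_{12}$. Because $M_{12}$ is local of degree $5$ but $M_{2,8}$ is not, Proposition~\ref{ref:degenerationSplitting:prop} already forbids $M_{2,8} \degtopure M_{12}$ outright: a local module cannot be a degeneration of a module whose support has cardinality $2$, because degeneration preserves the number of points (the support) counted via the splitting decomposition — more precisely, if $M_{2,8} = M' \oplus M''$ with $M'$, $M''$ of disjoint supports degenerated to $M_{12}$, then $M_{12}$ would decompose as a direct sum of modules supported at two distinct maximal ideals, contradicting that $M_{12}$ is local (indecomposable in a strong sense: its support is a single point). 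Wait — Proposition~\ref{ref:degenerationSplitting:prop} runs the other direction: it says a degeneration \emph{to} a split module forces the source to split; it does not immediately forbid a split module from degenerating to a local one. So the genuine obstruction must be more subtle. I would instead argue via submodules: by Proposition~\ref{ref:submodules:prop}, the summand $\kk[x_4]/(x_4-1)$ of $M_{2,8}$, which is a degree-$1$ submodule supported \emph{away} from the origin, would degenerate to a degree-$1$ submodule of $M_{12}$; but $M_{12}$ is local supported at $\maxideal$, so its only degree-$1$ submodule is $(0:\maxideal)_{M_{12}}$-related and supported at $\maxideal$ — and degeneration cannot move the support of a $1$-dimensional module (the eigenvalue of the action is continuous and the ambient condition fixes it). Concretely, $T_{2,8}$ has an idempotent in its $111$-algebra / the associated space $\Espace{T_{2,8}}$ contains a rank-$1$ projector-type element (namely the matrix with a single $x_4$ on the diagonal), so $\Espace{T_{2,8}}$ contains two complementary idempotents, whereas $\Espace{T_{1,12}}$ contains only the trivial idempotents $0$ and $\mathrm{id}$ (since the corresponding algebra/module is local). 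Being an idempotent is a closed condition and "having a nontrivial idempotent in $\mathcal{E}$" is, I claim, also closed under degeneration of the subspace — or rather, one shows the limit subspace must again contain an idempotent of the same rank by a limiting argument on the Grassmannian. This gives the contradiction.

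**Execution details and the main obstacle.** So the proof would run: (i) record the modules $M_{2,8}$, $M_{12}$ explicitly and note $M_{2,8}$ is self-dual, reducing to $M_{2,8}\degtopure M_{12}$; (ii) observe $T_{2,8}$ is $1_A$-generic, $1_B$- and $1_C$-degenerate, so only the $\Sigma_3$-swap of $B,C$ needs checking, already handled by self-duality; (iii) apply Proposition~\ref{ref:submodules:prop} to the degree-$4$ submodule $N \subseteq M_{2,8}$ (the copy of $N_8$ supported at the origin) and to the degree-$1$ submodule $\kk[x_4]/(x_4-1)$ supported elsewhere — the first must degenerate to a degree-$4$ submodule of $M_{12}$ and the second to a degree-$1$ submodule of $M_{12}$; (iv) analyze submodules of $M_{12}$: since $M_{12}$ is local of degree $5$, every proper nonzero submodule is supported at $\maxideal$, and in particular $M_{12}$ has no submodule that is a direct summand complement realizing the split type $4+1$; more sharply, the degeneration $\mathcal{M}$ witnessing $M_{2,8}\degtopure M_{12}$ restricts to a degeneration of $\kk[x_4]/(x_4-1)$ inside it, but the "eigenvalue $1$" of the $x_4$-action is locally constant on the base of the family, whereas $x_4$ acts on $M_{12}$ only with eigenvalue determined by its support at $\maxideal$ (after normalizing, eigenvalue $0$) — contradiction. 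The main obstacle is step (iv): making rigorous that the two summands of $M_{2,8}$, having different supports, cannot both limit into the single-point-supported $M_{12}$; the cleanest route is Proposition~\ref{ref:degenerationSplitting:prop} applied in the \emph{contrapositive with a twist} — but since that proposition only constrains degenerations \emph{to} split modules, I would instead need the (standard) fact that the support of a flat family of finite-length modules over $S[\![t]\!]$ is "upper semicontinuous" in the sense that specializing can only merge points, never create a point with a new location, so a source supported at $\{0, \mathfrak{n}\}$ with $\mathfrak{n}\neq 0$ can never specialize to something supported only at $\{0\}$ while keeping the same total degree $5$ — the mass at $\mathfrak{n}$ has nowhere to go. Spelling out this mass-conservation argument (via the additivity of degree in flat families and the fact that the degree-$1$ piece at $\mathfrak{n}$ persists) is the technical heart, after which the contradiction with $M_{12}$ being local is immediate.
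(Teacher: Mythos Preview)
Your argument has a genuine gap at the crucial step. The ``mass-conservation'' claim you rely on in step~(iv) is false: a module supported at two distinct points \emph{can} degenerate to a local module. The simplest example is $\kk[x]/(x(x-t))$, supported at $\{0,t\}$, which degenerates to $\kk[x]/(x^2)$, supported only at $\{0\}$. Points collide under specialization; the mass at $\mathfrak{n}$ has a perfectly good place to go, namely into the origin. Your idempotent argument fails for exactly the same reason: in that same family the source $\kk\times\kk$ has nontrivial idempotents while the limit $\kk[x]/(x^2)$ does not. More to the point, Diagram~\ref{sec:diagram} records the degeneration $T_{2,8}\degto T_{1,18}$, so a support-cardinality obstruction cannot possibly rule out $T_{2,8}\degto T_{1,12}$. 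You correctly noted that Proposition~\ref{ref:degenerationSplitting:prop} runs the wrong direction for your purposes, but then tried to salvage the same conclusion by an ad~hoc semicontinuity argument; that conclusion is simply not available.

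The paper's proof uses the submodule route you mention in passing but do not pursue: one exhibits the degree-four submodule $N=\spann{e_2,\ldots,e_5}\subseteq M_{2,8}$ and observes that the space $\ann(N)\cap\spann{x_1,\ldots,x_4}$ is two-dimensional. One then checks, by the same case analysis as in Lemma~\ref{ref:T11submodules:lem}, that every degree-four submodule of $M_{1,12}$ has at most a one-dimensional annihilator in the variables. Proposition~\ref{ref:submodules:prop} together with semicontinuity of the annihilator then gives the contradiction. Your reduction via self-duality of $M_{2,8}$ is correct and is exactly what the paper does first.
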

\begin{proof}
    The case is similar to the above. The module $M_{2,8}$ is self-dual, so it
    is enough to prove non-existence of degeneration $M_{2,8}\degtopure
    M_{1,12}$ of modules. The subspace $N = \spann{e_2, \ldots
    ,e_5}\subseteq M_{2,8}$ is a degree four submodule annihilated by a
    $2$-dimensional space of variables. Arguing as in
    Lemma~\ref{ref:T11submodules:lem}, we check that no submodule of
    $M_{1,12}$ has this property.
\end{proof}

\begin{remark}\label{ex:T22toT14}
    In the $1_A$-, $1_B$-generic case, instead of submodules, we may also
    consider subspaces with multiplication (also known as non-unital algebras). This is useful in one case: the tensor
    $T_{2,2}$ corresponds to an algebra $\kk[x, y]/(x, y)^2 \times
    \kk[z]/(z^2)$, which has a $3$-dimensional subspace $\spann{x, y, z}$ with
    zero multiplication. The tensor $T_{1,4}$ corresponds to the algebra
    $\kk[x, y]/(x^3, xy, y^3)$ that admits no such subspace, hence
    $T_{2,2}$ does not degenerate to $T_{1, 4}$.
\end{remark}

\subsection{Obstructions to degenerations coming from many low rank matrices}

\newcommand{\inters}[3]{d_{#1, #2}^{#3}}

    For a concise tensor $T\in A\otimes B\otimes C$ and a fixed integer $1\leq
    r\leq 4$, we may consider a projective subspace
    $\mathbb{P}(T(A^{\vee}))\subseteq \mathbb{P}(B\otimes C)$ and its
    intersection with the projective variety of matrices of rank at most $r$,
    that is, with the $r$-th secant to the Segre variety in
    $\mathbb{P}(B\otimes C)$. Let $\inters{T}{r}{A}-1$ denote the dimension of this
    intersection, so that $\inters{T}{r}{A}$ denotes the dimension of the
    intersection on the affine level. We define $\inters{T}{r}{B}$,
    $\inters{T}{r}{C}$ analogically. By
    semicontinuity, for $T\degtopure T'$, we have $\inters{T'}{r}{A}\geq
    \inters{T}{r}{A}$
    and same for two other coordinates.

    \begin{lemma}\label{ref:nofromT18toT55:lem}
        There is no degeneration $T_{1,8}\degto \Tdeg{55}$.
    \end{lemma}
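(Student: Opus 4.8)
The plan is to separate $T_{1,8}$ from $\Tdeg{55}$ using the invariant $\inters{T}{r}{A}$ with $r=2$, after first getting rid of permutations. I would begin by recording that $T_{1,8}$ is the multiplication tensor of the local Gorenstein algebra $M_8=\kk[x,y,z]/(x^2,y^2,xz,yz,xy+z^2)$ from~\eqref{eq:localalgebras}. Since $M_8$ is simultaneously cyclic and cocyclic (by Lemma~\ref{cyclic}, as $\dim_{\kk} M_8/\maxideal M_8=1=\dim_{\kk}(0:\maxideal)_{M_8}$), the tensor $T_{1,8}$ is $1_A$-, $1_B$- and $1_C$-generic; being the multiplication tensor of a Gorenstein algebra it is moreover isomorphic to a symmetric tensor by~\cite[Proposition~5.6.2.1]{landsberg_complexity}. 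Hence $\sigma\cdot T_{1,8}\simeq T_{1,8}$ for every $\sigma\in\Sigma_3$, so $T_{1,8}\degto\Tdeg{55}$ holds if and only if $T_{1,8}\degtopure\Tdeg{55}$. By the semicontinuity of $\inters{T}{2}{A}$ along $\degtopure$ recalled above, it then suffices to prove $\inters{\Tdeg{55}}{2}{A}<\inters{T_{1,8}}{2}{A}$.

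Next I would compute $\inters{T_{1,8}}{2}{A}$. In the basis used in Subsection~\ref{classification_tensors}, the matrix $M(x_0,\dots,x_4)$ representing the generic point of $T_{1,8}(A^{\vee})$ is lower triangular with $x_0$ on the diagonal, so $\det M=x_0^5$ and $M$ has rank $5$ whenever $x_0\neq 0$. For $x_0=0$ the matrix has a single possibly nonzero column $(0,x_1,x_2,x_3,x_4)^{\top}$ together with three further columns, each a scalar multiple of $(0,0,0,0,1)^{\top}$, hence rank at most $2$. Therefore the locus of rank-$\leq 2$ matrices in $\PP(T_{1,8}(A^{\vee}))$ is exactly the hyperplane $\{x_0=0\}\simeq\PP^3$, and $\inters{T_{1,8}}{2}{A}=4$.

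Finally I would compute $\inters{\Tdeg{55}}{2}{A}$ from the displayed matrix $M(x_0,\dots,x_4)$ of $\Tdeg{55}$. When $x_0\neq 0$ its upper-left $4\times 4$ minor has determinant $x_0^4$, so the rank is exactly $4$. When $x_0=0$ only rows $1,2,5$ are nonzero, and the $3\times 3$ submatrix on rows $1,2,5$ and columns $3,4,5$ has determinant $x_4^3$; thus the rank is $3$ for $x_0=0,\ x_4\neq 0$ and at most $2$ precisely when $x_0=x_4=0$. Hence the rank-$\leq 2$ locus is the linear subspace $\{x_0=x_4=0\}\simeq\PP^2$, giving $\inters{\Tdeg{55}}{2}{A}=3<4$. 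This contradicts semicontinuity, so $T_{1,8}\degtopure\Tdeg{55}$ is impossible, and by the first step $T_{1,8}\notdegto\Tdeg{55}$. The one non-routine point is guessing the right value $r=2$: for $r=4$ both tensors are $1$-degenerate, so the invariant equals $5$ in every slot and is useless, and for $r=3$ both loci equal the hyperplane $\{x_0=0\}$; the two spaces of matrices diverge only at $r=2$. Everything else is bookkeeping.
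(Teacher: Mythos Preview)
Your proof is correct and uses the same rank--locus invariant $\inters{-}{r}{-}$ as the paper. There are two cosmetic differences. First, the paper disposes of permutations by computing $\inters{-}{r}{\star}$ for all three $\star\in\{A,B,C\}$, whereas you invoke the Gorenstein symmetry of $T_{1,8}$ once and then work only with $\star=A$; both are fine. Second, the paper quotes $r=3$ with values $4$ versus $3$, while you use $r=2$. Your side remark that the literal rank $\le 3$ loci agree (both equal the hyperplane $\{x_0=0\}$) is correct: for $\Tdeg{55}$ one has rank $4$ whenever $x_0\neq 0$ and rank $\le 3$ whenever $x_0=0$, in every flattening. The paper's stated numbers actually match the rank $\le 2$ computation (one can check this also against the next lemma, where the triple $\{3,1,3\}$ for $T_{2,6}$ is the rank $\le 1$ triple), so the discrepancy is an indexing slip; your $r=2$ is the value that does the job.
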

    \begin{proof}
        We compute directly that $\inters{T_{1,8}}{3}{\star}$ is equal to $4$ for
        every $\star\in\{A, B,C\}$, while $\inters{\Tdeg{55}}{3}{\star}$ is equal
        to $3$ for every  $\star\in\{A, B,C\}$. This violates semicontinuity
        of $\inters{-}{3}{-}$,
        even after permuting factors.
    \end{proof}

    \begin{lemma}\label{ref:nofromT26toT54:lem}
        There is no degeneration $T_{2,6}\degto \Tdeg{54}$.
    \end{lemma}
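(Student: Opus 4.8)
The plan is to forbid this degeneration via semicontinuity of the invariants $\inters{-}{r}{\star}$, exactly as in Lemma~\ref{ref:nofromT18toT55:lem}, now taking $r=1$; that is, I intersect each flattening space, regarded as $\mathbb{P}(T(\star^{\vee}))\cong\mathbb{P}^{4}$, with the variety of rank-one matrices (the Segre variety).

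First I would examine the three flattenings of $T_{2,6}$. This tensor comes from the algebra $\kk[x,y,z]/(x,y,z)^{2}\times\kk$, see~\eqref{eq:modulesM23M26}, so it is $1_{A}$- and $1_{B}$-generic and, as recalled in~\S\ref{ssec:degenerationsDictionary}, swapping its $A$ and $B$ factors changes nothing; in particular $\inters{T_{2,6}}{1}{A}=\inters{T_{2,6}}{1}{B}$. Moreover $T_{2,6}(A^{\vee})=\Espace{T_{2,6}}\subseteq\End(\modC)$ is this algebra acting on itself by multiplication, and multiplication by any element of the three-dimensional maximal ideal of the local factor $\kk[x,y,z]/(x,y,z)^{2}$ has rank one (it kills that maximal ideal and sends $1$ into it). Hence $\mathbb{P}(T_{2,6}(A^{\vee}))$ contains a projective plane of rank-one matrices, so $\inters{T_{2,6}}{1}{A}=\inters{T_{2,6}}{1}{B}\geq 3$ (in fact equality holds, and in the third direction the rank-one locus is finite, so $\inters{T_{2,6}}{1}{C}=1$).

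Next I would compute, straight from the explicit $5\times 5$ matrix for $\Tdeg{54}$ in the list of $1$-degenerate tensors, that the rank-one locus inside each of its three flattening $\mathbb{P}^{4}$'s is a union of at most two projective lines, so $\inters{\Tdeg{54}}{1}{\star}=2$ for every $\star\in\{A,B,C\}$. To conclude: a degeneration $T_{2,6}\degto\Tdeg{54}$ would give $\sigma\cdot T_{2,6}\degtopure\Tdeg{54}$ for some permutation $\sigma$ of the factors, and then semicontinuity would force the sorted triple $(\inters{\Tdeg{54}}{1}{A},\inters{\Tdeg{54}}{1}{B},\inters{\Tdeg{54}}{1}{C})$ to dominate entrywise the correspondingly sorted triple for $T_{2,6}$. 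The latter is $(1,3,3)$ while the former is $(2,2,2)$, and $2\geq 3$ fails; no relabelling of the three directions repairs this. Hence $T_{2,6}$ does not degenerate to $\Tdeg{54}$, even allowing permutations.

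The individual steps are only rank computations for sparse $5\times 5$ matrices, so the main obstacle is purely organisational: writing down the three flattening spaces correctly (rows versus columns, which variable occupies which entry) and making sure one captures the entire rank-one locus rather than just an open piece of it. As a sanity check, $r=3$ yields an alternative obstruction, since $\inters{T_{2,6}}{3}{C}=5$ — the module $\modC$ of $T_{2,6}$ is very far from cocyclic, with three-dimensional socle, so its $C$-flattening contains no matrix of rank $\geq 4$ — whereas $\inters{\Tdeg{54}}{3}{\star}=4$ for all $\star$, and once more no permutation can accommodate the value $5$.
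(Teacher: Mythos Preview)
Your proposal is correct and takes essentially the same approach as the paper: both invoke semicontinuity of $\inters{-}{r}{\star}$ and compare the triples $\{3,3,1\}$ for $T_{2,6}$ versus $\{2,2,2\}$ for $\Tdeg{54}$. The paper nominally writes $r=2$, but the numerical values it reports are exactly those for $r=1$ (for $r=2$ both tensors have a $4$-dimensional rank-$\leq 2$ locus in every direction, since setting the ``identity'' coordinate to zero already drops the rank to at most $2$), so the paper's ``$2$'' appears to be a typo and your $r=1$ argument is the intended one. Your alternative obstruction via $r=3$ is also valid.
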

    \begin{proof}
        We compute directly that the triple
        \[
            \{\inters{T_{2,6}}{2}{\star}\ |\ \star\in \left\{ A,B,C \right\}\}
        \]
        is
        $\left\{3,1,3\right\}$, while the corresponding triple for $\Tdeg{54}$
        is $\left\{ 2,2,2 \right\}$. This violates semicontinuity of
        $\inters{-}{2}{-}$,
        even after permuting factors.
    \end{proof}

    \subsection{Obstructions to degenerations coming from the
        \BBname{} decomposition}

        \newcommand{\inn}{\operatorname{in}}
        \newcommand{\gr}{\operatorname{gr}}
        \newcommand{\Quot}{\OpQuot_{5}^r}

        In this section we show the non-existence of two degenerations which
        we find to be the hardest to discard. In the case $T_{1,5}\degto T_{1,13}$ none of
        the invariants known to us prohibits degeneration. In the case
        $T_{1,4}\degto T_{1,12}$ the degeneration is prohibited by considering
        the non-semisimple part of the stabilizer Lie algebra (we thank Joseph
        Landsberg for this observation).
        We handle both cases using the method of \BBname{} decomposition.
        In essence, it says that if a degeneration existed, if would have a
        particularly easy shape (called the associated graded), which is then
        possible to rule out by hand, see
        Proposition~\ref{ref:noeasydegeneration:prop}.
        Finding a nice invariant that rules out both cases would be a very
        useful simplification.

        The \BBname{} decomposition is a tool of moduli spaces in algebraic
        geometry and it is quite intricate. Below we try to summarize it,
        however we apologise for being brief.
        To use the \BBname{} decomposition, we fix a standard grading on $S$, where $\deg(x_i) = 1$.
        This yields a grading on $S^{\oplus r}$, for every $r$. For an element
        $k\in S^{\oplus r}$ we can decompose it into homogeneous parts, $k =
        k_0 + k_1 + \ldots$. The grading corresponds to an action of the torus
        $\Gmult := \Spec(\kk[t^{\pm 1}])$, where $t\cdot k_i = t^{-i}k_i$.

        The \emph{initial form} of $k$ is $\inn(k) := k_i$, where
        $i$ is the largest index such that $k_i\neq0$. For example, the
        initial form of $x_0^2 + x_1$ is $x_0^2$. By convention, $\inn(0) =
        0$.

        For $K\subseteq S^{\oplus
        r}$ a submodule, the \emph{initial module} $\inn(K)$ is the
        $\kk$-vector space spanned by initial forms of elements of $K$. It is
        always an $S$-submodule. For
        $M= S^{\oplus r}/K$ a quotient module, the \emph{associated graded
        module} is $\gr(M) := S^{\oplus r}/\inn(K)$. The notation is a bit
        abusive in that the associated
        graded module depends not only on $M$ but on its presentation as
        $S^{\oplus r}/K$. In terms of the
        $\Gmult$-action, the associated graded module is the limit of the
        $\Gmult$-orbit of $[K]$ at zero, where $[K]$ is the point of the $\Quot$
        scheme.

        The associated graded module can also be described in a different
        manner. Let $M$ and $K$ be as above and let $M_{\geq i} :=
        (S^{\oplus r})_{\geq i} + K$. By construction, the multiplication by
        every variable $x_j$ sends $M_{\geq i}$ to $M_{\geq i+1}$.
        The associated graded module of $M$ can be identified with the vector space
        \[
            \gr(M) := \bigoplus_i \frac{M_{\geq i}}{M_{\geq i+1}}
        \]
        which is an $S$-module via the maps $x_j\colon M_{\geq i}/M_{\geq
        i+1}\to M_{\geq i+1}/M_{\geq i+2}$ for every $j$.

        \begin{proposition}[no associated-graded
            degeneration]\label{ref:noeasydegeneration:prop}
            Let $N\subseteq M_{1,4}^{\vee}$ be a submodule of degree two. Then the
            associated graded module $\gr\left(M_{1,4}^{\vee}\right)$ is
            isomorphic to $M_{1, 15}$ as a graded module.
        \end{proposition}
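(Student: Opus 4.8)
The plan is to compute the associated graded module directly after putting $M_{1,4}^{\vee}$ into a convenient explicit form. First I would describe $M_{1,4}^{\vee}$ concretely using apolarity for modules (\S\ref{subsection_apolarity}): it is the dual of the algebra $R=\kk[x,y]/(xy,x^{3},y^{3})$ underlying $M_{1,4}$, so in the contraction model it has $\kk$-basis $X^{[2]},X^{[1]},Y^{[2]},Y^{[1]},1$, with $x_{1},x_{3}$ acting by contraction against $X,Y$ and $x_{2}=x_{1}^{2}$, $x_{4}=x_{3}^{2}$ acting as the induced operators. From this description one reads off that $M_{1,4}^{\vee}$ is cocyclic, with the two minimal generators $X^{[2]},Y^{[2]}$, socle $\langle 1\rangle$, and $\maxideal$-adic Hilbert function $(2,2,1)$.

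Second, I would identify all degree-two submodules. Any nonzero submodule meets the socle, so a degree-two submodule has the form $N=\langle 1,v\rangle=Sv$ for a single element $v\in\maxideal M_{1,4}^{\vee}\setminus\maxideal^{2}M_{1,4}^{\vee}$; explicitly $v=aX^{[1]}+bY^{[1]}$ with $[a:b]\in\mathbb{P}^{1}$. The automorphisms of $M_{1,4}^{\vee}$ induced by $x\mapsto\lambda x$, $y\mapsto\mu y$ and $x\leftrightarrow y$ on $R$ act on this $\mathbb{P}^{1}$ with the two orbits of $v=X^{[1]}$ and of $v=X^{[1]}+Y^{[1]}$, so it suffices to compute the associated graded for these two choices of $N$.

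Third, I would make the \BBname{} cocharacter explicit and compute the limit. The cocharacter that is forced is the one arising whenever $M_{1,4}^{\vee}$ (which needs two generators) degenerates to a three-generated module such as $M_{1,12}$: in such a family the third generator specializes from a $t^{-1}$-rescaled element of $\maxideal M_{1,4}^{\vee}$, and that element generates precisely a degree-two submodule $N$. Concretely one presents $S^{\oplus3}\twoheadrightarrow M_{1,4}^{\vee}$ by sending the first two basis vectors to $X^{[2]},Y^{[2]}$ and the third to $v$, equips $S^{\oplus3}$ with the grading induced from the grading of $S$, and computes the initial submodule $\inn(K)$ relation by relation from the $R$-module presentation of $M_{1,4}^{\vee}$. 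Reading off $\gr(M_{1,4}^{\vee})=S^{\oplus3}/\inn(K)$ and comparing the resulting list of relations (equivalently, the resulting space of commuting matrices up to conjugacy) with the normal form of $M_{1,15}$ from \S\ref{ssec:m5} gives the identification, and one checks that both representatives of $N$ yield the same answer.

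The step I expect to be the main obstacle is this last one: correctly pinning down the cocharacter (it acts on $S^{\oplus3}$, not on $S$, and enters through the choice of $N$) and then verifying that the limit is exactly the graded module defining $M_{1,15}$ rather than some other graded degree-five module with the same Hilbert function—above all not $M_{1,12}$ itself, since the whole statement is tailored to be fed into the non-degeneration $T_{1,4}\notdegto T_{1,12}$ (together with the fact that $M_{1,15}$ does not degenerate to $M_{1,12}$). This is a finite but delicate computation, most transparent in the contraction model where both $M_{1,4}^{\vee}$ and $M_{1,15}$ are given by explicit data.
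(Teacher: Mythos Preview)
Your plan follows the same approach as the paper and is in one respect more careful: you correctly find that the degree-two submodules of $M_{1,4}^{\vee}$ form a $\mathbb{P}^{1}$ (namely $N=\langle 1^{*},\,aX^{[1]}+bY^{[1]}\rangle$), with two $\operatorname{Aut}(M_{1,4}^{\vee})$-orbits. The paper's proof asserts there are only the two ``coordinate'' submodules $\langle 1^{*},x^{*}\rangle$ and $\langle 1^{*},y^{*}\rangle$ and overlooks the generic orbit. Your recipe---present $M_{1,4}^{\vee}$ as a quotient of $S^{\oplus 3}$ with the two minimal generators $X^{[2]},Y^{[2]}$ and one extra generator in $\maxideal M_{1,4}^{\vee}$, then compute $\inn(K)$---is exactly how to make the associated-graded computation explicit.

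One caution about the final step: your expectation that ``both representatives of $N$ yield the same answer'' will not survive the computation. For $v=X^{[1]}$ (equivalently $v=Y^{[1]}$) the associated graded is indeed $M_{1,15}$, matching the paper. But for $v=X^{[1]}+Y^{[1]}$ the outcome is $M_{1,14}$: with $e_{3}\mapsto X^{[1]}+Y^{[1]}$ one finds that in $\gr M$ both $x_{1}$ and $x_{3}$ act nontrivially on $e_{3}$ (sending it to the same element of $(\gr M)_{1}$), and the resulting $4$-dimensional space of $2\times 3$ matrices has Kronecker type $W_{14}$ rather than $W_{15}$. Thus the Proposition as literally stated is slightly too strong; the correct conclusion is $\gr(M_{1,4}^{\vee})\in\{M_{1,14},M_{1,15}\}$. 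Since the only use of the Proposition (in Proposition~\ref{ref:nofromT14toT112:prop}) is to exclude $M_{1,12}$, this amended statement still suffices, and carrying out your plan in full would in fact close a small gap left by the paper's incomplete enumeration of~$N$.
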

        \begin{proof}
            The multiplication on $M_{1,4}^{\vee}$ is determined by the
            transpose of $T_{1,4}$. From the matrices, it follows that the
            only submodules of degree two are $\spann{e_1, e_2}$ and
            $\spann{e_1, e_4}$; in the notation of~\eqref{eq:localalgebras} these are
            $\spann{1^*, x^*}$ or $\spann{1^*, y^*}$. Both choices are
            equivalent: swapping $x$ and $y$ interchanges them. Suppose we
            took $\spann{1^*, x^*}$. In the associated graded, the only
            nonzero multiplications are $x^2\cdot (x^2)^* = 1^*$, $x\cdot
            (x^2)^* = x^*$, $y\cdot y^* = 1^*$ and $y^2\cdot (y^2)^* = 1^*$.
            It follows that we get a module isomorphic to $M_{1,15}$.
        \end{proof}

        The following is the key general result that will allow us to restrict
        to degenerations given by the associated graded construction.
        \begin{proposition}[{\cite[Chapter~5]{components}, see also~\cite{jelisiejew_sienkiewicz__BB}}]\label{ref:BBmain:prop}
            Let $M = S^{\oplus r}/K$ be a zero-dimensional $S$-module and
            assume that $K$ is homogeneous. Assume additionally that
            \begin{equation}\label{eq:nopositivetangents}
                \Hom_S(K, M)_{\geq 0} = 0.
            \end{equation}
            Then there exists an open subset $U\subseteq
            \Quot$ and a $\Gmult$-invariant morphism $p\colon U\to U^{\Gmult}$ that sends
            every module $[M]\in U$ to $[\gr(M)]\in U^{\Gmult}$.
        \end{proposition}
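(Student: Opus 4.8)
The plan is to deduce the statement from the general \BBname{} decomposition of a quasi-projective scheme with a $\Gmult$-action, applied to $X:=\Quot$ with the action induced by the standard grading of $S^{\oplus r}$. First I would record the elementary geometry of this action: a point $[S^{\oplus r}/K']$ is $\Gmult$-fixed exactly when $K'$ is homogeneous, and for \emph{every} point of $X$ the orbit limit $\lim_{t\to 0}t\cdot[S^{\oplus r}/K']=[S^{\oplus r}/\inn(K')]=[\gr(S^{\oplus r}/K')]$ exists, because passing to the initial submodule preserves colength (the module $\gr(M')=\bigoplus_i M'_{\geq i}/M'_{\geq i+1}$ has the same dimension as $M'$, directly from the definition). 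In particular $[M]$ is a fixed point, $\gr(M)=M$, and the whole content of the proposition concerns a neighbourhood of $[M]$.

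Next I would translate the hypothesis into $\Gmult$-weights. The tangent space $T_{[M]}X=\Hom_S(K,M)$ is graded, and a homomorphism of internal degree $d$ has $\Gmult$-weight $-d$; hence $\Hom_S(K,M)_{\geq 0}=0$ is equivalent to saying that every $\Gmult$-weight occurring in $T_{[M]}X$ is strictly positive, i.e.\ that the maximal ideal $\mathfrak m$ of $\OO_{X,[M]}$ is generated by elements of strictly negative weight. Now I invoke the \BBname{} machinery for quasi-projective schemes with $\Gmult$-action (\cite[Chapter~5]{components}, \cite{jelisiejew_sienkiewicz__BB}): the attractor functor $X^+$, sending a test scheme $T$ to the set of $\Gmult$-equivariant maps $\mathbb A^1\times T\to X$, is represented by a scheme; evaluation at $1\in\mathbb A^1$ gives a locally closed immersion $\mathrm{ev}_1\colon X^+\to X$ whose image is the attractor locus, and evaluation at $0$ gives an affine morphism $\mathrm{ev}_0\colon X^+\to X^{\Gmult}$ sending an equivariant curve to its limit at $0$. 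By the first paragraph the attractor locus is all of $X$.

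The crux is to improve $\mathrm{ev}_1$ to an isomorphism near $[M]$. On a $\Gmult$-stable affine neighbourhood $\Spec A$ of $[M]$ one computes $X^+=\Spec\bigl(A/(A_{>0})\bigr)$, where $(A_{>0})$ is the ideal generated by the positive-weight part of $A$. Since $\mathfrak m$ is generated by negative-weight elements, every positive-weight element of $A$ has zero image in $\mathfrak m/\mathfrak m^2$, hence lies in $\mathfrak m^2$, hence (iterating) in $\bigcap_n\mathfrak m^n$, which vanishes after localizing at $[M]$ by Krull's intersection theorem. Thus $(A_{>0})$ is trivial on a Zariski neighbourhood of $[M]$, so $\mathrm{ev}_1$ restricts to an isomorphism onto an open subset $U\ni[M]$; replacing $U$ by the largest $\Gmult$-stable open contained in it we may assume $U$ is $\Gmult$-stable. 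Then $p:=\mathrm{ev}_0\circ(\mathrm{ev}_1|_U)^{-1}\colon U\to X^{\Gmult}$ is a $\Gmult$-invariant morphism with $p([M'])=\lim_{t\to 0}t\cdot[M']=[\gr(M')]$, and shrinking $U$ to $p^{-1}(U)$ makes $p$ take values in $U^{\Gmult}=U\cap X^{\Gmult}$, which is the asserted morphism.

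I expect the main obstacle to be the honest use of the general \BBname{} decomposition — representability of $X^+$, the assertion that $\mathrm{ev}_1$ is a locally closed immersion, and the behaviour of $\mathrm{ev}_0$ — together with the bookkeeping required to keep everything $\Gmult$-equivariant while passing to Zariski-local charts. The conceptual reason the hypothesis cannot be dropped is that $\Quot$ need not be smooth at $[M]$, so one cannot simply quote ``the \BBname{} cell is an affine bundle''; it is precisely the positivity of all tangent weights that pushes the positive-weight part of the local ring into $\bigcap_n\mathfrak m^n$ and thereby makes the attractor fill an entire neighbourhood of $[M]$.
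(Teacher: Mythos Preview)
The paper does not supply a proof of this proposition at all: it is quoted verbatim as a black box from \cite[Chapter~5]{components} and \cite{jelisiejew_sienkiewicz__BB}, and the surrounding section only \emph{applies} it (to rule out the degenerations in Propositions~\ref{ref:nofromT14toT112:prop} and~\ref{ref:nofromT15toT113:prop}). So there is no ``paper's own proof'' to compare against.

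That said, your outline is essentially the argument that the cited references carry out: identify the $\Gmult$-fixed locus with homogeneous $K'$, identify the limit at $0$ with the associated graded, compute the tangent weights as the negatives of the internal degrees in $\Hom_S(K,M)$, and use the \BBname{} attractor $X^+$ together with $\mathrm{ev}_0$ and $\mathrm{ev}_1$. One point to tighten: the step ``hence lies in $\mathfrak m^2$, hence (iterating) in $\bigcap_n\mathfrak m^n$'' needs the observation that $\mathfrak m^k/\mathfrak m^{k+1}$ is generated over $A/\mathfrak m$ by $k$-fold products of elements of $\mathfrak m/\mathfrak m^2$, so its $\Gmult$-weights are sums of $k$ strictly negative integers and hence are $\leq -k$; only then does a positive-weight element land in every $\mathfrak m^k$. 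Also, the existence of a $\Gmult$-stable affine neighbourhood of $[M]$ is not automatic for arbitrary schemes with $\Gmult$-action; here it follows from the projectivity of $\Quot$ (Sumihiro-type linearization), which is worth flagging. With those two clarifications your sketch matches what the references do.
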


        \begin{proposition}\label{ref:nofromT14toT112:prop}
            There is no degeneration $T_{1,4}\degto T_{1,12}$.
        \end{proposition}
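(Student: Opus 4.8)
The plan is to combine the degeneration dictionary of~\S\ref{ssec:degenerationsDictionary} with the \BBname{} method (Proposition~\ref{ref:BBmain:prop}) and the computation of Proposition~\ref{ref:noeasydegeneration:prop}. First I would record that $T_{1,4}$ is the multiplication tensor of $\kk[x,y]/(xy,x^3,y^3)$, whose socle $\spann{x^2,y^2}$ is two-dimensional; hence this algebra is not Gorenstein, so $T_{1,4}$ is $1_A$- and $1_B$-generic but $1_C$-degenerate. By~\S\ref{ssec:degenerationsDictionary}, a degeneration $T_{1,4}\degto T_{1,12}$ is equivalent to $M_{1,4}\degtopure M_{1,12}$ or $M_{1,4}\degtopure M_{1,12}^{\vee}$. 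Applying $(-)^{\vee}$ (which preserves module degenerations) turns these into $M_{1,4}^{\vee}\degtopure M_{1,12}^{\vee}$ and $M_{1,4}^{\vee}\degtopure M_{1,12}$, so it suffices to prove that $M_{1,4}^{\vee}$ degenerates neither to $M_{1,12}$ nor to $M_{1,12}^{\vee}$; the point of passing to $M_{1,4}^{\vee}$ is that the computation of Proposition~\ref{ref:noeasydegeneration:prop} concerns $\gr(M_{1,4}^{\vee})$.

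Next I would invoke the \BBname{} machinery. Both $M_{1,12}$ and $M_{1,12}^{\vee}$ are annihilated by $\maxideal^2$ (see~\S\ref{ssec:m5}), hence homogeneous, so after fixing homogeneous presentations the points $[M_{1,12}]$ and $[M_{1,12}^{\vee}]$ are $\Gmult$-fixed points of the ambient Quot scheme. I would check the hypothesis~\eqref{eq:nopositivetangents}, i.e.\ $\Hom_S(K,P)_{\geq 0}=0$, for $P\in\{M_{1,12},M_{1,12}^{\vee}\}$; since both modules are graded this is a finite linear-algebra check in a couple of low degrees. Proposition~\ref{ref:BBmain:prop} then provides, near each of these points, an open set $U$ with a $\Gmult$-equivariant retraction $p\colon U\to U^{\Gmult}$, $[M']\mapsto[\gr(M')]$. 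If $M_{1,4}^{\vee}$ degenerated to $P$, then some point $[M']$ of the $\GL$-orbit closure of $M_{1,4}^{\vee}$ would lie in $U$; the point $[\gr(M')]=p([M'])$ still lies in that orbit closure (which is $\Gmult$-invariant, being $\GL$-invariant), and by continuity of $p$ it degenerates to $P=p([P])$. Thus a degeneration $M_{1,4}^{\vee}\degtopure P$ would force a \emph{graded} intermediate module $G$, namely $\gr$ of a point in the $\GL$-orbit of $M_{1,4}^{\vee}$, with $M_{1,4}^{\vee}\degtopure G\degtopure P$.

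Finally, Proposition~\ref{ref:noeasydegeneration:prop} pins down every such $G$: it is isomorphic to $M_{1,15}$. So a degeneration $T_{1,4}\degto T_{1,12}$ would yield $M_{1,15}\degtopure M_{1,12}$ or $M_{1,15}\degtopure M_{1,12}^{\vee}$, and translating back to tensors (allowing a swap of factors) we would get $T_{1,15}\degto T_{1,12}$. But $T_{1,12}\degto T_{1,15}$ is among the degenerations constructed in Appendix~\ref{sec:code} (it factors as $T_{1,12}\degto T_{1,13}\degto T_{1,14}\degto T_{1,15}$ on Diagram~\ref{sec:diagram}), while $T_{1,12}$ and $T_{1,15}$ are non-isomorphic even up to permutations (Theorem~\ref{classification_theorem}); since $\degto$ is a partial order this is a contradiction, and therefore $T_{1,4}\notdegto T_{1,12}$.

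I expect the main obstacle to be the verification of~\eqref{eq:nopositivetangents} for $M_{1,12}$ and $M_{1,12}^{\vee}$ (so that the required \BBname{} cells exist), together with making the ``insert a graded module into the degeneration'' step precise; once $\gr(M_{1,4}^{\vee})\cong M_{1,15}$ is granted by Proposition~\ref{ref:noeasydegeneration:prop}, the concluding contradiction is immediate. An alternative route, prohibiting the degeneration via the non-semisimple part of the stabilizer Lie algebra of $T_{1,12}$, is also available but seems less transparent.
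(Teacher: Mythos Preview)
Your reduction via~\S\ref{ssec:degenerationsDictionary} to the two module degenerations is correct, and your \BBname{} treatment of the case $M_{1,4}^{\vee}\degtopure M_{1,12}$ is essentially the paper's own argument (with a cleaner endgame: rather than arguing that the generic associated graded equals $M_{1,12}$, you observe that Proposition~\ref{ref:noeasydegeneration:prop} forces it to be $M_{1,15}$ and then use the partial order). So far so good.

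The genuine gap is in the other case, $M_{1,4}^{\vee}\degtopure M_{1,12}^{\vee}$. The module $M_{1,12}^{\vee}$ lies in subcase~\ref{case1} of~\S\ref{ssec:m5}: it is minimally generated by \emph{two} elements and has Hilbert function $(2,3)$, not $(3,2)$. The \BBname{} retraction near $[M_{1,12}^{\vee}]$ therefore lives on $\OpQuot_5^2$, and the associated graded $\gr(M')$ for $M'\cong M_{1,4}^{\vee}$ is computed from a presentation $S^{\oplus 2}/K'$, where the filtration step $M'_{\geq 1}$ is a degree-\emph{three} submodule. Proposition~\ref{ref:noeasydegeneration:prop}, however, is stated and proved only for degree-\emph{two} submodules of $M_{1,4}^{\vee}$ (there are exactly two, and both give $M_{1,15}$). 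It says nothing about degree-three submodules, so your sentence ``Proposition~\ref{ref:noeasydegeneration:prop} pins down every such $G$'' is only half true. You would need a separate classification of degree-three submodules of $M_{1,4}^{\vee}$ and the resulting associated gradeds, plus the verification of~\eqref{eq:nopositivetangents} for $M_{1,12}^{\vee}$---neither is in the paper.

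The paper sidesteps this entirely: it treats the equivalent case $M_{1,4}\degtopure M_{1,12}$ directly with the elementary submodule obstruction (the maximal ideal of the algebra $M_{1,4}=\kk[x,y]/(xy,x^3,y^3)$ is a degree-four submodule annihilated by a two-dimensional space of linear forms, whereas $M_{1,12}$ admits no such submodule, cf.\ Lemma~\ref{ref:nofromT28toT112:lem}). Only the remaining case $M_{1,4}^{\vee}\degtopure M_{1,12}$ is handled by \BBname{}. If you want to salvage your symmetric two-BB approach, you must supply the missing degree-three computation; otherwise, adopt the paper's asymmetric split.
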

        \begin{proof}
            \def\curve{X}
            First, assume that $M_{1,4}$ degenerates to $M_{1,12}$. The
            maximal ideal of $M_{1,4}$ is a submodule of degree four annihilated by a two-dimensional
            space of variables. The module $M_{1,12}$ admits no such
            submodule, as we already asserted in
            Lemma~\ref{ref:nofromT28toT112:lem}, so a degeneration cannot
            exist.

            We now proceed to disprove the existence of
            $M_{1,4}^{\vee}\degtopure M_{1,12}$. This requires more care: the analogue of the argument
            above does not work, because $M_{1,12}/\kk e_5$ is a degree four quotient module
            annihilated by a two-dimensional space of variables.

            The module $M_{1,12}$ is generated by $e_1$, $e_2$, $e_3$, which
            means that it is isomorphic to $S^{\oplus 3}/K$.
            The kernel $K$ is homogeneous and in fact
            $H_{M_{1,12}} = (3, 2)$. A direct computation shows
            that~\eqref{eq:nopositivetangents} is satisfied. Let $U$ be as in
            Proposition~\ref{ref:BBmain:prop}. We shrink $U$ if necessary, so
            that it contains concise modules only.

            Assume that $M_{1,4}^{\vee}$ does degenerate to $M_{1,12}$. A
            degeneration yields a pointed curve $f\colon (\curve,0)\to \Quot$ which
            sends each point except zero to a module isomorphic to
            $M_{1,4}^{\vee}$ and sends $0$ to $M_{1,4}^{\vee} = S^{\oplus
            3}/K$. After replacing the curve by $f^{-1}(U)$, we get a curve
            $f\colon (\curve, 0) \to U$. Composing it with $p\colon U\to
            U^{\Gmult}$, we get a curve $p\circ f\colon (\curve, 0)\to
            U^{\Gmult}$.

            Consider any $x\in \curve$. The point $(p\circ f)(x)$ is a concise graded
            module with Hilbert function $(3, 2)$.
            The closure of the $\Gmult$-orbit in $p^{-1}((p\circ f)(x))$.
            By
            Proposition~\ref{ref:BBmain:prop}, it is a degeneration of
            $M_{1,4}^{\vee}$, in particular it is isomorphic to one of the
            modules $M_{1,12}$, $M_{1,13}$, $M_{1,14}$.

            By semicontinuity of the stabilizer, there is an open
            subset of $x$ which yield $M_{1,12}$. By semicontinuity, a
            general point $x$ corresponds to $M_{1,12}$. Fix any such $x$.
            The $\Gmult$-orbit of $x$ yields degeneration of
            $M_{1,4}^{\vee}$ to a module isomorphic to $M_{1,12}$ given by the associated graded
            construction.

            Denote by $M_{\geq i}\subseteq M_{1,12}$ the
            elements of the filtration.
            The module $M_{1,12}$ has Hilbert function
            $(3, 2)$, so it follows that
            \[
                \dim_{\kk} M_{\geq 0}/M_{\geq 1} = 3,\quad \dim_{\kk}
                M_{\geq 1}/M_{\geq 2} = 2,\quad \dim_{\kk} M_{\geq
                2}/M_{\geq 3} = 0,
            \]
            hence $M_{\geq 0} = M$, the submodule $M_{\geq 1}$ has degree two and $M_{\geq
            2} = 0$. But precisely such a degeneration was ruled out in
            Proposition~\ref{ref:noeasydegeneration:prop}.
        \end{proof}

        \begin{proposition}\label{ref:nofromT15toT113:prop}
            There is no degeneration $T_{1,5}\degto T_{1,13}$.
        \end{proposition}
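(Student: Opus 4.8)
The plan is to follow the scheme of the proof of Proposition~\ref{ref:nofromT14toT112:prop}. The tensor $T_{1,5}$ comes from the (local, non-Gorenstein) algebra $\kk[x,y]/(x^2,xy^2,y^3)$, so it is $1_A$- and $1_B$-generic but not $1_C$-generic, whereas $T_{1,13}$ is $1_A$-generic only. By~\S\ref{ssec:degenerationsDictionary} it therefore suffices to exclude the two module degenerations $M_{1,5}\degtopure M_{1,13}$ and $M_{1,5}^{\vee}\degtopure M_{1,13}$, the latter being equivalent to $M_{1,5}\degtopure M_{1,13}^{\vee}$. As flagged in the text, no standard invariant (minimal number of generators, dimension of the annihilator, intersections with secant varieties, stabilizer dimension, \ldots) separates these modules, so both cases will be handled through the \BBname{} decomposition.

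First I would prepare the BB input. Both $M_{1,13}$ and $M_{1,13}^{\vee}$ are graded, annihilated by $\maxideal^{2}$, with Hilbert functions $(3,2)$ and $(2,3)$; hence they admit homogeneous presentations $S^{\oplus 3}/K$ and $S^{\oplus 2}/K'$. The crucial preliminary is to verify, by a finite linear-algebra computation, the tangent conditions $\Hom_S(K,M_{1,13})_{\geq 0}=0$ and $\Hom_S(K',M_{1,13}^{\vee})_{\geq 0}=0$ of Proposition~\ref{ref:BBmain:prop}. Granting these, one obtains $\Gmult$-invariant open neighbourhoods $U$ of $[M_{1,13}]$ and of $[M_{1,13}^{\vee}]$ in the punctual Quot scheme together with retractions $p\colon U\to U^{\Gmult}$ sending a module to its associated graded; after shrinking, $U$ parametrises only concise modules with the relevant Hilbert function.

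Next, assume for contradiction that $M_{1,5}\degtopure M_{1,13}$ (the case $M_{1,5}\degtopure M_{1,13}^{\vee}$ is handled identically, with Hilbert function $(2,3)$ in place of $(3,2)$). A degeneration lifts to a pointed curve in $U$ whose special value is $[M_{1,13}]$ and whose generic value is a presentation of $M_{1,5}$; composing with $p$ and passing to a general point of the curve — using upper-semicontinuity of the stabilizer — produces a degeneration of $M_{1,5}$ to a module isomorphic to $M_{1,13}$ \emph{of the form given by the associated-graded construction}, i.e.\ governed by a two-step flag on $M_{1,5}$ whose associated graded, with the induced multiplication, is $M_{1,13}$. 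The concluding step is the analogue of Proposition~\ref{ref:noeasydegeneration:prop}: enumerate the finitely many flags on $M_{1,5}$ that could produce $M_{1,13}$ (respectively $M_{1,13}^{\vee}$) in this way, compute the associated graded module in each case, and check that it is always one of the other concise graded modules with that Hilbert function ($M_{1,14}$, $M_{1,15}$, or a decomposable one), never $M_{1,13}$ (respectively $M_{1,13}^{\vee}$).

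The main obstacle is precisely this final verification. Because no cheap invariant separates $T_{1,5}$ from $T_{1,13}$, ruling $M_{1,13}$ out has to be done directly, distinguishing it from the remaining concise graded modules of the same Hilbert function by the isomorphism type of the associated $2\times 3$ Kronecker pencil (cf.\ Example~\ref{ex:2x3}) and by the degenerate pencils it supports. A secondary technical point is the tangent-space computation that must precede the application of Proposition~\ref{ref:BBmain:prop}. As in the $T_{1,4}\degto T_{1,12}$ case, a single invariant eliminating both degenerations at once would shorten the argument considerably, but none is known at present.
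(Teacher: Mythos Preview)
Your plan is in the right spirit but diverges from the paper in how the two module degenerations are treated. The paper does \emph{not} run the \BBname{} argument symmetrically on both sides. For $M_{1,5}\degtopure M_{1,13}$ it uses a direct submodule obstruction, exactly parallel to the first paragraph of Proposition~\ref{ref:nofromT14toT112:prop}: the maximal ideal of the algebra $M_{1,5}$ is a degree-four submodule with large linear annihilator, and one checks that $M_{1,13}$ admits no degree-four submodule with this property. So your claim that ``no standard invariant separates these modules'' overstates the difficulty in this direction, and the BB machinery there is avoidable.

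Only the direction $M_{1,5}^{\vee}\degtopure M_{1,13}$ genuinely needs BB, and here the paper works with $M_{1,13}$ itself (Hilbert function $(3,2)$), not with $M_{1,13}^{\vee}$ as you propose. After the tangent check~\eqref{eq:nopositivetangents}, the reduction is not to an abstract enumeration of flags but to the concrete list of degree-two \emph{submodules} of $M_{1,5}^{\vee}$, of which there are exactly two; their associated gradeds are $M_{1,15}$ and a non-concise module, never $M_{1,13}$. Your formulation via $M_{1,13}^{\vee}$ and Hilbert function $(2,3)$ would require a separate verification of~\eqref{eq:nopositivetangents} for $M_{1,13}^{\vee}$ and a different enumeration, neither of which the paper supplies. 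Also, ``finitely many flags on $M_{1,5}$'' is imprecise: for a cyclic module presented non-minimally in $\OpQuot_5^3$ the relevant presentations form a positive-dimensional family; what is finite is the set of isomorphism types of their associated gradeds.
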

        \begin{proof}
            The proof is analogous to
            Proposition~\ref{ref:nofromT14toT112:prop}. First, a degeneration
            $M_{1,5}\degtopure M_{1,13}$ does not exist, because the maximal ideal
            of the algebra $M_{1,5}$ is annihilated by a $3$-dimensional space
            of variables, which $M_{1,13}$ admits no degree four submodule
            annihilated by such a large subspace.

            To rule out the degeneration $M_{1,5}\degtopure M_{1,13}$, we check
            that $M_{1,13}$ is isomorphic to $S^{\oplus 3}/K$ for $K$
            homogeneous, that it satisfies~\eqref{eq:nopositivetangents} and
            has Hilbert function $(3, 2)$. As above, we reduce to proving that
            there is no degeneration $M_{1,5}\degtopure M_{1,13}$ given by the
            associated graded construction. As in
            Proposition~\ref{ref:noeasydegeneration:prop}, we check that there
            are only two degree two submodules, hence only two possible
            degenerations. One of them yields $M_{1,15}$ and the other yields
            a non-concise module.
        \end{proof}

    \subsection{Obstruction to degenerations to $\Tdeg{58}$, \ldots ,
        $\Tdeg{54}$.}

        Consider a degeneration $T\degtopure T'$ of minimal border rank tensors in
        $A\otimes B\otimes C$. This can be interpreted as a
        family $T_t\in A\otimes B\otimes C$. Assume
        additionally that both $T$ and $T'$ are 111-sharp, that is, that they
        both have \emph{exactly} $m$-dimensional 111-algebras. In this case, the
        degeneration induces a degeneration of 111-algebras inside
        $\End(A)\times \End(B)\times \End(C)$, see~\cite[(1.3)]{concise},
        and consequently, a degeneration
        of modules $\modA$, $\modB$, $\modC$.
        By~\cite[\S1.4.1]{concise} all minimal border rank tensors for $m =
        \dim_{\kk} A \leq 5$ are 111-sharp.

        When $T$, $T'$ come from concise modules $M$, $M'$, the obtained degenerations
        of modules are quite
        tautological: we obtain degenerations $M\degtopure M'$,
        $M^{\vee}\degtopure
        (M')^{\vee}$ and additionally a degeneration of algebras
        $S/\ann(M)\degtopure
        S/\ann(M')$, because $\modA \simeq S/\ann(M)$ for $T$ an similarly for
        $T'$, by~\cite[Theorem 5.3]{concise}. In contrast, for a 1-degenerate
        $T'$, the above becomes very
        helpful.

        In the table below, we list the modules coming from the five
        1-degenerate tensors. One can compute them by hand, or using our
        package, see Appendix~\ref{sec:code}.
        \begin{equation}\label{eq:modules}
            \begin{tabular}[<+position+>]{ll c c c}
                && $\modA$ & $\modB$ & $\modC$\\
                \midrule
                $\Tdeg{58}$ && $M_{11}$ & $M_{11}$ & $M_{11}$\\
                $\Tdeg{57}$ && $M_{15}$ & $M_{12}$ & $M_{12}$\\
                $\Tdegtw{}$ && $M_{15}$ & $M_{13}$ & $M_{15}$\\
                $\Tdeg{55}$ && $M_{15}$ & $M_{15}$ & $M_{15}$\\
                $\Tdeg{54}$ && $M_{15}$ & $M_{15}$ & $M_{15}$
            \end{tabular}
        \end{equation}

        \begin{proposition}\label{ref:nodegstodegenerate:prop}
            There are no degenerations $T_{3,2}\degto \Tdeg{58}$,
            $T_{2,5}\degto \Tdeg{58}$, $T_{2,7}\degto \Tdeg{58}$,
            $T_{1,4}\degto \Tdeg{57}$, $T_{2,8}\degto \Tdeg{57}$,
            $T_{1,5}\degto \Tdegtw{}$, $T_{1,11}\degto \Tdeg{54}$.
        \end{proposition}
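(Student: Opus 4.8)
The plan is to reduce each of the seven claimed degenerations to a statement about degenerations of $S$-modules and then rule it out with an invariant that is already at our disposal. The key input is that every minimal border rank tensor in $\kk^5\otimes\kk^5\otimes\kk^5$ is $111$-sharp, so a degeneration $T\degtopure T'$ of such tensors produces a degeneration of the $111$-algebras inside $\End(A)\times\End(B)\times\End(C)$ and hence degenerations of $S$-modules $\modA\degtopure\modA'$, $\modB\degtopure\modB'$, $\modC\degtopure\modC'$; for the five $1$-degenerate targets the triples $(\modA',\modB',\modC')$ are the ones listed in~\eqref{eq:modules}. Each of the seven sources is $\mu_M$ for a concise module $M$ (they are all $1_A$-generic), so its triple equals, up to equivalence, $(S/\ann(M),\,M,\,M^{\vee})$; in particular the algebra $S/\ann(M)$ always occurs, and it occurs twice when $M$ is cyclic. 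Since we work up to permutations, a degeneration $T\degto T'$ means that, after reordering the source triple by some $\sigma\in\Sigma_3$, each entry degenerates --- with no permutation --- to the matching entry of the target triple. I will also use that submodules transport under module degenerations, which follows from the same compactness argument in the Grassmannian that underlies Proposition~\ref{ref:submodules:prop}, applied now one level down.

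I would first treat the degenerations into $\Tdeg{58}$, whose triple is $(M_{11},M_{11},M_{11})$. Here \emph{every} source module must degenerate to $M_{11}$, so it suffices to find one that cannot. For $T_{2,5}$ and $T_{2,7}$ the module $M$ is self-dual, so $M\degtopure M_{11}$ is equivalent to $M\degtopure M_{11}^{\vee}$, and the maximal ideal of the algebra attached to $M$ --- a degree-four submodule which is not cyclic --- would have to degenerate to a degree-four submodule of $M_{11}^{\vee}$, which by Lemma~\ref{ref:T11submodules:lem} is cyclic; semicontinuity of the minimal number of generators gives a contradiction. This is exactly the argument of Corollary~\ref{ref:notoT111:cor} and Lemma~\ref{ref:notoT27:lem}. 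For $T_{3,2}$, which is not self-dual, the algebra $S/\ann(M_{3,2})$ contains a degree-four submodule whose annihilator meets $S_1$ in a two-dimensional subspace, whereas by Lemma~\ref{ref:T11submodules:lem} every degree-four submodule of $M_{11}$ has at most one-dimensional annihilator in $S_1$; semicontinuity of the annihilator then rules out $M_{3,2}\degtopure M_{11}$.

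For the degenerations into $\Tdeg{57}$ (triple $(M_{15},M_{12},M_{12})$) and $\Tdegtw{}$ (triple $(M_{15},M_{13},M_{15})$) I would combine the same submodule--annihilator obstruction with a counting argument over the multiset of modules: the source $T_{1,4}=\mu_{R}$, $R=\kk[x,y]/(xy,x^3,y^3)$, has triple $(R,R,R^{\vee})$, and the maximal ideal of $R$ is a degree-four submodule with a two-dimensional annihilator in $S_1$, while by Lemma~\ref{ref:nofromT28toT112:lem} the module $M_{12}$ admits no such submodule, so $R\notdegtopure M_{12}$ and the multiset $\{R,R,R^{\vee}\}$ cannot be matched to $\{M_{15},M_{12},M_{12}\}$. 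The arguments for $T_{2,8}\degto\Tdeg{57}$ and $T_{1,5}\degto\Tdegtw{}$ follow the same pattern, using the submodule part of Lemma~\ref{ref:nofromT28toT112:lem} and of Proposition~\ref{ref:nofromT15toT113:prop} respectively; note that $T_{1,5}$ and $T_{2,8}$ are exactly the sources excluded there from degenerating to $T_{1,12}$, $T_{1,13}$, i.e.\ to $\mu_{M_{12}}$, $\mu_{M_{13}}$. Finally, for $T_{1,11}\degto\Tdeg{54}$ (triple $(M_{15},M_{15},M_{15})$): reading off the matrices one sees that the endomorphisms of $C$ spanning $\Espace{T_{1,11}}$ multiply to zero, so $\modA(T_{1,11})=S/\ann(M_{11})\simeq\kk[x_1,x_2,x_3,x_4]/\maxideal^2$, whose socle $(0:\maxideal)$ is four-dimensional, while the socle of $M_{15}$ is two-dimensional; since $M\degtopure M'$ forces $\dim_{\kk}(0:\maxideal)_{M'}\geq\dim_{\kk}(0:\maxideal)_{M}$ (dualize and apply~\S\ref{ssec:mingens}), the module $\modA(T_{1,11})$, which appears in the source triple in some slot after permutation, cannot degenerate to $M_{15}$.

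The main difficulty I anticipate is not a single hard step but keeping the bookkeeping airtight: one must pin down which entry of each source triple is the algebra and which is the dual, verify by hand the submodule structure of $M_{11},M_{12},M_{13},M_{15}$ (in particular that neither $M_{12}$ nor $M_{13}$ has a degree-four submodule with a large annihilator), and check that every permutation appearing in the multiset matchings for $\Tdeg{57}$ and $\Tdegtw{}$ is excluded; the self-duality of the modules behind $T_{2,5},T_{2,7},T_{2,8}$ is what reduces several of these to cases already handled. A cleaner single invariant separating all seven pairs, if one could be found, would considerably shorten this.
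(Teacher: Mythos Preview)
Your overall strategy is exactly the paper's: use $111$-sharpness to pass from a tensor degeneration to degenerations of the three coordinate modules, read off the target triples from~\eqref{eq:modules}, and then block each matching with a module-level invariant. For $\Tdeg{58}$ (triple $(M_{11},M_{11},M_{11})$), for $T_{2,8}\degto\Tdeg{57}$, and for $T_{1,11}\degto\Tdeg{54}$ your bookkeeping is correct; in the last case your socle argument is a clean alternative to the paper's use of stabilizer dimension. For $T_{1,4}\degto\Tdeg{57}$ your counting observation is even a slight sharpening: since the target triple $(M_{15},M_{12},M_{12})$ contains two copies of $M_{12}$ and the source multiset is $\{R,R,R^{\vee}\}$, every permutation forces at least one $R\to M_{12}$, so the easy ``submodule'' half of Proposition~\ref{ref:nofromT14toT112:prop} already suffices.

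There is, however, a genuine gap in the case $T_{1,5}\degto\Tdegtw{}$. The target triple here is $(M_{15},M_{13},M_{15})$, with only \emph{one} copy of $M_{13}$. The source multiset is $\{M_5,M_5,M_5^{\vee}\}$, and the permutation placing $M_5^{\vee}$ in the middle slot yields the required degenerations $M_5\degtopure M_{15}$ (twice) and $M_5^{\vee}\degtopure M_{13}$, with no instance of $M_5\degtopure M_{13}$ at all. So ruling out $M_5\degtopure M_{13}$ --- the ``submodule part'' of Proposition~\ref{ref:nofromT15toT113:prop} --- does not exclude this matching. You would need either to rule out $M_5\degtopure M_{15}$ (and no cheap invariant does this: the generator count, socle dimension, and degree-four submodule annihilators are all compatible) or to rule out $M_5^{\vee}\degtopure M_{13}$. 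The latter is precisely the hard half of Proposition~\ref{ref:nofromT15toT113:prop}, proved via the \BBname{} decomposition, and this is what the paper invokes. Your plan to avoid the BB argument works for $\Tdeg{57}$ because of the double $M_{12}$, but breaks down for $\Tdegtw{}$ because $M_{13}$ appears only once.
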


        \begin{proof}
            The tensor $T_{3,2}$ comes from an algebra $A$, so all its coordinate
            modules are isomorphic to $A$ or $A^{\vee}$. By
            Corollary~\ref{ref:notoT111:cor}, none of these modules
            degenerates to $T_{1,11}$, which are the coordinate modules of
            $\Tdeg{58}$, see~\eqref{eq:modules}. This proves that no degeneration
            $T_{3, 2}\degto \Tdeg{58}$ exists.

            The proof for $T_{2,5}\degto
            \Tdeg{58}$ is the same.
            For $T_{1,4}\degto \Tdeg{57}$ and $T_{1,5}\degto \Tdegtw{}$, the
            proof is again the same, using
            Propositions~\ref{ref:nofromT14toT112:prop}-\ref{ref:nofromT15toT113:prop}
            to get non-degenerations of modules.

            For $T_{2,7}\degto \Tdeg{58}$ the proof is
            similar, the only subtlety is that $T_{2,7}$ corresponds to a
            module $M_{2,7}$, so its coordinate modules are $M_{2,7}$,
            $M_{2,7}^{\vee}$, and $S/\ann(M_{2,7}) \simeq M_{2,6}$. None of
            these degenerates to $M_{11}$. The same argument works for
            $T_{2,8}\degto \Tdeg{57}$, using
            Lemma~\ref{ref:nofromT28toT112:lem}.

            To prove that no $T_{1,11}\degto \Tdeg{54}$ exists, we recall that
            the module $M_{11}$ is annihilated by the square of the maximal
            ideal, so that $S/\ann(M_{11})$ is isomorphic to $M_{9}$. This is a
            coordinate module of $T_{1,11}$, but it does not degenerate to
            $M_{15}$, because of stabilizer dimension. This concludes the
            proof.
        \end{proof}

        \section{Refined classification of minimal border rank
        tensors}\label{ssec:upToIsomorphism}

        In this section we complete the proof of Theorem~\ref{theorem_final} by giving the
classification of minimal border rank tensors for $m\leq 5$ over $\mathbb{C}$ up to action of
$\GL(A)\times \GL(B)\times \GL(C)$, that is, without allowing permutations of
factors.
\begin{proof}[Final part of proof of Theorem~\ref{theorem_final}]
    We do the full proof for the case $m = 5$, while the smaller cases can be
    deduced from it and are much easier anyway.

    We begin with the $1_*$-generic tensors. For this case, we rely on the proof of classification of subspaces, as given in
    Subsection~\ref{classification_subspaces}. A $1_A$-generic tensor is
    isomorphic to a symmetric tensor if and only if it is $1_B$-, $1_C$-generic. There are $10$ such
    cases $T_{5, 1}$, $T_{4, 1}$, $T_{3,1}$, $T_{3, 3}$, $T_{2,1}$,
    $T_{2,3}$, $T_{2,5}$, $T_{1,1}$, $T_{1,3}$, $T_{1,8}$, they yield $10$
    isomorphism types. A $1_A$-,
    $1_B$-generic tensor $T$ which is not $1_C$-generic comes from a multiplication in an algebra, hence is
    isomorphic to $T$ with swapped first two coordinates. There are $10$ such
    tensors $T_{3,2}$, $T_{2,2}$, $T_{2,4}$, $T_{2,6}$, $T_{1,2}$, $T_{1,4}$,
    $T_{1, 5}$, $T_{1,6}$, $T_{1, 7}$, $T_{1, 9}$. The orbits of $\Sigma_3$
    acting on them have three elements, so these yield $10\cdot 3$ isomorphism types.
    We are left with $12$ cases of $1_A$-generic, but not $1_B$- nor
    $1_C$-generic tensors. Exactly six of them come from self-dual modules
    (see Subsection~\ref{classification_subspaces}), so
    they are (up to isomorphism) invariant under transposing and yield $3\cdot
    6$ isomorphism types. The other six yield $6\cdot 6$ isomorphism types. In
    total we obtain $94$ isomorphism types of $1_*$-generic tensors.

    We have to deal with five $1$-degenerate tensors. It is convenient to
    keep Table~\eqref{eq:modules} in mind. The
    tensor~$\Tdeg{58}$ is the unique among them which is isomorphic to a symmetric one,
    see~\cite[p.2478]{concise}. The tensors $\Tdeg{57}$, $\Tdeg{55}$,
    $\Tdeg{54}$ are easily seen to be isomorphic to their transpositions. They
    are not symmetric, so each of them yields $3$ isomorphism types. From
    Table~\eqref{eq:modules} it follows that $\Tdegtw{}$ admits at most a
    transposition symmetry.  From \cite[Theorem~7.3]{concise} it follows that
    after we fix one coordinate, there are exactly two isomorphism types that
    after permutation yield $\Tdegtw{}$. This can happen only if
    $\Tdegtw{}$ indeed admits a transposition symmetry. It follows that the
    $1$-degenerate tensors contribute $1+4\cdot 3 = 13$ isomorphism classes,
    which gives in total $107$ isomorphism classes.

    For $m=4$ we get $6$ symmetric tensors, $3$ tensors which are $1_A$-,
    $1_B$-generic and $2$ tensors which are only $1_A$-generic, corresponding
    to self-dual modules. This yields $6 + 3\cdot 3 + 2\cdot 3 = 21$
    isomorphism classes. For $m=3$ we obtain $3$ symmetric and one $1_A$-,
    $1_B$-generic tensors, so $3+1\cdot 3 = 6$ isomorphism types.
\end{proof}

\section{Existence of $1$-degenerate tensors}\label{degenerate}
In this section, we use the correspondence between tensors of minimal border rank and bilinear maps between modules, based on the 111-algebra introduced in \cite{concise}, to translate the claim that there are no 1-degenerate tensors of minimal border rank in $\kk^m \otimes \kk^m \otimes \kk^m$ for $m \leq 4$ into the claim that there no maps bilinear maps between module satisfying certain conditions. Then we use the auxiliary classification of concise local $S$-modules of degree $m \leq 4$ from Subsection \ref{classification_local} to prove this.

Recall that the classification in question states that:
\begin{itemize}
    \item[$m = 1:$] There are only cyclic modules. They are simultaneously cocyclic. 
    \item[$m = 2:$] There are only cyclic modules. They are simultaneously cocyclic. 
    \item[$m = 3:$] There are some cyclic and some cocyclic modules. 
    \item[$m = 4:$] There are some cyclic and some cocyclic modules. There are also some self-dual modules that are minimally generated by 2 elements.
\end{itemize}

The following lemma and its corollary show that the maps obtained from concise 111-abundant tensors decompose well into maps of local modules for $m \leq 4$.

\begin{lemma}
    Let $\mathcal{A}$ be a commutative unital $\kk$-algebra of degree $m$ and let $M$ be a concise $\mathcal{A}$-module of degree $m$. Choose a surjection $S \to \mathcal{A}$ which maps $S_{\leq 1}$ bijectively onto $\mathcal{A}$. If $m \leq 4$, then for each maximal ideal $\maxideal \subset S$ the $\mathcal{A}_\maxideal$-module $M_\maxideal$ is concise and $\dim_\kk \mathcal{A}_\maxideal = \dim_\kk M_\maxideal$.
\end{lemma}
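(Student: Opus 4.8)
The plan is to reduce the statement to a dimension count, then feed it into the classification of $S$-modules of degree $\leq 4$. First I would record the decomposition $M \simeq \bigoplus_{\maxideal} M_\maxideal$ and $\mathcal{A}\simeq \bigoplus_\maxideal \mathcal{A}_\maxideal$ as $S$-modules, coming from \S\ref{prelim:modules}; note $M_\maxideal$ is an $\mathcal{A}_\maxideal$-module since the idempotents of $\mathcal{A}$ act on $M$. Conciseness of $M$ as an $\mathcal{A}$-module says precisely that no nonzero element of $\mathcal{A}$ annihilates $M$; since $\mathcal{A} = \prod \mathcal{A}_\maxideal$ and $M = \bigoplus M_\maxideal$ respect this product decomposition, conciseness is equivalent to: for every $\maxideal$, no nonzero element of $\mathcal{A}_\maxideal$ annihilates $M_\maxideal$, i.e. each $M_\maxideal$ is a concise $\mathcal{A}_\maxideal$-module. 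So conciseness of the local pieces is automatic and the only real content is the equality of degrees $\dim_\kk \mathcal{A}_\maxideal = \dim_\kk M_\maxideal$ for each $\maxideal$.

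For the degree equality, let $a_\maxideal = \dim_\kk \mathcal{A}_\maxideal$ and $m_\maxideal = \dim_\kk M_\maxideal$; these sum to $m$ over $\maxideal$ in the support, since $\mathcal{A}$ and $M$ both have degree $m$. The key point is that a concise module over a local algebra of degree $d$ has degree at least $d$: if $R$ is local artinian of degree $d$ with maximal ideal $\maxideal_R$, and $N$ is a concise $R$-module, then the map $R \to \End_\kk(N)$ is injective, and I claim $\dim_\kk N \ge d$. One way to see this: pick $v \in N$ with annihilator as small as possible; conciseness forces $\ann_R(v) \subseteq \ann_R(N)$ actually no — better, use that $N$ contains a copy of $R/\ann_R(v)$ for each $v$, and that conciseness means $\bigcap_v \ann_R(v) = 0$, so for a suitable finite set of $v_i$ we have an injection $R \hookrightarrow \bigoplus_i R/\ann_R(v_i) \hookrightarrow N^{\oplus k}$; this gives $d \le k \dim_\kk N$, which is not quite enough. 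The cleaner route: $N$ is faithful over $R$, and a faithful module over an artinian local ring $R$ satisfies $\dim_\kk N \ge \dim_\kk R$ because the socle of $R$ embeds into $N$ — more precisely, multiplication gives $\mathrm{soc}(R) \hookrightarrow N$ after choosing $v$ with $\ann_R(v)\subseteq$ any given ideal; iterating along a composition series of $R$ produces $d$ linearly independent elements of $N$. Thus $m_\maxideal \ge a_\maxideal$ for every $\maxideal$, and since $\sum m_\maxideal = m = \sum a_\maxideal$, all inequalities are equalities.

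The step I expect to be the genuine obstacle is the inequality $\dim_\kk N \ge \dim_\kk R$ for a faithful module $N$ over an artinian local $\kk$-algebra $R$; this is where the hypothesis $m \le 4$ presumably enters if one wants to avoid the general commutative-algebra fact, since for small $m$ one can instead just invoke the explicit classification of concise local $S$-modules of degree $\le 4$ from \S\ref{classification_local}: in every case there one checks directly that the degree of the module equals $\dim_\kk \alg{T}_\maxideal = a_\maxideal$, and in particular no concise local module of degree $< a_\maxideal$ exists. Either way, once $m_\maxideal = a_\maxideal$ is established the lemma follows. I would write the faithful-module inequality in general (it is short and avoids a case analysis), remark that $m\le 4$ is what makes the subsequent applications in Section~\ref{degenerate} go through rather than being needed here, and close by noting $M_\maxideal$ concise over $\mathcal{A}_\maxideal$ was already dispatched in the first paragraph.
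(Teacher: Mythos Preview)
Your reduction to the local inequality $m_{\maxideal} \ge a_{\maxideal}$ and the summing argument are exactly right, and your treatment of conciseness of the local pieces is fine. The problem is the inequality itself: it is \emph{false} in general that a faithful module $N$ over a commutative artinian local $\kk$-algebra $R$ satisfies $\dim_\kk N \ge \dim_\kk R$. Take $R = \kk[x_1,x_2,x_3,x_4]/\maxideal^2$, of dimension $5$, and let $N=\kk^4$ with $x_1,x_2,x_3,x_4$ acting by the elementary matrices $E_{13},E_{14},E_{23},E_{24}$. All products among these matrices vanish, so this is an $R$-module, and together with the identity they span a $5$-dimensional subspace of $\End_\kk(N)$, so $N$ is faithful; yet $\dim_\kk N = 4 < 5 = \dim_\kk R$. (This is precisely the degree-$4$ summand of the module $M_{10}$ in the paper.) Your sketched proof via the socle and composition series does not work, and your remark that ``$m\le4$ is what makes the subsequent applications go through rather than being needed here'' is therefore wrong: the hypothesis is genuinely needed for the lemma.

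The paper's proof takes your fallback route and makes it precise. Rather than a general faithful-module bound, it observes that if $\dim_\kk M_\maxideal \le 3$ then the classification in \S\ref{classification_local} forces $M_\maxideal$ to be cyclic or cocyclic, whence $\dim_\kk \End_S(M_\maxideal) = \dim_\kk M_\maxideal$ (Corollary~\ref{corr_cyclic}); faithfulness then gives $a_\maxideal = \dim_\kk \mathcal{A}_\maxideal \le \dim_\kk \End_S(M_\maxideal) = m_\maxideal$. The remaining case $\dim_\kk M_\maxideal = 4$ is trivial since then $a_\maxideal \le m = 4 = m_\maxideal$. So the classification is not merely a convenient alternative to a general fact; it is what actually supplies the bound, and the counterexample above shows this is unavoidable.
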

\begin{proof}
    If $\dim_\kk M_\maxideal = 4$, then $\dim_\kk \mathcal{A}_\maxideal \leq \dim_\kk \mathcal{A} = 4 = \dim_\kk M_\maxideal$. If $\dim_\kk M_\maxideal \leq 3$, then $M_\maxideal$ is cyclic or cocyclic by the classification, so $\dim_\kk \End M_\maxideal =\dim_\kk M_\maxideal$. The $\mathcal{A}_\maxideal$-module $M_\maxideal$ is concise, so $\dim_\kk \mathcal{A}_\maxideal \leq \dim_\kk \End M_\maxideal =  \dim_\kk M_\maxideal$. Therefore $\dim_\kk \mathcal{A}_\maxideal \leq \dim_\kk M_\maxideal$ for each maximal ideal. We also know that $\sum_\maxideal \dim_\kk \mathcal{A}_\maxideal = \dim_\kk \mathcal{A} = \dim_\kk M = \sum_\maxideal \dim_\kk M_\maxideal$, so $\dim_\kk \mathcal{A}_\maxideal = \dim_\kk M_\maxideal$ for each maximal ideal.
\end{proof}

\begin{corollary}\label{degenerate_dimension}
    Let $\varphi \colon M \otimes_S N \to P$ be a surjective non-degenerate map corresponding to a concise $111$-abundant tensor $T_\varphi$. By Lemma \ref{maps_decomposition} it decomposes as a direct sum of surjective non-degenerate maps $\varphi_\maxideal\colon M_\maxideal \otimes_S N_\maxideal \to P_\maxideal$. If $m \leq 4$, then $\dim_\kk M_\maxideal = \dim_\kk N_\maxideal = \dim_\kk P_\maxideal$ for each $\maxideal \subset S$ and $M_\maxideal, N_\maxideal, P_\maxideal$ are concise.
\end{corollary}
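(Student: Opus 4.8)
The plan is to combine the preceding Lemma with the characterisation of the 111-algebra from~\S\ref{maps}, applied componentwise over the maximal ideals in the support. First I would recall that $T_\varphi$ is concise and $111$-abundant, so by~\cite[Theorem~5.5]{concise} there is a commutative unital $\kk$-algebra $\mathcal{A}$ of degree at least $m$ and concise $\mathcal{A}$-modules $M, N, P$ of degree $m$ with $\varphi\colon M\otimes_{\mathcal{A}} N\to P$ surjective and non-degenerate, where (for $m\leq 5$, hence in particular $m\leq 4$) we have $\dim_\kk\mathcal{A} = m$ because $111$-abundant tensors are $111$-sharp, see~\cite[Theorem~1.6]{concise}. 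Pick a surjection $S\to\mathcal{A}$ mapping $S_{\leq 1}$ isomorphically onto $\mathcal{A}$; then $M, N, P$ become concise $S$-modules of degree $m$, and the previous Lemma applies verbatim to each of them. This gives $\dim_\kk M_\maxideal = \dim_\kk N_\maxideal = \dim_\kk P_\maxideal = \dim_\kk\mathcal{A}_\maxideal$ for every maximal ideal $\maxideal\subset S$, and that each of $M_\maxideal$, $N_\maxideal$, $P_\maxideal$ is concise as an $\mathcal{A}_\maxideal$-module.

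Next I would invoke Lemma~\ref{maps_decomposition}: the decomposition $M\otimes_S N = \bigoplus_\maxideal M_\maxideal\otimes_S N_\maxideal$ (using disjointness of supports) forces $\varphi$ to split as a direct sum of maps $\varphi_\maxideal\colon M_\maxideal\otimes_S N_\maxideal\to P_\maxideal$. It remains to check that each $\varphi_\maxideal$ is again surjective and non-degenerate. Surjectivity is immediate because a direct sum of maps is surjective iff each summand is. For non-degeneracy, if $\varphi_\maxideal(\overline m, -) = 0$ for some nonzero $\overline m\in M_\maxideal$, then lifting $\overline m$ to $m\in M$ (via the projection $M\twoheadrightarrow M_\maxideal$ with a section given by the idempotent decomposition) we get $\varphi(m, -)$ landing in $\bigoplus_{\maxideal'\neq\maxideal} P_{\maxideal'}$, so its $P_\maxideal$-component vanishes and, since $\varphi$ already kills the cross terms, $\varphi(m,-) = 0$, contradicting non-degeneracy of $\varphi$; the symmetric argument handles $\varphi_\maxideal(-, n)$. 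Finally, one should note that each $\varphi_\maxideal$ is already a map of $\mathcal{A}_\maxideal$-modules factoring through $M_\maxideal\otimes_{\mathcal{A}_\maxideal} N_\maxideal$, since $\mathcal{A}_\maxideal$ is the localisation of $\mathcal{A}$ and tensor product over $S$ of $\maxideal$-local modules agrees with tensor product over $\mathcal{A}_\maxideal$.

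The only genuinely substantive input is the previous Lemma, whose proof rests on the classification of concise local $S$-modules of degree $\leq 3$ (all cyclic or cocyclic, hence $\dim_\kk\End = \dim_\kk$ of the module) together with the trivial bound $\dim_\kk\mathcal{A}_\maxideal\leq\dim_\kk\mathcal{A} = m = 4$ in the top-dimensional case; everything in the corollary beyond that is bookkeeping with the idempotent splitting of finite-length modules over $S$. Thus I expect no real obstacle here: the main point to be careful about is exactly the compatibility of $\otimes_S$ and $\otimes_{\mathcal{A}_\maxideal}$ and the transfer of non-degeneracy through the splitting, both of which are routine once the idempotents of $\mathcal{A} = \prod_\maxideal\mathcal{A}_\maxideal$ are used. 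I would keep the written proof short, essentially: apply the Lemma to $M$, $N$, $P$; apply Lemma~\ref{maps_decomposition}; observe that surjectivity and non-degeneracy pass to summands.
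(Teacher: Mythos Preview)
Your proposal is correct and matches the paper's approach: the corollary is stated without its own proof, being a direct consequence of applying the immediately preceding Lemma to each of $M$, $N$, $P$ (using $111$-sharpness for $m\leq 5$ to get $\dim_\kk\mathcal{A}=m$), together with Lemma~\ref{maps_decomposition}. Your extra verification that surjectivity and non-degeneracy pass to the summands $\varphi_\maxideal$ is sound and is precisely the bookkeeping the paper leaves implicit.
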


We will use general results introduced in Subsection \ref{maps} and the classification of local concise modules to prove Theorem \ref{theorem_degenerate}. The result holds for any algebraically closed field $\kk$ with $\charr \kk \neq 2$.

\begin{proof}[Proof of Theorem~\ref{theorem_degenerate}]
    Let $M, N, P$ be concise $S$-modules of degree $m$ and let $\varphi \colon M \otimes_S N \to P$ be a surjective non-degenerate map. The module $P$ is a local module of degree 4 or it decomposes as a direct sum of at most one local module of degree 3 and local modules of degrees at most 2. 

\begin{enumerate}
    \item In the first case it follows from the classification that each $M, N, P$ is cyclic, cocyclic, or minimally generated by 2 elements. If at least one of $M, N, P$ is cyclic or cocyclic then we conclude by Corollary \ref{maps_symmetry} and Corollary \ref{degenerate_dimension}. We will show that the other case cannot hold.

    Assume that $M, N, P$ are minimally generated by 2 elements. Let $e_1, e_2$ and $e_3, e_4$ be minimal generators of $M$ and $N$. By Lemma \ref{maps_nakayama} and Lemma \ref{maps_cyclic} the map $\overline{\varphi}(e_1, -) \colon N / \maxideal N \to P / \maxideal P$ cannot be surjective. We know that $\dim_\kk P / \maxideal P = 2$, so $\overline{\varphi}(e_1, e_3), \overline{\varphi}(e_1, e_4)$ must be linearly dependent. Applying the same argument for $e_2, e_3, e_4$ and in case of need $e_1 + e_2$ or $e_3 + e_4$ shows that in fact all $\overline{\varphi}(e_1, e_3), \overline{\varphi}(e_1, e_4), \overline{\varphi}(e_2, e_3), \overline{\varphi}(e_2, e_4)$ are linearly dependent. It follows that the image of $\varphi$ is at most $3$-dimensional, so $\varphi$ is not surjective.
    
    \item In the second case the classification combined with Lemma \ref{maps_(co)cyclic} imply that $P$ is cyclic or cocyclic because local modules of degree 3 are cyclic or cocyclic and local modules of degrees at most 2 are simultaneously cyclic and cocyclic. We conclude by Corollary \ref{maps_symmetry}.
\end{enumerate}. 
\end{proof}

\appendix
\section{Code}\label{sec:code}

Macaulay2 computations are included with the arXiv submission of this paper,
as an auxiliary file \texttt{SmallMinimalBorderRankTensors.m2}. This is a
Macaulay2 package, which can be loaded using
\texttt{loadPackage("SmallMinimalBorderRankTensors")}.

The variable \texttt{TensorList} contains a list of tensors (in matrix
notation) together with their names. Additionally, the table
\texttt{TensorInMatrixForm} allows for quick access to a given tensor, for
example to get $\Tdegtw$, use
\begin{verbatim}
TensorInMatrixForm_{56}
\end{verbatim}
note the curly braces. The function \texttt{matrixFormToTensor} yields the
tensor form, for example \texttt{matrixFormToTensor TensorInMatrixForm\_{56}}
yields
{\small\begin{verbatim}
a b c  + a b c  + a b c  + a b c  + a b c  + a b c  + a b c  + a b c  + a b c  + a b c
 1 1 1    1 2 2    2 1 3    1 3 3    3 1 4    4 2 4    1 4 4    5 5 4    5 1 5    5 2 5
\end{verbatim}}
The function \texttt{matrixFormToModule} applied to one of our $1_A$-generic
tensors, yields the corresponding module. Some important invariants of tensors
are obtained using the functions
\begin{itemize}
    \item \texttt{stabilizerDimension}, which yields the stabilizer dimension,
    \item \texttt{oneoneonematrixspace}, which yields the 111-algebra inside
        $\End(A) \times \End(B) \times \End(C)$,
    \item \texttt{coordinateTensorsInMatrixForm}, which yields the matrix
        forms of the multiplication tensors of $\modA$, $\modB$, $\modC$,
    \item \texttt{coordinateModules}, which yields $\modA$, $\modB$, $\modC$,
        as modules.
\end{itemize}

The variable \texttt{DegenerationList} includes every degeneration in
the diagram~\ref{sec:diagram}. A desired degeneration can be looked up by name
parameter (\texttt{degName}), for example by

\begin{verbatim}
mydeg = first select(DegenerationList, el -> el#degName == {{1,2}, {1,18}})
\end{verbatim}

Once obtained, the variables \texttt{mydeg\#Laction}, \texttt{mydeg\#Raction},
\texttt{mydeg\#Vaction} contain the matrices corresponding to the \textbf{L}eft action
(on the $B$ coordinate), the \textbf{R}ight action (on the $C$ coordinate) and the
action on \textbf{V}ariables (on the $A$ coordinate), respectively. To get the
family itself, use \texttt{degenerationAsFamily}.

\section{Classification and degenerations for $m\leq 4$}\label{sec:smallm}

In this appendix we present a list of isomorphism types and degenerations of
minimal border rank tensors in $\kk^m\otimes \kk^m \otimes \kk^m$ for $m\leq
4$.

Case $m=2$.
\[
    \begin{bmatrix}
        x_0 & 0 \\
        x_1 & x_0 \\
    \end{bmatrix},\quad
    \begin{bmatrix}
        x_0 & 0 \\
        0 & x_0 + x_1 \\
    \end{bmatrix}
    \]
Case $m=3$.
    \[
    \begin{split}
        \begin{bmatrix}
            x_0 & 0 & 0 \\
            x_1 & x_0 & 0 \\
            x_2 & x_1 & x_0 \\
        \end{bmatrix},\quad
        \begin{bmatrix}
            x_0 & 0 & 0 \\
            x_1 & x_0 & 0 \\
            x_2 & 0 & x_0 \\
        \end{bmatrix},\quad
        \begin{bmatrix}
            x_0 & 0 & 0 \\
            x_1 & x_0 & 0 \\
            0 & 0 & x_0 + x_2 \\
        \end{bmatrix},\quad
        \begin{bmatrix}
            x_0 & 0 & 0 \\
            0 & x_0 + x_1 & 0 \\
            0 & 0 & x_0 + x_2 \\
        \end{bmatrix}
    \end{split}
    \]
\newcommand{\ten}[1]{U_{#1}}
Case $m=4$.
{\scriptsize\[
    \begin{split}
        &\ten{2,3} = \begin{bmatrix}
        x_0 & 0 & 0 & 0  \\
        x_1 & x_0 & 0 & 0 \\
        x_2 & x_1 & x_0 & 0 \\
        x_3 & x_2 & x_1 & x_0 \\
    \end{bmatrix},\ 
    \ten{2,4} =\begin{bmatrix}
        x_0 & 0 & 0 & 0 \\
        x_1 & x_0 & 0 & 0  \\
        x_2 & 0 & x_0 & 0 \\
        x_3 & 0 & x_2 & x_0 \\
    \end{bmatrix},\ 
    \ten{2,5} = \begin{bmatrix}
        x_0 & 0 & 0 & 0 \\
        x_1 & x_0 & 0 & 0 \\
        x_2 & 0 & x_0 & 0 \\
        x_3 & x_2 & x_1 & x_0 \\
    \end{bmatrix},\ 
    \ten{2,6} =\begin{bmatrix}
        x_0 & 0 & 0 & 0 \\
        x_1 & x_0 & 0 & 0 \\
        x_2 & 0 & x_0 & 0 \\
        x_3 & 0 & 0 & x_0 \\
    \end{bmatrix},\\
    &\ten{2,7} = \begin{bmatrix}
        x_0 & 0 & 0 & 0 \\
        0 & x_0 & 0  & 0 \\
        x_1 & x_2 & x_0 & 0 \\
        x_3 & -x_1 & 0 & x_0 \\
    \end{bmatrix},\ 
    \ten{2,8} = \begin{bmatrix}
        x_0 & 0 & 0 & 0 \\
        0 & x_0 & 0 & 0 \\
        x_1 & x_2 & x_0 & 0 \\
        x_3 & 0 & 0 & x_0  \\
    \end{bmatrix},\ 
    \ten{3,1} = \begin{bmatrix}
        x_0 & 0 & 0 & 0\\
        x_1 & x_0 & 0 & 0\\
        x_2 & x_1 & x_0 & 0\\
        0 & 0 & 0 & x_0 + x_3 \\
    \end{bmatrix},\ 
    \ten{3,2} = \begin{bmatrix}
        x_0 & 0 & 0 & 0 \\
        x_1 & x_0 & 0 & 0\\
        x_2 & 0 & x_0 & 0\\
        0 & 0 & 0 & x_0 + x_3 \\
    \end{bmatrix},\\
    &\ten{3,3} = \begin{bmatrix}
        x_0 & 0 & 0 & 0 \\
        x_1 & x_0 & 0 & 0 \\
        0 & 0 & x_0 + x_3 & 0 \\
        0 & 0 & x_2 & x_0 + x_3 \\
    \end{bmatrix},\
    \ten{4,1} = \begin{bmatrix}
        x_0 & 0 & 0 & 0 \\
        x_1 & x_0 & 0 & 0 \\
        0 & 0 & x_0 + x_2 & 0 \\
        0 & 0 & 0 & x_0 + x_3 \\
    \end{bmatrix},\
    \ten{5,1} = \begin{bmatrix}
        x_0 & 0 & 0 & 0 \\
        0 & x_0 + x_1 & 0 & 0 \\
        0 & 0 & x_0 + x_2 & 0 \\
        0 & 0 & 0 & x_0 + x_3 \\
    \end{bmatrix}
    \end{split}
\]}
The degeneration diagram for $m=4$ is obtained directly from~\ref{sec:diagram} using
Proposition~\ref{ref:degenerationSplitting:prop}. The diagrams for $m=2,3$ can
be obtained from the one below by the same method and are easy to get anyway.
\noindent\begin{center}{\scalebox{0.8}{
\begin{tikzpicture}[node distance={20mm}, thick, main/.style = {draw, circle}]

    \node[main] (U51) {$U_{5,1}$};
    \node[main] (U41) [below of=U51] {$U_{4,1}$};
    \node[main] (U33) [below left of=U41] {$U_{3,3}$};
    \node[main] (U31) [below right of=U41] {$U_{3,1}$};
    \node[main] (U23) [below of=U31] {$U_{2,3}$};
    \node[main] (U25) [below left of=U23] {$U_{2,5}$};
    \node[main] (U32) [left=18mm of U25] {$U_{3,2}$};
    \node[main] (U24) [below right of=U32]{$U_{2,4}$};
    \node[main] (U27) [below right=27.5mm of U23]{$U_{2,7}$};
    \node[main] (U28) [below right=15mm of U27]{$U_{2,8}$};
    \node[main] (U26) [below left=15mm of U24]{$U_{2,6}$};

\draw[arrows={-Latex[length=8pt,bend,line width=0pt]}] (U27) edge (U28);
\draw[arrows={-Latex[length=8pt,bend,line width=0pt]}, out=225, in=60] (U24) edge (U26);
\draw[arrows={-Latex[length=8pt,bend,line width=0pt]}, out=340, in=160] (U24) edge (U28);
\draw[arrows={-Latex[length=8pt,bend,line width=0pt]}] (U25) edge (U24);
\draw[arrows={-Latex[length=8pt,bend,line width=0pt]}] (U41) edge (U33);
\draw[arrows={-Latex[length=8pt,bend,line width=0pt]}] (U51) edge (U41);
\draw[arrows={-Latex[length=8pt,bend,line width=0pt]}] (U41) edge (U31);
\draw[arrows={-Latex[length=8pt,bend,line width=0pt]}] (U33) edge (U23);
\draw[arrows={-Latex[length=8pt,bend,line width=0pt]}] (U31) edge (U23);
\draw[arrows={-Latex[length=8pt,bend,line width=0pt]}, bend left=15] (U31) edge (U32);
\draw[arrows={-Latex[length=8pt,bend,line width=0pt]}] (U23) edge (U25);
\draw[arrows={-Latex[length=8pt,bend,line width=0pt]}, bend left=30] (U23) edge (U27);
\draw[arrows={-Latex[length=8pt,bend,line width=0pt]}] (U32) edge (U24);

\end{tikzpicture}}}\end{center}

\small
\newcommand{\etalchar}[1]{$^{#1}$}


\begin{thebibliography}{CGLV22}

\bibitem[AM69]{Atiyah_Macdonald}
M.~F. Atiyah and I.~G. Macdonald.
\newblock {\em Introduction to commutative algebra}.
\newblock Addison-Wesley Publishing Co., Reading, Mass.-London-Don Mills, Ont.,
  1969.

\bibitem[BB13]{Ballico_Bernardi}
Edoardo Ballico and Alessandra Bernardi.
\newblock Stratification of the fourth secant variety of {V}eronese varieties
  via the symmetric rank.
\newblock {\em Adv. Pure Appl. Math.}, 4(2):215--250, 2013.

\bibitem[BB21]{buczynski}
Weronika Buczy\'{n}ska and Jaros{\l}aw Buczy\'{n}ski.
\newblock Apolarity, border rank, and multigraded {H}ilbert scheme.
\newblock {\em Duke Math. J.}, 170(16):3659--3702, 2021.

\bibitem[BCS13]{BurgisserBook}
Peter B{\"u}rgisser, Michael Clausen, and Mohammad~A Shokrollahi.
\newblock {\em Algebraic complexity theory}, volume 315 of Grundlehren der
  mathematischen Wissenschaften.
\newblock Springer Science \& Business Media, 2013.

\bibitem[BL14]{third}
Jaros{\l}aw Buczy\'{n}ski and J.~M. Landsberg.
\newblock On the third secant variety.
\newblock {\em J. Algebraic Combin.}, 40(2):475--502, 2014.

\bibitem[BL16]{Blaser_Lysikov}
Markus Bl\"aser and Vladimir Lysikov.
\newblock On degeneration of tensors and algebras.
\newblock In {\em 41st {I}nternational {S}ymposium on {M}athematical
  {F}oundations of {C}omputer {S}cience}, volume~58 of {\em LIPIcs. Leibniz
  Int. Proc. Inform.}, pages Art. No. 19, 11 pages. Schloss Dagstuhl.
  Leibniz-Zent. Inform., Wadern, 2016.

\bibitem[BO11]{bates-oeding}
Daniel~J. Bates and Luke Oeding.
\newblock Toward a salmon conjecture.
\newblock {\em Exp. Math.}, 20(3):358--370, 2011.

\bibitem[CGLS24]{CGLS}
Matthias Christandl, Fulvio Gesmundo, Vladimir Lysikov, and Vincent Steffan.
\newblock Partial degeneration of tensors.
\newblock {\em SIAM J. Matrix Anal. Appl.}, 45(1):771--800, 2024.

\bibitem[CGLV22]{Conner_Gesmundo_Landsberg_Ventura}
Austin Conner, Fulvio Gesmundo, Joseph~M. Landsberg, and Emanuele Ventura.
\newblock Rank and border rank of {K}ronecker powers of tensors and
  {S}trassen's laser method.
\newblock {\em Comput. Complexity}, 31(1):Paper No. 1, 40, 2022.

\bibitem[CGZ23]{CGZ}
Matthias Christandl, Fulvio Gesmundo, and Jeroen Zuiddam.
\newblock A gap in the subrank of tensors.
\newblock {\em SIAM J. Appl. Algebra Geom.}, 7(4):742--767, 2023.

\bibitem[CHL23]{Conner_Huang_Landsberg}
Austin Conner, Hang Huang, and J.~M. Landsberg.
\newblock Bad and good news for {S}trassen's laser method: border rank of
  {${\rm perm}_3$} and strict submultiplicativity.
\newblock {\em Found. Comput. Math.}, 23(6):2049--2087, 2023.

\bibitem[CVZ19]{Christandl__barriers}
Matthias Christandl, P\'eter Vrana, and Jeroen Zuiddam.
\newblock Barriers for fast matrix multiplication from irreversibility.
\newblock In {\em 34th {C}omputational {C}omplexity {C}onference}, volume 137
  of {\em LIPIcs. Leibniz Int. Proc. Inform.}, pages Art. No. 26, 17. Schloss
  Dagstuhl. Leibniz-Zent. Inform., Wadern, 2019.

\bibitem[Eis95]{eisenbud}
David Eisenbud.
\newblock {\em Commutative algebra}, volume 150 of {\em Graduate Texts in
  Mathematics}.
\newblock Springer-Verlag, New York, 1995.
\newblock With a view toward algebraic geometry.

\bibitem[FGI{\etalchar{+}}05]{fantechi_et_al_fundamental_ag}
Barbara Fantechi, Lothar G{\"o}ttsche, Luc Illusie, Steven~L. Kleiman, Nitin
  Nitsure, and Angelo Vistoli.
\newblock {\em Fundamental algebraic geometry}, volume 123 of {\em Mathematical
  Surveys and Monographs}.
\newblock American Mathematical Society, Providence, RI, 2005.
\newblock Grothendieck's FGA explained.

\bibitem[FO14]{Friedland_Ottaviani}
Shmuel Friedland and Giorgio Ottaviani.
\newblock The number of singular vector tuples and uniqueness of best rank-one
  approximation of tensors.
\newblock {\em Found. Comput. Math.}, 14(6):1209--1242, 2014.

\bibitem[Fri13]{friedland_4}
Shmuel Friedland.
\newblock On tensors of border rank {$l$} in {$\Bbb{C}^{m\times n\times l}$}.
\newblock {\em Linear Algebra Appl.}, 438(2):713--737, 2013.

\bibitem[HJMS22]{Homs_Jelisiejew_Michalek_Seynnaeve}
Roser Homs, Joachim Jelisiejew, Mateusz Micha{\l}ek, and Tim Seynnaeve.
\newblock Bounds on complexity of matrix multiplication away from
  {C}oppersmith-{W}inograd tensors.
\newblock {\em J. Pure Appl. Algebra}, 226(12):Paper No. 107142, 16 pages,
  2022.

\bibitem[IK99]{Iarrobino_Kanev}
Anthony Iarrobino and Vassil Kanev.
\newblock {\em Power sums, {G}orenstein algebras, and determinantal loci},
  volume 1721 of {\em Lecture Notes in Mathematics}.
\newblock Springer-Verlag, Berlin, 1999.
\newblock Appendix C by Anthony Iarrobino and Steven L. Kleiman.

\bibitem[JLP23]{concise}
Joachim Jelisiejew, J.~M. Landsberg, and Arpan Pal.
\newblock Concise tensors of minimal border rank.
\newblock {\em Mathematische Annalen}, Feb 2023.

\bibitem[JS19]{jelisiejew_sienkiewicz__BB}
Joachim Jelisiejew and {\L{}}ukasz Sienkiewicz.
\newblock {B}ia{\l{}}ynicki-{B}irula decomposition for reductive groups.
\newblock {\em Journal de Mathématiques Pures et Appliquées}, 131:290 -- 325,
  2019.

\bibitem[J{\v{S}}22]{components}
Joachim Jelisiejew and Klemen {\v{S}}ivic.
\newblock Components and singularities of {Q}uot schemes and varieties of
  commuting matrices.
\newblock {\em J. Reine Angew. Math.}, 788:129--187, 2022.

\bibitem[Kun11]{Kunte__Gorenstein_modules_of_finite_length}
Michael Kunte.
\newblock Gorenstein modules of finite length.
\newblock {\em Math. Nachr.}, 284(7):899--919, 2011.

\bibitem[Lan12]{landsberg_tensors}
J.~M. Landsberg.
\newblock {\em Tensors: geometry and applications}, volume 128 of {\em Graduate
  Studies in Mathematics}.
\newblock American Mathematical Society, Providence, RI, 2012.

\bibitem[Lan17]{landsberg_complexity}
J.~M. Landsberg.
\newblock {\em Geometry and complexity theory}, volume 169 of {\em Cambridge
  Studies in Advanced Mathematics}.
\newblock Cambridge University Press, Cambridge, 2017.

\bibitem[LM17]{abelian}
J.~M. Landsberg and Mateusz Micha{\l}ek.
\newblock Abelian tensors.
\newblock {\em J. Math. Pures Appl. (9)}, 108(3):333--371, 2017.

\bibitem[LT10]{Landsberg_Teitler}
J.~M. Landsberg and Zach Teitler.
\newblock On the ranks and border ranks of symmetric tensors.
\newblock {\em Found. Comput. Math.}, 10(3):339--366, 2010.

\bibitem[Maz80]{mazzola_generic_finite_schemes}
Guerino Mazzola.
\newblock Generic finite schemes and {H}ochschild cocycles.
\newblock {\em Comment. Math. Helv.}, 55(2):267--293, 1980.

\bibitem[MR18]{Moschetti_Ricolfi}
Riccardo Moschetti and Andrea~T. Ricolfi.
\newblock On coherent sheaves of small length on the affine plane.
\newblock {\em J. Algebra}, 516:471--489, 2018.

\bibitem[MZ14]{Mroz_Zwara}
Andrzej Mr{\'o}z and Grzegorz Zwara.
\newblock Combinatorial algorithms for computing degenerations of modules of
  finite dimension.
\newblock {\em Fund. Inform.}, 132(4):519--532, 2014.

\bibitem[Poo08]{poonen}
Bjorn Poonen.
\newblock Isomorphism types of commutative algebras of finite rank over an
  algebraically closed field.
\newblock In {\em Computational arithmetic geometry}, volume 463 of {\em
  Contemp. Math.}, pages 111--120. Amer. Math. Soc., Providence, RI, 2008.

\bibitem[ST03]{perestanovochnye}
D.~A. Suprunenko and R.~I. Tyshkevich.
\newblock {\em { Perestanovochnye matritsy}}.
\newblock \`Editorial URSS, Moscow, second edition, 2003.

\bibitem[Str83]{strassen}
V.~Strassen.
\newblock Rank and optimal computation of generic tensors.
\newblock {\em Linear Algebra Appl.}, 52/53:645--685, 1983.

\bibitem[{Woj}24]{Wojtala}
Maciej {Wojtala}.
\newblock Iarrobino's decomposition for self-dual modules.
\newblock arXiv:2405.13829, 2024.

\end{thebibliography}
\end{document}